\ifdefined\pdfoutput\pdfoutput=1\fi
\documentclass[11pt]{amsart} 
\usepackage{amsmath,amssymb,amsthm} 
\usepackage[all]{xy} 
\usepackage{iftex}
\ifpdf\else
  
\fi
\usepackage{tikz} 
\usetikzlibrary{arrows.meta} 
\usepackage{comment} 

\newtheorem{theorem}{Theorem}[section] 
\newtheorem{lemma}[theorem]{Lemma} 
\newtheorem{proposition}[theorem]{Proposition} 
\newtheorem{corollary}[theorem]{Corollary} 

\newtheorem{maintheorem}{Theorem} 
 
\newtheorem{maincorollary}[maintheorem]{Corollary} 

\theoremstyle{definition} 
\newtheorem{definition}[theorem]{Definition} 
\newtheorem{remark}[theorem]{Remark} 
\newtheorem{example}[theorem]{Example}

\numberwithin{equation}{section}

\renewcommand{\phi}{\varphi}

\newcommand{\Eff}{\operatorname{Eff}} 
\newcommand{\Coker}{\operatorname{Coker}} 
\newcommand{\Fix}{\operatorname{Fix}} 
 
\newcommand{\Homeo}{\operatorname{Homeo}} 
\newcommand{\id}{\operatorname{id}} 
 
\newcommand{\Iso}{\operatorname{Iso}} 
\newcommand{\Ker}{\operatorname{Ker}} 
\newcommand{\Nor}{\operatorname{N}} 
\newcommand{\Out}{\operatorname{Out}} 
\newcommand{\St}{\operatorname{St}} 
\newcommand{\supp}{\operatorname{supp}} 

\newcommand{\N}{\mathbb{N}} 
\newcommand{\Z}{\mathbb{Z}}

\newcommand{\cCO}{\mathcal{CO}} 
\newcommand{\cG}{\mathcal{G}} 
\newcommand{\cH}{\mathcal{H}} 
\newcommand{\cK}{\mathcal{K}} 
\newcommand{\cN}{\mathcal{N}} 
\newcommand{\cS}{\mathcal{S}} 

\newcommand{\sD}{\mathsf{D}} 
\newcommand{\sF}{\mathsf{F}} 
\newcommand{\sA}{\mathsf{A}}

\title[Maximal subgroups of topological full groups of shifts of finite type] 
{Maximal subgroups of topological full groups of \\ 
shifts of finite type} 
\author{Hiroki Matui} 
\thanks{The author was supported by JSPS KAKENHI Grant Number JP23K22397.} 
\address{Graduate School of Mathematical Sciences, 
The University of Tokyo, Japan} 
\email{hiroki@ms.u-tokyo.ac.jp} 
\date{}

\subjclass[2020]{Primary 20E28; Secondary 22A22, 37B05, 37B10, 20E32} 
\keywords{topological full group, maximal subgroup, ample groupoid}

\begin{document} 
\begin{abstract} 
We study maximal subgroups of topological full groups of irreducible 
one-sided shifts of finite type whose associated groupoids have 
Cantor unit space. 
For each wide open subgroupoid of such a groupoid, we construct 
a subgroupoid graph and identify the subgroupoid with a full compact 
open reduction of the corresponding countable graph groupoid. 
Every minimal wide open subgroupoid is either AF or purely infinite. 
A proper wide open subgroupoid is prime if it admits no proper 
intermediate wide open subgroupoid. 
We prove a trichotomy for prime subgroupoids: they are minimal, arise 
from a clopen bipartition, or have a unique nonempty proper closed 
invariant set, which is nowhere dense. 
For a prime subgroupoid, the normalizer of its topological full group 
is maximal in the ambient topological full group precisely when it 
maps onto the ambient abelianization. 
An analogous criterion holds in every subgroup containing the 
ambient commutator subgroup. 
For a minimal prime subgroupoid, its topological full group is maximal 
precisely when it equals its normalizer and the inclusion induces 
a surjection on first groupoid homology. 
We also describe all fully amplified maximal subgroups, namely 
those closed under local agreement. 
For Thompson's group $V$, the stabilizer of every simple type system 
is maximal, extending results of Belk, Bleak, Quick and Skipper 
for finite type systems and certain classes of infinite ones. 
Finally, every subgroup containing the ambient commutator subgroup 
has continuum many pairwise nonisomorphic maximal subgroups 
acting minimally on the unit space. 
\end{abstract} 

\maketitle

%%%%%%%%%%%%%%%%%%%%%%%%%%%%%%%%%%%%%%%%%%%%%%%%%%%%%%%%%%%%
\section{Introduction} 
%%%%%%%%%%%%%%%%%%%%%%%%%%%%%%%%%%%%%%%%%%%%%%%%%%%%%%%%%%%%

Topological full groups of irreducible one-sided shifts of finite type 
generalize Thompson's group $V$ and the Higman--Thompson groups 
\cite{Ma15crelle}. 
We study their maximal subgroups through the ample groupoids 
associated with these shifts, which we call SFT groupoids. 
A wide open subgroupoid of an SFT groupoid gives rise to two natural 
subgroups of the ambient topological full group: 
its own topological full group and the normalizer of that group. 
We investigate when these subgroups are maximal and how maximality is 
reflected in the structure of the subgroupoid. 
Our approach combines graph models, the dynamics of prime subgroupoids 
and the maps induced on abelianizations and groupoid homology. 

Belk, Bleak, Quick and Skipper \cite{BBQS25TAMS} use type systems 
to construct maximal subgroups of Thompson's group $V$, 
including an uncountable family of pairwise nonisomorphic examples. 
Their type systems are equivalence relations on finite binary words 
satisfying compatibility conditions with passage to children. 
Grigorchuk and Vorobets \cite{GV26ETDS} study maximal subgroups 
arising from closed-set stabilizers and clopen partitions 
in a broader dynamical setting. 
Here we interpret type systems in terms of wide open subgroupoids 
and develop the resulting theory for general irreducible SFT groupoids. 
This gives structural results for subgroupoids themselves, 
as well as criteria and constructions for maximal subgroups. 

Throughout the introduction, let $(V,E)$ be a finite directed graph 
whose adjacency matrix is irreducible and is not a permutation matrix. 
Let $\cH:=\cG_{(V,E)}$ be its SFT groupoid, and put $X:=\cH^{(0)}$. 
Thus $X$ is a Cantor set, and $\cH$ is minimal, effective and 
purely infinite. 
All groupoids in this paper are second countable and Hausdorff. 
We write $\sF(\cH)$ for the topological full group of $\cH$ and 
$\sD(\cH)$ for its commutator subgroup. 
Our maximality criteria apply to every group $\Gamma$ with 
$\sD(\cH)\subset\Gamma\subset\sF(\cH)$. 
These are precisely the nontrivial normal subgroups of $\sF(\cH)$ 
\cite[Theorem 4.16]{Ma15crelle}. 
A subgroupoid is wide if its unit space is $X$. 
A proper wide open subgroupoid $\cG\subsetneq\cH$ is called prime 
if there is no proper intermediate wide open subgroupoid 
\cite[Definition 5.1]{Ma26maximal1}. 

We first associate a subgroupoid graph $\Delta_{\cG}$ to every 
wide open subgroupoid $\cG\subset\cH$. 
Its vertices record which finite paths can be interchanged by 
prefix replacements belonging to $\cG$. 
The graph has a natural projection to $(V,E)$ and a root section. 
Wide open subgroupoids between $\cG$ and $\cH$ correspond to 
quotients of $\Delta_{\cG}$ as a rooted graph over $(V,E)$ 
(Propositions~\ref{prop:subgroupoidgraphconverse} 
and~\ref{prop:subgroupoidgraphquotient}). 
In particular, primeness has a graph-theoretic characterization 
(Corollary~\ref{cor:primegraph}). 
For the full $2$-shift groupoid $\cH_2$, these subgroupoid graphs 
are the type graphs of \cite{BBQS25TAMS}, and primeness corresponds 
to simplicity of type systems 
(Remark~\ref{rem:subgroupoidgraphBBQS}). 

The graph also gives a concrete groupoid model: 
\[ 
\cG\cong\cG_{\Delta_{\cG}}|S, 
\] 
where $S$ is a full compact open subset of the unit space of the 
countable graph groupoid $\cG_{\Delta_{\cG}}$ 
(Proposition~\ref{prop:subgroupoidgraphmodel}). 
This requires neither minimality nor primeness of $\cG$, 
nor finiteness of $\Delta_{\cG}$. 
It allows the homology of $\cG$ to be computed from the adjacency 
matrix of its subgroupoid graph. 
It also shows that every minimal wide open subgroupoid of $\cH$ 
is either AF or purely infinite 
(Corollary~\ref{cor:minimalsubgroupoiddichotomy}). 

To state the structure theorem for prime subgroupoids, we recall 
the regular reduction and the $Y$-preserving subgroupoid from 
\cite{Ma26maximal2}. 
For a closed set $Y\subset X$, the regular reduction $\cH|Y$ 
consists of those arrows with source and range in $Y$ which lie 
in a compact open bisection $U$ satisfying $YU=UY$. 
It has the relative topology and may be smaller than the full 
restriction $r^{-1}(Y)\cap s^{-1}(Y)$. 
The union of all such $Y$-preserving bisections is a wide open 
subgroupoid $\cS_Y$ of $\cH$, with 
\[ 
\cS_Y|Y=\cH|Y,\qquad 
\sF(\cS_Y)=\St(\cH,Y), 
\] 
where $\St(\cH,Y)$ denotes the setwise stabilizer of $Y$ 
in $\sF(\cH)$. 

\begin{maintheorem}\label{intro:trichotomy} 
Let $\cG\subsetneq\cH$ be a prime wide open subgroupoid. 
Then exactly one of the following holds. 
\begin{enumerate} 
\item $\cG$ is minimal. 
In this case, $\cG$ is either AF or purely infinite. 
\item There is a partition $X=X_1\sqcup X_2$ into nonempty clopen sets 
such that $\cG=\cH|X_1\sqcup\cH|X_2$. 
\item There is a unique nonempty proper closed $\cG$-invariant set 
$Y\subset X$. 
It is nowhere dense and either finite or a Cantor set, 
the regular reduction $\cH|Y$ is minimal, and $\cG=\cS_Y$. 
In particular, $\sF(\cG)=\St(\cH,Y)$. 
\end{enumerate} 
\end{maintheorem}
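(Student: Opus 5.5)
The proof splits according to whether $\cG$ is minimal. If it is, we are in case (1), and the AF/purely infinite dichotomy is Corollary~\ref{cor:minimalsubgroupoiddichotomy}, obtained from the model $\cG\cong\cG_{\Delta_{\cG}}|S$ of Proposition~\ref{prop:subgroupoidgraphmodel}. The three cases are mutually exclusive. In case (2), $X_1$ is a proper closed $\cG$-invariant set, and so is $X_2$. So (2) is neither minimal nor has a \emph{unique} such set. Case (3) is not minimal, and its unique invariant set $Y$ is nowhere dense, hence not clopen, which rules out (2). It therefore remains to treat a prime $\cG$ that is not minimal.

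Suppose $\cG$ is not minimal, and pick a nonempty proper closed $\cG$-invariant set $Y$. Since $\cG$ is open and invariant bisections can be shrunk, every compact open bisection of $\cG$ can be covered by compact open bisections $U\subset\cG$ with $YU=UY$. Indeed, $U$ maps $Y\cap s(U)$ onto $Y\cap r(U)$ by invariance, so $\cG\subset\cS_Y$. The subgroupoid $\cS_Y$ is proper, because $\cH$ is minimal and $Y\neq X$, so some arrow of $\cH$ moves a point of $Y$ off $Y$. Primeness then gives $\cG=\cS_Y$.

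Next we show that $Y$ is unique. Any other nonempty proper closed invariant set $Y'$ also satisfies $\cG=\cS_{Y'}$, so $\cS_Y=\cS_{Y'}$. Write $Z:=Y\cap Y'$.
\begin{itemize}
\item If $Z\neq\emptyset$, then $Z$ is closed and invariant, so $\cS_Y\subset\cS_Z$, and primeness forces $\cS_Y=\cS_Z$. We then argue that $\cS_Y$ determines $Y$ whenever $Y$ is nowhere dense. Given a point $y\in Y\setminus Z$, we build a bisection in $\cH$ that preserves $Z$ but moves a neighbourhood of $y$ so that it meets $X\setminus Y$. This uses pure infiniteness of $\cH$ to realise arbitrary clopen transfers away from $Z$. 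Such a bisection lies in $\cS_Z$ but not in $\cS_Y$, a contradiction.
\item If $Y\cap Y'=\emptyset$, we separate them by a clopen set and compare $\cS_Y$ with $\cS_{Y\cup Y'}$ in the same way.
\end{itemize}

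The nowhere-density needed above comes from the following dichotomy. If $Y$ has nonempty interior, then $\overline{\mathrm{int}\,Y}$ is invariant. Let $X_1$ be the interior of $Y$; it is open and invariant. Then $\cG\subset\cS_{\overline{X_1}}$, and a second use of primeness compares $\cG$ with $\cH|X_1\sqcup\cH|X\setminus X_1$. I expect this to force $X_1$ to be clopen and $\cG=\cH|X_1\sqcup\cH|X_2$, using that the latter groupoid is itself wide, open and proper. This step, showing that $\mathrm{int}\,Y$ is clopen and not merely open, is the main obstacle. The plan there is to work in the subgroupoid graph $\Delta_{\cG}$ and use Corollary~\ref{cor:primegraph}: an invariant open set corresponds to a set of vertices closed under edges, and primeness of the graph quotient forces the boundary to be empty. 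In the remaining case $Y$ has empty interior, which is where uniqueness is proved.

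It remains to describe $Y$ and $\cH|Y$. Minimality of $\cH|Y$ follows from uniqueness. A closed $\cH|Y$-invariant subset $Y_0\subsetneq Y$ would be $\cS_Y$-invariant, because $\cS_Y|Y=\cH|Y$ and the $\cS_Y$-orbit of a point of $Y$ stays in $Y$. This would give a second invariant set, which is impossible. Minimality in turn makes $Y$ either a single finite orbit or perfect. Since $Y\subset X$ is compact, metrizable and totally disconnected, a perfect $Y$ is a Cantor set. Finally, $\sF(\cG)=\sF(\cS_Y)=\St(\cH,Y)$ by the recalled identity.
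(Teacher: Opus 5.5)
There is a genuine gap at the step you flag as the main obstacle. You never prove that $Y$ is clopen or nowhere dense, and that dichotomy is exactly what separates case (2) from case (3).

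Comparing $\cG$ with $\cH|X_1\sqcup\cH|(X\setminus X_1)$, where $X_1=\operatorname{int}Y$, cannot produce clopenness. If the second piece is the full restriction, the union is open in $\cH$ only when $X_1$ is already closed: an arrow from a point of $\partial X_1$ into $X\setminus Y$ is not an interior point, so primeness does not apply. If it is the regular reduction, the union is just $\cS_{X\setminus X_1}$, and primeness merely returns $\cG=\cS_{X\setminus X_1}$. The graph plan does not say which quotient of $\Delta_{\cG}$ a forward-closed vertex set would define, so Corollary~\ref{cor:primegraph} has nothing to act on.

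The missing idea, which the paper uses, is this. Since $\operatorname{int}Y$ is $\cG$-invariant (as you note), so is $\partial Y$, and $\partial Y$ is therefore a closed invariant subset of $Y$. If $Y$ is minimal among nonempty proper closed invariant sets (the paper obtains such $Y$ by Zorn's lemma), this forces $\partial Y=\emptyset$ or $\partial Y=Y$.

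Your $\cS_Y$ observation can supply that minimality directly. However, the separation step must use an arrow rather than a bisection, and it needs neither nowhere density nor pure infiniteness. Suppose $Z\subsetneq Y$ are nonempty proper closed $\cG$-invariant sets.
\begin{enumerate}
\item Minimality of $\cH$ gives $g\in\cH$ with $s(g)\in Y\setminus Z$ and $r(g)\in X\setminus Y$.
\item A compact open bisection around $g$ whose source and range miss $Z$ lies in $\cS_Z$.
\item But $g$ lies in no $Y$-preserving bisection, so $\cS_Z\neq\cS_Y$, contradicting $\cS_Z=\cG=\cS_Y$.
\end{enumerate}
A bisection that merely fails to preserve $Y$, as in your sketch, would not suffice, since it could still be a union of $Y$-preserving pieces.

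Thus every nonempty proper closed invariant set is minimal, and the boundary argument gives the dichotomy. The clopen case is (2), because $\cS_Y=\cH|Y\sqcup\cH|(X\setminus Y)$ for clopen $Y$. In the nowhere dense case your intersection/union argument gives uniqueness. There $Y\cup Y'\neq X$ holds automatically, since the closed set $Y'$ cannot contain the dense set $X\setminus Y$.

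The remaining steps are fine: mutual exclusivity, $\cG=\cS_Y$, minimality of $\cH|Y$, finite versus Cantor, and $\sF(\cG)=\St(\cH,Y)$. With this repair your route is a valid alternative to the paper's. The paper instead shows $\cH|(X\setminus Y)\subset\cG$, derives uniqueness from density of the $\cG$-orbits off $Y$, and identifies $\cG$ with $\cS_Y$ only afterwards.
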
 

This combines Theorem~\ref{thm:primetrichotomy}, 
Corollary~\ref{cor:minimalsubgroupoiddichotomy} 
and the identification at the beginning of 
Section~\ref{sec:exceptional}. 
The trichotomy gives a dynamical interpretation of the finite 
simple type systems classified in \cite[Theorem 4.8]{BBQS25TAMS}, 
and applies without a finiteness assumption on the subgroupoid graph. 
The underlying trichotomy holds for general minimal ample groupoids 
with Cantor unit space (Remark~\ref{rem:primetrichotomy}). 
In the SFT setting, it remains open whether a proper prime minimal 
wide open subgroupoid can be AF. 
Case \textup{(3)} connects the present theory with the closed-set 
stabilizers studied in \cite{Ma26maximal2}. 

The three cases admit a common maximality criterion. 
For groups $K\subset L$, write $\Nor(L,K)$ for the normalizer 
of $K$ in $L$, and write $H_1(L)$ for the abelianization of $L$. 

\begin{maintheorem}\label{intro:maximality} 
Let $\cG\subsetneq\cH$ be a prime wide open subgroupoid, put 
$N:=\Nor\left(\sF(\cH),\sF(\cG)\right)$, and let 
$\pi:\sF(\cH)\to H_1(\sF(\cH))$ be the abelianization map. 
For every group $\Gamma$ with 
$\sD(\cH)\subset\Gamma\subset\sF(\cH)$, the subgroup 
$\Gamma\cap N$ is proper in $\Gamma$, and 
\[ 
\Gamma\cap N\text{ is maximal in }\Gamma 
\iff \pi(\Gamma\cap N)=\pi(\Gamma). 
\] 
The condition on abelianizations holds automatically if $\cG$ 
is not minimal. 
In particular, $N\cap\sD(\cH)$ is maximal in $\sD(\cH)$. 
\end{maintheorem}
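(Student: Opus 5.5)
The plan is to take an arbitrary subgroup $M$ with $\Gamma\cap N\subset M\subset\Gamma$ and show $M=\Gamma\cap N$ or $M=\Gamma$. Properness first: $\Gamma\cap N=\Gamma$ would force $\sD(\cH)\subset N$. By the reconstruction results of \cite{Ma26maximal1}, a subgroup of $\sD(\cH)$ whose generated groupoid is all of $\cH$ cannot normalize $\sF(\cG)$ unless $\sF(\cG)$ is trivial or has full groupoid $\cH$. Hence $\sD(\cH)$-invariance of $\sF(\cG)$ would make the groupoid generated by $\sF(\cG)$ invariant under all of $\cH$. That groupoid is $\cG$, which is proper and wide, so this is a contradiction. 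The same argument shows $\Gamma\cap N\neq\Gamma$.

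For maximality, I would attach to $M$ the wide open subgroupoid $\cG_M$ generated by the germs of elements of $M\cap\sF(\cS)$, where $\cS$ is the groupoid of germs of $M$ restricted suitably. More concretely, I would use the union of the supports $[U]$ of the bisections $U$ with $\tau_U\in M$. Because $\Gamma\cap N\supset\sF(\cG)\cap\sD(\cH)$, and $\sD(\cG)$ generates $\cG$ as a groupoid, we get $\cG\subset\cG_M$. Primeness then gives two cases: $\cG_M=\cG$ or $\cG_M=\cH$.

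If $\cG_M=\cG$, then $M$ permutes the elements of $\sF(\cG)$. Conjugating a local element of $\cG$ by $m\in M$ produces a local element of $\cG_M$, so $m$ normalizes $\cG$ and hence $\sF(\cG)$. This gives $M\subset N$, so $M=\Gamma\cap N$. If $\cG_M=\cH$, then $M$ contains enough local transpositions from every compact open bisection of $\cH$. Using the usual generation of $\sD(\cH)$ by transpositions $\tau_U$ with $U$ small (\cite{Ma15crelle}), I get $\sD(\cH)\subset M$. Then $M=\pi^{-1}(\pi(M))\cap\Gamma$, and $\pi(M)\supset\pi(\Gamma\cap N)$. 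So if $\pi(\Gamma\cap N)=\pi(\Gamma)$, then $M=\Gamma$, which proves the ``if'' direction. Conversely, if $\pi(\Gamma\cap N)\subsetneq\pi(\Gamma)$, then $(\Gamma\cap N)\sD(\cH)$ is a proper intermediate subgroup. It contains $\Gamma\cap N$ strictly, because $\sD(\cH)\not\subset N$. So $\Gamma\cap N$ is not maximal.

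For the automatic condition in the nonminimal cases (2) and (3) of Theorem~\ref{intro:trichotomy}, I would show $\pi(\sF(\cG)\cap\Gamma)=\pi(\Gamma)$ directly. In case (3), $\sF(\cG)=\St(\cH,Y)$ with $Y$ nowhere dense. Every element of $\Gamma$ is congruent modulo $\sD(\cH)$ to an element supported in a small clopen set disjoint from $Y$; this uses that $H_1(\sF(\cH))\cong(H_0(\cH)\otimes\Z_2)\oplus H_1(\cH)$ is realized locally. Such an element stabilizes $Y$. In case (2), every class is realized by elements supported inside $X_1$, using minimality and full reductions $\cH|X_1\cong$ SFT groupoid. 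The final clause is the case $\Gamma=\sD(\cH)$, where $\pi(\Gamma)=0$.

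The main obstacle is the step $\cG_M=\cH\Rightarrow\sD(\cH)\subset M$. Elements of $M$ need not lie in $\sF(\cG)$, and the groupoid they generate does not obviously contain compact open bisections whose transpositions lie in $M$. I would first try commutator tricks: take $[m,\tau]$ with $\tau\in\sD(\cG)$ supported near a point, to produce transpositions in $M$ along germs of $m$. I would then invoke the result of \cite{Ma26maximal1} that a subgroup containing $\sD(\cG)$ and generating $\cH$ contains $\sD(\cH)$.
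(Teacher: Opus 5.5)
The ``only if'' direction, via the proper intermediate group $(\Gamma\cap N)\sD(\cH)$, is fine and matches the paper. So is the automatic condition in the nonminimal cases, via $\sF(\cH)=\sF(\cH)_W\sD(\cH)$ for a clopen $W$ inside $X_1$ or $X\setminus Y$. The genuine gap is in the ``if'' direction. The dichotomy $\cG_M=\cG$ or $\cG_M=\cH$ cannot separate $M=\Gamma\cap N$ from $M\supset\sD(\cH)$, because $\Gamma\cap N$ itself can already have $\cG_M=\cH$. In Example~\ref{ex:paritynormalizers}~(2), $N=\sF(\cG)\rtimes\langle\tau\rangle$, where $\tau=\tau_{Z(1,0)}$ is a transposition along a bisection on which $\xi\equiv1$. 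Hence $\cG\cup U_\tau\cG=\cH_2$, whether you use germs of $N$ or supports of transpositions in $N$. Yet $N$ is a proper subgroup of $V_2=\sD(\cH_2)$. So the implication $\cG_M=\cH\Rightarrow\sD(\cH)\subset M$, which you flag as the main obstacle, is false. So is the result you propose to import from \cite{Ma26maximal1}, that a subgroup containing $\sD(\cG)$ whose germs generate $\cH$ contains $\sD(\cH)$. The same example refutes the general principle in your properness paragraph. There, invariance of $\cG$ under conjugation by all of $\cH$ is not contradictory by itself, since the unit space is invariant too. The clean argument is this: $\sD(\cG)$ is characteristic in $\sF(\cG)$, so $\sD(\cH)\subset N$ would make the nontrivial group $\sD(\cG)$ normal in the simple group $\sD(\cH)$. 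That forces $\sD(\cG)=\sD(\cH)$ and hence $\cG=\cH$.

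The correct case split is $M\subset N$ versus $M\not\subset N$. The missing ingredient is Theorem~\ref{thm:minimalovergroup}: if $\sD(\cG)\subset M$ and $M\not\subset N$, then $\sD(\cH)\subset M$. The paper proves this without using the germs of $M$ at all.
\begin{itemize}
\item It records only three-cycles lying in $M$, through the relation $\approx$ of Definition~\ref{def:approx}, which requires a witnessing three-cycle after \emph{every} sufficiently long common extension.
\item This refinement stability makes $\approx$ compatible with right extensions in both directions. So it defines an intermediate wide open subgroupoid $\cK$ by Proposition~\ref{prop:subgroupoidgraphquotient}.
\item Your commutator idea is essentially Lemma~\ref{lem:approxlocal}. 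Conjugating localized three-cycles of $\sD(\cG)$ by $h\in M$ shows that every element of $M$ fixing a nonempty open set pointwise lies in $\sF(\cK)$.
\item $\cG$ is generated by the bisections of elements of $\sD(\cG)$ that fix an open set, so one gets $U_h\cG U_h^{-1}\subset\cK$ for all $h\in M$.
\item Thus $M\not\subset N$ forces $\cK\neq\cG$, and primeness gives $\cK=\cH$. The basic three-cycles generating $\sD(\cH)$ are then reassembled from their refinements.
\end{itemize}
This is exactly where $\cK$ differs from the germ groupoid. For minimal $\cG$, an element of $N$ fixing an open set already lies in $\sF(\cG)$, so elements like $\tau$ above contribute nothing to $\cK$. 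Note also that this generation argument uses minimality of $\cG$ (Lemmas~\ref{lem:prefixplacement} and~\ref{elementofDG}). So a uniform approach like yours does not cover the nonminimal cases. The paper proves maximality there separately: via Property E for the clopen bipartition (Proposition~\ref{prop:clopenpartitionnormalizer}), and via \cite[Proposition 3.20]{Ma26maximal2} for $\cS_Y$ (Theorem~\ref{thm:exceptionalstabilizer}).
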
 

This is Corollary~\ref{cor:primenormalizermaximal}. 
The nonminimal cases follow from 
Proposition~\ref{prop:clopenpartitionnormalizer} on clopen partitions 
and Theorem~\ref{thm:exceptionalstabilizer} on closed-set stabilizers. 
The main step is the minimal case. 
For a prime minimal $\cG\subsetneq\cH$, we prove that every subgroup 
of $\sF(\cH)$ containing $\sD(\cG)$ and an element outside $N$ 
must contain $\sD(\cH)$ 
(Theorem~\ref{thm:minimalovergroup}). 
Primeness forces an equivalence relation built from local 
three-cycles to identify all finite paths with a common terminal 
vertex, allowing us to recover the generators of $\sD(\cH)$. 
Once the ambient commutator subgroup has been obtained, 
maximality reduces to the image in the ambient abelianization. 
This is a generation strategy also used for the factor-map 
and subgroupoid constructions in \cite{Ma26maximal1}. 

When $\cG\subsetneq\cH$ is prime and minimal, Li's exact sequence 
\cite[Corollary E]{Li25ForumPi} relates this criterion to groupoid 
homology. 
The topological full group $\sF(\cG)$ itself is maximal in 
$\sF(\cH)$ if and only if it equals its normalizer and the map 
$H_1(\cG)\to H_1(\cH)$ induced by inclusion is surjective 
(Corollary~\ref{cor:minimalfullmaximal} (2)). 
The two conditions are independent, as illustrated in 
Section~\ref{subsec:normalizerexamples}. 
An analogous criterion for maximality of $\sD(\cG)$ in 
$\sD(\cH)$ is expressed by injectivity of the map 
$N/\sD(\cG)\to H_1(\sF(\cH))$ induced by $\pi$ 
(Corollary~\ref{cor:minimalcommutatormaximal}). 

We next ask which maximal subgroups can be described directly 
as topological full groups of wide open subgroupoids. 
For a subgroup $M\subset\sF(\cH)$, 
let $\cG_M$ be the subgroupoid of germs of its action on $X$, 
viewed as a wide open subgroupoid of $\cH$. 
Then $\sF(\cG_M)$ consists of the homeomorphisms of $X$ 
which locally agree with elements of $M$. 
Following \cite[Definition 2.5]{GV26ETDS}, this is the full 
amplification of $M$, and $M$ is fully amplified if 
$M=\sF(\cG_M)$. 
The same reference also calls such groups ample; 
we use fully amplified for groups and reserve ample for groupoids. 
Proposition~\ref{prop:maximalgerms} shows that every fully amplified 
maximal subgroup of $\sF(\cH)$ arises from a proper prime 
wide open subgroupoid. 
This observation is not specific to SFT groupoids. 
Combining it with the structure and maximality results above 
gives the following description. 

\begin{maincorollary}\label{intro:fullyamplified} 
A subgroup $M\subset\sF(\cH)$ is fully amplified and maximal 
in $\sF(\cH)$ if and only if it has one of the following forms. 
\begin{enumerate} 
\renewcommand{\theenumi}{\alph{enumi}} 
\item $M=\St(\cH,A)$ for a nonempty proper clopen set $A\subset X$ 
such that $2[1_A]\neq[1_X]$ in $H_0(\cH)$. 
\item $M=\St(\cH,Y)$ for a nonempty closed nowhere dense set 
$Y\subset X$ such that the regular reduction $\cH|Y$ is minimal. 
\item $M=\sF(\cG)$ for a minimal prime wide open subgroupoid 
$\cG\subsetneq\cH$ such that 
$\Nor\left(\sF(\cH),\sF(\cG)\right)=\sF(\cG)$ and the map 
$H_1(\cG)\to H_1(\cH)$ induced by inclusion is surjective. 
\end{enumerate} 
These three alternatives are mutually exclusive. 
\end{maincorollary}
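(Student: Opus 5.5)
The plan is to reduce to prime subgroupoids and then run through the trichotomy. Let $M\subset\sF(\cH)$ be fully amplified and maximal. By Proposition~\ref{prop:maximalgerms}, $\cG_M$ is a proper prime wide open subgroupoid and $M=\sF(\cG_M)$. Since $M\subset\Nor(\sF(\cH),M)\subsetneq\sF(\cH)$ (properness from Theorem~\ref{intro:maximality} with $\Gamma=\sF(\cH)$), maximality forces $M=N:=\Nor(\sF(\cH),\sF(\cG_M))$. Now apply Theorem~\ref{intro:trichotomy} to $\cG:=\cG_M$.

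\textbf{Case (3).} Here $\sF(\cG)=\St(\cH,Y)$ with $Y$ nowhere dense and $\cH|Y$ minimal, which is form (b). \textbf{Case (2).} Here $\sF(\cG)=\sF(\cH|X_1)\times\sF(\cH|X_2)$. This is the subgroup of elements preserving both $X_1$ and $X_2$, and hence equals $\St(\cH,X_1)$. The normalizer also contains any element swapping $X_1$ and $X_2$; such an element exists exactly when $[1_{X_1}]=[1_{X_2}]$ in $H_0(\cH)$, i.e.\ $2[1_{X_1}]=[1_X]$. So $M=N$ forces the condition in (a). \textbf{Case (1).} Here $M=N$ holds, and maximality together with Theorem~\ref{intro:maximality} gives $\pi(\sF(\cG))=\pi(\sF(\cH))$. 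Via Corollary~\ref{cor:minimalfullmaximal}(2), which translates this through Li's exact sequence, the map $H_1(\cG)\to H_1(\cH)$ is surjective. This is form (c).

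For the converse, first note that each listed $M$ is fully amplified, since it is the topological full group of a wide open subgroupoid: $\sF(\cS_A)$, $\sF(\cS_Y)$, or $\sF(\cG)$. In case (c), maximality is exactly Corollary~\ref{cor:minimalfullmaximal}(2). In case (b), I would check that $\cS_Y$ is prime, or invoke Theorem~\ref{thm:exceptionalstabilizer} directly, which should give that $\St(\cH,Y)$ equals its normalizer and is maximal. Here nowhere density guarantees properness, and minimality of $\cH|Y$ guarantees primeness. In case (a), I would show $\cG=\cH|A\sqcup\cH|A^c$ is prime and apply Proposition~\ref{prop:clopenpartitionnormalizer}. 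That proposition should say the normalizer is $\St(\cH,A)$ extended by swaps; the condition $2[1_A]\neq[1_X]$ rules out swaps, so $N=\St(\cH,A)$, and by Theorem~\ref{intro:maximality} (nonminimal case, automatic surjectivity) $N$ is maximal.

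\textbf{Mutual exclusivity.} The subgroupoid $\cG_M$ is recovered from $M$. In (c) it is minimal. In (a) it has the invariant clopen set $A$, and in (b) it has a nowhere dense invariant closed set. These are cases (2) and (3) of the trichotomy, which are mutually exclusive.

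The main obstacle is the bookkeeping in case (a): showing that the normalizer of $\sF(\cH|X_1)\times\sF(\cH|X_2)$ consists exactly of the elements preserving or swapping the partition, and that a swap exists iff $[1_{X_1}]=[1_{X_2}]$. The latter uses the fact that in a purely infinite minimal SFT groupoid, clopen sets with equal $H_0$-classes whose complements also have equal classes are exchanged by an element of $\sF(\cH)$. I expect this is contained in Proposition~\ref{prop:clopenpartitionnormalizer}.
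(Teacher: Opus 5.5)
Your proposal is correct and follows essentially the same route as the paper. You use Proposition~\ref{prop:maximalgerms} to get a prime $\cG_M$ with $M$ equal to its normalizer, read off (a), (b), (c) from the trichotomy, and prove the converse via Proposition~\ref{prop:clopenpartitionnormalizer}, Theorem~\ref{thm:exceptionalstabilizer} and Corollary~\ref{cor:minimalfullmaximal}~(2). Two small corrections: the swap in case (2) comes from cancellation (Theorem~\ref{niceproperties}~(2)), not from Proposition~\ref{prop:clopenpartitionnormalizer}; and in (a) you need not prove primeness separately, since that proposition with $n=2$ already gives maximality directly.
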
 

This is Corollary~\ref{cor:amplemaximal}. 
In case \textup{(a)}, the homological condition means that no element 
of $\sF(\cH)$ exchanges $A$ and $X\setminus A$. 
Cases \textup{(a)} and \textup{(b)} are exactly the maximal subgroups 
which do not act minimally on $X$, in accordance with 
\cite[Theorems 6.6 and 6.19]{GV26ETDS} and \cite{Ma26maximal2}. 
A maximal subgroup which is not fully amplified has full 
amplification $\sF(\cH)$ and therefore acts minimally on $X$, 
since full amplification does not change orbits. 
The normalizer criterion in Theorem~\ref{intro:maximality} 
also covers maximal subgroups which are not fully amplified; 
Example~\ref{ex:paritynormalizers} (2) gives such a normalizer. 

For a simple type system $\mathcal P$ and its corresponding 
subgroupoid $\cG\subsetneq\cH_2$, the dictionary in 
Remark~\ref{rem:subgroupoidgraphBBQS}, together with 
Lemma~\ref{lem:groupnormalizers} and 
Theorem~\ref{thm:exceptionalstabilizer}, gives 
\[ 
\Fix(\mathcal P)=\sF(\cG),\qquad 
\operatorname{Stab}(\mathcal P) 
=\Nor\left(\sF(\cH_2),\sF(\cG)\right). 
\] 
Thus our criteria give the following consequences for $V$. 

\begin{maincorollary}\label{intro:typesystems} 
Let $\cH_2$ be the full $2$-shift groupoid, so that 
$\sF(\cH_2)=V_2$ is Thompson's group $V$. 
\begin{enumerate} 
\item For every simple type system $\mathcal P$, finite or infinite, 
$\operatorname{Stab}(\mathcal P)$ is a maximal subgroup of $V_2$. 
\item For a simple type system $\mathcal P$, the subgroup 
$\Fix(\mathcal P)$ is maximal in $V_2$ if and only if 
$\Fix(\mathcal P)=\operatorname{Stab}(\mathcal P)$. 
\item A maximal subgroup $M$ of $V_2$ is fully amplified 
if and only if 
$M=\Fix(\mathcal P)=\operatorname{Stab}(\mathcal P)$ 
for some simple type system $\mathcal P$. 
\end{enumerate} 
\end{maincorollary}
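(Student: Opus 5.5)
The plan is to derive all three parts from Theorem~\ref{intro:maximality} and Corollary~\ref{intro:fullyamplified} in the case $\cH=\cH_2$, $\Gamma=\sF(\cH_2)=V_2$. The dictionary of Remark~\ref{rem:subgroupoidgraphBBQS} provides the translation: simple type systems $\mathcal P$ correspond bijectively to prime wide open subgroupoids $\cG\subsetneq\cH_2$, and $\Fix(\mathcal P)=\sF(\cG)$ and $\operatorname{Stab}(\mathcal P)=\Nor(\sF(\cH_2),\sF(\cG))$ hold by Lemma~\ref{lem:groupnormalizers} and Theorem~\ref{thm:exceptionalstabilizer}. The one input specific to $V$ is that $V_2$ is simple. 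Hence $\sD(\cH_2)=\sF(\cH_2)$ and $H_1(\sF(\cH_2))=0$, so the abelianization condition $\pi(\Gamma\cap N)=\pi(\Gamma)$ holds trivially.

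For (1), take $\Gamma=V_2$ in Theorem~\ref{intro:maximality}. Then $N=\operatorname{Stab}(\mathcal P)$ is proper in $V_2$, and it is maximal because the abelianization condition is vacuous. For (2), the direction ``if'' follows directly from (1). For ``only if'', suppose $\Fix(\mathcal P)=\sF(\cG)$ is maximal. Since $\sF(\cG)\subset N\subsetneq V_2$, maximality forces $\sF(\cG)=N$, that is, $\Fix(\mathcal P)=\operatorname{Stab}(\mathcal P)$. No homology argument is needed in either direction.

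For (3), first let $M$ be fully amplified and maximal. By Proposition~\ref{prop:maximalgerms}, $M=\sF(\cG_M)$ with $\cG_M$ a proper prime wide open subgroupoid. Let $\mathcal P$ be the corresponding simple type system. Then $M=\Fix(\mathcal P)$, and (2) gives $\Fix(\mathcal P)=\operatorname{Stab}(\mathcal P)$. Conversely, suppose $M=\Fix(\mathcal P)=\operatorname{Stab}(\mathcal P)$. Then $M=\sF(\cG)$ is maximal by (2). It is fully amplified, because the topological full group of any wide open subgroupoid is fully amplified: the germ groupoid of $\sF(\cG)$ is contained in $\cG$, so $\sF(\cG_M)\subset\sF(\cG)=M$, and the reverse inclusion is automatic.

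The main point to verify carefully is that the dictionary covers infinite type systems as well. That is, the correspondence in Remark~\ref{rem:subgroupoidgraphBBQS} and the identification of $\operatorname{Stab}(\mathcal P)$ with the normalizer must not depend on finiteness of $\Delta_{\cG}$. I expect this to hold, since Proposition~\ref{prop:subgroupoidgraphmodel} and the normalizer lemma make no finiteness assumption.
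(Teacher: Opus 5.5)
Your proposal is correct and follows essentially the paper's route: identify $\operatorname{Stab}(\mathcal P)$ with $\Nor(\sF(\cH_2),\sF(\cG))$, apply the prime normalizer criterion with the abelianization condition vacuous because $V_2$ is perfect, and then deduce (2) and (3) from properness of the normalizer together with Proposition~\ref{prop:maximalgerms}. The one detail to make explicit is that the trichotomy decides which result gives $\cN_{\cH_2}(\cG)=\Nor(\sF(\cH_2),\sF(\cG))$: Lemma~\ref{lem:groupnormalizers} when all $\cG$-orbits have at least three points, and Theorem~\ref{thm:exceptionalstabilizer} in case (3), which is needed when $Y$ is a one- or two-point orbit, e.g.\ a single point.
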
 

This is Corollary~\ref{cor:typesystems}. 
Since $V_2$ is perfect, the abelianization condition in 
Theorem~\ref{intro:maximality} is automatic. 
The first assertion includes 
\cite[Theorems 5.12, 5.13 and 5.15]{BBQS25TAMS} 
and treats all simple type systems uniformly. 
The second and third assertions describe precisely when the 
subgroup $\Fix(\mathcal P)$ itself is maximal and which maximal 
subgroups are fully amplified. 
The detailed comparison with \cite{BBQS25TAMS} is given in 
Section~\ref{subsec:BBQScomparison}. 

Our final main result constructs maximal subgroups acting 
minimally on the whole unit space. 

\begin{maintheorem}\label{intro:uncountable} 
There exists a family $(\cG_t)_{t\in T}$, with $\#T=2^{\aleph_0}$, 
of proper prime minimal purely infinite wide open subgroupoids 
of $\cH$ such that, for every group $\Gamma$ with 
$\sD(\cH)\subset\Gamma\subset\sF(\cH)$, the groups 
$\Gamma\cap\sF(\cG_t)$, $t\in T$, are pairwise nonisomorphic 
maximal subgroups of $\Gamma$ acting minimally on $X$. 
\end{maintheorem}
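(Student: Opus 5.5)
We will build the family from subgroupoid graphs, using the dictionary of Propositions~\ref{prop:subgroupoidgraphconverse}, \ref{prop:subgroupoidgraphquotient} and~\ref{prop:subgroupoidgraphmodel}. Any rooted graph $\Delta$ over $(V,E)$ that satisfies the realizability conditions determines a wide open subgroupoid $\cG_\Delta\subset\cH$ with $\cG_\Delta\cong\cG_{\Delta}|S$. We will construct a continuum family of infinite rooted graphs $\Delta_t$, $t\in T$, with three properties. First, each $\Delta_t$ admits no nontrivial proper quotient as a rooted graph over $(V,E)$, so that $\cG_t:=\cG_{\Delta_t}$ is prime by Corollary~\ref{cor:primegraph}. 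Second, each $\Delta_t$ is strongly connected in a suitable sense and contains cycles, so that $\cG_t$ is minimal and purely infinite; here we use Corollary~\ref{cor:minimalsubgroupoiddichotomy}, and we rule out the AF case by exhibiting a properly infinite compact open set coming from a cycle. Third, the $\Delta_t$ encode a parameter $t\in\{0,1\}^{\N}$, for instance through the lengths of the loops attached along an infinite ray, in an aperiodic pattern. Mimicking the uncountable family of simple type systems in \cite{BBQS25TAMS}, we will pick the $\Delta_t$ so that they contain no nontrivial quotient maps. One way to arrange this is to make the local pattern around each vertex unique, which forces any quotient to be trivial.

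Next we verify the maximality hypotheses of Corollary~\ref{cor:minimalfullmaximal} and Theorem~\ref{intro:maximality}.

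\textbf{Surjectivity on $H_1$.} We plant a copy of enough of $(V,E)$ near the root. Computing $H_*(\cG_t)$ from the adjacency matrix of $\Delta_t$ then shows that $H_1(\cG_t)\to H_1(\cH)$ and $H_0(\cG_t)\to H_0(\cH)$ are surjective. It follows that $\pi(\sF(\cG_t)\cap\Gamma)=\pi(\Gamma)$ for every $\Gamma$, via Li's exact sequence and the fact that $\sF(\cG_t)$ contains elements realizing the generators.

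\textbf{Self-normalization.} We show $\Nor(\sF(\cH),\sF(\cG_t))=\sF(\cG_t)$. An element $g$ normalizing $\sF(\cG_t)$ normalizes $\cG_t$, so it induces an automorphism of the subgroupoid graph $\Delta_t$ up to a finite change near the root. Rigidity of $\Delta_t$ (trivial automorphism group, even up to finite modification) forces the germs of $g$ to lie in $\cG_t$.

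Granting these, Theorem~\ref{intro:maximality} shows that $\Gamma\cap N=\Gamma\cap\sF(\cG_t)$ is maximal in $\Gamma$. Minimality of its action on $X$ follows since $\sD(\cG_t)\subset\Gamma\cap\sF(\cG_t)$ and $\sD(\cG_t)$ acts with the same orbits as the minimal groupoid $\cG_t$.

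\textbf{Pairwise nonisomorphism.} For this we use a Rubin-type or isomorphism theorem for topological full groups of minimal purely infinite groupoids: any group $\Lambda$ with $\sD(\cG_t)\subset\Lambda\subset\sF(\cG_t)$ determines $\cG_t$ up to isomorphism. An isomorphism $\Gamma\cap\sF(\cG_t)\cong\Gamma\cap\sF(\cG_s)$ would then give $\cG_t\cong\cG_s$. To separate isomorphism classes we use an invariant computed from $\Delta_t$. If homology alone does not distinguish continuum many, we note that each isomorphism class is contained in a countable set: the groupoids are determined by countable data, but a given groupoid has only countably many embeddings as a wide open subgroupoid of $\cH$ up to the relevant equivalence, since its full group is countable. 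A pigeonhole argument then refines the family to continuum many pairwise nonisomorphic groupoids. We prefer this counting route because it avoids computing fine invariants.

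\textbf{Main obstacle.} The hardest step will be the simultaneous design of $\Delta_t$: it must be quotient-free (primeness), rigid under normalizers, and homologically surjective. We would first try to adapt the BBQS uncountable construction for $\cH_2$ to general $(V,E)$ by attaching markers along a ray.
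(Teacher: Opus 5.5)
Your outline has the right shape: prime, self-normalizing and surjective on $H_1$, then Corollary~\ref{cor:minimalfullmaximal}~(2) and the isomorphism theorem. But the step you call the main obstacle is the actual content of the proof, and the mechanisms you propose for it do not work.

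\textbf{Primeness.} Making local patterns unique is a rigidity device. It kills automorphisms, not quotients, since a quotient only needs an equivalence relation compatible with right extensions. For instance, the stationary diagram of Example~\ref{ex:AFcoresubgroupoidgraph} for the full $2$-shift has no nontrivial label-preserving automorphism. Yet it maps onto the graph of $c^{-1}(m\Z)$ for every $m\geq2$. Primeness needs a propagation argument like the one for $\Lambda_{\mathbf p}$: there, every nontrivial compatible identification forces $\mathbf X\equiv\mathbf Y$, and then everything.

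\textbf{Surjectivity on $H_1$.} You cannot plant a copy of $(V,E)$. Because $\rho$ is right-covering, a full copy is forward closed, hence equal to the strongly connected graph $\Delta_t$, which gives $\cG_t=\cH$. Let $\widetilde B$ be the adjacency matrix of $\Delta_t$. A copy with some edges redirected lifts vectors of $\Ker(I-A^t)$ into $\Ker(I-\widetilde B^t)$ only if they are compatible with the redirection, for instance if they vanish at the initial vertices of the redirected edges. A general $(V,E)$ offers no such place, nor does it contain two loops at one vertex on which the BBQS transitions could run.

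\textbf{The missing idea} is to change the presentation of $\cH$ first. The paper adjoins $a,b,c$ as in \eqref{eq:uncountableambientmatrix}, checks that $H_0$ with its unit class and $\det(I-A)$ are unchanged, and uses \cite[Theorem 3.6]{MM14Kyoto} to get $\widehat\cH\cong\cH$. Now:
\begin{itemize}
\item $a$ carries two loops $e_0,e_1$;
\item the graph without them is still strongly connected;
\item every vector in $\Ker(I-\widehat A^t)$ vanishes at $a$.
\end{itemize}
On $W_{\mathbf p}\times\widehat V$, let $e_0,e_1$ act by the transitions of $\Lambda_{\mathbf p}$ and let all other edges preserve the layer. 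Since paths avoiding $e_0,e_1$ connect each layer, primeness and rigidity reduce to those of $\Lambda_{\mathbf p}$. A kernel vector placed in a single layer remains a kernel vector, so $H_1(\cG_{\mathbf p})\cong D_1^{(W_{\mathbf p})}$ maps onto $D_1\cong H_1(\cH)$ by summation.

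\textbf{Nonisomorphism.} Your counting route is unsupported. An isomorphism between two wide open subgroupoids of $\cH$ is implemented by a homeomorphism of $X$, which need not lie in $\sF(\cH)$ or any other countable set. You also have not shown $\sF(\cG_t)$ to be finitely generated. So countability of $\sF(\cG_t)$ does not bound the number of $t$ in one isomorphism class.

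The paper instead computes
\[
H_0(\cG_{\mathbf p})\cong D_0^{(W_{\mathbf p})}\oplus\Z\oplus\bigoplus_n\Z/p_n,\qquad D_0=H_0(\cH),
\]
and uses only prime sets avoiding the finitely many primes in the torsion of $D_0$. Then the set of primes can be read off from $H_0$. Finally, \cite[Theorem 3.9]{Ma15crelle} transfers this to the groups $\Gamma\cap\sF(\cG_{\mathbf p})$, which lie between $\sD(\cG_{\mathbf p})$ and $\sF(\cG_{\mathbf p})$.
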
 

This is Theorem~\ref{thm:uncountableminimal}; 
the actions are minimal because $\Gamma\cap\sF(\cG_t)$ contains 
$\sD(\cG_t)$, which acts minimally on $X$. 
The construction starts with the infinite graphs used in 
\cite[Theorem 7.5]{BBQS25TAMS}. 
We modify the ambient graph and use the classification of 
irreducible SFT groupoids by Matsumoto and the author 
\cite[Theorem 3.6]{MM14Kyoto} to obtain the prescribed $\cH$. 
The resulting subgroupoids have zeroth homology groups with different 
prime torsion. 
By the isomorphism theorem \cite[Theorem 3.9]{Ma15crelle}, this 
distinguishes the maximal subgroups up to abstract isomorphism. 
A single family of subgroupoids works for every $\Gamma$ in the 
statement, so the construction extends the family of 
\cite{BBQS25TAMS} both to arbitrary irreducible SFT groupoids 
with Cantor unit space and to all the intermediate groups 
considered here. 

The maximal stabilizers constructed in 
\cite[Proposition 4.16]{Ma26maximal2} preserve nowhere dense Cantor 
sets, whereas the groups in Theorem~\ref{intro:uncountable} 
act minimally on $X$. 
For $\Gamma=\sF(\cH)$, the latter groups are fully amplified 
and belong to case \textup{(c)} of 
Corollary~\ref{intro:fullyamplified}. 

We also discuss embeddings, realizations and examples. 
In Proposition~\ref{prop:binarygraphrealization}, we give a direct 
proof of a special case of Nyland and Ortega's embedding theorem 
\cite[Theorem 11.16]{NO19IJM}, as a complement to our graph model. 
We also recall its consequences for embeddings of SFT and AF 
groupoids with Cantor unit space into $\cH_2$ with wide open images. 
In a different direction, every SFT groupoid with Cantor unit space 
is strongly realizable in $\cH$, that is, occurs as its full restriction 
to a nowhere dense Cantor set, with the relative topology 
(Proposition~\ref{prop:SFTstrongrealization}). 
This complements the AF realization theorem of \cite{Ma26maximal2}. 
The examples also explain how outer symmetries and homological 
obstructions affect maximality of a subgroupoid's topological full 
group and its normalizer. 

Section~2 recalls the groupoid, graph and homological preliminaries. 
Section~\ref{sec:subgroupoidgraph} develops subgroupoid graphs, 
their graph-groupoid models and normalizers. 
Section~\ref{sec:trichotomy} proves the prime trichotomy and treats 
clopen partitions, while Section~\ref{sec:exceptional} studies 
the case of a nowhere dense invariant set. 
Section~\ref{sec:minimal} proves the generation and maximality 
theorems for minimal subgroupoids and derives the unified criteria. 
Section~\ref{sec:uncountable} constructs the uncountable families. 
Section~\ref{sec:BBQS} compares our results with those on type systems 
and discusses embeddings, realizations and examples.

%%%%%%%%%%%%%%%%%%%%%%%%%%%%%%%%%%%%%%%%%%%%%%%%%%%%%%%%%%%%
\section{Preliminaries} 
%%%%%%%%%%%%%%%%%%%%%%%%%%%%%%%%%%%%%%%%%%%%%%%%%%%%%%%%%%%%

%%%%%%%%%%%%%%%%%%%%%%%%%%%%%%%%%%%%%%%%%%%%%%%%%%%%%%%%%%%%
\subsection{Ample groupoids}\label{subsec:amplegroupoids} 
%%%%%%%%%%%%%%%%%%%%%%%%%%%%%%%%%%%%%%%%%%%%%%%%%%%%%%%%%%%%

We fix notation and recall the basic terminology for ample groupoids. 

The cardinality of a set $A$ is written $\#A$ and 
the characteristic function of $A$ is written $1_A$. 
The finite cyclic group of order $p$ is denoted by $\Z/p$. 
We say that a subset of a topological space $X$ is clopen 
if it is both closed and open. 
We let $\cCO(X)$ denote 
the set of all nonempty compact open subsets of $X$. 
A topological space is said to be totally disconnected 
if its connected components are singletons. 
By a Cantor set, 
we mean a nonempty compact, metrizable, totally disconnected space 
with no isolated points. 
It is known that any two such spaces are homeomorphic. 

In this paper, by an \'etale groupoid 
we mean a second countable locally compact Hausdorff groupoid 
such that the range map is a local homeomorphism. 
(We emphasize that our \'etale groupoids are always assumed to be Hausdorff, 
while non-Hausdorff groupoids are also studied actively.) 
We refer the reader to \cite{Re_text,Re08Irish} 
for background material on \'etale groupoids. 
For an \'etale groupoid $\cG$, 
we let $\cG^{(0)}$ denote the unit space and 
let $s$ and $r$ denote the source and range maps, 
i.e.\ $s(g)=g^{-1}g$, $r(g)=gg^{-1}$. 
A subset $U\subset\cG$ is called a bisection 
if $r|U$ and $s|U$ are injective. 
The \'etale groupoid $\cG$ has a basis for its topology 
consisting of open bisections. 
For $x\in\cG^{(0)}$, 
the set $r(s^{-1}(x))$ is called the orbit 
(or $\cG$-orbit) of $x$. 
When every orbit is dense in $\cG^{(0)}$, $\cG$ is said to be minimal. 
An \'etale groupoid $\cG$ is called ample 
if its unit space $\cG^{(0)}$ is totally disconnected. 
An \'etale groupoid is ample if and only if 
it has a basis for its topology consisting of compact open bisections. 

For $x\in\cG^{(0)}$, 
we write $\cG_x=r^{-1}(x)\cap s^{-1}(x)$ and 
call it the isotropy group of $x$. 
The isotropy subgroupoid of $\cG$ is 
$\Iso(\cG):=\{g\in\cG\mid r(g)=s(g)\}=\bigsqcup_{x\in\cG^{(0)}}\cG_x$. 
The interior $\Iso(\cG)^\circ$ is an open subgroupoid of $\cG$. 
We say that $\cG$ is principal if $\Iso(\cG)=\cG^{(0)}$. 
We say that $\cG$ is effective 
if the interior of $\Iso(\cG)$ is $\cG^{(0)}$. 
Note that the term essentially principal is also used 
in the literature. 
In this paper, unless specified otherwise, 
an ample groupoid is always assumed to be effective. 
A subgroupoid $\cH\subset\cG$ is said to be wide 
if $\cH^{(0)}=\cG^{(0)}$. 
A wide open subgroupoid of an ample groupoid is again ample, 
and is effective whenever the ambient groupoid is effective. 

For an ample groupoid $\cG$ and an open subset $Y\subset\cG^{(0)}$, 
we write $\cG|Y:=r^{-1}(Y)\cap s^{-1}(Y)$, 
which is again an ample groupoid, with unit space $Y$. 
Since $\cG$ is effective, so is $\cG|Y$. 
We say that $Y$ is full if $Y$ meets every $\cG$-orbit. 
Two ample groupoids $\cG_1$ and $\cG_2$ are said to be 
Kakutani equivalent 
if there exist full clopen subsets $Y_i\subset\cG_i^{(0)}$ 
such that $\cG_1|Y_1$ and $\cG_2|Y_2$ are isomorphic. 
In particular, $\cG$ and $\cG|Y$ are Kakutani equivalent 
for every full clopen subset $Y\subset\cG^{(0)}$. 

For closed subsets $Y\subset\cG^{(0)}$, we use the conventions of 
\cite[Section 3.1]{Ma26maximal2}. 
An arrow $g\in r^{-1}(Y)\cap s^{-1}(Y)$ is regular for $Y$ 
if it belongs to a compact open bisection $U$ with $YU=UY$. 
The regular reduction $\cG|Y$ consists of these arrows, 
with the relative topology; it is ample but need not be effective. 
When the quotient 
\[ 
\Eff(\cG|Y):=(\cG|Y)/\Iso(\cG|Y)^\circ 
\] 
is Hausdorff, we write $\cG{\downarrow}Y:=\Eff(\cG|Y)$ 
and call it the effective reduction. 
A closed set $Y$ is $\cG$-\'etale if the full restriction 
$r^{-1}(Y)\cap s^{-1}(Y)$ is \'etale with the relative topology. 
This holds if and only if every arrow of the full restriction is regular, 
in which case $\cG|Y=r^{-1}(Y)\cap s^{-1}(Y)$. 

We use the following conventions for semidirect products and skew products 
(see \cite{Re_text}). 
Let $\Gamma$ be a countable discrete group. 
For an action $\lambda:\Gamma\curvearrowright\cG$ by automorphisms, 
the semidirect product $\cG\rtimes_\lambda\Gamma$ is $\cG\times\Gamma$ 
with the product topology and unit space identified with $\cG^{(0)}$. 
Its range, source and multiplication are given by 
\[ 
r(g,\gamma)=r(g),\qquad s(g,\gamma)=\lambda_\gamma^{-1}(s(g)),\qquad 
(g,\gamma)(h,\delta)=(g\lambda_\gamma(h),\gamma\delta). 
\] 
When $\cG$ is a unit groupoid $X$, this is the transformation groupoid 
$X\rtimes_\lambda\Gamma$. 

For a continuous homomorphism $\xi:\cG\to\Gamma$, 
the skew product $\cG\times_\xi\Gamma$ is $\cG\times\Gamma$ 
with the product topology and unit space $\cG^{(0)}\times\Gamma$, where 
\[ 
r(g,\gamma)=(r(g),\gamma),\qquad 
s(g,\gamma)=(s(g),\gamma\xi(g)),\qquad 
(g,\gamma)(h,\gamma\xi(g))=(gh,\gamma). 
\] 
The action $\delta\cdot(g,\gamma)=(g,\delta\gamma)$ is by automorphisms. 
When $\Gamma$ is finite, the projection $\cG\times_\xi\Gamma\to\cG$ 
is a covering map, and these automorphisms are called deck transformations.

%%%%%%%%%%%%%%%%%%%%%%%%%%%%%%%%%%%%%%%%%%%%%%%%%%%%%%%%%%%%
\subsection{Topological full groups, homology and the index map} 
\label{subsec:homology} 
%%%%%%%%%%%%%%%%%%%%%%%%%%%%%%%%%%%%%%%%%%%%%%%%%%%%%%%%%%%%

For a bisection $U\in\cCO(\cG)$, we write 
$\theta_U:=r\circ(s|U)^{-1}:s(U)\to r(U)$ 
for its associated homeomorphism. 
The bisection $U$ is said to be full when 
$r(U)=\cG^{(0)}=s(U)$; in this case 
$\theta_U\in\Homeo(\cG^{(0)})$. 

\begin{definition}[{\cite[Definition 2.3]{Ma12PLMS}}] 
\label{def:topologicalfullgroup} 
Let $\cG$ be an effective ample groupoid 
whose unit space $\cG^{(0)}$ is compact. 
The set of all $\alpha\in\Homeo(\cG^{(0)})$, 
for which there exists a full bisection $U\in\cCO(\cG)$ 
such that $\alpha=\theta_U$, 
is called the topological full group of $\cG$ 
and is denoted by $\sF(\cG)$. 
\end{definition} 

For $\alpha\in\sF(\cG)$ 
the compact open bisection $U$ as above uniquely exists, 
because $\cG$ is effective. 
Obviously $\sF(\cG)$ is a subgroup of $\Homeo(\cG^{(0)})$. 
Since $\cG$ is second countable, it has countably many compact open subsets, 
and so $\sF(\cG)$ is at most countable. 
A homeomorphism $\alpha:\cG^{(0)}\to\cG^{(0)}$ belongs to $\sF(\cG)$ 
if and only if for any $x\in\cG^{(0)}$ 
there exists a bisection $V\in\cCO(\cG)$ 
such that $x$ is in $s(V)$ and 
$\alpha$ equals $r\circ(s|V)^{-1}$ on a neighborhood of $x$. 
Thus, $\alpha$ is in $\sF(\cG)$ if and only if 
the `graph' of $\alpha$ is a clopen subset of $\cG$. 
We let $\sD(\cG)$ denote the derived subgroup (commutator subgroup) 
of $\sF(\cG)$.

For $\alpha\in\Homeo(\cG^{(0)})$ we write 
\[ 
\supp(\alpha):=\overline{\{x\in\cG^{(0)}\mid\alpha(x)\neq x\}}. 
\] 
When $\alpha=\theta_U$ belongs to $\sF(\cG)$, 
one has $\supp(\alpha)=s(U\setminus\cG^{(0)})$, 
and so $\supp(\alpha)$ is a clopen subset of $\cG^{(0)}$ 
by \cite[Lemma 2.2]{Ma15crelle}. 

\begin{definition}\label{def:localsubgroup} 
Let $\Gamma$ be a subgroup of $\sF(\cG)$ and 
let $V\subset\cG^{(0)}$ be a clopen subset. 
We call 
\[ 
\Gamma_V:=\{\alpha\in\Gamma\mid\supp(\alpha)\subset V\} 
\] 
the local subgroup of $\Gamma$ on $V$. 
\end{definition} 

Extending homeomorphisms by the identity outside $V$, we identify 
$\sF(\cG|V)$ with $\sF(\cG)_V$. 
Under this identification, 
$\sD(\cG|V)\subset\sD(\cG)_V$. 

We note that $\Gamma_{\beta(V)}=\beta\Gamma_V\beta^{-1}$ 
for any $\beta\in\sF(\cG)$ normalizing $\Gamma$ and 
any clopen set $V$, because 
$\supp(\beta\alpha\beta^{-1})=\beta(\supp(\alpha))$. 

For a subgroup $\Gamma\subset\sF(\cG)$ and a subset 
$Y\subset\cG^{(0)}$, we write 
\[ 
\St(\Gamma,Y):=\{\alpha\in\Gamma\mid\alpha(Y)=Y\},\qquad 
\St(\cG,Y):=\St(\sF(\cG),Y). 
\] 

We write $H_n(\cG,A)$ for groupoid homology with coefficients in 
an abelian group $A$, and put $H_n(\cG):=H_n(\cG,\Z)$. 
We refer to \cite[Section 3]{Ma12PLMS} for the definition. 
The group $H_0(\cG)$ is generated by the classes $[1_B]$ 
of compact open subsets $B\subset\cG^{(0)}$, subject to the relations 
\[ 
[1_{B\sqcup C}]=[1_B]+[1_C],\qquad 
[1_{r(U)}]=[1_{s(U)}] 
\] 
for disjoint compact open sets $B,C$ and compact open bisections $U$. 
The same presentation over $\Z/2$ gives $H_0(\cG,\Z/2)$; 
equivalently, $H_0(\cG,\Z/2)\cong H_0(\cG)\otimes\Z/2$. 

A groupoid homomorphism $\rho:\cG\to\cH$ which is a local homeomorphism 
induces covariant homomorphisms 
\[ 
H_n(\rho):H_n(\cG,A)\longrightarrow H_n(\cH,A); 
\] 
see the discussion preceding \cite[Definition 3.4]{Ma12PLMS}. 
In particular, inclusion of an open subgroupoid induces such maps, 
sending each compact open unit-space class $[1_B]$ to $[1_B]$. 
For a wide open inclusion, the map on $H_0$ is therefore surjective, 
both with integer and with $\Z/2$ coefficients. 
Inclusion of a full clopen reduction induces an isomorphism on homology 
by \cite[Proposition 3.5 and Theorem 3.6]{Ma12PLMS}. 
Thus these groups are invariants of Kakutani equivalence. 
For an AF groupoid, $H_0$ is its dimension group; 
see \cite[Theorem 4.10]{Ma12PLMS}. 

For $\alpha\in\sF(\cG)$, let $U$ be its unique compact open full bisection. 
The assignment 
\[ 
I:\sF(\cG)\longrightarrow H_1(\cG),\qquad I(\alpha):=[1_U] 
\] 
is a homomorphism, called the index map 
\cite[Section 7]{Ma12PLMS}. 
It is natural for open subgroupoid inclusions, using extension by 
the identity for the corresponding inclusions of topological full groups. 
In particular, $\sD(\cG)\subset\Ker I$. 
For an abstract group $\Lambda$, we use 
$H_1(\Lambda)=\Lambda/[\Lambda,\Lambda]$ for its abelianization.

%%%%%%%%%%%%%%%%%%%%%%%%%%%%%%%%%%%%%%%%%%%%%%%%%%%%%%%%%%%%
\subsection{Graph groupoids}\label{subsec:graphgroupoid} 
%%%%%%%%%%%%%%%%%%%%%%%%%%%%%%%%%%%%%%%%%%%%%%%%%%%%%%%%%%%%

We recall the construction of graph groupoids and their basic bisections. 

Let $(W,F)$ be a countable directed graph, 
where $W$ is the set of vertices and $F$ is the set of edges, 
and where $i(f)$ and $t(f)$ denote 
the initial and the terminal vertex of $f\in F$. 
For $w\in W$, we write $wF:=\{f\in F\mid i(f)=w\}$. 
Throughout we assume that $(W,F)$ is row-finite and has no sinks, 
that is, 
\[ 
0<\#(wF)<\infty\qquad\text{for every }w\in W. 
\] 
A finite path is a word $\mu=\mu_1\mu_2\cdots\mu_n$ in $F$ 
with $t(\mu_j)=i(\mu_{j+1})$ for all $j$; 
we allow $n=0$ and identify the paths of length $0$ with the vertices. 
We write $\lvert\mu\rvert:=n$ for the length of $\mu$, 
and $i(\mu):=i(\mu_1)$ and $t(\mu):=t(\mu_n)$ 
for its initial and terminal vertices. 
For a vertex $w$, regarded as a path of length zero, 
we set $i(w)=t(w)=w$. 
The concatenation $\mu\nu$ is defined when $t(\mu)=i(\nu)$. 
We say that $\mu$ is a prefix of $\nu$, and write $\mu\preceq\nu$, 
when $\nu=\mu\eta$ for some finite path $\eta$; 
the same notation is used when $\nu$ is an infinite path. 
Two finite paths are said to be incomparable 
when neither is a prefix of the other. 
By a closed path we mean a finite path $\mu$ 
with $\lvert\mu\rvert\geq1$ and $i(\mu)=t(\mu)$. 
The graph is strongly connected if every vertex can reach every 
other vertex by a finite path. 
A vertex is called branching if it has at least two outgoing edges. 

Let $(W',F')$ be another countable row-finite graph without sinks. 
A graph homomorphism $\rho\colon(W,F)\to(W',F')$ consists of 
maps $\rho\colon W\to W'$ and $\rho\colon F\to F'$ 
such that $i(\rho(f))=\rho(i(f))$ and $t(\rho(f))=\rho(t(f))$. 
We say that $\rho$ is right-covering 
if $\rho\colon wF\to\rho(w)F'$ is bijective for every $w\in W$. 

Let 
\[ 
X_{(W,F)}:=\left\{(x_k)_{k\in\N}\in F^\N 
\mid t(x_j)=i(x_{j+1})\quad\forall j\in\N\right\} 
\] 
be the space of infinite paths with the relative product topology 
and let $\sigma$ be the shift on it. 
For a finite path $\mu$, we put 
$Z(\mu):=\{x\in X_{(W,F)}\mid \mu\preceq x\}$; 
in particular $Z(w)=\{x\in X_{(W,F)}\mid i(x_1)=w\}$ for $w\in W$. 
A finite set of pairwise incomparable finite paths is called 
a complete prefix set if every $x\in X_{(W,F)}$ has a prefix in it. 
Since $(W,F)$ is row-finite and has no sinks, 
each $Z(\mu)$ is a nonempty compact open set, 
and these sets form a basis for the topology of $X_{(W,F)}$. 
Hence $X_{(W,F)}$ is a second countable, locally compact, Hausdorff, 
totally disconnected space, 
and it is compact if and only if $W$ is finite. 

The shift $\sigma$ is a local homeomorphism, 
and we let 
\[ 
\cG_{(W,F)}:=\left\{(x,n,y)\in X_{(W,F)}\times\Z\times X_{(W,F)}\mid 
\exists k,l\geq0,\ n=k{-}l,\ \sigma^k(x)=\sigma^l(y)\right\} 
\] 
denote the associated groupoid, 
which we call the graph groupoid of $(W,F)$. 
For finite paths $\mu,\nu$ with $t(\mu)=t(\nu)$, the set 
\[ 
Z(\nu,\mu):=\left\{(\nu z,\,\lvert\nu\rvert-\lvert\mu\rvert,\,\mu z) 
\mid z\in Z(t(\mu))\right\} 
\] 
is a compact open bisection with range $Z(\nu)$ and source $Z(\mu)$, 
it implements the prefix replacement homeomorphism $Z(\mu)\to Z(\nu)$, 
$\mu z\mapsto\nu z$, 
and these sets form a basis for the topology of $\cG_{(W,F)}$. 
Thus $\cG_{(W,F)}$ is a second countable, locally compact, Hausdorff, 
\'etale groupoid whose unit space $X_{(W,F)}$ is totally disconnected. 
We say that $(W,F)$ is cofinal if, for every $w\in W$ and every 
$x\in X_{(W,F)}$, there is a finite path from $w$ to $i(x_n)$ 
for some $n\geq1$. 
The graph satisfies Condition~\textup{(L)} if every closed path 
$\mu=\mu_1\cdots\mu_n$ has an exit, that is, there are $j$ and 
$f\in i(\mu_j)F$ with $f\neq\mu_j$. 
Equivalently, every closed path passes through a vertex 
with at least two outgoing edges. 
Then $\cG_{(W,F)}$ is minimal if and only if $(W,F)$ is cofinal, 
and it is effective if and only if Condition~\textup{(L)} holds; 
see \cite[Propositions 8.2 and 8.3]{NO19IJM}. 
In particular, strong connectivity implies minimality. 

The graph $C^*$-algebra $C^*(W,F)$ is canonically isomorphic to 
$C_r^*(\cG_{(W,F)})$; graph groupoids are amenable, so the full 
and reduced groupoid $C^*$-algebras agree \cite{Pa02JOT}. 
We record the standard criteria and computations used below. 
Let $B=(B(u,v))_{u,v\in W}$ be the adjacency matrix, where 
$B(u,v):=\#\{f\in F\mid i(f)=u,\ t(f)=v\}$. 
Row-finiteness ensures that $I-B^t$ is an endomorphism of the 
direct sum $\Z^{(W)}$ of finitely supported vertex vectors. 

\begin{proposition}\label{prop:graphalgebras} 
Let $(W,F)$ be a nonempty countable row-finite graph without sinks. 
\begin{enumerate} 
\item $C^*(W,F)$ is simple if and only if $(W,F)$ is cofinal 
and satisfies Condition~\textup{(L)}. 
\item $C^*(W,F)$ is AF if and only if $(W,F)$ has no closed paths. 
Consequently, it is simple AF if and only if $(W,F)$ is cofinal 
and has no closed paths. 
\item $C^*(W,F)$ is purely infinite simple if and only if $(W,F)$ 
is cofinal, satisfies Condition~\textup{(L)}, and has a closed path. 
\item There are isomorphisms 
\begin{equation}\label{eq:countablegraphhomology} 
\begin{aligned} 
K_0(C^*(W,F))&\cong H_0(\cG_{(W,F)})\cong\Coker(I-B^t),\\ 
K_1(C^*(W,F))&\cong H_1(\cG_{(W,F)})\cong\Ker(I-B^t). 
\end{aligned} 
\end{equation} 
Moreover, $H_n(\cG_{(W,F)})=0$ for $n\geq2$. 
For $w\in W$, the class of the vertex projection in $K_0$ 
and the class $[1_{Z(w)}]$ in $H_0$ correspond to the class of 
the standard basis vector $\epsilon_w$ in the cokernel. 
More generally, $[1_{Z(\mu)}]=[\epsilon_{t(\mu)}]$. 
If $W$ is finite, the unit class corresponds to 
$[\sum_{w\in W}\epsilon_w]$. 
\end{enumerate} 
\end{proposition}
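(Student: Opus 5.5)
This proposition collects standard facts from graph $C^*$-algebra theory, so the plan is to derive each item from known results rather than prove it from scratch. For (1)--(3) I will combine the groupoid criteria recalled above with classical theorems on graph algebras of row-finite graphs without sinks. For (1), $C_r^*(\cG_{(W,F)})$ is simple if and only if the amenable groupoid $\cG_{(W,F)}$ is minimal and effective. This uses amenability (\cite{Pa02JOT}), so that full and reduced algebras agree and the Renault/Brown--Clark--Farthing--Sims criterion applies. By \cite[Propositions 8.2 and 8.3]{NO19IJM}, minimality and effectiveness translate into cofinality and Condition~(L).

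For (2), I will invoke the Kumjian--Pask--Raeburn theorem that $C^*(W,F)$ is AF exactly when the graph has no closed paths. In the direction from no closed paths to AF, the groupoid is then an increasing union of elementary subgroupoids indexed by path length. In the direction from AF to no closed paths, a closed path without exit yields a copy of $C(\mathbb{T})\otimes$ compacts, while a closed path with exit yields an infinite projection; neither can occur in an AF algebra. The simple AF statement then follows by combining this with (1): with no closed paths Condition~(L) holds vacuously.

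For (3), I will cite the dichotomy of Kumjian--Pask--Raeburn for simple graph algebras: a simple $C^*(W,F)$ is either AF or purely infinite. The AF case occurs precisely when there are no closed paths, by (2). Hence purely infinite simple is equivalent to simple together with the existence of a closed path, and (1) turns this into the stated graph conditions.

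For (4), the $K$-theory formula $K_0\cong\Coker(I-B^t)$ and $K_1\cong\Ker(I-B^t)$ on $\Z^{(W)}$ is the Raeburn--Szyma\'nski computation for row-finite graphs without sinks. Under it, the vertex projection $p_w$ goes to $[\epsilon_w]$. The homology side is the main point. I would realize $\cG_{(W,F)}$ through the cocycle $c(x,n,y)=n$: its kernel is an AF groupoid with $H_0$ equal to the inductive limit of $\Z^{(W)}$ under $B^t$ and all higher homology vanishing. The skew product $\cG\times_c\Z$ is Morita equivalent to this kernel. I would then apply the Pimsner--Voiculescu-type long exact sequence for $\Z$-actions in groupoid homology, as in \cite{Ma12PLMS}. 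This yields $H_0\cong\Coker(I-B^t)$, $H_1\cong\Ker(I-B^t)$ and $H_n=0$ for $n\ge2$. The isomorphism with $K$-theory follows by comparing the two computations, or from the HK property known for graph groupoids.

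For the class identities, I start from the bisection $Z(\mu,t(\mu))$, whose range is $Z(\mu)$ and whose source is $Z(t(\mu))$. This gives $[1_{Z(\mu)}]=[1_{Z(t(\mu))}]=[\epsilon_{t(\mu)}]$. If $W$ is finite, $X_{(W,F)}=\bigsqcup_w Z(w)$, so the unit class is $[\sum_w\epsilon_w]$. The main obstacle is tracking the identification in (4) carefully, so that the generator $[1_{Z(w)}]$ really maps to $\epsilon_w$ and not to its transpose image. I would check this by working at level zero of the AF kernel, where $Z(w)$ is a basic clopen set.
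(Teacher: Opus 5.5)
Your handling of (1)--(3), of the $K$-theory formula and of the cylinder classes is fine, and it amounts to the paper's citation argument. Routing (1) through the groupoid simplicity criterion and \cite[Propositions 8.2 and 8.3]{NO19IJM}, and (3) through the AF/purely infinite dichotomy, is equivalent to the paper's citations of \cite{BPRS00NYJM}.

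The problem is your derivation of the homology formulas; for these the paper simply cites \cite[Theorem 4.6]{NO21DM}. You use two facts:
\begin{enumerate}
\item $H_0(\Ker c)\cong\varinjlim(\Z^{(W)},B^t)$;
\item $\cG\times_c\Z$ is equivalent to $\Ker c$. The natural equivalence here is the reduction of $\cG\times_c\Z$ to the clopen set $X_{(W,F)}\times\{0\}$, which is $\Ker c$. It is an equivalence only if this set is full.
\end{enumerate}
Both facts hold when $(W,F)$ has no sources, since then every vertex receives paths of every length. That is automatic for the irreducible finite graphs in Matui's SFT computation, which you are following. But the proposition allows sources (the roots in Example~\ref{ex:AFcoresubgroupoidgraph} are sources), and then both can fail.

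Indeed, the $\cG\times_c\Z$-orbit of $(y,m)$ meets $X_{(W,F)}\times\{0\}$ only if $(x,m,y)\in\cG_{(W,F)}$ for some $x$. For $m>0$ this needs a path of length $m+l$ ending at $i(y_{l+1})$ for some $l\geq0$. Consider the ray $W=\N$, $F=\{e_n\colon n\to n+1\}$:
\begin{enumerate}
\item The groupoid is the pair groupoid on the discrete space $X_{(W,F)}\cong\N$, and $c$ vanishes only on units, so $\Ker c$ is the trivial groupoid on $\N$.
\item The skew product is a disjoint union, over $d\in\Z$, of pair groupoids on $\N$. So $H_0(\cG\times_c\Z)\cong\Z^{(\Z)}$, with translation acting as a shift.
\item $X_{(W,F)}\times\{0\}$ meets only the orbits with $d\leq0$.
\end{enumerate}
Thus $H_0(\Ker c)$ sees only part of the $\Z$-module to which the Pimsner--Voiculescu-type sequence must be applied. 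Also, the canonical map sending $[1_{Z(\mu)}]$ to the class of $\epsilon_{t(\mu)}$ at stage $\lvert\mu\rvert$, from $H_0(\Ker c)$ into $\varinjlim(\Z^{(\N)},B^t)\cong\Z^{(\Z)}$, has image only $\Z^{(\N)}$.

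To repair this, bypass $\Ker c$. The skew product $\cG\times_c\Z$ is itself the graph groupoid of the acyclic graph with vertices $W\times\Z$ and edges $(e,k)\colon(i(e),k)\to(t(e),k+1)$. It is therefore AF with vanishing higher homology, and $H_0\cong\varinjlim(\Z^{(W)},B^t)$ with translation shifting the stages. This needs no condition on sources. Applying the sequence you mention to $\cG\sim(\cG\times_c\Z)\rtimes\Z$ then gives $H_0$ and $H_1$ as coinvariants and invariants, and $H_n=0$ for $n\geq2$. The stage-zero map $\Z^{(W)}\to\varinjlim$ identifies these with $\Coker(I-B^t)$ and $\Ker(I-B^t)$, and it also gives $[1_{Z(w)}]\mapsto[\epsilon_w]$.

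Alternatively, attach an infinite incoming ray at every source. This changes neither $\Coker(I-B^t)$ nor $\Ker(I-B^t)$. The new graph has no sources, and the old path space is a full clopen subset of the new one with the same reduction, so your original argument then applies.
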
 

\begin{proof} 
(1) is \cite[Proposition 5.1]{BPRS00NYJM}, and 
(2) follows from \cite[Theorem 2.4]{KPR98PJM} and (1). 
For (3), cofinality and the existence of a closed path imply that 
every vertex can reach that closed path. 
Now apply \cite[Proposition 5.3 and Remark 5.6]{BPRS00NYJM}. 
The $K$-theory formulas in (4) are 
\cite[Theorem 3.1]{DT02IJM}; the homology formulas follow from 
\cite[Theorem 4.6]{NO21DM}. 
The assertion about cylinder classes also follows from the 
bisection $Z(\mu,t(\mu))$. 
\end{proof} 

Thus a simple graph $C^*$-algebra is either AF or purely infinite. 
The homology identifications in \eqref{eq:countablegraphhomology} 
are natural for right-covering graph maps: the induced maps on 
the kernels and cokernels are given by summing vertex coordinates 
over the fibers. 
Indeed, for such a map $\rho\colon(W,F)\to(W',F')$, 
the map $\rho_*\colon\Z^{(W)}\to\Z^{(W')}$, 
$\epsilon_w\mapsto\epsilon_{\rho(w)}$, satisfies 
$\rho_*B^t=(B')^t\rho_*$. 
The AF skew-product computation of homology respects these maps; 
see \cite[Theorem 4.14]{Ma12PLMS} and 
\cite[Proposition 7.1 and Remark 7.2]{NO21DM}. 

The following elementary facts will be used without further comment. 

\begin{lemma}\label{lem:basicbisection} 
Let $\mu,\nu,\omega,\lambda$ be finite paths in $(W,F)$. 
\begin{enumerate} 
\item $Z(\mu,\mu)=Z(\mu)$, $Z(\nu,\mu)^{-1}=Z(\mu,\nu)$ and 
$Z(\omega,\nu)Z(\nu,\mu)=Z(\omega,\mu)$. 
\item $Z(\nu,\mu)=\bigsqcup_{f\in t(\mu)F}Z(\nu f,\mu f)$. 
\item If $Z(\nu,\mu)\cap Z(\omega,\lambda)\neq\emptyset$ and 
$\lvert\lambda\rvert\geq\lvert\mu\rvert$, 
then there is a unique finite path $\eta$ 
with $\lambda=\mu\eta$ and $\omega=\nu\eta$; 
in particular $Z(\omega,\lambda)\subset Z(\nu,\mu)$. 
\end{enumerate} 
\end{lemma}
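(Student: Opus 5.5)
The plan is to verify the three parts of Lemma~\ref{lem:basicbisection} directly from the definition
\[
Z(\nu,\mu)=\{(\nu z,\lvert\nu\rvert-\lvert\mu\rvert,\mu z)\mid z\in Z(t(\mu))\},
\]
together with the groupoid operations of $\cG_{(W,F)}$. Recall that these are $(x,n,y)(y,m,w)=(x,n+m,w)$ and $(x,n,y)^{-1}=(y,-n,x)$, with units $(x,0,x)$. No earlier result beyond the definitions is needed; the only point needing care is part (3).

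For (1), $Z(\mu,\mu)$ consists of the triples $(\mu z,0,\mu z)$ with $z\in Z(t(\mu))$. These are exactly the units over $Z(\mu)$. Inverting $(\nu z,\lvert\nu\rvert-\lvert\mu\rvert,\mu z)$ gives $(\mu z,\lvert\mu\rvert-\lvert\nu\rvert,\nu z)$, so $Z(\nu,\mu)^{-1}=Z(\mu,\nu)$.

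For the product, a composable pair consists of $(\omega z',\ast,\nu z')$ and $(\nu z,\ast,\mu z)$ with $\nu z'=\nu z$. Cancelling the common finite prefix $\nu$ gives $z=z'$. The product is then $(\omega z,\lvert\omega\rvert-\lvert\mu\rvert,\mu z)$, since the integer parts add. Conversely, every element of $Z(\omega,\mu)$ factors this way through $\nu z$, so $Z(\omega,\nu)Z(\nu,\mu)=Z(\omega,\mu)$.

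For (2), note that $Z(t(\mu))$ is the disjoint union of the sets $fZ(t(f))$ over $f\in t(\mu)F$. This holds because each infinite path starting at $t(\mu)$ has a unique first edge. Substituting $z=fz''$ into the definition identifies the $f$-piece with $Z(\nu f,\mu f)$. The integer coordinate is unchanged, since $\lvert\nu f\rvert-\lvert\mu f\rvert=\lvert\nu\rvert-\lvert\mu\rvert$.

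For (3), take a common element $(x,n,y)$. We have $y=\mu z=\lambda z'$ with $\lvert\lambda\rvert\geq\lvert\mu\rvert$, so $\mu\preceq\lambda$ and we can write $\lambda=\mu\eta$ uniquely. Then $\eta z'=z$ by cancellation. On the range side, $x=\nu z=\nu\eta z'$ and also $x=\omega z'$. Comparing integer parts, $\lvert\nu\rvert-\lvert\mu\rvert=\lvert\omega\rvert-\lvert\lambda\rvert$, which forces $\lvert\omega\rvert=\lvert\nu\eta\rvert$. Two prefixes of the same length of the infinite path $x$ coincide, so $\omega=\nu\eta$.

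Uniqueness of $\eta$ is immediate from $\lambda=\mu\eta$. The inclusion $Z(\omega,\lambda)=Z(\nu\eta,\mu\eta)\subset Z(\nu,\mu)$ follows by iterating (2), or directly by substituting $z=\eta z'$.

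The main (mild) obstacle is this use of the integer coordinate in (3). Without it one could not conclude $\omega=\nu\eta$ from equality of the infinite paths alone.
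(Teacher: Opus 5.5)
Your proof is correct and follows the paper's argument. For (1) and (2) you unwind the definitions, which the paper simply calls immediate. For (3) you take the paper's route: factor $\lambda=\mu\eta$ from the source coordinate, then use the middle integer coordinate to force $\lvert\omega\rvert=\lvert\nu\eta\rvert$, and hence $\omega=\nu\eta$.
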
 

\begin{proof} 
(1) and (2) are immediate from the definitions. 
For (3), let 
$(\nu z,\lvert\nu\rvert-\lvert\mu\rvert,\mu z) 
=(\omega w,\lvert\omega\rvert-\lvert\lambda\rvert,\lambda w)$. 
From $\mu z=\lambda w$ and $\lvert\lambda\rvert\geq\lvert\mu\rvert$, 
we obtain $\eta$ with $\lambda=\mu\eta$ and $z=\eta w$. 
Then $\nu\eta w=\omega w$, and 
$\lvert\omega\rvert=\lvert\nu\rvert+\lvert\eta\rvert$ 
by comparing the middle coordinates, whence $\omega=\nu\eta$. 
\end{proof} 

When $W$ is finite, the dynamical system 
$(X_{(W,F)},\sigma)$ is a one-sided shift of finite type, 
and $\cG_{(W,F)}$ is called its SFT groupoid. 
The reader is referred to \cite[Section 6]{Ma15crelle} 
or \cite{Ma17Abel} for its basic properties. 
In this case, we let $A=(A(u,v))_{u,v\in W}$ 
denote the adjacency matrix of $(W,F)$, that is, 
\[ 
A(u,v):=\#\{f\in F\mid i(f)=u,\ t(f)=v\}, 
\] 
and we say that $A$ is irreducible if for all $u,v\in W$ 
there exists $n\in\N$ with $A^n(u,v)>0$. 
Thus $(W,F)$ is strongly connected if and only if $A$ is irreducible. 
Suppose that $A$ is irreducible and is not a permutation matrix. 
In this case, 
$X_{(W,F)}$ has no isolated points and hence is a Cantor set. 
We also note that every closed path in $(W,F)$ passes through a vertex $w$ 
with $\#(wF)\geq2$. 
Indeed, if $\mu_1\mu_2\cdots\mu_n$ were a closed path 
with $\#(i(\mu_j)F)=1$ for every $j$, 
then only the vertices $i(\mu_1),\dots,i(\mu_n)$ would be 
reachable from $i(\mu_1)$. 
Irreducibility would therefore force 
$W=\{i(\mu_1),\dots,i(\mu_n)\}$ and 
$F=\{\mu_1,\dots,\mu_n\}$, 
so that $A$ would be a permutation matrix. 
Moreover, $\cG_{(W,F)}$ is a minimal effective ample groupoid 
by \cite[Lemma 6.1]{Ma15crelle}.

%%%%%%%%%%%%%%%%%%%%%%%%%%%%%%%%%%%%%%%%%%%%%%%%%%%%%%%%%%%%
\subsection{AF and purely infinite groupoids} 
%%%%%%%%%%%%%%%%%%%%%%%%%%%%%%%%%%%%%%%%%%%%%%%%%%%%%%%%%%%%

We recall the two classes of groupoids arising from the graph model 
and the properties of their topological full groups used below. 

For an ample groupoid $\cG$ with compact unit space, 
a compact open principal subgroupoid $\cK\subset\cG$ 
with $\cK^{(0)}=\cG^{(0)}$ is called an elementary subgroupoid. 
We say that $\cG$ is AF if it is the union of an increasing sequence 
of elementary subgroupoids. 
Every AF groupoid is principal and is almost finite in the sense of 
\cite[Definition 6.2]{Ma12PLMS}. 

\begin{definition}[{\cite[Definition 4.9]{Ma15crelle}}]\label{pi/def} 
Let $\cG$ be an ample groupoid with $\cG^{(0)}$ compact. 
We say that $\cG$ is purely infinite 
if for every clopen set $A\subset\cG^{(0)}$, 
there exist compact open bisections $U,V\subset\cG$ 
such that $s(U)=s(V)=A$, $r(U)\sqcup r(V)\subset A$. 
\end{definition} 

For the graph groupoids used below, pure infiniteness follows 
from the following cylinder argument. 

\begin{lemma}\label{lem:graphpureinfiniteness} 
Let $(W,F)$ be a countable row-finite strongly connected graph 
with a vertex having at least two outgoing edges. 
For every nonempty compact open $S\subset X_{(W,F)}$, 
the reduction $\cG_{(W,F)}|S$ is minimal and purely infinite, 
and its unit space is a Cantor set. 
\end{lemma}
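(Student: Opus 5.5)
The plan is to reduce everything to statements about cylinder sets $Z(\mu)$ and to check each of the three properties by hand. Write $S$ as a finite disjoint union of cylinders $Z(\mu^{(1)}),\dots,Z(\mu^{(m)})$; this is possible because cylinders form a basis and $S$ is compact. Fix a vertex $w_0$ with $\#(w_0F)\geq 2$. By strong connectivity, every vertex reaches $w_0$ and $w_0$ reaches every vertex. The argument then proceeds in three steps.

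First, the Cantor property. The space $S$ is compact, metrizable and totally disconnected, being a compact open subset of $X_{(W,F)}$, so only the absence of isolated points needs checking. Given $x\in S$ and a cylinder $Z(\mu)\subset S$ around $x$, I choose a path $\eta$ from $t(\mu)$ to $w_0$. The two distinct edges at $w_0$, each extended to an infinite path (possible since there are no sinks), give two distinct points of $Z(\mu)$. Hence no cylinder is a singleton.

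Second, minimality. Let $x\in S$ and let $Z(\nu)\subset S$ be a cylinder. I take $n$ with $Z(x_1\cdots x_n)\subset S$ and a path $\lambda$ from $t(\nu)$ to $i(x_{n+1})$. The bisection $Z(\nu\lambda,\,x_1\cdots x_n)$ then moves $x$ to the point $\nu\lambda\,\sigma^n(x)\in Z(\nu)$. Both source and range lie in $S$, so this arrow belongs to $\cG_{(W,F)}|S$, and the orbit of $x$ within $S$ is dense.

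Third, pure infiniteness, which I expect to be the main obstacle. A clopen $A\subset S$ is again a finite disjoint union of cylinders $Z(\mu^{(j)})$, $j=1,\dots,k$. It suffices to find bisections $U_j,V_j$ with sources $Z(\mu^{(j)})$ and with all ranges pairwise disjoint and contained in $A$; then $U=\bigsqcup U_j$ and $V=\bigsqcup V_j$ work. The difficulty is keeping the ranges of different $j$ disjoint from one another.

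To handle this, I will work inside a single cylinder. Pick paths $\alpha$ from $t(\mu^{(1)})$ to $w_0$ and $\beta$ from $w_0$ to $w_0$; the closed path $\beta$ exists because some edge $e$ at $w_0$ can return to $w_0$. Let $f\neq e$ be a second edge at $w_0$. Then the paths $\beta^{p}f$ for $p=0,1,2,\dots$ (with $\beta$ starting with $e$) are pairwise incomparable, which gives infinitely many disjoint subcylinders $Z(\mu^{(1)}\alpha\beta^{p}f\gamma_p)$. Here $\gamma_p$ is chosen to end at any prescribed vertex, using strong connectivity. I assign $2k$ of these subcylinders, with $\gamma$ ending at $t(\mu^{(j)})$ for the $j$-th pair, and set
\[
U_j:=Z(\mu^{(1)}\alpha\beta^{2j}f\gamma_{2j},\,\mu^{(j)}),\qquad V_j:=Z(\mu^{(1)}\alpha\beta^{2j+1}f\gamma_{2j+1},\,\mu^{(j)}).
\]
All ranges are disjoint and lie in $Z(\mu^{(1)})\subset A$, and each source is $Z(\mu^{(j)})$, as required. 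The only point to verify carefully is that a return path $\beta$ starting with an edge at $w_0$ exists and that the chosen words are genuinely incomparable; both follow from strong connectivity and the existence of the two distinct edges at $w_0$.
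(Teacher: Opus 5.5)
Your proof is correct, and each of the three verifications works as written. It differs from the paper's proof mainly in the pure-infiniteness step and in how minimality is obtained.

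\emph{Pure infiniteness.} The paper works locally. Every vertex $w$ carries two incomparable closed paths $\alpha_w,\beta_w$: go to a branching vertex, take two different edges, and return to $w$. For a cylinder $Z(\mu)$ with $t(\mu)=w$, the bisections $Z(\mu\alpha_w,\mu)$ and $Z(\mu\beta_w,\mu)$ have source $Z(\mu)$ and disjoint ranges inside $Z(\mu)$ itself. Because each range stays inside its own cylinder, gluing over a cylinder partition of $A$ is automatic. So the obstacle you anticipated, disjointness of ranges for different $j$, never arises.

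Your construction instead pushes all $2k$ ranges into $Z(\mu^{(1)})$ using the infinite incomparable family $\beta^p f$. This costs more bookkeeping. In exchange it proves a slightly stronger fact: any clopen $A\subset S$ has arbitrarily many disjoint copies inside any prescribed cylinder.

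\emph{Minimality.} The paper deduces cofinality from strong connectivity and applies the Nyland--Ortega criterion for minimality of graph groupoids. It then notes that minimality passes to nonempty open reductions. Your explicit bisection $Z(\nu\lambda,x_1\cdots x_n)$ gives a self-contained argument instead, with no appeal to that criterion.

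\emph{Effectiveness.} The paper's proof also records Condition~\textup{(L)}, hence effectiveness of the reduction. The statement does not assert this, but the paper's standing convention that ample groupoids are effective relies on it later. Your argument does not address this point.
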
 

\begin{proof} 
Strong connectivity implies cofinality. 
It also implies Condition~\textup{(L)}: a closed path without an exit 
would be the only path leaving its vertices, so no branching vertex 
could be reached from it. 
Hence the graph groupoid is minimal and effective. 
Every vertex $w$ admits two incomparable closed paths 
$\alpha_w,\beta_w$: follow a path from $w$ to a branching vertex, 
choose two distinct outgoing edges, and return to $w$. 
For every finite path $\mu$ with $t(\mu)=w$, the basic bisections 
$Z(\mu\alpha_w,\mu)$ and $Z(\mu\beta_w,\mu)$ have common 
source $Z(\mu)$ and disjoint ranges contained in $Z(\mu)$. 
Partitioning a compact open set into finitely many cylinders 
therefore proves pure infiniteness of the reduction to $S$. 
The same branching argument shows that no cylinder is a singleton, 
so $S$ is a Cantor set. 
Minimality passes to every nonempty open reduction. 
\end{proof} 

In particular, this recovers pure infiniteness of an irreducible 
SFT groupoid, as in \cite[Lemma 6.1]{Ma15crelle}. 

If $\cG$ is minimal and either AF or purely infinite, 
then so is $\cG|W$ for every $W\in\cCO(\cG^{(0)})$. 

\begin{theorem}\label{niceproperties} 
Let $\cG$ be a minimal ample groupoid with Cantor unit space 
which is either AF or purely infinite. 
\begin{enumerate} 
\item The commutator subgroup $\sD(\cG)$ is simple. 
\item The groupoid $\cG$ has cancellation: for nonempty clopen sets 
$A,B\subset\cG^{(0)}$ with $[1_A]=[1_B]$ in $H_0(\cG)$, 
there exists a compact open bisection $U$ with $r(U)=A$ and $s(U)=B$. 
\end{enumerate} 
\end{theorem}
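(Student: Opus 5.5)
This theorem is a known structural result about topological full groups, so my plan is to reduce it to the existing literature rather than prove it from scratch. The relevant sources are the simplicity theorem of \cite{Ma15crelle} and its AF counterpart in \cite{Ma12PLMS}.

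For (1), I would split into the two cases of the hypothesis. In the purely infinite case, \cite[Theorem 4.16]{Ma15crelle} shows the following for a minimal purely infinite effective ample groupoid with Cantor unit space: every nontrivial subgroup of $\sF(\cG)$ normalized by $\sD(\cG)$ contains $\sD(\cG)$. Hence $\sD(\cG)$ is simple. In the AF case, the corresponding statement is \cite[Theorem 4.7]{Ma06IJM}, which is recorded as \cite[Theorem 3.1]{Ma15crelle}: for minimal AF groupoids, $\sD(\cG)$ is simple.

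If I had to sketch the mechanism, both cases follow the same scheme. First, take a nontrivial normal subgroup $N\subset\sD(\cG)$ and a nontrivial $\alpha\in N$. Choose a clopen set $A$ with $\alpha(A)\cap A=\emptyset$. Second, commutators $[\alpha,\beta]$ with $\beta\in\sD(\cG)_A$ show that $N$ contains $[\sD(\cG)_A,\sD(\cG)_A]$. Third, one needs that $\sD(\cG)$ is generated by such local commutators, e.g.\ by elements of order three built from bisections $U$ with $s(U),r(U),r(U^2)$ pairwise disjoint. Fourth, minimality lets one conjugate small supports around to cover all of these generators. The third step is the main obstacle, and it is where AF or pure infiniteness is actually used.

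For (2), cancellation is again a citation. In the purely infinite case it is \cite[Proposition 4.11]{Ma15crelle}: purely infinite minimal groupoids satisfy $[1_A]=[1_B]$ in $H_0(\cG)$ implies $A\sim B$. The argument there uses that $H_0$ classes are realized by comparison in a properly infinite way. For AF groupoids it is the standard fact that $H_0(\cG)$ is the dimension group, which is cited above as \cite[Theorem 4.10]{Ma12PLMS}. Equality of classes at some finite elementary stage $\cK_n$ then gives a bisection in $\cK_n$ implementing $A\cong B$. I would make this precise via the Bratteli diagram: equal classes in the inductive limit become equal at a finite level, where clopen sets decompose into towers. Nonemptiness of $A$ and $B$ is only needed to avoid trivial cases.
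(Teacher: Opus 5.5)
Your plan matches the paper's: split into the AF and purely infinite cases and quote the literature, using \cite[Theorem 4.16]{Ma15crelle} for simplicity in the purely infinite case, and your finite-stage argument for cancellation in the AF case is sound. The paper's other references differ from yours. It treats AF groupoids as almost finite to invoke \cite[Theorem 4.7]{Ma15crelle} for simplicity and \cite[Theorem 6.12]{Ma12PLMS} for cancellation, and it takes purely infinite cancellation from \cite[Theorem 4.2 (3)]{Ma16Adv}, so check that your 2006 reference and \cite[Proposition 4.11]{Ma15crelle} actually state what you attribute to them.
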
 

\begin{proof} 
The first assertion follows from 
\cite[Theorems 4.7 and 4.16]{Ma15crelle}. 
The second follows from \cite[Theorem 6.12]{Ma12PLMS} and 
\cite[Theorem 4.2 (3)]{Ma16Adv}. 
\end{proof} 

For a compact open bisection $U$ with $r(U)\cap s(U)=\emptyset$, 
define the transposition $\tau_U:=\theta_W\in\sF(\cG)$, where 
\[ 
W:=U\sqcup U^{-1}\sqcup 
\left(\cG^{(0)}\setminus(r(U)\cup s(U))\right). 
\] 
Then $\tau_U^2=1$ and $\supp(\tau_U)=r(U)\sqcup s(U)$. 
We use the following part of the AH exact sequence. 

\begin{theorem}[{\cite[Corollary E]{Li25ForumPi}}]\label{Li} 
Let $\cG$ be a minimal ample groupoid with Cantor unit space 
which is either AF or purely infinite. 
Then there is an exact sequence 
\[ 
\xymatrix@M=8pt{ 
H_0(\cG,\Z/2) \ar[r]^-\zeta & 
H_1(\sF(\cG)) \ar[r]^-\eta & H_1(\cG) \ar[r] & 0. 
} 
\] 
The map $\eta$ is induced by the index map, 
and $\zeta$ sends $[1_{r(U)}]$ to the class of $\tau_U$ 
whenever $U$ is a compact open bisection with disjoint source and range. 
\end{theorem}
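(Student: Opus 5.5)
This is quoted from \cite[Corollary E]{Li25ForumPi}. If I were to prove it directly, I would build the three-term sequence by hand from the index map $I$ and the transpositions $\tau_U$, using Theorem~\ref{niceproperties} throughout. Since $\sD(\cG)\subset\Ker I$ and $H_1(\cG)$ is abelian, $I$ factors through a homomorphism $\eta:H_1(\sF(\cG))\to H_1(\cG)$.

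\emph{Surjectivity of $\eta$.} A class in $H_1(\cG)$ is represented by a cycle $\sum_j n_j1_{U_j}$ with $U_j\in\cCO(\cG)$ bisections. By refining I may take each $U_j$ small and each $n_j=\pm1$. Using minimality and pure infiniteness, or in the AF case comparison of clopen sets, I would move the pieces so that their sources are pairwise disjoint and so are their ranges. The cycle condition then gives $[1_{\bigsqcup r(U_j)}]=[1_{\bigsqcup s(U_j)}]$ in $H_0(\cG)$. Cancellation (Theorem~\ref{niceproperties}(2)) then supplies a compact open bisection $V$ from $\bigsqcup r(U_j)$ to $\bigsqcup s(U_j)$, which I use to complete the pieces to a full bisection. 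Adding the relevant boundaries shows that the resulting element of $\sF(\cG)$ has index equal to the given class.

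\emph{Definition of $\zeta$.} I would set $\zeta([1_{r(U)}]):=[\tau_U]$ and check that this respects the defining relations of $H_0(\cG,\Z/2)$. Independence of the choice of $U$: if $U$ and $U'$ are bisections with $r(U)=r(U')$, then $\tau_U$ and $\tau_{U'}$ should be conjugate in $\sF(\cG)$, hence equal in $H_1(\sF(\cG))$. Additivity on disjoint unions: $\tau_{U\sqcup U'}=\tau_U\tau_{U'}$ when the four sets are disjoint; when disjointness fails, refine and use cancellation. Invariance under $r(U)\sim s(U)$: $\tau_{U^{-1}}=\tau_U$. The relation $2[\tau_U]=0$ is automatic because $\tau_U^2=1$. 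Since $\tau_U$ is an involution swapping $r(U)$ and $s(U)$, $I(\tau_U)=[1_U]+[1_{U^{-1}}]+[1_{\text{rest}}]$ is a boundary, so $\eta\zeta=0$.

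\emph{Exactness at $H_1(\sF(\cG))$.} This is the main obstacle: showing that every $\alpha\in\Ker I$ is, modulo $\sD(\cG)$, a product of transpositions. I would first use $I(\alpha)=0$ to write $1_U$ as a boundary $\partial c$ with $c$ a $2$-chain. Each elementary $2$-simplex $(g,h)$ contributes $1_{gh}-1_g-1_h$, which should correspond to a decomposition of $\alpha$ into pieces realized by $3$-cycles and transpositions. In the purely infinite case, \cite[Section 7]{Ma15crelle} suggests that $\Ker I$ is generated by $\sD(\cG)$ together with transpositions. In the AF case I would use the inductive limit of elementary groupoids, where the topological full groups are finite products of symmetric groups and the kernel of the sign map is generated by $3$-cycles. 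If this fails to give the sharp statement, I would fall back on Li's homotopical approach: identify $\sF(\cG)$ with $\pi_1$ of the relevant spectrum and derive the sequence from the $\Z/2$-term of the sphere spectrum.
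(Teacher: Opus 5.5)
Citing \cite[Corollary E]{Li25ForumPi}, as your first sentence does, is all the paper does: it gives no proof of this statement. Your direct argument is not a proof, though, and the gap is at the one step that carries the content.

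The peripheral parts are fine in outline. The map $\eta$ is well defined because $\sD(\cG)\subset\Ker I$, and $\eta\zeta=0$ because $1_U+1_{U^{-1}}$ is a boundary. In the AF case $H_1(\cG)=0$, and every element of $\sF(\cG)$ lies in $\sF$ of an elementary subgroupoid, so it is a product of transpositions; exactness is immediate there.

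In the purely infinite case, however, the inclusion $\Ker\eta\subset\zeta\bigl(H_0(\cG,\Z/2)\bigr)$ is the whole of Matui's AH conjecture, and neither of your suggestions establishes it. Saying that $\Ker I$ is generated by $\sD(\cG)$ and transpositions is just a restatement of what must be shown. \cite{Ma15crelle} does not provide it for general purely infinite minimal groupoids; Matui obtained it only for special classes such as SFT groupoids. The chain-level idea also fails as stated. A $2$-chain $c$ with $\partial c=1_{U_\alpha}$ is a signed combination of compact open pieces of $\cG^{(2)}$, i.e.\ of composable pairs of partial bisections. Nothing forces its terms to assemble into a factorization of $\alpha$ into full bisections, let alone into three-cycles and transpositions.

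Li's argument avoids this problem altogether. He identifies the homology of $\sF(\cG)$ with that of an infinite loop space attached to $\cG$. The abelianization $H_1(\sF(\cG))$, not $\sF(\cG)$ itself, thereby becomes $\pi_1$ of a spectrum whose homotopy is computed from $H_p(\cG,\pi^s_q)$. The sequence is the low-degree part of that spectral sequence, with $\Z/2=\pi^s_1$.

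Several of the ``easy'' steps are also stated incorrectly.
\begin{itemize}
\item Two transpositions with the same range need not be conjugate. In $\cH_2$, the bisections $U=Z(1,0)$ and $U'=Z(1,00)$ both have range $Z(1)$, but $\supp(\tau_U)=\cH_2^{(0)}\neq\supp(\tau_{U'})$. You must first split $U$ so that the pieces have proper support, then use cancellation on the complements.
\item The same splitting is needed to verify the relation $[1_{r(W)}]=[1_{s(W)}]$ for an arbitrary compact open bisection $W$. Your identity $\tau_{U^{-1}}=\tau_U$ covers only the defining bisection.
\item Splitting is also needed to realize each generator $[1_B]$, including $B=\cG^{(0)}$, as $r(U)$ for some $U$ with $r(U)\cap s(U)=\emptyset$.
\item In the surjectivity sketch, normalize to coefficients $+1$ with pairwise disjoint sources and pairwise disjoint ranges. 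The cycle condition then already gives $\bigsqcup_j r(U_j)=\bigsqcup_j s(U_j)$ as sets, so you simply extend by the identity. Completing with a bisection supplied by cancellation would add an uncontrolled index. Meanwhile the real work, moving the pieces without changing the homology class, is left unaddressed.
\end{itemize}
These points are repairable. The middle exactness, however, is not supplied by anything in your proposal, so the citation, as in the paper, is the only complete route you give.
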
 

This description of $\zeta$, together with naturality of the index map, 
shows that the sequence is natural for wide open subgroupoid inclusions 
between groupoids satisfying the hypotheses. 

The following local properties will be used repeatedly. 
We recall their statements and refer to the first two papers for proofs. 
The three-cycle construction requires only minimality. 

\begin{lemma}[{\cite[Lemma 2.6]{Ma26maximal2}}] 
\label{threecycle} 
Let $\cH$ be a minimal ample groupoid with $\cH^{(0)}$ a Cantor set 
and let $U,V\in\cCO(\cH)$ be bisections with $s(U)=s(V)$ 
such that $s(U)$, $r(U)$ and $r(V)$ are mutually disjoint. 
\begin{enumerate} 
\item The element $\tau_V\tau_U$ equals the commutator $[\tau_U,\tau_V]$ 
and has order three, with 
\[ 
\supp(\tau_V\tau_U)=s(U)\cup r(U)\cup r(V). 
\] 
In particular $\tau_V\tau_U\in\sD(\cH)$. 
\item The element $\tau_V\tau_U$ agrees with $\theta_U$ on $s(U)$. 
\item For any $x\in\cH^{(0)}$ and any $A\in\cCO(\cH^{(0)})$ 
there exists $\sigma\in\sD(\cH)$ such that $\sigma(x)\in A$. 
Consequently, for any $B\in\cCO(\cH^{(0)})$, 
the local subgroup $\sD(\cH)_B$ acts minimally on $B$, 
and $\sD(\cH)$ acts minimally on $\cH^{(0)}$. 
\end{enumerate} 
\end{lemma}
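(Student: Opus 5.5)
The plan is to verify (1) and (2) by tracking the action of $\tau_U$ and $\tau_V$ on the three disjoint clopen sets $s(U)$, $r(U)$, $r(V)$. Then (3) follows by building suitable bisections $U,V$ from minimality.

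For (1) and (2), recall that $\tau_U$ acts as $\theta_U$ on $s(U)$, as $\theta_U^{-1}$ on $r(U)$, and as the identity elsewhere; likewise for $\tau_V$ with $s(V)=s(U)$. Compute $\tau_V\tau_U$ piece by piece.
\begin{itemize}
\item On $s(U)$: $\tau_U$ sends $x$ to $\theta_U(x)\in r(U)$, which lies outside $\supp(\tau_V)=s(U)\sqcup r(V)$. So $\tau_V\tau_U=\theta_U$ there, which is (2).
\item On $r(U)$: $\tau_U$ sends $x$ into $s(U)$, and then $\tau_V$ sends it into $r(V)$. So the composite is $\theta_V\theta_U^{-1}$.
\item On $r(V)$: $\tau_U$ acts trivially, and $\tau_V$ returns the point to $s(U)$ by $\theta_V^{-1}$.
\item Off the union of the three sets, both transpositions are trivial.
\end{itemize}
Hence $\tau_V\tau_U$ cyclically permutes $s(U)\to r(U)\to r(V)\to s(U)$. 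Composing the three pieces gives $\theta_V^{-1}\circ\theta_V\theta_U^{-1}\circ\theta_U=\id$ on $s(U)$, and similarly on the other two sets. So the element has order three and support exactly $s(U)\cup r(U)\cup r(V)$; it is nontrivial because the three sets are nonempty and disjoint. Finally, since $\tau_U$ and $\tau_V$ are involutions,
\[
[\tau_U,\tau_V]=\tau_U\tau_V\tau_U\tau_V=(\tau_U\tau_V)^2=(\tau_U\tau_V)^{-1}=\tau_V\tau_U .
\]
Here the third equality uses that $\tau_U\tau_V=(\tau_V\tau_U)^{-1}$ also has order three. Thus $\tau_V\tau_U\in\sD(\cH)$.

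For (3), fix $x$ and $A$. If $x\in A$, take $\sigma=1$. Otherwise, minimality gives an arrow $g$ with $s(g)=x$ and $r(g)\in A$, and $r(g)\neq x$. Since $\cH^{(0)}$ is a Cantor set and the orbit of $x$ is dense, it is infinite. So we may choose a further arrow $h$ with $s(h)=x$ and $r(h)\notin\{x,r(g)\}$. Pick compact open bisections $U'\ni g$ and $V'\ni h$. Using Hausdorffness and a basis of compact open sets, shrink them so that the following hold:
\begin{itemize}
\item the sets $s(U')\cap s(V')$, $r(U')$ and $r(V')$ are pairwise disjoint;
\item $r(U')\subset A$.
\end{itemize}
Now restrict both bisections to the common source $C:=s(U')\cap s(V')\ni x$, giving $U:=U'C$ and $V:=V'C$. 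These satisfy the hypotheses, so $\sigma:=\tau_V\tau_U\in\sD(\cH)$ by (1). By (2), $\sigma(x)=\theta_U(x)=r(g)\in A$.

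For the consequence, apply this to the reduction $\cH|B$. It is a minimal ample groupoid, and its unit space $B$, a nonempty clopen subset of a Cantor set, is a Cantor set. We also have $\sD(\cH|B)\subset\sD(\cH)_B$. So every $\sD(\cH)_B$-orbit in $B$ meets every nonempty compact open $A\subset B$, and is therefore dense in $B$; taking $B=\cH^{(0)}$ gives minimality of $\sD(\cH)$. The only delicate point is the shrinking step, where the three sets must be made disjoint while keeping a common source. This is routine once $r(g)$, $r(h)$ and $x$ are chosen distinct, so I expect no real obstacle.
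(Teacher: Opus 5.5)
Your argument is correct and complete. The piecewise computation of $\tau_V\tau_U$ on $s(U)$, $r(U)$ and $r(V)$ gives (1) and (2), and your construction in (3) works: three distinct orbit points, disjoint clopen neighbourhoods with the one at $r(g)$ inside $A$, restriction to the common source, then applied to $\cH|B$ using $\sD(\cH|B)\subset\sD(\cH)_B$. The paper does not reprove this lemma but quotes it from \cite[Lemma 2.6]{Ma26maximal2}, and your verification is the standard direct argument.
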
 

\begin{lemma}[{\cite[Lemma 2.9]{Ma26maximal1}}] 
\label{localabelianization} 
Let $\cG$ be a minimal ample groupoid with Cantor unit space 
which is either AF or purely infinite. 
For every nonempty clopen set $W\subset\cG^{(0)}$, one has 
\[ 
\sF(\cG)=\sF(\cG)_W\cdot\sD(\cG). 
\] 
Equivalently, the inclusion $\sF(\cG|W)\to\sF(\cG)$ induces 
a surjection on abelianizations. 
\end{lemma}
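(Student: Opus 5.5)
This lemma is quoted from \cite[Lemma 2.9]{Ma26maximal1}; the following is how I would prove it directly from Theorem~\ref{niceproperties} and Theorem~\ref{Li}. The plan is to show that every $\alpha\in\sF(\cG)$ is congruent modulo $\sD(\cG)$ to an element supported in $W$. The equivalence with the abelianization statement is immediate: $\sF(\cG)=\sF(\cG)_W\sD(\cG)$ says exactly that $\sF(\cG)_W\to\sF(\cG)/\sD(\cG)$ is onto.

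I would use Li's exact sequence (Theorem~\ref{Li}) for both $\cG$ and $\cG|W$. Note that $\cG|W$ is again minimal, with Cantor unit space, and AF or purely infinite. The inclusion $\cG|W\subset\cG$ is a full clopen reduction, since minimality makes $W$ full. Hence it induces isomorphisms $H_1(\cG|W)\cong H_1(\cG)$ and $H_0(\cG|W,\Z/2)\cong H_0(\cG,\Z/2)$. By naturality of the sequence (the $\zeta$-description and naturality of the index map), there is a commutative ladder
\[
\xymatrix{
H_0(\cG|W,\Z/2)\ar[r]\ar[d]^{\cong} & H_1(\sF(\cG|W))\ar[r]\ar[d] & H_1(\cG|W)\ar[r]\ar[d]^{\cong} & 0\\
H_0(\cG,\Z/2)\ar[r] & H_1(\sF(\cG))\ar[r] & H_1(\cG)\ar[r] & 0.
}
\]
A four-lemma style diagram chase then gives surjectivity of the middle vertical map. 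Take $c\in H_1(\sF(\cG))$, lift its image in $H_1(\cG)$ to some $c'\in H_1(\sF(\cG|W))$, and note that the difference $c-\iota(c')$ lies in the image of $\zeta$. That image comes from $H_0(\cG|W,\Z/2)$ via the left isomorphism, hence from $\iota\circ\zeta_W$.

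The one point needing care, and the main obstacle, is whether naturality really applies to the reduction inclusion $\cG|W\to\cG$. This inclusion is an open but not wide subgroupoid. So I would check directly that $\zeta$ commutes with it: for a bisection $U\subset\cG|W$ with disjoint range and source, $\tau_U$ computed in $\cG|W$ and extended by the identity equals $\tau_U$ in $\cG$. The index map is natural for open inclusions, as stated in the preliminaries.

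Alternatively, I would avoid this by a direct argument. By cancellation (Theorem~\ref{niceproperties}), any generator class $[1_{r(U)}]$ of $H_0(\cG,\Z/2)$ can be represented by a bisection inside $W$. The reason is that $H_0(\cG|W)\to H_0(\cG)$ is onto and minimality allows subdivision into small pieces. Each transposition $\tau_U$ is then conjugate in $\sF(\cG)$ to a product of transpositions supported in $W$, and conjugates agree modulo $\sD(\cG)$. This handles the image of $\zeta$, and the $H_1(\cG)$ part follows from the isomorphism on groupoid homology.
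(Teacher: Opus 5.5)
The paper gives no proof of this lemma; it is quoted from \cite{Ma26maximal1}, so there is no in-text argument to compare against. Your first argument is correct. It is the same diagram chase the paper carries out in the proof of Corollary~\ref{cor:minimalfullmaximal}~(2), applied to the reduction $\cG|W\subset\cG$ rather than a wide inclusion, with fullness of $W$ playing the role of wideness.

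The four-lemma chase only uses surjectivity of the two outer vertical maps. On $H_1(\cG|W)\to H_1(\cG)$ this is Kakutani invariance. On $H_0(\,\cdot\,,\Z/2)$ it is elementary: by minimality, every clopen set splits into pieces that are sources of compact open bisections with range in $W$.

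One detail you should state explicitly is why your check on transpositions gives commutativity of the whole left square. The classes $[1_{r(U)}]$ with $U\subset\cG|W$ and $r(U)\cap s(U)=\emptyset$ generate $H_0(\cG|W,\Z/2)$. To see this, cut any clopen $B\subset W$ into small pieces. Each piece can be moved off itself inside $W$ by a bisection of the minimal groupoid $\cG|W$, whose unit space is a Cantor set.

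Your alternative argument does not avoid Theorem~\ref{Li}. Realizing a prescribed class of $H_1(\cG)$ by an element of $\sF(\cG)_W$ needs surjectivity of the index map of $\cG|W$. Reducing the remaining difference to the image of $\zeta$ needs exactness for $\cG$. So the alternative only replaces the naturality check for $\zeta$.

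Its conjugation step is also more than you need. Once $[1_{r(U)}]=\sum_i[1_{r(U_i)}]$ in $H_0(\cG,\Z/2)$ with $U_i\subset\cG|W$ as above, the homomorphism $\zeta$ already gives $[\tau_U]=\sum_i[\tau_{U_i}]$ in $H_1(\sF(\cG))$. If you keep the conjugation, note that extending a bisection $V$, from a small support into $W$, to a full bisection uses Theorem~\ref{niceproperties}~(2). It is applied to the complements of $s(V)$ and $r(V)$, which are nonempty and have equal class in $H_0(\cG)$.
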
 

\begin{lemma}[{\cite[Lemma 2.10 (2)]{Ma26maximal1}}] 
\label{elementofDG} 
Let $\cG$ be a minimal ample groupoid with Cantor unit space 
which is either AF or purely infinite. 
If a bisection $U\in\cCO(\cG)$ satisfies 
$r(U)\neq\cG^{(0)}$ and $s(U)\neq\cG^{(0)}$, then 
there exists a full bisection $V$ such that 
$U\subset V$ and $\theta_V\in\sD(\cG)$. 
\end{lemma}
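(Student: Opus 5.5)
This lemma is cited from \cite[Lemma 2.10 (2)]{Ma26maximal1}, but here is how I would prove it directly. The idea is to extend $U$ to a full bisection in two stages. First extend it to a full bisection $V_0\supset U$ that is an involution-like product of transpositions. Then correct by an element supported away from $s(U)$ so that the result lies in $\sD(\cG)$.

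\emph{Step 1: a full extension.} Put $A:=\cG^{(0)}\setminus s(U)$ and $B:=\cG^{(0)}\setminus r(U)$; both are nonempty clopen sets by hypothesis. In $H_0(\cG)$ we have $[1_A]=[1_X]-[1_{s(U)}]=[1_X]-[1_{r(U)}]=[1_B]$. Cancellation (Theorem~\ref{niceproperties} (2)) then gives a compact open bisection $U'$ with $s(U')=A$ and $r(U')=B$. Hence $V_0:=U\sqcup U'$ is a full bisection containing $U$, and $\alpha:=\theta_{V_0}\in\sF(\cG)$.

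\emph{Step 2: correcting into $\sD(\cG)$.} I would look for $\beta\in\sF(\cG)$ with $\supp(\beta)\subset W:=r(U')=B$ and $\beta\alpha\in\sD(\cG)$. Then $V:=$ the bisection of $\beta\alpha$ still contains $U$, because $\beta$ fixes $r(U)\subset X\setminus B$ pointwise. By Lemma~\ref{localabelianization} applied to the nonempty clopen set $B$, we have $\sF(\cG)=\sF(\cG)_B\cdot\sD(\cG)$. So write $\alpha^{-1}=\gamma\delta$ with $\gamma\in\sF(\cG)_B$ and $\delta\in\sD(\cG)$.

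The obstacle is the order of factors. We need $\beta\alpha\in\sD(\cG)$ with $\beta$ supported in $B$, i.e.\ $\alpha\in\sF(\cG)_B\cdot\sD(\cG)$. Since $\sD(\cG)$ is normal, $\sF(\cG)_B\sD(\cG)=\sD(\cG)\sF(\cG)_B$ is a subgroup, and it equals $\sF(\cG)$. So $\alpha=\beta^{-1}\delta'$ with $\beta\in\sF(\cG)_B$ and $\delta'\in\sD(\cG)$, whence $\beta\alpha=\delta'\in\sD(\cG)$.

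It remains to check that $\beta\alpha$ agrees with $\theta_U$ on $s(U)$. This holds because $\alpha$ maps $s(U)$ into $r(U)$, which is disjoint from $\supp(\beta)\subset B$. Since $\cG$ is effective, the full bisection $V$ of $\beta\alpha$ satisfies $V\cap s^{-1}(s(U))=U$, so $U\subset V$.

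The only genuinely nontrivial input is thus Lemma~\ref{localabelianization}, together with cancellation for the full extension. The main point to watch is that the local subgroup is taken on $X\setminus r(U)$, not on $X\setminus s(U)$, so that the correction does not disturb $U$.
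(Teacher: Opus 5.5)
Your argument is correct: cancellation (Theorem~\ref{niceproperties} (2)) extends $U$ to the full bisection $U\sqcup U'$, and Lemma~\ref{localabelianization} on $\cG^{(0)}\setminus r(U)$ corrects $\theta_{U\sqcup U'}$ into $\sD(\cG)$ without changing it on $s(U)$; effectiveness then gives $U\subset V$. The paper itself only cites \cite[Lemma 2.10 (2)]{Ma26maximal1}, and your cancellation-plus-local-correction route is the expected one; the same correction trick appears in the proof of Proposition~\ref{prop:clopenpartitionnormalizer}. Two cosmetic points: the ``order of factors'' worry is vacuous, since writing $\alpha=\gamma\delta$ with $\gamma\in\sF(\cG)_B$ and $\delta\in\sD(\cG)$ already lets you take $\beta:=\gamma^{-1}$, and your opening description of $V_0$ as a product of transpositions does not match what Step~1 actually constructs.
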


%%%%%%%%%%%%%%%%%%%%%%%%%%%%%%%%%%%%%%%%%%%%%%%%%%%%%%%%%%%%
\section{Subgroupoid graphs of SFT groupoids}\label{sec:subgroupoidgraph} 
%%%%%%%%%%%%%%%%%%%%%%%%%%%%%%%%%%%%%%%%%%%%%%%%%%%%%%%%%%%%

Throughout this section, 
let $(V,E)$ be a finite directed graph with adjacency matrix $A$, 
and assume that $A$ is irreducible and is not a permutation matrix. 
We write 
\[ 
\cH:=\cG_{(V,E)},\qquad X:=\cH^{(0)}=X_{(V,E)}, 
\] 
and let $\Omega=\Omega(V,E)$ denote the set of all finite paths in $(V,E)$. 
We freely use the notation introduced in Section~\ref{subsec:graphgroupoid}. 

The subgroupoid graph records which prefix replacements belong to 
a wide open subgroupoid of $\cH$. 
We first characterize these graphs and describe intermediate 
subgroupoids by quotient maps. 
We then identify each subgroupoid with a full compact open reduction 
of a countable graph groupoid and use prefix replacements to describe 
its topological full group and normalizer. 

The following notion of a prime inclusion will be used in 
Corollary~\ref{cor:primegraph} and subsequent sections. 

\begin{definition}[{\cite[Definition 5.1]{Ma26maximal1}}] 
\label{def:prime} 
Let $\cH$ be an ample groupoid and 
let $\cG\subset\cH$ be a wide open subgroupoid. 
We say that $\cG\subset\cH$ is prime 
if there is no proper intermediate wide open subgroupoid; 
more precisely, if $\cK\subset\cH$ is a wide open subgroupoid 
containing $\cG$, then $\cK$ is equal to either $\cH$ or $\cG$. 
\end{definition}

%%%%%%%%%%%%%%%%%%%%%%%%%%%%%%%%%%%%%%%%%%%%%%%%%%%%%%%%%%%%
\subsection{The subgroupoid graph}\label{subsec:subgroupoidgraph} 
%%%%%%%%%%%%%%%%%%%%%%%%%%%%%%%%%%%%%%%%%%%%%%%%%%%%%%%%%%%%

We associate a rooted graph to a wide open subgroupoid of an SFT groupoid 
and describe how inclusions of subgroupoids correspond to graph quotients. 

Let $\cG\subset\cH$ be a wide open subgroupoid. 
For $\mu,\nu\in\Omega$, write 
\[ 
\mu\sim_{\cG}\nu 
\quad:\iff\quad 
t(\mu)=t(\nu)\ \text{ and }\ Z(\nu,\mu)\subset\cG. 
\] 
By Lemma~\ref{lem:basicbisection} (1), this is an equivalence relation. 
We write $[\mu]_{\cG}$ for the equivalence class of $\mu$. 
The following two properties will be used repeatedly. 

\begin{lemma}\label{lem:subgroupoidrelation} 
Let $\mu,\nu\in\Omega$ with $t(\mu)=t(\nu)$. 
\begin{enumerate} 
\item If $\mu\sim_{\cG}\nu$, then 
$\mu e\sim_{\cG}\nu e$ for every $e\in t(\mu)E$. 
\item If $\mu e\sim_{\cG}\nu e$ for every $e\in t(\mu)E$, then 
$\mu\sim_{\cG}\nu$. 
\end{enumerate} 
\end{lemma}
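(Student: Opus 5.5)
Both parts follow directly from Lemma~\ref{lem:basicbisection}, using that $\cG$ is an open subgroupoid of $\cH$ and that the sets $Z(\nu,\mu)$ form a basis of compact open bisections.

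For (1), assume $\mu\sim_{\cG}\nu$, so $Z(\nu,\mu)\subset\cG$. Take $e\in t(\mu)E$. By Lemma~\ref{lem:basicbisection}~(2),
\[
Z(\nu e,\mu e)\subset Z(\nu,\mu)\subset\cG .
\]
Also $t(\mu e)=t(e)=t(\nu e)$. Hence $\mu e\sim_{\cG}\nu e$.

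For (2), assume $Z(\nu e,\mu e)\subset\cG$ for every $e\in t(\mu)E$. The graph $(V,E)$ is finite and has no sinks, so $t(\mu)E$ is a finite nonempty set. Lemma~\ref{lem:basicbisection}~(2) gives the decomposition
\[
Z(\nu,\mu)=\bigsqcup_{e\in t(\mu)E}Z(\nu e,\mu e).
\]
Every piece lies in $\cG$, so their union $Z(\nu,\mu)$ lies in $\cG$. Since $t(\mu)=t(\nu)$ by hypothesis, this gives $\mu\sim_{\cG}\nu$.

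There is no real obstacle. The only point to check is that the decomposition in Lemma~\ref{lem:basicbisection}~(2) is exhaustive. This holds because every $z\in Z(t(\mu))$ begins with an edge $e\in t(\mu)E$. Note that the openness of $\cG$ is not needed for this lemma.
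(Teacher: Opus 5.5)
Your proof is correct and matches the paper's argument: part (1) uses the inclusion $Z(\nu e,\mu e)\subset Z(\nu,\mu)$, and part (2) uses the disjoint decomposition $Z(\nu,\mu)=\bigsqcup_{e\in t(\mu)E}Z(\nu e,\mu e)$ from Lemma~\ref{lem:basicbisection}~(2). Your observation that openness of $\cG$ is not used here is also accurate.
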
 

\begin{proof} 
The first assertion follows from $Z(\nu e,\mu e)\subset Z(\nu,\mu)$. 
For the second, Lemma~\ref{lem:basicbisection} (2) gives 
\[ 
Z(\nu,\mu)=\bigsqcup_{e\in t(\mu)E}Z(\nu e,\mu e)\subset\cG. 
\] 
\end{proof} 

Let $\Delta_{\cG}^0:=\Omega/{\sim_{\cG}}$. 
There is a well-defined map 
\[ 
\rho_{\cG}:\Delta_{\cG}^0\longrightarrow V, 
\qquad 
\rho_{\cG}([\mu]_{\cG})=t(\mu). 
\] 
By Lemma~\ref{lem:subgroupoidrelation} (1), 
if $Q=[\mu]_{\cG}\in\Delta_{\cG}^0$ and $e\in\rho_{\cG}(Q)E$, 
then $Qe:=[\mu e]_{\cG}$ is independent of the choice of $\mu$. 

\begin{definition}\label{def:subgroupoidgraph} 
The subgroupoid graph of $\cG\subset\cH$ is the directed graph 
\[ 
\Delta_{\cG}=(\Delta_{\cG}^0,\Delta_{\cG}^1), 
\qquad 
\Delta_{\cG}^1:= 
\{(Q,e)\mid Q\in\Delta_{\cG}^0,\ e\in\rho_{\cG}(Q)E\}, 
\] 
where $(Q,e)$ goes from $Q$ to $Qe$. 
We also write 
\[ 
\rho_{\cG}:\Delta_{\cG}\longrightarrow(V,E) 
\] 
for the graph homomorphism given by $\rho_{\cG}(Q,e)=e$ on edges, 
and define the root section by 
\[ 
\varsigma_{\cG}:V\longrightarrow\Delta_{\cG}^0, 
\qquad 
\varsigma_{\cG}(v)=[v]_{\cG}. 
\] 
\end{definition} 

By construction, 
$\rho_{\cG}$ is right-covering and $\rho_{\cG}\circ\varsigma_{\cG}=\id_V$. 
Every vertex of $\Delta_{\cG}$ is reachable from $\varsigma_{\cG}(V)$: 
indeed, the terminal vertex of the lift of $\mu\in\Omega$ 
starting at $\varsigma_{\cG}(i(\mu))$ is precisely $[\mu]_{\cG}$. 
Moreover Lemma~\ref{lem:subgroupoidrelation} (2) says that 
\begin{equation}\label{eq:subgroupoidgraphreduced} 
Q\neq Q',\ \rho_{\cG}(Q)=\rho_{\cG}(Q') 
\implies Qe\neq Q'e\ \text{ for some }e\in\rho_{\cG}(Q)E. 
\end{equation} 
We refer to \eqref{eq:subgroupoidgraphreduced} 
as the reducedness condition. 
For the full two-shift, $\sim_{\cG}$ and $\Delta_{\cG}$ are the type 
system and type graph of \cite{BBQS25TAMS}; 
see Remark~\ref{rem:subgroupoidgraphBBQS}. 

The graph $\Delta_{\cG}$ is countable, row-finite and has no sinks. 
Indeed, its vertex set is a quotient of the countable set $\Omega$, 
and $\#(Q\Delta_{\cG}^1)=\#(\rho_{\cG}(Q)E)$ for every $Q\in\Delta_{\cG}^0$. 
Furthermore, $\rho_{\cG}$ carries closed paths in $\Delta_{\cG}$ 
to closed paths in $(V,E)$. 
Since $(V,E)$ is finite and irreducible and 
its adjacency matrix is not a permutation matrix, 
every closed path in $(V,E)$ passes through a vertex $v$ with $\#(vE)\geq2$; 
see Section~\ref{subsec:graphgroupoid}. 
If $Q$ is a vertex of $\Delta_{\cG}$ lying over such a vertex $v$, 
then the right-covering property gives $\#(Q\Delta_{\cG}^1)=\#(vE)\geq2$. 
It follows that every closed path 
in the generally infinite graph $\Delta_{\cG}$ 
passes through a vertex of outdegree at least two. 
Thus the graph groupoid $\cG_{\Delta_{\cG}}$ is 
an effective ample groupoid. 

\begin{example}\label{ex:AFcoresubgroupoidgraph} 
Let $c:\cH\to\Z$ be the canonical cocycle $c(x,k,y)=k$, 
and let $\cG:=\Ker c$ be the canonical AF core. 
Indeed, $\cG$ is the increasing union of the compact open principal 
subgroupoids 
\[ 
\{(x,0,y)\in\cH\mid\sigma^n(x)=\sigma^n(y)\},\qquad n\geq0. 
\] 
For finite paths $\mu,\nu\in\Omega$, 
$\mu\sim_{\cG}\nu$ if and only if 
$\lvert\mu\rvert=\lvert\nu\rvert$ and $t(\mu)=t(\nu)$. 
Consequently $[\mu]_{\cG}\mapsto(\lvert\mu\rvert,t(\mu))$ 
identifies the vertex set of $\Delta_{\cG}$ with 
$\Z_{\geq0}\times V$, and the edges are 
\[ 
(n,e):(n,i(e))\longrightarrow(n{+}1,t(e)), 
\qquad n\geq0,\ e\in E. 
\] 
Here $\rho_{\cG}(n,v)=v$ and $\varsigma_{\cG}(v)=(0,v)$. 
Thus $\Delta_{\cG}$ is the stationary Bratteli diagram obtained 
by repeating $(V,E)$ at successive levels, with one root for 
each $v\in V$. 
For the full $d$-shift, it has one vertex at each level and 
$d$ edges between successive levels. 
\end{example}

We next characterize the graphs which arise in this way. 

\begin{proposition}\label{prop:subgroupoidgraphconverse} 
Let $(W,F)$ be a countable directed graph, 
let $\rho:(W,F)\to(V,E)$ be a right-covering graph homomorphism 
and let $\varsigma:V\to W$ satisfy $\rho\circ\varsigma=\id_V$. 
Assume that every vertex of $W$ is reachable from $\varsigma(V)$ 
and that 
\begin{equation}\label{eq:reducedrootedgraph} 
w\neq w',\ \rho(w)=\rho(w') 
\implies we\neq w'e\ \text{ for some }e\in\rho(w)E, 
\end{equation} 
where $we$ in the right-hand side denotes 
the terminal vertex of the unique edge with initial vertex $w$ 
which is mapped to $e$ by $\rho$. 
For $\mu\in\Omega$, let $\ell(\mu)\in W$ be the terminal vertex of 
the lift of $\mu$ starting at $\varsigma(i(\mu))$, and put 
\[ 
\cG(W,F,\rho,\varsigma) 
:=\bigcup\left\{Z(\nu,\mu)\mid 
\ell(\mu)=\ell(\nu)\right\}. 
\] 
Then $\cG(W,F,\rho,\varsigma)$ is a wide open subgroupoid of $\cH$, 
and its subgroupoid graph is canonically isomorphic to $(W,F)$ 
over $(V,E)$, with the root section corresponding to $\varsigma$. 
\end{proposition}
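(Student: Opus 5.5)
The plan is to compute the relation $\sim_{\cG'}$ for $\cG':=\cG(W,F,\rho,\varsigma)$ explicitly. The key claim is
\[
\mu\sim_{\cG'}\nu\iff \ell(\mu)=\ell(\nu)
\]
for $\mu,\nu\in\Omega$; note that $\ell(\mu)=\ell(\nu)$ forces $t(\mu)=\rho(\ell(\mu))=t(\nu)$. Once this holds, the map $[\mu]_{\cG'}\mapsto\ell(\mu)$ is a well-defined injection $\Delta_{\cG'}^0\to W$. It is surjective because every vertex is reachable from $\varsigma(V)$, and it is compatible with $\rho$. It sends $[v]$ to $\varsigma(v)$, and it sends the edge $([\mu],e)$ to the unique lift of $e$ at $\ell(\mu)$, since $\ell(\mu e)=\ell(\mu)e$. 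Because both $\rho$ and $\rho_{\cG'}$ are right-covering, this gives the isomorphism over $(V,E)$ on edges as well.

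First I show that $\cG'$ is a wide open subgroupoid. It is open as a union of basic bisections, and it contains $X$ because $Z(v,v)=Z(v)$. It is closed under inverses by Lemma~\ref{lem:basicbisection} (1). For products, take $g\in Z(\nu,\mu)$ and $h\in Z(\lambda,\kappa)$ with $s(g)=r(h)$, where $\ell(\mu)=\ell(\nu)$ and $\ell(\kappa)=\ell(\lambda)$. The common point lies in $Z(\mu)\cap Z(\lambda)$, so $\mu$ and $\lambda$ are comparable; say $\lambda=\mu\eta$, the other case being symmetric. Then $g\in Z(\nu\eta,\mu\eta)$, and $\ell(\nu\eta)=\ell(\nu)\eta=\ell(\mu)\eta=\ell(\mu\eta)$, where $w\eta$ denotes the endpoint of the lift of $\eta$ at $w$. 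Hence $gh\in Z(\nu\eta,\kappa)$ with $\ell(\nu\eta)=\ell(\lambda)=\ell(\kappa)$, which lies in $\cG'$. The same extension identity $\ell(\mu\eta)=\ell(\mu)\eta$ will be used throughout.

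For the key claim, the direction $\ell(\mu)=\ell(\nu)\Rightarrow Z(\nu,\mu)\subset\cG'$ is immediate from the definition. The converse is the main obstacle, since $Z(\nu,\mu)\subset\cG'$ only says that $Z(\nu,\mu)$ is covered by sets $Z(\omega,\lambda)$ with $\ell(\lambda)=\ell(\omega)$, possibly of different lengths.

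I would proceed as follows. By compactness, $Z(\nu,\mu)$ is covered by finitely many such $Z(\omega,\lambda)$. Each of these meets $Z(\nu,\mu)$, so $\lambda$ is comparable with $\mu$. If $\lvert\lambda\rvert\le\lvert\mu\rvert$, Lemma~\ref{lem:basicbisection} (3) gives $\mu=\lambda\eta$ and $\nu=\omega\eta$, so $\ell(\mu)=\ell(\lambda)\eta=\ell(\omega)\eta=\ell(\nu)$ and we are done. Otherwise $\lambda=\mu\eta$ and $\omega=\nu\eta$, again by Lemma~\ref{lem:basicbisection} (3), and $\ell(\mu)\eta=\ell(\nu)\eta$.

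Refining all pieces to a common length $N$, I conclude that $\ell(\mu)\eta=\ell(\nu)\eta$ for every path $\eta$ of length $N$ starting at $t(\mu)$. Then I argue by downward induction using \eqref{eq:reducedrootedgraph}. Suppose $w:=\ell(\mu)\eta'$ and $w':=\ell(\nu)\eta'$ satisfy $we=w'e$ for all $e$. Since $\rho(w)=\rho(w')$, the reducedness condition \eqref{eq:reducedrootedgraph} forces $w=w'$. Descending from length $N$ to length $0$ gives $\ell(\mu)=\ell(\nu)$, which completes the key claim and the proof.
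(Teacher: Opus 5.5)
Your proposal is correct and follows essentially the same route as the paper: closure under products by passing to a common refinement of the two comparable prefixes, then compactness to reduce $Z(\nu,\mu)\subset\cG(W,F,\rho,\varsigma)$ to $\ell(\mu)\eta=\ell(\nu)\eta$ for all extensions $\eta$ of a fixed length $N$, and finally $N$ applications of the reducedness condition \eqref{eq:reducedrootedgraph}. Your identification of the subgroupoid graph via $[\mu]\mapsto\ell(\mu)$, with surjectivity coming from reachability from $\varsigma(V)$, is also the paper's argument.
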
 

\begin{proof} 
Define an equivalence relation on $\Omega$ by 
\[ 
\mu\sim\nu\quad:\iff\quad 
\ell(\mu)=\ell(\nu). 
\] 
If $\mu\sim\nu$, then necessarily $t(\mu)=t(\nu)$, 
and the right-covering property implies $\mu e\sim\nu e$ 
for every $e\in t(\mu)E$. 
Conversely, if $\mu e\sim\nu e$ for every $e\in t(\mu)E$, then 
\eqref{eq:reducedrootedgraph} implies $\ell(\mu)=\ell(\nu)$. 

Let $\cK:=\cG(W,F,\rho,\varsigma)$. 
It is open and contains $\cH^{(0)}$. 
It is closed under inverses by symmetry of $\sim$. 
To see that it is closed under products, 
let $g\in Z(\nu,\mu)$ and $h\in Z(\omega,\lambda)$ be composable, 
where $\mu\sim\nu$ and $\lambda\sim\omega$. 
By refining the two basic bisections around $g$ and $h$ 
using Lemma~\ref{lem:basicbisection} (2), (3), 
we may assume that $\mu=\omega$. 
The compatibility of $\sim$ with right extensions then 
gives $\lambda\sim\omega=\mu\sim\nu$, 
and hence $gh\in Z(\nu,\lambda)\subset\cK$. 
Thus $\cK$ is a wide open subgroupoid. 

It remains to identify its subgroupoid graph. 
If $Z(\nu,\mu)\subset\cK$, 
compactness and Lemma~\ref{lem:basicbisection} (2), (3) show that 
there exists $N\geq0$ such that $\ell(\mu\eta)=\ell(\nu\eta)$ 
for every path $\eta$ of length $N$ with $i(\eta)=t(\mu)$. 
Applying \eqref{eq:reducedrootedgraph} successively $N$ times yields 
$\ell(\mu)=\ell(\nu)$. 
Consequently the equivalence relation obtained from $\cK$ is exactly $\sim$. 
Since every vertex of $W$ is reachable from $\varsigma(V)$, 
the map $[\mu]_{\cK}\mapsto\ell(\mu)$ is a bijection on vertices and 
extends to the required graph isomorphism. 
\end{proof} 

It is useful to record 
how inclusions of subgroupoids appear on the graph side. 

\begin{proposition}\label{prop:subgroupoidgraphquotient} 
Let $\cG\subset\cH$ be a wide open subgroupoid. 
For every intermediate wide open subgroupoid $\cG\subset\cK\subset\cH$, 
the map 
\[ 
q_{\cG,\cK}:\Delta_{\cG}\longrightarrow\Delta_{\cK}, 
\qquad 
q_{\cG,\cK}([\mu]_{\cG})=[\mu]_{\cK}, 
\] 
is a surjective right-covering graph homomorphism satisfying 
\[ 
\rho_{\cK}\circ q_{\cG,\cK}=\rho_{\cG}, 
\qquad 
q_{\cG,\cK}\circ\varsigma_{\cG}=\varsigma_{\cK}. 
\] 

Conversely, suppose that 
$q:\Delta_{\cG}\to(W,F)$ is a surjective graph homomorphism and 
that there are maps 
\[ 
\rho:(W,F)\longrightarrow(V,E), 
\qquad 
\varsigma:V\longrightarrow W 
\] 
such that $(W,F,\rho,\varsigma)$ satisfies the hypotheses of 
Proposition~\ref{prop:subgroupoidgraphconverse} and 
\[ 
\rho\circ q=\rho_{\cG}, 
\qquad 
q\circ\varsigma_{\cG}=\varsigma. 
\] 
Then 
$\cK:=\cG(W,F,\rho,\varsigma)$ is an intermediate wide open subgroupoid, 
its subgroupoid graph is canonically isomorphic to $(W,F,\rho,\varsigma)$, 
and, under this identification, $q_{\cG,\cK}$ is equal to $q$. 

Consequently, intermediate wide open subgroupoids are in bijection 
with equivalence classes of such quotient maps. 
Here two quotient maps are called equivalent if an isomorphism of 
rooted graphs over $(V,E)$ between their targets intertwines the maps. 
\end{proposition}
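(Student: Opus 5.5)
For the first half, I would check directly that $q_{\cG,\cK}$ is well defined and has the stated properties. Since $\cG\subset\cK$, we have $Z(\nu,\mu)\subset\cG\implies Z(\nu,\mu)\subset\cK$, so $\mu\sim_{\cG}\nu$ implies $\mu\sim_{\cK}\nu$. Hence $[\mu]_{\cG}\mapsto[\mu]_{\cK}$ is a well-defined surjection on vertices, and it commutes with $\rho$ because both sides give $t(\mu)$. On edges I would set $q(Q,e):=(q(Q),e)$. This respects range, because $q([\mu]_{\cG}e)=[\mu e]_{\cK}=[\mu]_{\cK}e$, and it is bijective from $Q\Delta_{\cG}^1$ onto $q(Q)\Delta_{\cK}^1$, since both sets are indexed by $\rho(Q)E$. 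Edge surjectivity then follows from vertex surjectivity. The root compatibility $q\circ\varsigma_{\cG}=\varsigma_{\cK}$ is immediate, as both send $v$ to the class of the length-zero path $v$.

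For the converse, let $\ell_{\cG}$ and $\ell$ denote the lift-endpoint maps for $(\Delta_{\cG},\varsigma_{\cG})$ and $(W,F,\varsigma)$.

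1. I first show $q\circ\ell_{\cG}=\ell$. The graph map $q$ commutes with $\rho$ and sends roots to roots, and the lift of $\mu$ is unique because $\rho$ is right-covering. So $q$ carries the lift of $\mu$ in $\Delta_{\cG}$ to the lift of $\mu$ in $W$. Since $\ell_{\cG}(\mu)=[\mu]_{\cG}$, this gives $\ell(\mu)=q([\mu]_{\cG})$.

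2. Consequently $\mu\sim_{\cG}\nu$ implies $\ell(\mu)=\ell(\nu)$, and therefore $Z(\nu,\mu)\subset\cK$. Every arrow of $\cG$ lies in some basic bisection contained in $\cG$, because $\cG$ is open and the sets $Z(\nu,\mu)$ form a basis with each $Z(\nu,\mu)$ determining $t(\mu)=t(\nu)$. This yields $\cG\subset\cK$.

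3. Proposition~\ref{prop:subgroupoidgraphconverse} says $\cK$ is a wide open subgroupoid, and that $[\mu]_{\cK}\mapsto\ell(\mu)$ identifies $\Delta_{\cK}$ with $(W,F,\rho,\varsigma)$.

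4. Under this identification, $q_{\cG,\cK}([\mu]_{\cG})=[\mu]_{\cK}$ corresponds to $\ell(\mu)=q([\mu]_{\cG})$, so $q_{\cG,\cK}=q$ on vertices. On edges both maps are determined by $e$.

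For the bijection, I need the two assignments to be mutually inverse.

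1. Starting from $\cK$, pass to $q_{\cG,\cK}$ and then to $\cG(\Delta_{\cK},\rho_{\cK},\varsigma_{\cK})$. The subgroupoid obtained this way is $\bigcup\{Z(\nu,\mu)\mid\mu\sim_{\cK}\nu\}$, and this equals $\cK$ because $\cK$ is open and is the union of the basic bisections it contains.

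2. Starting from $q$, step 4 of the converse gives $q_{\cG,\cK}\cong q$ up to isomorphism of the targets.

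3. If $q$ and $q'$ are equivalent via an isomorphism $\phi$ of rooted graphs over $(V,E)$, then $\phi\circ\ell=\ell'$, so the two quotient maps produce the same $\cK$. This shows the correspondence is well defined on equivalence classes.

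The step I expect to need the most care is $\cG\subset\cK$, specifically the claim that $\cG$ equals the union of the basic bisections it contains. Openness of $\cG$ gives this cleanly, because the sets $Z(\nu,\mu)$ form a basis of $\cH$. The other steps are bookkeeping with unique lifts.
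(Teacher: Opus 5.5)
Your proposal is correct and follows essentially the same route as the paper. The first half rests on $\mu\sim_{\cG}\nu\implies\mu\sim_{\cK}\nu$. For the converse, you identify $q([\mu]_{\cG})$ with the endpoint of the rooted lift of $\mu$, which gives $\cG\subset\cK$ and $q_{\cG,\cK}=q$ via Proposition~\ref{prop:subgroupoidgraphconverse}. The bijection then follows from recovering $\cK$ as the union of the basic bisections it contains; you additionally spell out details the paper leaves implicit, namely the edge map, uniqueness of lifts, and well-definedness on equivalence classes.
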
 

\begin{proof} 
The first assertion is immediate 
from $\mu\sim_{\cG}\nu\implies\mu\sim_{\cK}\nu$. 

Conversely, let $q:\Delta_{\cG}\to(W,F)$ be as in the statement 
and put $\cK:=\cG(W,F,\rho,\varsigma)$. 
If $Z(\nu,\mu)\subset\cG$, then $[\mu]_{\cG}=[\nu]_{\cG}$, 
and hence $q([\mu]_{\cG})=q([\nu]_{\cG})$. 
The latter two vertices are precisely the terminal vertices 
of the lifts of $\mu$ and $\nu$ to $(W,F)$ 
starting at the corresponding roots. 
Thus $Z(\nu,\mu)\subset\cK$, proving $\cG\subset\cK$. 
Proposition~\ref{prop:subgroupoidgraphconverse} identifies 
$\Delta_{\cK}$ canonically with $(W,F)$, 
and the same description of terminal vertices shows that 
this identification carries $q_{\cG,\cK}$ to $q$. 

Finally, reconstructing the subgroupoid from $q_{\cG,\cK}$ gives 
\[ 
\bigcup\{Z(\nu,\mu)\mid [\mu]_{\cK}=[\nu]_{\cK}\}=\cK, 
\] 
since these basic bisections form a basis for the topology of $\cK$. 
Together with the canonical identification of $q$ with $q_{\cG,\cK}$ 
above, this shows that the two constructions are inverse 
up to the stated equivalence. 
\end{proof} 

The quotient corresponding to the ambient groupoid $\cH$ is 
$\rho_{\cG}:\Delta_{\cG}\to(V,E)$. 
Proposition~\ref{prop:subgroupoidgraphquotient} therefore gives 
the following graph-theoretic formulation of primeness. 

\begin{corollary}\label{cor:primegraph} 
A proper wide open subgroupoid $\cG\subset\cH$ is prime 
in the sense of Definition~\ref{def:prime} 
if and only if every quotient map of 
$(\Delta_{\cG},\rho_{\cG},\varsigma_{\cG})$ 
in the sense of Proposition~\ref{prop:subgroupoidgraphquotient} 
is equivalent either to 
$\id_{\Delta_{\cG}}:\Delta_{\cG}\to\Delta_{\cG}$ 
or to $\rho_{\cG}:\Delta_{\cG}\to(V,E)$. 
\end{corollary}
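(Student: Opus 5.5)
The corollary follows almost formally from Proposition~\ref{prop:subgroupoidgraphquotient}. The plan is to translate each side of the equivalence through the bijection between intermediate wide open subgroupoids $\cG\subset\cK\subset\cH$ and equivalence classes of quotient maps of $(\Delta_{\cG},\rho_{\cG},\varsigma_{\cG})$. Under this bijection, $\cK$ corresponds to the class of $q_{\cG,\cK}$. It then suffices to identify which classes correspond to the two trivial choices $\cK=\cG$ and $\cK=\cH$.

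First, for $\cK=\cG$ the map $q_{\cG,\cG}$ is $\id_{\Delta_{\cG}}$. For $\cK=\cH$ the relation $\sim_{\cH}$ is simply $t(\mu)=t(\nu)$, since every basic bisection $Z(\nu,\mu)$ lies in $\cH$. Thus $[\mu]_{\cH}\mapsto t(\mu)$ identifies $\Delta_{\cH}$ with $(V,E)$, with $\rho_{\cH}=\id$ and $\varsigma_{\cH}=\id_V$. Under this identification $q_{\cG,\cH}=\rho_{\cG}$, as already remarked before the statement. The one point to check is that $(V,E,\id,\id)$ satisfies the hypotheses of Proposition~\ref{prop:subgroupoidgraphconverse}. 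Reachability is trivial, and the reducedness condition is vacuous because $\rho=\id$ is injective.

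Next, assume $\cG$ is prime. Given any admissible quotient map $q$, Proposition~\ref{prop:subgroupoidgraphquotient} produces an intermediate $\cK=\cG(W,F,\rho,\varsigma)$ with $q$ equivalent to $q_{\cG,\cK}$. Primeness forces $\cK\in\{\cG,\cH\}$, so $q$ is equivalent to $\id_{\Delta_{\cG}}$ or to $\rho_{\cG}$. Conversely, assume every quotient map is equivalent to one of these two. For an intermediate $\cK$, the map $q_{\cG,\cK}$ is such a quotient map by the first part of Proposition~\ref{prop:subgroupoidgraphquotient}. By injectivity of the bijection, $\cK$ equals $\cG$ or $\cH$. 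Hence $\cG$ is prime; properness is assumed throughout.

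The only mildly delicate step is the bookkeeping that equivalence of quotient maps matches equality of subgroupoids. This is exactly the ``consequently'' clause of Proposition~\ref{prop:subgroupoidgraphquotient}, together with the observation that $\Delta_{\cK}$ determines $\cK$ as the union of the basic bisections $Z(\nu,\mu)$ with $[\mu]_{\cK}=[\nu]_{\cK}$. I expect no real obstacle beyond this.
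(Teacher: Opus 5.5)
Your proposal is correct and follows the paper's own argument: the corollary is read off from the bijection in Proposition~\ref{prop:subgroupoidgraphquotient}, using that $\cK=\cG$ corresponds to $\id_{\Delta_{\cG}}$ and $\cK=\cH$ corresponds to $\rho_{\cG}$ under the identification $\Delta_{\cH}\cong(V,E)$. Your additional checks are exactly the bookkeeping the paper leaves implicit, namely that $(V,E,\id,\id)$ is admissible and that equivalent quotient maps determine the same subgroupoid.
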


%%%%%%%%%%%%%%%%%%%%%%%%%%%%%%%%%%%%%%%%%%%%%%%%%%%%%%%%%%%%
\subsection{The graph-groupoid model} 
%%%%%%%%%%%%%%%%%%%%%%%%%%%%%%%%%%%%%%%%%%%%%%%%%%%%%%%%%%%%

The subgroupoid graph gives a concrete model for $\cG$. 
This elementary observation is of independent interest: 
every wide open subgroupoid of an irreducible SFT groupoid is 
isomorphic to a full compact open reduction of a countable graph 
groupoid, without any minimality or primeness assumption. 
For the full two-shift, the finite nuclear case is closely related 
to the topological full group model in \cite[Theorem 6.14]{BBQS25TAMS}. 
A related graph model for AF groupoids is given in 
\cite[Proposition 11.25]{NO19IJM}. 

\begin{proposition}\label{prop:subgroupoidgraphmodel} 
Let $\cG\subset\cH$ be a wide open subgroupoid and 
put $\Delta:=\Delta_{\cG}$. 
Define 
\[ 
S:=\bigsqcup_{v\in V}Z(\varsigma_{\cG}(v)) 
\subset X_{\Delta}. 
\] 
Then $S$ is a full compact open subset of $X_{\Delta}$, 
and $\rho_{\cG}$ induces an isomorphism 
$\cG_{\Delta}|S\longrightarrow\cG$, 
where $\cG_{\Delta}$ is the graph groupoid of $\Delta$. 
In particular, $\cG$ is Kakutani equivalent to $\cG_{\Delta}$. 
\end{proposition}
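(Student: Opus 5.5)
The plan is to define the map explicitly on infinite paths and on arrows, check that it is a bijective groupoid homomorphism, and then show it is a homeomorphism by matching basic bisections.

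\emph{Unit spaces.} Since $\rho_{\cG}$ is right-covering, every infinite path $x=x_1x_2\cdots$ in $X=X_{(V,E)}$ has a unique lift starting at the root $\varsigma_{\cG}(i(x_1))$. Its $k$-th vertex is $[x_1\cdots x_k]_{\cG}$. Conversely, $\rho_{\cG}$ maps $Z(\varsigma_{\cG}(v))$ bijectively onto $Z(v)$. So $\rho_{\cG}$ restricts to a bijection $S\to X$. It carries $Z(\tilde\mu)\cap S$ onto $Z(\mu)$, where $\tilde\mu$ is the rooted lift of $\mu$. These sets form bases on both sides, so $\rho_{\cG}|_S$ is a homeomorphism.

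\emph{$S$ is compact open and full.} $S$ is a finite union of cylinders, hence compact open. Every vertex $Q=[\mu]_{\cG}$ is the terminus of the rooted lift of $\mu$. Hence for any $y\in Z(Q)$ we have $y=\sigma^{|\mu|}(\tilde\mu y)$ with $\tilde\mu y\in S$, so $S$ meets every orbit.

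\emph{The map on arrows.} An arrow of $\cG_\Delta|S$ has the form $(\tilde\nu z,\,|\nu|-|\mu|,\,\tilde\mu z)$ with $z\in Z(Q)$. Here $\tilde\mu,\tilde\nu$ are rooted lifts of $\mu,\nu$ that both end at $Q$, so $[\mu]_{\cG}=[\nu]_{\cG}$. Define
\[
\Phi(\tilde\nu z,k,\tilde\mu z):=(\rho(\tilde\nu z),k,\rho(\tilde\mu z)).
\]
This arrow lies in $Z(\nu,\mu)$, and $Z(\nu,\mu)\subset\cG$ because $\mu\sim_{\cG}\nu$. Thus $\Phi$ maps into $\cG$. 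It is clearly a homomorphism, and it is injective because $\rho_{\cG}$ is injective on $S$. On basic sets, $\Phi$ sends $Z(\tilde\nu,\tilde\mu)\cap(\cG_\Delta|S)$ onto $Z(\nu,\mu)$.

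\emph{Surjectivity (the main point).} Let $g=(x,k,y)\in\cG$. Since $\cG$ is open in $\cH$, $g$ lies in some basic bisection $Z(\nu,\mu)\subset\cG$. Then $\mu\sim_{\cG}\nu$, so the rooted lifts of $\mu,\nu$ end at the same vertex. By uniqueness of lifts, the lifts of $x,y$ share a common tail after $\tilde\nu$ and $\tilde\mu$ respectively. Hence $g$ is the image of an arrow of $\cG_\Delta|S$.

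The same computation shows that $\Phi$ maps a basis of $\cG_\Delta|S$ onto a basis of $\cG$: the images $Z(\nu,\mu)$ with $\mu\sim_{\cG}\nu$ form a basis of $\cG$, since $\cG$ is open and the cylinder bisections form a basis of $\cH$. Therefore $\Phi$ is open and continuous, hence a groupoid isomorphism.

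Kakutani equivalence then follows directly from the definition, since $S$ is a full clopen subset of $X_\Delta$. The only delicate point is the surjectivity step, where openness of $\cG$ is used to place each arrow in a cylinder bisection contained in $\cG$.
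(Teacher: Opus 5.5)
Your proposal is correct and follows essentially the same route as the paper. Unique rooted lifts make $\rho_{\cG}|S$ a homeomorphism onto $X$, and reachability from the roots gives fullness; each arrow of $\cG_{\Delta}|S$ lies over a basic bisection $Z(\nu,\mu)$ with $[\mu]_{\cG}=[\nu]_{\cG}$, openness of $\cG$ gives surjectivity, and the matching of basic bisections gives the topological isomorphism, exactly as in the paper's final sentence.
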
 

\begin{proof} 
As $V$ is finite, $S$ is compact open. 
By a slight abuse of notation, 
the maps induced by $\rho_{\cG}$ on infinite path spaces and 
graph groupoids are also denoted by $\rho_{\cG}$. 

Since $\rho_{\cG}$ is right-covering, 
every $x\in X_{(V,E)}$ admits a unique lift $\widehat{x}\in X_{\Delta}$ 
with $i(\widehat{x}_1)=\varsigma_{\cG}(i(x_1))$, 
and this lift belongs to $S$. 
Hence $\rho_{\cG}|S:S\to X_{(V,E)}$ is a homeomorphism. 
To see that $S$ is full, 
let $\tilde x\in X_{\Delta}$ and put $Q:=i(\tilde x_1)$. 
Since every vertex of $\Delta$ is reachable from $\varsigma_{\cG}(V)$, 
there exists a finite path $\tilde\mu$ 
from a vertex in $\varsigma_{\cG}(V)$ to $Q$. 
Then $\tilde\mu\tilde x\in S$ 
and $(\tilde\mu\tilde x,\lvert\tilde\mu\rvert,\tilde x) \in\cG_{\Delta}$, 
so every $\cG_{\Delta}$-orbit meets $S$. 

Let $(\tilde x,n,\tilde y)\in\cG_{\Delta}|S$ and write 
$\tilde x=\tilde\nu\tilde z$, $\tilde y=\tilde\mu\tilde z$, 
$n=\lvert\tilde\nu\rvert-\lvert\tilde\mu\rvert$. 
Put $\nu:=\rho_{\cG}(\tilde\nu)$ and $\mu:=\rho_{\cG}(\tilde\mu)$. 
Since $\tilde x,\tilde y\in S$, 
the paths $\tilde\nu$ and $\tilde\mu$ are the lifts of $\nu$ and $\mu$ 
starting at the corresponding roots. 
Their terminal vertices both equal $i(\tilde z_1)$, 
and hence $[\mu]_{\cG}=[\nu]_{\cG}$. 
Therefore $Z(\nu,\mu)\subset\cG$, 
and the image of $(\tilde x,n,\tilde y)$ belongs to $\cG$. 

Conversely, every element of $\cG$ lies in some $Z(\nu,\mu)\subset\cG$. 
Then $[\mu]_{\cG}=[\nu]_{\cG}$, 
so the lifts of $\mu$ and $\nu$ starting at the corresponding roots have 
the same terminal vertex and give a preimage in $\cG_{\Delta}|S$. 
Uniqueness follows from the injectivity of $\rho_{\cG}|S$. 
The resulting groupoid homomorphism carries 
basic compact open bisections onto basic compact open bisections and 
is therefore an isomorphism. 
\end{proof} 

The graph model also gives a dichotomy at the groupoid level, 
corresponding to the familiar dichotomy for simple graph 
$C^*$-algebras in Proposition~\ref{prop:graphalgebras}. 

\begin{corollary}\label{cor:minimalsubgroupoiddichotomy} 
Let $\cG\subset\cH$ be a minimal wide open subgroupoid. 
Then $\cG$ is AF if $\Delta_{\cG}$ has no closed paths, 
and is purely infinite otherwise. 
\end{corollary}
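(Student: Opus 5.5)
The plan is to transfer the question to the countable graph $\Delta:=\Delta_{\cG}$ through Proposition~\ref{prop:subgroupoidgraphmodel}, which gives $\cG\cong\cG_{\Delta}|S$ with $S$ full and compact open. Minimality of $\cG$ then becomes a statement about $\Delta$. Since $S$ is full, every $\cG_\Delta$-orbit meets $S$. Every nonempty open subset of $X_\Delta$ contains a cylinder $Z(\tilde\mu)$, and each such cylinder is $\cG_\Delta$-equivalent to a cylinder inside $S$ via a basic bisection (prepend a path from a root, as in the proof of fullness). Hence minimality of $\cG_\Delta|S$ gives minimality of $\cG_\Delta$, and $(\Delta,F)$ is cofinal. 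We already know that every closed path in $\Delta$ passes through a vertex of outdegree at least two, so Condition~(L) holds and $\cG_\Delta$ is effective.

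Suppose first that $\Delta$ has no closed paths. Then every finite path in $\Delta$ is injective on vertices. I would show $\cG_\Delta|S$ is AF by writing it as the increasing union of
\[
\cK_n:=\{(\tilde\nu\tilde z,\lvert\tilde\nu\rvert-\lvert\tilde\mu\rvert,\tilde\mu\tilde z)\in\cG_\Delta|S\mid \lvert\tilde\nu\rvert,\lvert\tilde\mu\rvert\le n\}.
\]
Each $\cK_n$ is a finite union of basic bisections, because only finitely many paths of length at most $n$ start at the finitely many roots. Hence $\cK_n$ is compact open. I still need to check that $\cK_n$ is a subgroupoid and that it is principal. Both follow from the absence of closed paths: an arrow $(\tilde x,k,\tilde x)$ with $k\neq 0$ would force $\sigma^a\tilde x=\sigma^b\tilde x$ with $a\neq b$, hence a closed path. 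For closure under products I would use the normalization $\lvert\tilde\mu\rvert=n$ after refining via Lemma~\ref{lem:basicbisection}~(2). Transported through the isomorphism, this exhibits $\cG$ as AF. Alternatively, one can cite Proposition~\ref{prop:graphalgebras}~(2) together with the known correspondence between AF graph algebras and AF groupoids, but the direct argument avoids this.

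Now suppose $\Delta$ has a closed path $\tilde\gamma$. Cofinality yields, for every vertex $Q$, a path from $Q$ to the vertex set of $\tilde\gamma$. Condition~(L) gives an exit of $\tilde\gamma$, and cofinality applied to the infinite path starting along that exit returns us to $\tilde\gamma$. So from each vertex $Q$ I would build two incomparable paths $\tilde\alpha_Q,\tilde\beta_Q$ from $Q$ to a common vertex $P$ on $\tilde\gamma$: one going directly around $\tilde\gamma$, the other leaving through the exit and coming back. Each should return to a suitable common point, with lengths arranged by winding around $\tilde\gamma$.

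For a path $\tilde\mu$ with $t(\tilde\mu)=Q$, I then want bisections with source $Z(\tilde\mu)$ and disjoint ranges inside $Z(\tilde\mu)$. This requires returning to $Q$ itself, not only reaching $P$, and here lies the main obstacle: $\Delta$ need not be strongly connected, so Lemma~\ref{lem:graphpureinfiniteness} does not apply directly. My first attempt is to replace the target by a subcylinder. Inside $Z(\tilde\mu)$, take $\tilde\mu\tilde\pi$ ending at $P$, and use the two incomparable closed paths $\tilde\alpha,\tilde\beta$ at $P$ (going around $\tilde\gamma$, versus leaving through the exit and returning). These give $Z(\tilde\mu\tilde\pi\tilde\alpha,\tilde\mu\tilde\pi)$ and $Z(\tilde\mu\tilde\pi\tilde\beta,\tilde\mu\tilde\pi)$, which halve $Z(\tilde\mu\tilde\pi)$. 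To handle all of $Z(\tilde\mu)$, I would instead use minimality of $\cG$ together with the criterion of \cite[Definition 4.9]{Ma15crelle}: it suffices to find a single nonempty clopen set that is properly infinite. Then minimality and compactness of the unit space propagate this to every clopen set by covering with finitely many translates of subsets. That reduction would be cited from \cite{Ma15crelle}, or proved by a standard finite-cover argument.
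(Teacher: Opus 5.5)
The transfer to $\Delta$ and the derivation of cofinality and Condition~(L) are fine. The AF step, however, has a genuine gap. The sets $\cK_n$ cut out by the length bound are in general not closed under multiplication. No normalization of representatives can repair this, because the product genuinely lies outside $\cK_n$.

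The reason is that acyclicity does not force rooted paths to a common vertex to have equal length. Suppose:
\begin{itemize}
\item $\tilde\omega,\tilde\lambda$ are rooted paths to a common vertex with $\lvert\tilde\omega\rvert<\lvert\tilde\lambda\rvert\le n$;
\item $\tilde\eta$ continues them with $\lvert\tilde\omega\tilde\eta\rvert=n$;
\item $\tilde\nu$ is a rooted path of length at most $n$ ending at $t(\tilde\eta)$ whose last edge differs from that of $\tilde\eta$.
\end{itemize}
An arrow of $Z(\tilde\nu,\tilde\omega\tilde\eta)$ composed with an arrow of $Z(\tilde\omega,\tilde\lambda)$ lies in $Z(\tilde\nu,\tilde\lambda\tilde\eta)$. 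Since the last edges differ, no common tail can be cancelled. Hence every representative has source prefix of length at least $\lvert\tilde\lambda\rvert+n-\lvert\tilde\omega\rvert>n$, and the product is not in $\cK_n$.

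This occurs for genuine subgroupoid graphs. Take the rooted graph with vertices $v_k,u_k$ ($k\ge0$) and root $v_0$. Its edges are $v_k\to v_{k+1}$ (label $0$), $v_k\to u_k$ (label $1$), and two edges $u_k\to v_{k+1}$ (labels $0,1$). It is reduced, acyclic and cofinal. By Propositions~\ref{prop:subgroupoidgraphconverse} and~\ref{prop:subgroupoidgraphmodel}, it is $\Delta_{\cG}$ for a minimal wide open $\cG\subset\cH_2$, namely the $3^\infty$ tail groupoid coded by $0,10,11$. Take $n=3$, $\tilde\omega=(v_0\to v_1)$, $\tilde\lambda=(v_0\to u_0\to v_1)$, $\tilde\eta=(v_1\to u_1\to v_2)$ and $\tilde\nu=(v_0\to v_1\to v_2)$.

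The repair, which is what the paper does, is to exhaust by finite sets $F$ of \emph{edges} rather than by lengths. Take $S$ together with all $Z(\tilde\nu,\tilde\mu)$ with $\tilde\mu,\tilde\nu$ rooted, ending at a common vertex, and using only edges in $F$. After prefix cancellation, the new prefix $\tilde\nu\tilde\eta$ or $\tilde\lambda\tilde\eta$ still uses only edges of $F$, since $\tilde\eta$ is a segment of one of the given paths. In an acyclic graph only finitely many paths use edges from a finite set. So these are elementary subgroupoids increasing to $\cG_\Delta|S$.

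In the purely infinite case, your subcylinder construction is the right ingredient, but the reduction you invoke is not what a finite-cover argument proves. Start from one properly infinite clopen set $A$ and an arbitrary clopen $B$. Minimality lets you carry a finite partition of $B$ into $A$, which shows only that $B$ embeds in $A$. Pieces of $A$ carried into $B$ may overlap, so nothing produces room inside $B$.

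What does propagate is the local statement your construction already yields: every cylinder $Z(\tilde\mu)\subset S$ contains the properly infinite subcylinder $Z(\tilde\mu\tilde\pi)$. Given $B$, choose such a set $A'\subset B$. Carry a partition $B=\bigsqcup_{i=1}^mB_i$ into $A'$ by bisections $W_i$. Then compose with bisections $T_1,\dots,T_{2m}$ having source $A'$ and mutually disjoint ranges inside $A'$, obtained by iterating the two halving bisections. The bisections $\bigsqcup_iT_iW_i$ and $\bigsqcup_iT_{m+i}W_i$ witness proper infiniteness of $B$.

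The paper avoids minimality here. The strongly connected component of the closed path is terminal, and every infinite path enters it. So every compact open subset of $S$ is a finite disjoint union of cylinders ending in that component, and each is halved by two incomparable closed paths inside it.
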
 

\begin{proof} 
Write $\cG\cong\cG_{\Delta}|S$ as in 
Proposition~\ref{prop:subgroupoidgraphmodel}. 
Since $S$ is full and $\cG$ is minimal and effective, 
$\Delta$ is cofinal and satisfies Condition~\textup{(L)}. 

If $\Delta$ has no closed paths, exhaust its edges by finite sets. 
For each such set, take $S$ together with all $Z(\nu,\mu)$ 
where $\mu,\nu$ start at roots, end at the same vertex, and 
use only edges in that set. 
Acyclicity makes these finite unions of compact open bisections 
with trivial isotropy. Prefix cancellation shows that they are subgroupoids, 
and they increase to $\cG_{\Delta}|S$. Thus $\cG$ is AF. 

Otherwise, let $C$ be the strongly connected component containing 
a closed path. Cofinality implies that every vertex can reach $C$, 
by applying it to the infinite repetition of that path. 
Every vertex reachable from $C$ can return to $C$ and thus belongs 
to $C$; hence no edge leaves $C$. 
Applying cofinality from a vertex of $C$ to any infinite path 
shows that the path eventually enters $C$. 
Condition~\textup{(L)} ensures that $\Delta|C$ has a branching vertex. 
Every compact open subset of $S$ can therefore be partitioned 
into finitely many cylinders whose terminal vertices lie in $C$. 
The cylinder argument of Lemma~\ref{lem:graphpureinfiniteness} 
then proves pure infiniteness of $\cG_{\Delta}|S$. 
\end{proof} 

In Section~\ref{sec:minimal}, this dichotomy will allow us to apply 
the general results for AF and purely infinite groupoids to minimal 
wide open subgroupoids of $\cH$.

\begin{example}\label{ex:cocyclesubgroupoidgraph} 
Let $m\geq2$ and assign a label $\ell(e)\in\Z/m$ to each $e\in E$. 
For a finite path $\mu$, put $\ell(\mu):=\sum_j\ell(\mu_j)$, 
with $\ell(v)=0$ for $v\in V$. 
Define a continuous homomorphism $\xi:\cH\to\Z/m$ by 
\[ 
\xi(x,k-l,y):=\ell(x_1\cdots x_k)-\ell(y_1\cdots y_l) 
\quad\text{when }\sigma^k(x)=\sigma^l(y). 
\] 
The value is unchanged when $k$ and $l$ are increased equally. 
Put $\cK:=\Ker\xi$ for this example. 
For $\mu,\nu\in\Omega$ with $t(\mu)=t(\nu)$, 
$\xi$ is constant on $Z(\nu,\mu)$, with value 
$\ell(\nu)-\ell(\mu)$. Hence 
\[ 
\mu\sim_{\cK}\nu 
\iff t(\mu)=t(\nu)\ \text{ and }\ 
\ell(\mu)=\ell(\nu). 
\] 
Define a graph $(W,F)$ by 
\[ 
W:=V\times\Z/m,\qquad F:=E\times\Z/m, 
\qquad 
i(e,b):=(i(e),b),\quad t(e,b):=(t(e),b+\ell(e)). 
\] 
Then $\Delta_{\cK}$ is the subgraph consisting of the vertices 
reachable from $V\times\{0\}$ and their outgoing edges, 
with $\varsigma_{\cK}(v)=(v,0)$ and with $(e,b)$ labelled by $e$. 
In particular, it is the whole graph $(W,F)$ when that graph 
is strongly connected. 

Sending a path in $(W,F)$ to its projected path and its initial 
layer identifies its path space with $X_{(V,E)}\times\Z/m$ 
and its groupoid with $\cH\times_\xi\Z/m$. 
Under this identification, the root cylinders correspond to 
$X_{(V,E)}\times\{0\}$. 
Proposition~\ref{prop:subgroupoidgraphmodel} therefore gives 
\[ 
\Ker\xi\cong 
(\cH\times_\xi\Z/m)|\left(X_{(V,E)}\times\{0\}\right). 
\] 
For $m=2$ and $E=E_0\sqcup E_1$, take $\ell(e)=i$ for $e\in E_i$. 
This is the cocycle used for the two-to-one covers in 
\cite[Section 4.3]{Ma26maximal1}. 
The factor map $\cH\times_\xi\Z/m\to\cH$ and the wide open 
inclusion $\Ker\xi\subset\cH$ thus arise from the same cocycle; 
see \cite[Section 2.5]{Ma26maximal1} for factor maps and their pullbacks. 
Their associated subgroups are different: the factor pullback 
acts on the covering space, whereas $\sF(\Ker\xi)$ acts on 
$X_{(V,E)}$. 
The inclusion of the covering groupoid into its semidirect product 
by the deck group is prime when $m$ is prime and the covering 
groupoid is minimal; 
see Example~\ref{ex:cycliccovers}. 
\end{example}

%%%%%%%%%%%%%%%%%%%%%%%%%%%%%%%%%%%%%%%%%%%%%%%%%%%%%%%%%%%%
\subsection{Topological full groups and normalizers} 
%%%%%%%%%%%%%%%%%%%%%%%%%%%%%%%%%%%%%%%%%%%%%%%%%%%%%%%%%%%%

We use the subgroupoid graph to describe the topological full group 
of a wide open subgroupoid and its normalizer in the ambient 
topological full group. 

For groups $K\subset L$, we use the notation 
\[ 
\Nor(L,K):=\{h\in L\mid hKh^{-1}=K\} 
\] 
for the usual group normalizer, as in \cite{Ma26maximal1}. 
For a wide open subgroupoid $\cG\subset\cH$, define 
\[ 
\cN_{\cH}(\cG):= 
\{\alpha\in\sF(\cH)\mid 
U_\alpha\cG U_\alpha^{-1}=\cG\}, 
\] 
where $U_\alpha\in\cCO(\cH)$ is the unique full bisection 
satisfying $\alpha=\theta_{U_\alpha}$. 
It is immediate that $\cN_{\cH}(\cG)$ is a subgroup of $\sF(\cH)$, 
and $\sF(\cG)$ is a normal subgroup of $\cN_{\cH}(\cG)$. 

The two normalizers agree whenever the subgroupoid has sufficiently 
large orbits; in particular, they agree when $\cG$ is minimal. 

\begin{lemma}\label{lem:groupnormalizers} 
Let $\cG\subset\cH$ be a wide open subgroupoid such that every 
$\cG$-orbit contains at least three points. Then 
\[ 
\cN_{\cH}(\cG) 
=\Nor\left(\sF(\cH),\sF(\cG)\right) 
=\Nor\left(\sF(\cH),\sD(\cG)\right). 
\] 
\end{lemma}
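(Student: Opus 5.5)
The plan is to prove the chain of inclusions
\[
\cN_{\cH}(\cG)\subset\Nor\left(\sF(\cH),\sF(\cG)\right)\subset\Nor\left(\sF(\cH),\sD(\cG)\right)\subset\cN_{\cH}(\cG).
\]
The first inclusion is direct. If $U_\alpha\cG U_\alpha^{-1}=\cG$ and $\beta=\theta_V$ with $V\subset\cG$ a full bisection, then $\alpha\beta\alpha^{-1}=\theta_{U_\alpha VU_\alpha^{-1}}$. Here $U_\alpha VU_\alpha^{-1}$ is a compact open full bisection contained in $\cG$, so $\alpha\sF(\cG)\alpha^{-1}\subset\sF(\cG)$; applying the same argument to $\alpha^{-1}$ gives equality. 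The second inclusion is formal, since $\sD(\cG)$ is characteristic in $\sF(\cG)$.

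The real content is the last inclusion. Let $\alpha\in\sF(\cH)$ normalize $\sD(\cG)$ and put $\cG':=U_\alpha\cG U_\alpha^{-1}$, a wide open subgroupoid. By symmetry it suffices to show $\cG'\subset\cG$, since the reverse inclusion follows by applying the same argument to $\alpha^{-1}$. Note that $\sD(\cG')=\alpha\sD(\cG)\alpha^{-1}=\sD(\cG)$. So the key claim is: \emph{if every $\cG'$-orbit has at least three points, then every arrow of $\cG'$ is a germ of an element of $\sD(\cG')$}. Applying this to $\cG'$, we get that each $g\in\cG'$ is the germ at $s(g)$ of some $\gamma\in\sD(\cG)\subset\sF(\cG)$. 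Because $\cH$ is effective, the germ of $\gamma$ at $s(g)$ is a unique arrow of $\cH$, and it lies in the full bisection of $\gamma$, which is contained in $\cG$. Hence $g\in\cG$. One must also check that the orbit hypothesis transfers to $\cG'$. This holds because $\theta_{U_\alpha}$ maps $\cG$-orbits bijectively onto $\cG'$-orbits.

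To prove the key claim, take $g\in\cG'$ with $x=s(g)$ and $y=r(g)$.

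If $x\neq y$, use the third orbit point: choose $z$ in the orbit distinct from $x$ and $y$, with an arrow $h\in\cG'$ from $x$ to $z$. Shrink compact open bisections $U\ni g$ and $V\ni h$ inside $\cG'$ so that $s(U)=s(V)$ and $s(U)$, $r(U)$, $r(V)$ are mutually disjoint; this is possible since $X$ is Hausdorff and totally disconnected. Lemma~\ref{threecycle}, applied with $\cG'$ in place of the ambient groupoid, gives $\tau_V\tau_U=[\tau_U,\tau_V]\in\sD(\cG')$ agreeing with $\theta_U$ on $s(U)$. Its germ at $x$ is therefore $g$.

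I expect the main obstacle to be the case $x=y$, i.e.\ isotropy arrows, including units (units are trivial, being germs of the identity). Lemma~\ref{threecycle} is stated for minimal groupoids, but parts (1) and (2) are purely local and should not need minimality; I would verify this directly. For a nontrivial isotropy arrow $g\in\cG'_x$, pick $z\neq x$ in the orbit and an arrow $k$ from $x$ to $z$, and factor $g=k^{-1}(kg)$. Both $kg$ and $k^{-1}$ join distinct points, so each is the germ of an element of $\sD(\cG')$ by the previous case. Since germs multiply, $g$ is the germ of the product of these two elements, which lies in $\sD(\cG')$.

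With the key claim in hand the three inclusions close up, giving
\[
\cN_{\cH}(\cG)=\Nor\left(\sF(\cH),\sF(\cG)\right)=\Nor\left(\sF(\cH),\sD(\cG)\right).
\]
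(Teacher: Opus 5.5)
Your proof is correct and follows the paper's route. You prove the same chain of inclusions, and the nontrivial step is that the full bisections of elements of $\sD(\cG)$ cover $\cG$. The paper cites this from \cite[Lemma 3.7]{Ma15crelle}, whereas you reprove it with local three-cycles; your remark that parts (1) and (2) of Lemma~\ref{threecycle} need no minimality is right.

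Passing through $\cG'=U_\alpha\cG U_\alpha^{-1}$ is a harmless detour. Applying the covering claim to $\cG$ itself and using $U_\alpha U_\gamma U_\alpha^{-1}=U_{\alpha\gamma\alpha^{-1}}\subset\cG$ avoids having to transfer the orbit hypothesis.
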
 

\begin{proof} 
By \cite[Lemma 3.7]{Ma15crelle}, the full bisections associated 
with $\sD(\cG)$ cover $\cG$; see also 
\cite[Lemma 4.9 (2)]{NO19IJM}. 
Thus normalizing $\sD(\cG)$ implies normalizing $\cG$. 
The reverse inclusions follow because $\sD(\cG)$ is 
characteristic in $\sF(\cG)$. 
\end{proof} 

We first record the partial action of $\sF(\cH)$ on finite paths. 
For $\alpha\in\sF(\cH)$, set 
\[ 
\Omega(\alpha):= 
\{\mu\in\Omega\mid 
Z(\nu,\mu)\subset U_\alpha\ \text{for some }\nu\in\Omega\}. 
\] 

\begin{lemma}\label{lem:subgroupoidpartialaction} 
Let $\alpha,\beta\in\sF(\cH)$. 
\begin{enumerate} 
\item For $\mu\in\Omega(\alpha)$, the path $\nu$ above is unique; 
we write it as $\alpha(\mu)$. 
One has $t(\alpha(\mu))=t(\mu)$ and $\alpha(Z(\mu))=Z(\alpha(\mu))$. 
\item $\Omega\setminus\Omega(\alpha)$ is finite, 
and $\mu\in\Omega(\alpha)$ implies $\mu\eta\in\Omega(\alpha)$ and 
$\alpha(\mu\eta)=\alpha(\mu)\eta$ for every finite path $\eta$ 
which can be concatenated to $\mu$. 
\item The map $\mu\mapsto\alpha(\mu)$ is a bijection 
from $\Omega(\alpha)$ onto $\Omega(\alpha^{-1})$. 
\item If $\mu\in\Omega(\alpha)$ and $\alpha(\mu)\in\Omega(\beta)$, 
then $\mu\in\Omega(\beta\alpha)$ and 
$(\beta\alpha)(\mu)=\beta(\alpha(\mu))$. 
\end{enumerate} 
\end{lemma}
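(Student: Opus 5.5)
The plan is to work throughout with the unique full bisection $U_\alpha$ and its decomposition into basic bisections, using Lemma~\ref{lem:basicbisection}. Since $U_\alpha$ is compact open and the sets $Z(\nu,\mu)$ form a basis, we can write $U_\alpha$ as a finite disjoint union $\bigsqcup_{j}Z(\nu_j,\mu_j)$. Here $\{\mu_j\}$ and $\{\nu_j\}$ are complete prefix sets, because $s(U_\alpha)=r(U_\alpha)=X$ and the $Z(\mu_j)$ are disjoint.

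For (1), suppose $Z(\nu,\mu)\subset U_\alpha$ and $Z(\nu',\mu)\subset U_\alpha$. Both are subsets of the bisection $U_\alpha$ with the same source $Z(\mu)$, so they coincide. Comparing the middle coordinate and the prefixes of $\nu z$ and $\nu' z$ for a fixed $z$ gives $\nu=\nu'$. The identity $t(\alpha(\mu))=t(\mu)$ is built into the definition of $Z(\nu,\mu)$, and $\alpha(Z(\mu))=\theta_{Z(\nu,\mu)}(Z(\mu))=Z(\nu)$.

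For (2), take the decomposition above and let $N:=\max_j\lvert\mu_j\rvert$. Any $\mu$ with $\lvert\mu\rvert\geq N$ has a unique prefix $\mu_j$, say $\mu=\mu_j\eta$. Then Lemma~\ref{lem:basicbisection} (2) gives $Z(\nu_j\eta,\mu_j\eta)\subset Z(\nu_j,\mu_j)\subset U_\alpha$. Since $V$ is finite and $\Omega$ contains only finitely many paths of length less than $N$, the complement $\Omega\setminus\Omega(\alpha)$ is finite. The same refinement applied to $Z(\alpha(\mu),\mu)\subset U_\alpha$ shows $Z(\alpha(\mu)\eta,\mu\eta)\subset U_\alpha$, and uniqueness from (1) then yields $\alpha(\mu\eta)=\alpha(\mu)\eta$.

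For (3), note that $U_{\alpha^{-1}}=U_\alpha^{-1}$ and $Z(\nu,\mu)^{-1}=Z(\mu,\nu)$. Hence $\mu\in\Omega(\alpha)$ implies $\alpha(\mu)\in\Omega(\alpha^{-1})$ with $\alpha^{-1}(\alpha(\mu))=\mu$, and symmetrically in the other direction, so the two maps are mutually inverse. For (4), we have $U_{\beta\alpha}=U_\beta U_\alpha$ because the product of full bisections is the full bisection implementing the composition, and effectiveness makes it unique. Then
\[Z(\beta(\alpha(\mu)),\mu)=Z(\beta(\alpha(\mu)),\alpha(\mu))\,Z(\alpha(\mu),\mu)\subset U_\beta U_\alpha=U_{\beta\alpha}\]
by Lemma~\ref{lem:basicbisection} (1), and uniqueness from (1) gives the claim.

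The only point needing real care is the uniqueness in (1): the bare inclusions $Z(\nu,\mu)\subset U_\alpha$ must determine $\nu$. This rests on the bisection property of $U_\alpha$ together with a direct comparison of coordinates, or alternatively on Lemma~\ref{lem:basicbisection} (3) once $\lvert\nu\rvert$ is known from the middle coordinate. Everything else is routine.
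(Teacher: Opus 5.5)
Your proposal is correct and follows the paper's proof essentially step for step. You use the same decomposition of $U_\alpha$ into basic bisections over complete prefix sets, the bisection property for uniqueness in (1), and Lemma~\ref{lem:basicbisection}~(1) for (3) and (4). The only cosmetic difference is that you bound $\Omega\setminus\Omega(\alpha)$ by the paths of length less than $\max_j\lvert\mu_j\rvert$ rather than by the proper prefixes of the $\mu_j$; this is equally valid because the graph $(V,E)$ is finite.
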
 

\begin{proof} 
For (1), any two candidate bisections have the same source and 
are contained in $U_\alpha$, so they coincide. 
Lemma~\ref{lem:basicbisection} (3) then gives uniqueness of $\nu$. 
For (2), write 
\[ 
U_\alpha=\bigsqcup_{k=1}^n Z(\nu_k,\mu_k), 
\] 
where $\{\mu_k\}$ and $\{\nu_k\}$ are complete prefix sets. 
Then $\mu_k\eta\in\Omega(\alpha)$ and $\alpha(\mu_k\eta)=\nu_k\eta$ 
by Lemma~\ref{lem:basicbisection} (2). 
Since $\{\mu_k\}_{k=1}^n$ is a complete prefix set, 
every finite path $\mu$ which is not a proper prefix of any $\mu_k$ 
extends some $\mu_k$; 
thus $\mu=\mu_k\eta$ for some $k$ and some finite path $\eta$. 
Hence $\Omega\setminus\Omega(\alpha)$ is contained 
in the finite set of proper prefixes of the $\mu_k$'s. 
Assertions (3) and (4) follow from Lemma~\ref{lem:basicbisection} (1). 
\end{proof}

The subgroupoid relation $\sim_{\cG}$ detects 
both the topological full group of $\cG$ and its normalizer in $\sF(\cH)$. 

\begin{proposition}\label{prop:subgroupoidnormalizer} 
Let $\cG\subset\cH$ be a wide open subgroupoid and let $\alpha\in\sF(\cH)$. 
\begin{enumerate} 
\item The following are equivalent: 
\begin{enumerate} 
\item $\alpha\in\sF(\cG)$; 
\item $\alpha(\mu)\sim_{\cG}\mu$ for every $\mu\in\Omega(\alpha)$; 
\item $\alpha(\mu)\sim_{\cG}\mu$ for all but finitely many $\mu\in\Omega$ 
for which $\alpha(\mu)$ is defined. 
\end{enumerate} 
\item The following are equivalent: 
\begin{enumerate} 
\item $\alpha\in\cN_{\cH}(\cG)$; 
\item there exists a cofinite subset 
$\Omega_0\subset\Omega(\alpha)$, closed under right extensions, 
such that for all $\mu,\nu\in\Omega_0$, 
\[ 
\mu\sim_{\cG}\nu 
\iff \alpha(\mu)\sim_{\cG}\alpha(\nu). 
\] 
\end{enumerate} 
\end{enumerate} 
\end{proposition}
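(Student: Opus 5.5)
For (1), we work directly with the decomposition of $U_\alpha$ into basic bisections from the proof of Lemma~\ref{lem:subgroupoidpartialaction} (2). The implication (a)$\Rightarrow$(b) is immediate. If $\alpha\in\sF(\cG)$, then $U_\alpha\subset\cG$. For $\mu\in\Omega(\alpha)$ we have $Z(\alpha(\mu),\mu)\subset U_\alpha\subset\cG$, and $t(\alpha(\mu))=t(\mu)$ by Lemma~\ref{lem:subgroupoidpartialaction} (1), so $\alpha(\mu)\sim_{\cG}\mu$. The implication (b)$\Rightarrow$(c) is trivial.

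For (c)$\Rightarrow$(a), let $\Omega_1\subset\Omega(\alpha)$ be the finite exceptional set, and write $U_\alpha=\bigsqcup_k Z(\nu_k,\mu_k)$. By Lemma~\ref{lem:basicbisection} (2), each piece splits as $\bigsqcup_{\eta}Z(\nu_k\eta,\mu_k\eta)$ over all paths $\eta$ of a fixed length $N$ starting at $t(\mu_k)$. Lemma~\ref{lem:subgroupoidpartialaction} (2) gives $\alpha(\mu_k\eta)=\nu_k\eta$. Choose $N$ larger than the lengths of all paths in $\Omega_1$. Then every $\mu_k\eta$ lies outside $\Omega_1$, so $\nu_k\eta\sim_{\cG}\mu_k\eta$, that is, $Z(\nu_k\eta,\mu_k\eta)\subset\cG$. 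Hence $U_\alpha\subset\cG$, and $\alpha=\theta_{U_\alpha}\in\sF(\cG)$. (Alternatively, one can apply Lemma~\ref{lem:subgroupoidrelation} (2) backwards from long extensions.)

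For (2), the key reformulation is this. Since $\cG=\bigcup\{Z(\nu,\mu)\mid\mu\sim_{\cG}\nu\}$, the condition $U_\alpha\cG U_\alpha^{-1}\subset\cG$ amounts to the following: $U_\alpha Z(\nu,\mu)U_\alpha^{-1}\subset\cG$ whenever $\mu\sim_{\cG}\nu$. By refining with Lemma~\ref{lem:basicbisection} (2) and Lemma~\ref{lem:subgroupoidrelation} (1), we may pass to extensions $\mu\eta,\nu\eta$ lying in $\Omega(\alpha)$. Lemma~\ref{lem:basicbisection} (1) then gives
\[
U_\alpha Z(\nu\eta,\mu\eta)U_\alpha^{-1}=Z(\alpha(\nu\eta),\alpha(\mu\eta)).
\]
So the inclusion holds precisely when $\sim_{\cG}$ is preserved by $\alpha$ on pairs in $\Omega(\alpha)$, modulo passing to extensions.

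For (a)$\Rightarrow$(b), we take $\Omega_0:=\Omega(\alpha)$. This set is cofinite and closed under right extensions by Lemma~\ref{lem:subgroupoidpartialaction} (2). Let $\mu\sim_{\cG}\nu$ with $\mu,\nu\in\Omega_0$. Then $Z(\alpha(\nu),\alpha(\mu))=U_\alpha Z(\nu,\mu)U_\alpha^{-1}\subset\cG$, so $\alpha(\mu)\sim_{\cG}\alpha(\nu)$. The reverse implication follows by applying the same argument to $\alpha^{-1}\in\cN_{\cH}(\cG)$, using Lemma~\ref{lem:subgroupoidpartialaction} (3), (4).

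For (b)$\Rightarrow$(a), take $g\in\cG$ and choose $Z(\nu,\mu)\ni g$ with $\mu\sim_{\cG}\nu$. Replace $\mu,\nu$ by $\mu\eta,\nu\eta$ for all $\eta$ of a large length $N$; there are finitely many such pieces, and the extensions lie in $\Omega_0$ because $\Omega_0$ is cofinite and closed under extensions. The forward direction of (b) then gives $Z(\alpha(\nu\eta),\alpha(\mu\eta))\subset\cG$, hence $U_\alpha\cG U_\alpha^{-1}\subset\cG$.

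The main obstacle is the reverse inclusion $U_\alpha^{-1}\cG U_\alpha\subset\cG$. For this we use the backward implication in (b), applied to the set $\alpha(\Omega_0)$. This set is cofinite in $\Omega$, by Lemma~\ref{lem:subgroupoidpartialaction} (2), (3) and the finiteness of $\Omega(\alpha)\setminus\Omega_0$. It is closed under extensions, by the formula $\alpha(\mu\eta)=\alpha(\mu)\eta$. Given $\mu'\sim_{\cG}\nu'$, we extend them into $\alpha(\Omega_0)$ and write $\mu'\eta=\alpha(\mu)$ and $\nu'\eta=\alpha(\nu)$ with $\mu,\nu\in\Omega_0$. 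Then (b) yields $\mu\sim_{\cG}\nu$, and hence $Z(\nu,\mu)=U_\alpha^{-1}Z(\nu'\eta,\mu'\eta)U_\alpha\subset\cG$.

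The only bookkeeping needed is a uniform choice of $N$, taken larger than the lengths of all paths in the finite complements. With that, both inclusions hold, so $U_\alpha\cG U_\alpha^{-1}=\cG$ and $\alpha\in\cN_{\cH}(\cG)$.
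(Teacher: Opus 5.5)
Your proposal is correct and follows essentially the same route as the paper. In (1) you refine $U_\alpha=\bigsqcup_k Z(\nu_k,\mu_k)$ past the finite exceptional set, and in (2) you use the identity $U_\alpha Z(\nu,\mu)U_\alpha^{-1}=Z(\alpha(\nu),\alpha(\mu))$ on $\Omega_0$, getting the reverse inclusion by rerunning the argument for $\alpha^{-1}$ on the cofinite, extension-closed set $\alpha(\Omega_0)$; the only cosmetic difference is that for (a)$\Rightarrow$(b) you obtain the backward implication by applying the forward one to $\alpha^{-1}$, whereas the paper reads both directions off $U_\alpha\cG U_\alpha^{-1}=\cG$ at once.
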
 

\begin{proof} 
(1)\: 
If $\alpha\in\sF(\cG)$, then $U_\alpha\subset\cG$, 
and hence $Z(\alpha(\mu),\mu)\subset\cG$ for every $\mu\in\Omega(\alpha)$. 
Thus (i) implies (ii), and (ii) implies (iii). 
Suppose (iii) holds. 
Write $U_\alpha=\bigsqcup_k Z(\nu_k,\mu_k)$ 
as in the proof of Lemma~\ref{lem:subgroupoidpartialaction}. 
Refining the partition by right extensions, 
we may assume that every $\mu_k$ belongs to the cofinite set 
on which (iii) holds. 
Then $Z(\nu_k,\mu_k)\subset\cG$ for all $k$, and hence $U_\alpha\subset\cG$. 
Thus $\alpha\in\sF(\cG)$. 

(2)\: 
Suppose first that (ii) holds. Let $g\in\cG$. 
By openness, 
after refining a basic neighborhood of $g$ and using the fact that 
$\Omega_0$ is cofinite and closed under right extensions, 
we can find $\mu,\nu\in\Omega_0$ such that $g\in Z(\nu,\mu)\subset\cG$. 
Then 
\[ 
U_\alpha Z(\nu,\mu)U_\alpha^{-1} 
=Z(\alpha(\nu),\nu)Z(\nu,\mu)Z(\mu,\alpha(\mu)) 
=Z(\alpha(\nu),\alpha(\mu))\subset\cG. 
\] 
Hence $U_\alpha\cG U_\alpha^{-1}\subset\cG$. 
By Lemma~\ref{lem:subgroupoidpartialaction} (2), (3), the set 
$\Omega_1:=\alpha(\Omega_0)\subset\Omega(\alpha^{-1})$ 
is cofinite in $\Omega$ and closed under right extensions. 
The equivalence in (ii) gives the same condition for $\alpha^{-1}$ 
on $\Omega_1$. Applying the preceding argument to $\alpha^{-1}$ 
gives the reverse inclusion, and therefore equality holds. 
Thus $\alpha\in\cN_{\cH}(\cG)$. 

Conversely, suppose $U_\alpha\cG U_\alpha^{-1}=\cG$ and 
take $\Omega_0:=\Omega(\alpha)$. 
If $\mu,\nu\in\Omega_0$ and $t(\mu)=t(\nu)$, then 
\[ 
Z(\nu,\mu)\subset\cG 
\iff Z(\alpha(\nu),\alpha(\mu))\subset\cG, 
\] 
by conjugating with $U_\alpha$. 
If $t(\mu)\neq t(\nu)$, both relations fail, 
since $t(\alpha(\mu))=t(\mu)$ and $t(\alpha(\nu))=t(\nu)$. 
This proves (ii). 
\end{proof}

\begin{remark}\label{rem:subgroupoidgraphBBQS} 
When $V$ consists of a single vertex and $E$ consists of two loops, 
$\sF(\cH)$ is the Higman--Thompson group $V_2$. 
In this case, the equivalence relation $\sim_{\cG}$ and 
the graph $\Delta_{\cG}$ are exactly the type system and the type graph, 
respectively, associated to $\cG$ in the terminology of \cite{BBQS25TAMS}. 
Writing $\mathcal P$ for the corresponding type system, 
$\sF(\cG)$ is their $\Fix(\mathcal P)$ and 
$\cN_{\cH}(\cG)$ is their $\operatorname{Stab}(\mathcal P)$. 
Moreover, Corollary~\ref{cor:primegraph} translates their simplicity 
condition into primeness of the wide open subgroupoid. 
The type semigroup and the main results are compared in 
Section~\ref{subsec:BBQScomparison}. 
\end{remark} 

We conclude by passing in the opposite direction, from subgroups 
to subgroupoids. 
For a subgroup $M\subset\sF(\cH)$, put 
\[ 
\cG_M:=\bigcup_{\alpha\in M}U_\alpha. 
\] 
Since $U_\alpha U_\beta=U_{\alpha\beta}$, this is a wide open 
subgroupoid of $\cH$, normalized by $M$. 
By \cite[Proposition 4.10]{NO19IJM}, it identifies with the 
groupoid of germs of the action of $M$ on $X$. 
By \cite[Proposition 4.6]{NO19IJM}, $\sF(\cG_M)$ consists of 
those homeomorphisms of $X$ which locally agree with elements 
of $M$; in particular, $M\subset\sF(\cG_M)$. 

Following \cite[Definition 2.5]{GV26ETDS}, we call $\sF(\cG_M)$ the 
full amplification of $M$ and say that $M$ is fully amplified 
if $M=\sF(\cG_M)$. 
The same reference also calls such groups ample. 
We use fully amplified for groups and reserve ample for groupoids. 
Since the topological full group of a wide open subgroupoid is 
closed under local agreement, the fully amplified subgroups of 
$\sF(\cH)$ are precisely the topological full groups of wide open 
subgroupoids. 
For a maximal subgroup of $\sF(\cH)$, its full amplification is 
either the subgroup itself or $\sF(\cH)$. 
The next proposition shows that every fully amplified maximal 
subgroup of $\sF(\cH)$ arises from a proper prime wide open subgroupoid. 
Corollary~\ref{cor:amplemaximal} will describe these subgroups 
by combining the trichotomy and maximality criteria below. 
The construction and proposition also hold for any effective ample 
groupoid with Cantor unit space whose orbits contain at least 
three points. 

\begin{proposition}\label{prop:maximalgerms} 
Let $\sD(\cH)\subset\Gamma\subset\sF(\cH)$, and let $M$ be 
a maximal subgroup of $\Gamma$. 
If $\cG_M\neq\cH$, then 
\[ 
M=\Gamma\cap\sF(\cG_M). 
\] 
If in addition $\Gamma=\sF(\cH)$, then 
$\cG_M\subset\cH$ is prime. 
\end{proposition}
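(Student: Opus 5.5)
The plan is to exploit the fact that $M$ normalizes $\cG_M$, so that $M\subset\Gamma\cap\sF(\cG_M)$, and then to use maximality of $M$ in $\Gamma$ to squeeze. First, $M\subset\sF(\cG_M)$ because each $U_\alpha$ with $\alpha\in M$ lies in $\cG_M$; hence $M\subset\Gamma\cap\sF(\cG_M)\subset\Gamma$. By maximality, either $\Gamma\cap\sF(\cG_M)=M$, which is what we want, or $\Gamma\subset\sF(\cG_M)$. So the main step is to rule out $\sD(\cH)\subset\sF(\cG_M)$ when $\cG_M\neq\cH$.

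For this I would use that the full bisections of elements of $\sD(\cH)$ cover $\cH$. Every $\cH$-orbit is infinite, so in particular it has at least three points. By \cite[Lemma 3.7]{Ma15crelle}, as quoted in the proof of Lemma~\ref{lem:groupnormalizers}, every $g\in\cH$ lies in $U_\alpha$ for some $\alpha\in\sD(\cH)$. Alternatively one can argue directly: take a small compact open bisection $U\ni g$ with $r(U)\neq X\neq s(U)$ (possible since $X$ has no isolated points) and apply Lemma~\ref{elementofDG}, using that $\cH$ is minimal and purely infinite. Now if $\sD(\cH)\subset\sF(\cG_M)$, then each such $U_\alpha$ lies in $\cG_M$, so $\cH\subset\cG_M$, a contradiction. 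This proves $M=\Gamma\cap\sF(\cG_M)$.

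For primeness when $\Gamma=\sF(\cH)$, I would argue as follows. We have $M=\sF(\cG_M)$, and $\cG_M\subsetneq\cH$ is proper. Let $\cK$ be a wide open subgroupoid with $\cG_M\subset\cK\subset\cH$. Then $M=\sF(\cG_M)\subset\sF(\cK)\subset\sF(\cH)$, so by maximality either $\sF(\cK)=M$ or $\sF(\cK)=\sF(\cH)$.

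In the second case, the previous paragraph shows $\cK\supset\bigcup_{\alpha\in\sF(\cH)}U_\alpha=\cH$, so $\cK=\cH$.

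In the first case, I need $\cK=\cG_M$, that is, I must recover $\cK$ from $\sF(\cK)$. Here I would use that $\cK$ is a wide open subgroupoid of $\cH$, so all its orbits are infinite: open subsets of $X$ are infinite, and $\cK$ is effective and ample. Minimality of $\cH$ does not directly give infinite $\cK$-orbits; only orbits with at least three points are needed. Granting that, I would apply \cite[Lemma 3.7]{Ma15crelle} to $\cK$ to get $\cK=\bigcup_{\alpha\in\sD(\cK)}U_\alpha\subset\bigcup_{\alpha\in M}U_\alpha=\cG_M$.

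The expected obstacle is exactly this orbit-size hypothesis, since $\cK$-orbits could a priori be small, for example trivial at some points. To handle it, I would instead cover $\cK$ by bisections directly. Given $g\in\cK\setminus X$, choose a compact open bisection $U\ni g$ in $\cK$ with $r(U)\cap s(U)=\emptyset$; this is possible since $\cK$ is effective and Hausdorff and $g$ can be separated from units, the degenerate case $g\in\Iso$ being excluded by shrinking. Then the transposition $\tau_U$ lies in $\sF(\cK)=M$, and its bisection contains $U\ni g$. Hence $g\in\cG_M$, and so $\cK=\cG_M$. This transposition argument avoids the three-point condition and is the route I would actually write.
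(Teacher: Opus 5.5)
Your first assertion is handled correctly, exactly as in the paper. The gap is in the primeness step, in the case $\sF(\cK)=M$.

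Your transposition argument needs a compact open bisection $U\subset\cK$ with $g\in U$ and $r(U)\cap s(U)=\emptyset$. Such a $U$ exists only when $r(g)\neq s(g)$. If $g\in\Iso(\cK)\setminus X$ is a nonunit isotropy arrow at $x$, then $x=r(g)=s(g)$ lies in $r(U)\cap s(U)$ for every bisection $U$ containing $g$, so no amount of shrinking helps. Effectiveness of $\cK$ only says that such arrows do not fill an open set; it does not rule them out. The SFT groupoid $\cH$ has such arrows at every eventually periodic point. Nor is the case vacuous here: for the full $2$-shift, the maximal subgroup $M=\St(\cH,\{0^\infty\})$ of $V_2$ has $\cG_M$ with $\{0^\infty\}$ as a singleton orbit and isotropy group $\Z$ there, generated by the germ of $z\mapsto 0z$ (compare Corollary~\ref{cor:exceptionalclassification} (1)).

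When the $\cK$-orbit of $x$ contains a second point $y$, the repair is easy. Choose $h\in\cK$ from $x$ to $y$ and write $g=(gh^{-1})h$ as a product of two arrows with distinct source and range; each lies in $\cG_M$ by your argument.

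For a singleton $\cK$-orbit $\{x\}$ no such factorization exists. This is exactly the small-orbit case you flagged but did not resolve. The paper uses maximality a second time:
\begin{itemize}
\item Every element of $M=\sF(\cK)$ fixes $x$, so $M\subset\St(\cH,\{x\})\subsetneq\sF(\cH)$, whence $M=\St(\cH,\{x\})$.
\item For $g\in\cH_x$, take $\alpha\in\sD(\cH)$ with $g\in U_\alpha$; this exists by the covering lemma you already used.
\item Then $\alpha(x)=x$, so $\alpha\in M$ and $g\in\cG_M$.
\end{itemize}
Without this step your argument does not establish $\cK\subset\cG_M$, so primeness is not proved.
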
 

\begin{proof} 
Every arrow of $\cH$ belongs to $U_\alpha$ for some 
$\alpha\in\sD(\cH)$ by \cite[Lemma 3.7]{Ma15crelle}. 
Consequently $\Gamma\cap\sF(\cG_M)$ is a proper subgroup 
of $\Gamma$ when $\cG_M\neq\cH$. 
It contains $M$, so maximality gives the first assertion. 

Suppose that $\Gamma=\sF(\cH)$, and let 
$\cG_M\subset\cK\subsetneq\cH$ be an intermediate wide open 
subgroupoid. 
The same observation shows that $\sF(\cK)\neq\sF(\cH)$, 
and hence maximality gives $\sF(\cK)=M$. 
If $g\in\cK$ has distinct source and range, choose a compact 
open bisection $U\subset\cK$ containing $g$ with 
$s(U)\cap r(U)=\emptyset$. 
Then $\tau_U\in\sF(\cK)=M$, so $g\in\cG_M$. 
If $g\in\cK$ is isotropy at $x$ and the $\cK$-orbit of $x$ 
contains another point, write $g$ as a product of two arrows 
of $\cK$ with distinct source and range. 
The preceding argument again gives $g\in\cG_M$. 

It remains to consider a singleton $\cK$-orbit $\{x\}$. 
Every element of $M=\sF(\cK)$ fixes $x$, so 
$M\subset\St(\cH,\{x\})\subsetneq\sF(\cH)$. 
Maximality gives $M=\St(\cH,\{x\})$. 
For any isotropy arrow $g\in\cH_x$, choose 
$\alpha\in\sD(\cH)$ with $g\in U_\alpha$, as above. 
Then $\alpha(x)=x$, so $\alpha\in M$ and $g\in\cG_M$. 
Thus every arrow of $\cK$ belongs to $\cG_M$, proving 
$\cK=\cG_M$. 
\end{proof}

%%%%%%%%%%%%%%%%%%%%%%%%%%%%%%%%%%%%%%%%%%%%%%%%%%%%%%%%%%%%
\section{A trichotomy for prime subgroupoids}\label{sec:trichotomy} 
%%%%%%%%%%%%%%%%%%%%%%%%%%%%%%%%%%%%%%%%%%%%%%%%%%%%%%%%%%%%

We retain the notation of Section~\ref{sec:subgroupoidgraph}. 
Thus $\cH=\cG_{(V,E)}$ is the SFT groupoid of a finite irreducible graph 
whose adjacency matrix is not a permutation matrix, and $X=\cH^{(0)}$. 
By Proposition~\ref{prop:maximalgerms}, every fully amplified maximal 
subgroup of $\sF(\cH)$ is the topological full group of a proper 
prime wide open subgroupoid. 
Grigorchuk and Vorobets \cite[Theorem 6.6 and Lemma 6.7]{GV26ETDS} 
show that, for a fully amplified group acting minimally 
on a Cantor set, 
a maximal subgroup which fails to act minimally is the stabilizer 
of a nonempty proper closed set which is either clopen or nowhere dense. 
Using their observation about boundaries of minimal invariant sets, 
we obtain the following trichotomy for prime wide open subgroupoids. 

\begin{theorem}\label{thm:primetrichotomy} 
Let $\cG\subsetneq\cH$ be a prime wide open subgroupoid. 
Then exactly one of the following holds. 
\begin{enumerate} 
\item $\cG$ is minimal. 
\item There is a partition $X=X_1\sqcup X_2$ into two nonempty clopen sets 
such that 
\[ 
\cG=\cH|X_1\sqcup\cH|X_2. 
\] 
The two reductions are minimal and purely infinite, 
and the partition is unique up to interchanging $X_1$ and $X_2$. 
\item There is a unique nonempty proper closed $\cG$-invariant subset 
$Y\subset X$. 
It is nowhere dense, $\cG|Y$ is minimal, and 
\[ 
\cG|(X\setminus Y)=\cH|(X\setminus Y). 
\] 
Moreover, $Y$ is either finite or a Cantor set, 
and every $\cG$-orbit in $X\setminus Y$ is dense in $X$. 
\end{enumerate} 
\end{theorem}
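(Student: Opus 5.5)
The plan is to assume $\cG$ is not minimal and to study a nonempty proper closed $\cG$-invariant set. The basic tool is the following: for a closed $\cG$-invariant set $Y$, the $Y$-preserving subgroupoid $\cS_Y$ of \cite{Ma26maximal2} (the union of compact open bisections $U$ of $\cH$ with $YU=UY$) is a wide open subgroupoid of $\cH$. Since $\cG$ is open and $Y$ is $\cG$-invariant, every compact open bisection $U\subset\cG$ satisfies $YU=UY$, so $\cG\subset\cS_Y$. Moreover $\cS_Y\neq\cH$ whenever $Y$ is nonempty and proper. Indeed, $\cH$ is minimal, so some arrow of $\cH$ joins a point of $Y$ to a point outside $Y$; such an arrow lies in no $Y$-preserving bisection. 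Primeness therefore gives $\cG=\cS_Y$ for every nonempty proper closed $\cG$-invariant $Y$. In particular all such $Y$ have the same $\cS_Y$, and $Y$ is itself $\cS_Y$-invariant. The remaining work is to extract the dichotomy from this rigidity.

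\emph{Clopen case.} Suppose some nonempty proper closed invariant $Y$ has nonempty interior. First I would show that $Y$ is clopen. The boundary $\partial Y=Y\setminus Y^\circ$ is closed and $\cG$-invariant, because invariance is preserved by the interior operation for an open groupoid. If $\partial Y\neq\emptyset$, then $\cG=\cS_{\partial Y}$. Now take a small bisection of $\cH$ moving a point of $Y^\circ$ to a point of $X\setminus Y$ while staying away from $\partial Y$. This is possible by minimality of $\cH$ and since $X\setminus Y$ is open and nonempty. Such a bisection preserves $\partial Y$ but not $Y$, which contradicts $\cS_{\partial Y}=\cS_Y$. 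Hence $Y=X_1$ is clopen, and $X_2:=X\setminus Y$ is clopen and invariant. Then $\cS_{X_1}=\cH|X_1\sqcup\cH|X_2$, giving case (2). Each reduction $\cH|X_i$ is minimal and purely infinite, since it is a clopen reduction of a minimal purely infinite groupoid. Consequently the only invariant closed sets are $\emptyset,X_1,X_2,X$, which gives uniqueness of the partition. This is the argument of \cite[Theorem 6.6 and Lemma 6.7]{GV26ETDS} transplanted to subgroupoids.

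\emph{Nowhere dense case.} Otherwise every nonempty proper closed invariant set is nowhere dense. Let $Y_1,Y_2$ be two such sets. If $Y_1\cap Y_2=\emptyset$, then $Y_1\cup Y_2$ is still nowhere dense, invariant and proper. I would derive a contradiction via bisections that preserve $Y_1\cup Y_2$ but swap pieces near $Y_1$ with pieces near $Y_2$, or at least fail to preserve $Y_1$. Minimality of $\cH$ supplies a bisection $U$ from a small neighbourhood of a point of $Y_1$ to a neighbourhood of a point of $Y_2$, and it can be modified using $\cG=\cS_{Y_1\cup Y_2}$. If instead $Y_1\cap Y_2\neq\emptyset$ with $Y_1\neq Y_2$, I would apply the same idea to $Y_1\cap Y_2$ versus $Y_1$. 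The conclusion should be a unique minimal closed invariant $Y$, contained in every nonempty closed invariant set. Then every orbit outside $Y$ has closure strictly containing $Y$, and uniqueness forces that closure to be $X$. So every $\cG$-orbit in $X\setminus Y$ is dense in $X$, and $Y$ is the unique nonempty proper closed invariant set. Minimality of $\cG|Y$ (the regular reduction) follows because any closed $\cG|Y$-invariant subset of $Y$ is $\cG$-invariant, which uses $\cG=\cS_Y$ to match regular arrows. The identity $\cG|(X\setminus Y)=\cH|(X\setminus Y)$ holds because every arrow of $\cH$ with range and source in the open set $X\setminus Y$ lies in a small bisection avoiding $Y$, hence $Y$-preserving, hence in $\cS_Y=\cG$. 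Finally, $Y$ is either finite or a Cantor set. A closed set that is minimal for the action is either finite (a single finite orbit, allowed since isolated points are then all of $Y$) or has no isolated points. In the latter case it is compact metrizable and totally disconnected, hence Cantor.

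The three cases are mutually exclusive because of the uniqueness statements. The step I expect to be hardest is the uniqueness of $Y$ in the nowhere dense case, that is, excluding two distinct invariant sets. I would first try to use $\cS_{Y_1}=\cS_{Y_2}$ directly, constructing an element of $\sF(\cH)$ supported near a point of $Y_1\setminus Y_2$ that preserves $Y_2$ but not $Y_1$. Such an element is available once a point of $Y_1\setminus Y_2$ and a nearby point outside $Y_1$ can be exchanged away from $Y_2$, and nowhere density guarantees points outside $Y_1$ arbitrarily close.
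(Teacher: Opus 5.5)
Your argument is correct, but it is organized differently from the paper's.

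The paper's route is as follows.
\begin{itemize}
\item By compactness and Zorn's lemma, it picks a nonempty proper closed $\cG$-invariant set $Y$ that is minimal under inclusion.
\item The Grigorchuk--Vorobets boundary observation ($\partial Y$ is closed and invariant, so $\partial Y=\emptyset$ or $\partial Y=Y$) then makes $Y$ clopen or nowhere dense.
\item Primeness is invoked only once: against $\cH|Y\sqcup\cH|(X\setminus Y)$ in the clopen case, and against the subgroupoid generated by $\cG$ and $\cH|(X\setminus Y)$ in the nowhere dense case.
\item The resulting inclusion $\cH|(X\setminus Y)\subset\cG$ makes every $\cG$-orbit in $X\setminus Y$ dense. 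So every proper closed invariant set lies in $Y$, and minimality of $Y$ gives uniqueness.
\end{itemize}

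You instead apply primeness to $\cS_Y$ for every nonempty proper closed invariant $Y$. This gives $\cH|(X\setminus Y)\subset\cS_Y=\cG$ for all such sets at once. It removes the need for Zorn's lemma and for minimality of $Y$. The boundary step becomes your $\cS_{\partial Y}$ argument, and uniqueness becomes a pairwise comparison. What this buys is a single uniform principle; the cost is that you must apply it to several sets rather than to one chosen set.

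The step you flag as hardest is already settled by your final paragraph, and it can be said more directly. Let $x\in Y_1\setminus Y_2$. The set $(X\setminus Y_2)\setminus Y_1$ is open, and it is nonempty because $X\setminus Y_2$ is a nonempty open set and $Y_1$ is nowhere dense. Minimality of $\cH$ gives an arrow from $x$ into this set. That arrow has both endpoints in $X\setminus Y_2$, so it lies in $\cH|(X\setminus Y_2)\subset\cG$, contradicting invariance of $Y_1$. Hence $Y_1\subset Y_2$, and symmetrically $Y_1=Y_2$. Your preliminary discussion of $Y_1\cup Y_2$, of $Y_1\cap Y_2$, and of swapping pieces near $Y_1$ with pieces near $Y_2$ is unnecessary and should be dropped.

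Finally, minimality of $\cG|Y$ needs only $\cG$-invariance of $Y$, not the identification $\cG=\cS_Y$. Invariance already makes the regular reduction equal to the full restriction, whose orbits are exactly the $\cG$-orbits in $Y$.
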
 

\begin{proof} 
Suppose that $\cG$ is not minimal. 
By compactness and Zorn's lemma, there is a nonempty proper closed 
$\cG$-invariant subset $Y\subset X$ which is minimal under inclusion. 
As in \cite[Lemma 6.7]{GV26ETDS}, its boundary $\partial Y$ is 
a closed $\cG$-invariant subset of $Y$: 
the partial homeomorphisms defined by open bisections of $\cG$ 
preserve $Y$ and its interior. 
Minimality of $Y$ therefore gives $\partial Y=\emptyset$ or $\partial Y=Y$. 
Thus $Y$ is either clopen or nowhere dense. 

Suppose first that $Y$ is clopen. 
Since $Y$ is $\cG$-invariant, 
\[ 
\cG\subset\cK:=\cH|Y\sqcup\cH|(X\setminus Y)\subsetneq\cH. 
\] 
Here $\cK$ is wide and open, and the last inclusion is strict 
because $\cH$ is minimal and $Y$ is nonempty and proper. 
Primeness gives $\cG=\cK$, which is \textup{(2)} with 
$X_1=Y$ and $X_2=X\setminus Y$. 
Each $\cH|X_i$ is minimal and purely infinite, since it is a reduction 
of the minimal purely infinite groupoid $\cH$ to a nonempty clopen set. 
The $\cG$-orbit closures are precisely $X_1$ and $X_2$, 
so the partition is unique up to interchange. 

Suppose next that $Y$ is nowhere dense, and put $U:=X\setminus Y$. 
Let $\cK$ be the subgroupoid of $\cH$ generated by $\cG$ and $\cH|U$. 
It is wide and open, since products of open subsets of 
an \'etale groupoid are open. 
Both generating subgroupoids preserve $Y$, so $Y$ is $\cK$-invariant. 
Consequently $\cK\neq\cH$ by minimality of $\cH$, 
and primeness yields $\cK=\cG$. 
It follows that $\cH|U\subset\cG$, and hence $\cG|U=\cH|U$. 

For every $x\in U$, its $\cG$-orbit is the intersection of 
its $\cH$-orbit with $U$. 
This intersection is dense in $U$ by minimality of $\cH$, 
and hence dense in $X$ because $U$ is open and dense. 
Every proper closed $\cG$-invariant subset is therefore contained in $Y$. 
Minimality of $Y$ now shows that it is the unique nonempty such subset. 
Since $Y$ is $\cG$-invariant, every bisection of $\cG$ preserves $Y$ 
on its domain. Thus the regular reduction $\cG|Y$ coincides with 
the full restriction to $Y$ and has precisely the $\cG$-orbits 
contained in $Y$. It is therefore minimal. 
Finally, if $Y$ is infinite, its set of nonisolated points is 
nonempty, closed and invariant under this reduction. 
It equals $Y$ by minimality, so $Y$ is a Cantor set. 
This proves \textup{(3)}. 

The three alternatives are mutually exclusive: 
in \textup{(1)} there is no nonempty proper closed invariant subset, 
in \textup{(2)} there are exactly two, 
and in \textup{(3)} there is exactly one. 
\end{proof} 

\begin{remark}\label{rem:primetrichotomy} 
The same argument gives this trichotomy for any minimal ample groupoid 
$\cH$ with Cantor unit space, with the assertion of pure infiniteness 
in \textup{(2)} omitted. 
In the SFT setting, Corollary~\ref{cor:minimalsubgroupoiddichotomy} 
shows that $\cG$ in \textup{(1)} is either AF or purely infinite. 
We leave open whether the AF alternative can occur for a prime inclusion. 
\end{remark}

We treat case \textup{(2)} in the proposition below, 
case \textup{(3)} in Section~\ref{sec:exceptional}, 
and case \textup{(1)} in Section~\ref{sec:minimal}. 
The proposition also covers partitions into more than two clopen sets: 
these can yield maximal normalizers even when the subgroupoid is not prime. 

For $\Gamma=\sF(\cH)$, the maximality criterion follows from 
\cite[Theorems 6.19 and 6.20]{GV26ETDS}, whose condition for 
partitions into at least three sets is transitivity of the stabilizer 
on the partition. 
Here we express this condition in terms of $H_0(\cH)$ and give 
a direct proof for all groups between $\sD(\cH)$ and $\sF(\cH)$. 

\begin{proposition}\label{prop:clopenpartitionnormalizer} 
Let $\mathcal Q=\{X_1,\dots,X_n\}$ be a partition of $X$ 
into nonempty clopen sets, where $n\geq2$, and put 
\[ 
\cG:=\bigsqcup_{i=1}^n\cH|X_i. 
\] 
Then 
\[ 
\Nor\left(\sF(\cH),\sF(\cG)\right) 
=\{\alpha\in\sF(\cH)\mid\alpha(\mathcal Q)=\mathcal Q\}. 
\] 
For every group $\Gamma$ with 
$\sD(\cH)\subset\Gamma\subset\sF(\cH)$, the subgroup 
$K:=\Gamma\cap\Nor\left(\sF(\cH),\sF(\cG)\right)$ is maximal in $\Gamma$ 
if and only if 
\[ 
n=2\quad\text{or}\quad 
[1_{X_1}]=\cdots=[1_{X_n}]\text{ in }H_0(\cH). 
\] 
\end{proposition}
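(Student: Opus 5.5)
First I will identify the normalizer. The $\cG$-orbits are exactly the $\cH$-orbits intersected with the $X_i$, so every $\cG$-orbit is infinite and Lemma~\ref{lem:groupnormalizers} gives $\Nor(\sF(\cH),\sF(\cG))=\cN_{\cH}(\cG)$. If $\alpha(\mathcal Q)=\mathcal Q$, then conjugation by $U_\alpha$ carries each $\cH|X_i$ onto $\cH|\alpha(X_i)$, so $\alpha\in\cN_{\cH}(\cG)$. Conversely, suppose $U_\alpha\cG U_\alpha^{-1}=\cG$. Then $\alpha$ maps $\cG$-invariant clopen sets to $\cG$-invariant clopen sets. Each $X_i$ is a minimal $\cG$-invariant set, because $\cH|X_i$ is minimal. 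Hence $\alpha(X_i)$ is a minimal invariant clopen set, and so equals some $X_j$. Put $P:=\{\alpha\in\sF(\cH)\mid\alpha(\mathcal Q)=\mathcal Q\}$ and $K:=\Gamma\cap P$. $K$ is proper in $\Gamma$: by Lemma~\ref{elementofDG}, a small bisection joining part of $X_1$ to part of $X_2$ extends to an element of $\sD(\cH)$, and that element does not permute $\mathcal Q$.

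For sufficiency, assume $n=2$ or all the classes $[1_{X_i}]$ are equal. Let $\gamma\in\Gamma\setminus K$ and put $L:=\langle K,\gamma\rangle$. I want $\sD(\cH)\subset L$, because then $L\supset\sD(\cH)\cdot K$, and I will show $K\sD(\cH)=\Gamma$. Since $K\supset\sD(\cH)\cap P\supset\prod_i\sD(\cH|X_i)$, it suffices to produce, inside $L$, enough transpositions $\tau_U$ with $r(U)\subset X_i$, $s(U)\subset X_j$ and $i\neq j$. $\sD(\cH)$ is generated by three-cycles as in Lemma~\ref{threecycle} supported on small sets, so each such three-cycle should be conjugated inside $L$ to one whose support lies inside a single $X_i$ or straddles parts in a controlled way. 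Concretely, $\gamma$ moves some clopen $C\subset X_i$ into $X_j$ with $j\neq i$. Conjugating elements of $\sD(\cH|X_i)_C$ by $\gamma$ gives elements of $L$ supported in $X_j$ but not coming from $\cG$ in a compatible way. Commutators of these with elements of $\sD(\cH|X_j)$ then give three-cycles mixing $X_i$ and $X_j$, in the spirit of the generation arguments of \cite{Ma26maximal1}. Using minimality and cancellation (Theorem~\ref{niceproperties}) in each $\cH|X_i$, I can move supports freely within parts. This yields all three-cycles whose support meets two parts, and so, by generation, $\sD(\cH)\subset L$. This is the main obstacle. The part I expect to be hardest is getting cross-part three-cycles whose pieces lie in all pairs $(X_i,X_k)$ and not only in the pair $(i,j)$ moved by $\gamma$. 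For $n\ge3$ I would chain pairs by conjugating with elements of $P$, which need not exist in general. This is where the equal-class hypothesis enters: by cancellation it gives elements of $\sD(\cH)\cap P$, hence of $K$, permuting the parts transitively (even as $3$-cycles or double transpositions, which lie in $\sD$).

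Finally, to get $K\sD(\cH)=\Gamma$, I need $\pi(K)=\pi(\Gamma)$. By Lemma~\ref{localabelianization}, $\sF(\cH)=\sF(\cH)_{X_1}\sD(\cH)$. If $n\ge3$, $\sF(\cH)_{X_1}$ is not quite in $\Gamma$, so instead I argue that any $\gamma'\in\Gamma$ can be multiplied by an element of $\sD(\cH)$ so that it fixes $X_1$ setwise. This uses Lemma~\ref{elementofDG} to correct $\gamma'$ on $X_1$, as long as $\gamma'(X_1)$ and $X_1$ have equal classes, which holds. After the correction it preserves $\mathcal Q$ after further corrections, giving $\gamma'\in K\sD(\cH)$. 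For $n=2$ the complement is automatic. For necessity, suppose $n\ge3$ and the classes are not all equal. Let $R\subset\mathcal Q$ be the parts with class $[1_{X_1}]$. Then $\{\alpha\in\Gamma\mid\alpha(\bigcup R)=\bigcup R\}\cap\ldots$, or more simply the stabilizer in $\Gamma$ of the coarser partition $\{\bigcup R,\ X\setminus\bigcup R\}$, sits strictly between $K$ and $\Gamma$. Strictness above $K$ comes from refining pieces within $\bigcup R$, using an element of $\sD(\cH)$ that mixes two parts of the same union when $|R|\ge2$ or when the complement has two parts; otherwise I choose the grouping by classes appropriately.
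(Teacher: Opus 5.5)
Your identification of the normalizer, your use of elements of $K$ that permute the parts when all classes coincide, and your necessity argument follow the paper. In the necessity part there is no ``otherwise'' case: $R$ and $\mathcal Q\setminus R$ are both nonempty, and $n\geq3$ forces one of them to contain two parts. The plain stabilizer $\St(\Gamma,\bigcup R)$ suffices. Two small repairs are needed elsewhere.
\begin{itemize}
\item Lemma~\ref{elementofDG} gives no control of $V$ off $s(U)$, so your witness for $K\neq\Gamma$ might map all of $X_1$ onto $X_2$. Add to $U$ the unit on a nonempty clopen subset of $X_1\setminus s(U)$; then $\theta_V(X_1)$ meets both $X_1$ and $X_2$.
\item An element inducing an even permutation of $\mathcal Q$ is not automatically in $\sD(\cH)$. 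Either build it as $\tau_V\tau_U$ using Lemma~\ref{threecycle}~(1), or correct it by an element of $\sF(\cH)_{X_1}$ via Lemma~\ref{localabelianization}, as the paper does.
\end{itemize}

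The genuine gap is in sufficiency, at the step you flag yourself, and the mechanism you sketch cannot work. If $\gamma(C)\subset X_j$, then $\gamma\,\sD(\cH|X_i)_C\,\gamma^{-1}$ consists of elements of $\Gamma$ supported in $X_j$. These lie in $\sF(\cH|X_j)\subset\sF(\cG)$ and hence already in $K$. Their commutators with $\sD(\cH|X_j)$ are again supported in $X_j$, so no three-cycles mixing two parts arise. Moreover, when the classes coincide, elements of $K$ that nontrivially permute $\mathcal Q$ also carry a clopen subset of one part into another. So no argument using only this property of $\gamma$ can give $L=\Gamma$.

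What you must use is that $\gamma\notin P$ means some $B:=\gamma(X_i)$ meets two distinct parts $X_j,X_k$. Then $L$ contains $\Gamma_{X_j}$ and $\Gamma_{X_k}$ (both in $K$) and also $\Gamma_B=\gamma\Gamma_{X_i}\gamma^{-1}$. The missing tool is a generation principle for local subgroups over overlapping clopen sets: Property E, $\Gamma_{D_1\cup D_2}=\langle\Gamma_{D_1}\cup\Gamma_{D_2}\rangle$ whenever $D_1\cap D_2\neq\emptyset$ \cite[Proposition 3.11]{Ma26maximal2}. Applied to $(X_j,B)$ and then to $(X_j\cup B,X_k)$, it gives $\Gamma_{X_j\cup X_k}\subset L$, which is $\Gamma$ when $n=2$. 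For $n\geq3$, conjugating by your part-permuting elements of $K$ and applying Property E to the sets $X_1\cup X_s$ gives $\Gamma\subset L$.

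A three-cycle substitute would have to transport three-cycles supported in $X_j\cup X_k$ into $B$ by elements of $\sD(\cH|X_j)\sD(\cH|X_k)$. It would also have to handle three-cycles spread over three parts. Your sketch does neither.

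This route also makes your separate step $K\sD(\cH)=\Gamma$ unnecessary. If you keep it, it is immediate for every $n$: Lemma~\ref{localabelianization} and $\sD(\cH)\subset\Gamma$ give $\Gamma=\Gamma_{X_1}\sD(\cH)$ with $\Gamma_{X_1}\subset K$. Your correction argument is not needed.
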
 

\begin{proof} 
By Lemma~\ref{lem:groupnormalizers}, the group normalizer equals 
$\cN_{\cH}(\cG)$. 
The $\cG$-orbit closures are precisely $X_1,\dots,X_n$, 
so every element of $\Nor\left(\sF(\cH),\sF(\cG)\right)$ permutes these sets. 
Conversely, any element of $\sF(\cH)$ permuting them normalizes 
$\cG=\bigsqcup_i\cH|X_i$. 
This proves the asserted description of the normalizer. 

First, $K\neq\Gamma$. 
Indeed, choose a nonempty proper clopen subset $A\subset X_1$. 
By Lemma~\ref{threecycle} (3), 
$\sD(\cH)_{A\cup X_2}$ acts minimally on $A\cup X_2$. 
Hence it contains an element sending a point of $A$ into $X_2$ 
and fixing $X_1\setminus A$ pointwise. 
This element belongs to $\Gamma\setminus K$. 

Suppose that $n=2$ or that all the classes $[1_{X_i}]$ are equal. 
Let $K\subsetneq L\subset\Gamma$ and choose $\alpha\in L\setminus K$. 
Since $\alpha$ does not preserve $\mathcal Q$, 
some $B:=\alpha(X_i)$ meets two distinct members $X_j,X_k$. 
The group $L$ contains $\Gamma_{X_j}$ and $\Gamma_{X_k}$, as well as 
$\Gamma_B=\alpha\Gamma_{X_i}\alpha^{-1}$. 
By \cite[Proposition 3.11]{Ma26maximal2}, $\Gamma$ has Property E: 
\[ 
\Gamma_{D_1\cup D_2} 
=\left\langle\Gamma_{D_1}\cup\Gamma_{D_2}\right\rangle 
\] 
for clopen sets $D_1,D_2\subset X$ with $D_1\cap D_2\neq\emptyset$. 
Applying it first to $X_j,B$ and then to $X_j\cup B,X_k$ gives 
\[ 
\Gamma_{X_j\cup X_k}\subset\Gamma_{X_j\cup B\cup X_k}\subset L. 
\] 
If $n=2$, this gives $L=\Gamma$. 

Now assume that $n\geq3$ and that all the classes $[1_{X_i}]$ are equal. 
We first show that $K$ induces every permutation of $\mathcal Q$. 
For every permutation $\sigma$ of $\{1,\dots,n\}$, 
Theorem~\ref{niceproperties} (2) provides compact open bisections 
$U_i\subset\cH$ with $s(U_i)=X_i$ and $r(U_i)=X_{\sigma(i)}$. 
Their union is a full bisection, so 
$h:=\theta_{\bigsqcup_iU_i}$ induces $\sigma$ on $\mathcal Q$. 
By Lemma~\ref{localabelianization}, there exists 
$\beta\in\sF(\cH)_{X_1}$ such that $h\beta\in\sD(\cH)$. 
Thus $h\beta\in K$ and still induces $\sigma$ on $\mathcal Q$. 
Conjugating $\Gamma_{X_j\cup X_k}\subset L$ by these elements of $K$ 
gives $\Gamma_{X_r\cup X_s}\subset L$ for every $r\neq s$. 
Here conjugation carries local subgroups of $\Gamma$ to local 
subgroups of $\Gamma$, since the conjugating elements belong to $\Gamma$. 
Repeated applications of Property E to the sets $X_1\cup X_s$, 
$2\leq s\leq n$, now give $\Gamma_X\subset L$. 
Hence $L=\Gamma$, proving maximality. 

Conversely, suppose $n\geq3$ and the classes $[1_{X_i}]$ 
are not all equal. 
Choose one of these classes $a$ and put 
\[ 
Y:=\bigcup_{\{i\mid[1_{X_i}]=a\}}X_i. 
\] 
Every element of $\sF(\cH)$ preserves the homology class 
of a clopen set, so $K\subset\St(\Gamma,Y)$. 
Since $Y$ is nonempty and proper and $\Gamma$ acts minimally on $X$, 
$\St(\Gamma,Y)\neq\Gamma$. 
Moreover, one of $Y$ and $X\setminus Y$ contains two distinct 
members $X_i,X_j$ of $\mathcal Q$. 
The same local construction used above to show $K\neq\Gamma$, 
applied to $X_i,X_j$, gives an element of $\sD(\cH)$ 
supported in $X_i\cup X_j$ which does not preserve $\mathcal Q$. 
It belongs to $\St(\Gamma,Y)\setminus K$. 
Thus $K\subsetneq\St(\Gamma,Y)\subsetneq\Gamma$, 
and $K$ is not maximal. 
\end{proof} 

\begin{remark}\label{rem:finitepartitionBBQS} 
The case $n=2$ applies to case \textup{(2)} of 
Theorem~\ref{thm:primetrichotomy}. 
For $n\geq3$, the inclusion $\cG\subset\cH$ is not prime: 
merging $X_1$ and $X_2$ gives a proper intermediate wide open subgroupoid. 
Thus a maximal normalizer need not arise from a prime inclusion. 
For the full $2$-shift, $H_0(\cH)=0$, 
so every finite clopen partition has maximal stabilizer in $V_2$. 
Choosing bisections identifying its parts and using 
Lemma~\ref{lem:groupnormalizers} gives 
\[ 
\Nor\left(\sF(\cH),\sF(\cG)\right)\cong V_2^n\rtimes S_n, 
\] 
where $S_n$ permutes the factors; here each clopen reduction 
of the full $2$-shift groupoid has topological full group isomorphic 
to $V_2$. 
This recovers \cite[Proposition 6.3]{BBQS25TAMS}. 
\end{remark}

%%%%%%%%%%%%%%%%%%%%%%%%%%%%%%%%%%%%%%%%%%%%%%%%%%%%%%%%%%%%
\section{Subgroupoids with a nowhere dense invariant set} 
\label{sec:exceptional} 
%%%%%%%%%%%%%%%%%%%%%%%%%%%%%%%%%%%%%%%%%%%%%%%%%%%%%%%%%%%%

We retain the notation of Section~\ref{sec:subgroupoidgraph}, 
so $\cH=\cG_{(V,E)}$ and $X=\cH^{(0)}$. 
Throughout this section, $Y\subsetneq X$ is a nonempty nowhere dense 
closed set such that the regular reduction $\cH|Y$ is minimal. 
Let $\cG:=\cS_Y$ be the $Y$-preserving subgroupoid of $\cH$ 
from \cite[Definition 3.19]{Ma26maximal2}; explicitly, 
\begin{equation}\label{eq:exceptionalsubgroupoid} 
\cG=\bigcup\left\{U\in\cCO(\cH)\mid 
\text{$U$ is a bisection and }YU=UY\right\}. 
\end{equation} 
We first identify its normalizer and establish maximality for all groups 
between $\sD(\cH)$ and $\sF(\cH)$. 
We then model the regular reduction to $Y$ by a graph and classify 
it according to whether $Y$ is finite or Cantor and whether the 
graph has closed paths. 
For Cantor sets, this gives an AF/purely infinite dichotomy and 
invariants of the abstract isomorphism class of the stabilizer. 

The general results of 
\cite[Definition 3.19 and Proposition 3.20]{Ma26maximal2} give 
\[ 
\cG|Y=\cH|Y,\qquad 
\cG|(X\setminus Y)=\cH|(X\setminus Y),\qquad 
\sF(\cG)=\St(\cH,Y), 
\] 
and show that $\cG\subset\cH$ is prime. 
They also show that $Y$ is the unique nonempty proper closed 
$\cG$-invariant subset of $X$, that every $\cG$-orbit in 
$X\setminus Y$ is dense in $X$, and that $Y$ is finite or a Cantor set. 

This setting includes case \textup{(3)} of 
Theorem~\ref{thm:primetrichotomy}. 
Indeed, let $\cK\subsetneq\cH$ be a prime wide open subgroupoid 
in that case, and let $Y$ be its exceptional invariant set. 
Then $\cK\subset\cS_Y=\cG\subsetneq\cH$, and primeness gives 
$\cK=\cG$. Thus $\cH|Y=\cK|Y$ is minimal as required. 

To apply the isomorphism invariants of 
\cite[Theorem 4.3 (2)]{Ma26maximal2} when $Y$ is a Cantor set, 
we need the effective reduction to $Y$ to be Hausdorff. 
The next lemma shows that the regular reduction is already effective 
in this case. 
The maximality criterion used below requires no such assumption. 

\begin{lemma}\label{lem:SFTperfectreduction} 
Let $Z\subset X$ be a closed subset with no isolated points. 
Then $\cH|Z$ is effective, and $\cH{\downarrow}Z=\cH|Z$. 
\end{lemma}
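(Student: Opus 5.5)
To prove that $\cH|Z$ is effective, I will take an arrow $g$ in the interior of $\Iso(\cH|Z)$ and show that it is a unit. Fix such a $g$ and let $x:=s(g)=r(g)\in Z$. Choose a basic bisection $Z(\nu,\mu)\subset\cH$ containing $g$, with $x=\mu z=\nu z'$. Because $g$ is interior in the isotropy of the regular reduction, there is a neighbourhood of $g$ in $\cH|Z$, of the form $Z(\nu,\mu)\cap(\cH|Z)$ after refining, which consists of isotropy. The arrows of $Z(\nu,\mu)$ that lie over points of $Z\cap Z(\mu)$ are regular: the bisection $Z(\nu,\mu)$ can be extended, or refined together with $g$'s regularity witness, to a compact open bisection $U$ with $ZU=UZ$. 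Hence, on the relatively open set $O:=Z\cap Z(\mu)$, the prefix replacement $\mu w\mapsto\nu w$ fixes every point of $O$. Moreover, $O$ contains $x$ and has no isolated points.

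Next I use the standard SFT rigidity argument. If $\mu=\nu$, then $g=(x,0,x)$ is a unit and we are done. Otherwise, say $\lvert\nu\rvert\ge\lvert\mu\rvert$, and note that the middle coordinate $n=\lvert\nu\rvert-\lvert\mu\rvert$ is constant on the bisection. Suppose first $n=0$ and $\mu\ne\nu$. Then $\mu w\ne\nu w$ for every $w$, contradicting the fact that $x$ is fixed. So $n>0$, and each $y\in O$ satisfies $y=\mu w=\nu w$. Writing $\nu=\mu\eta$, this forces $w=\eta w$ for each such $w$. Thus $w=\eta^\infty$, so $y=\mu\eta^\infty$, and the set $O$ is a single point. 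This contradicts the assumption that $Z$ has no isolated points, since $O$ is a nonempty relatively open subset of $Z$. The same argument covers arrows $g$ with nonzero degree at periodic points. Therefore $\Iso(\cH|Z)^\circ=Z$, so $\cH|Z$ is effective.

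The identity $\cH{\downarrow}Z=\cH|Z$ then follows at once. The quotient $\Eff(\cH|Z)=(\cH|Z)/\Iso(\cH|Z)^\circ$ is by the unit space, so it is $\cH|Z$ itself, and $\cH|Z$ is Hausdorff because it carries the relative topology of the Hausdorff groupoid $\cH$.

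I expect the main obstacle to be the first step: making precise that a neighbourhood of $g$ in the relative topology of $\cH|Z$ contains all arrows of a basic bisection $Z(\nu,\mu)$ over $Z\cap Z(\mu)$. This needs regularity to be an open condition along $Z$. I would handle it by taking the regularity witness $U\ni g$ with $ZU=UZ$, intersecting it with $Z(\nu,\mu)$ and shrinking to a basic cylinder inside $U$. Then every arrow of $U$ with source in $Z$ is regular and has range in $Z$, so $U\cap s^{-1}(Z)$ is a relatively open subset of $\cH|Z$ consisting of isotropy, as required.
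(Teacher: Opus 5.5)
Your proof is correct and follows essentially the same route as the paper. Both arguments use the fact that a basic prefix replacement with $\mu\neq\nu$ has at most one fixed point, so an open piece of isotropy in $\cH|Z$ would force an isolated point of $Z$. The only difference is cosmetic: you get the relatively open set of fixed points as $Z\cap Z(\mu)$ by shrinking the basic bisection inside a regularity witness, whereas the paper takes $s(B\cap(\cH|Z))$ and uses that the source map of the \'etale groupoid $\cH|Z$ is open.
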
 

\begin{proof} 
A basic prefix replacement $\mu z\mapsto\nu z$ with 
$\mu\neq\nu$ has at most one fixed point. 
If $\lvert\mu\rvert=\lvert\nu\rvert$, it has no fixed point. 
Otherwise, equality requires one prefix to extend the other, 
and periodicity uniquely determines the remaining tail. 
If a nonunit arrow $g$ belonged to $\Iso(\cH|Z)^\circ$, we could 
choose a basic bisection $B=Z(\nu,\mu)$ containing $g$ with 
$B\cap(\cH|Z)\subset\Iso(\cH|Z)^\circ$. 
Then $\mu\neq\nu$, and the nonempty relatively open set 
$s(B\cap(\cH|Z))\subset Z$ would consist of the single fixed 
point $s(g)$, contradicting the absence of isolated points. 
\end{proof} 

\begin{theorem}\label{thm:exceptionalstabilizer} 
One has 
\[ 
\sF(\cG)=\Nor\left(\sF(\cH),\sF(\cG)\right)=\St(\cH,Y). 
\] 
Moreover, $\cN_{\cH}(\cG)=\sF(\cG)$. 
For every group $\Gamma$ with $\sD(\cH)\subset\Gamma\subset\sF(\cH)$, 
the subgroup $\Gamma\cap\sF(\cG)=\St(\Gamma,Y)$ is maximal in $\Gamma$. 
\end{theorem}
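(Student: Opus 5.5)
We prove the equalities first and then deduce maximality from them. Recall from the discussion before the statement that $\sF(\cG)=\St(\cH,Y)$, that $\cG\subset\cH$ is prime, and that $Y$ is the unique nonempty proper closed $\cG$-invariant subset of $X$.

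\textbf{The normalizer.} Let $\alpha\in\cN_{\cH}(\cG)$. Then $\alpha$ carries $\cG$-invariant closed sets to $\cG$-invariant closed sets, because $U_\alpha\cG U_\alpha^{-1}=\cG$. By uniqueness of $Y$ we get $\alpha(Y)=Y$, so $\alpha\in\St(\cH,Y)=\sF(\cG)$. Hence
\[
\sF(\cG)\subset\cN_{\cH}(\cG)\subset\sF(\cG),
\]
which gives $\cN_{\cH}(\cG)=\sF(\cG)$.

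For the group normalizer $\Nor(\sF(\cH),\sF(\cG))$ we want to apply Lemma~\ref{lem:groupnormalizers}. This needs every $\cG$-orbit to have at least three points. Orbits in $X\setminus Y$ are dense in $X$, hence infinite. Orbits in $Y$ are the $\cH|Y$-orbits. If $Y$ is a Cantor set these are dense in $Y$ and hence infinite. The problematic case is finite $Y$, where $Y$ is a single finite orbit of $\cH|Y$ and could have one or two points. In that case I would argue directly. If $\beta$ normalizes $\sF(\cG)=\St(\cH,Y)$, then $\beta Y$ is the common fixed-point-set structure of $\beta\St(\cH,Y)\beta^{-1}$. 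Concretely, $Y$ should be recoverable from the group $\St(\cH,Y)$ as the complement of the union of the supports of elements of the local subgroups $\sD(\cH)_C$ over clopen $C$ disjoint from $Y$. Once that holds, $\beta Y=Y$ and $\beta\in\St(\cH,Y)$. The same recovery argument in fact works uniformly for all $Y$, because $\cG|(X\setminus Y)=\cH|(X\setminus Y)$. So I would use it in all cases and avoid the three-point hypothesis entirely.

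\textbf{Maximality.} The subgroup $\St(\Gamma,Y)$ is proper in $\Gamma$, since $\sD(\cH)$ acts minimally on $X$ (Lemma~\ref{threecycle}) and $Y$ is nowhere dense. Now let $\St(\Gamma,Y)\subsetneq L\subset\Gamma$ and pick $\alpha\in L$ with $\alpha(Y)\neq Y$. Set
\[
\cK:=\cG_L\supset\cG_{\St(\Gamma,Y)}.
\]
Every arrow of $\cG$ lies in some $U_\beta$ with $\beta\in\sD(\cH)\cap\St(\cH,Y)$; this is \cite[Lemma 3.7]{Ma15crelle} applied to the pieces of $\cG$, using that $\cG$ is generated by $Y$-preserving bisections. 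Hence $\cG\subset\cK$. Since $U_\alpha\not\subset\cG$, the wide open subgroupoid $\cK$ is strictly larger than $\cG$, and primeness gives $\cK=\cH$.

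It remains to upgrade $\cG_L=\cH$ to $L\supset\sD(\cH)$, and this is the main obstacle. I would use Property E \cite[Proposition 3.11]{Ma26maximal2}. First, $L$ contains $\Gamma_C$ for every clopen $C$ disjoint from $Y$. Next, $L$ contains $\alpha\Gamma_C\alpha^{-1}=\Gamma_{\alpha(C)}$. Pick a point $y\in Y$ with $\alpha^{-1}(y)\notin Y$ (or the analogue for $\alpha$ in place of $\alpha^{-1}$), and a small clopen $C\ni\alpha^{-1}(y)$ disjoint from $Y$. Then $\Gamma_{\alpha(C)}$ is a local subgroup around $y$ that lies in $L$.

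Translating such neighborhoods around by elements of $\St(\Gamma,Y)$, and using minimality of $\cH|Y$ together with compactness of $Y$, finitely many clopen sets $D_i$ cover $Y$ with $\Gamma_{D_i}\subset L$. Each $D_i$ can be chosen to overlap a clopen set disjoint from $Y$, so iterated Property E glues these local subgroups together and yields $\Gamma_X=\Gamma\subset L$.

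The delicate point is that an element of $\St(\Gamma,Y)$ sends $\Gamma_D$ to $\Gamma_{\beta(D)}$, but the translates $\beta(D)$ of $\alpha(C)$ must still be guaranteed to cover a neighborhood of every point of $Y$. This is where minimality of the regular reduction $\cH|Y$ is used: regular arrows extend to $Y$-preserving bisections and hence to elements of $\St(\sD(\cH),Y)$ by Lemma~\ref{elementofDG}. Alternatively, one can simply cite the stabilizer maximality result of \cite{Ma26maximal2} for closed sets whose regular reduction is minimal, if it is available in that generality.
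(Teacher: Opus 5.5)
\textbf{Normalizers.} Your proof of $\cN_{\cH}(\cG)=\sF(\cG)$ is the paper's. You are also right that Lemma~\ref{lem:groupnormalizers} is unavailable when $Y$ has one or two points; the paper bypasses it too.

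Your replacement argument for $\Nor(\sF(\cH),\sF(\cG))$ is not yet a recovery of $Y$ from the group. Describing $X\setminus Y$ as the union of the supports of $\sD(\cH)_C$ over clopen $C$ disjoint from $Y$ refers to $Y$ itself. It yields $\beta(Y)=Y$ only once you know that $\beta$ permutes the clopen sets disjoint from $Y$, which is exactly what is at issue.

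The missing step is an intrinsic description of these sets. For nonempty clopen $C$, one has $C\cap Y=\emptyset$ if and only if $\sD(\cH)_C\subset\St(\cH,Y)$. Indeed, if $C$ meets $Y$, then $C\cap Y$ is a nonempty proper closed subset of $C$ because $Y$ is nowhere dense, whereas $\sD(\cH)_C$ acts minimally on $C$ by Lemma~\ref{threecycle}~(3). Since $\beta\sD(\cH)_C\beta^{-1}=\sD(\cH)_{\beta(C)}$, this condition is transported by $\beta$.

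The paper uses the same local subgroups slightly differently. They make every $\sF(\cG)$-orbit outside $Y$ dense, so $Y$ is the largest proper closed $\sF(\cG)$-invariant set, and $\beta^{\pm1}(Y)\subset Y$ follows at once.

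\textbf{Maximality.} The paper does exactly your fallback: it cites \cite[Proposition 3.20]{Ma26maximal2}. This applies because $\cH$ is minimal and purely infinite and $\cH|Y$ is minimal.

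Your self-contained route has a genuine gap at the step you call delicate. Covering $Y$ by translates of $\alpha(C)$ needs $\St(\Gamma,Y)$ to act minimally on $Y$. Lemma~\ref{elementofDG}, as you invoke it, does not give this. For a bisection $U$ with $YU=UY$ it controls $\theta_V$ only on $s(U)$, so $\theta_V$ may move points of $Y\setminus s(U)$ off $Y$. Nor can the lemma be applied to $\cG$, which is not minimal.

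The lemma does work after enlarging $U$. Let $g$ be a regular arrow between distinct points of $Y$.
\begin{itemize}
\item Take $U_1\ni g$ with $YU_1=U_1Y$ and $r(U_1)\cap s(U_1)=\emptyset$.
\item Fix a nonempty clopen $W\subset X\setminus Y$, shrinking $U_1$ so that $r(U_1)\cup s(U_1)$ avoids $W$.
\item Put $U:=U_1\sqcup U_1^{-1}\sqcup\bigl(X\setminus(W\cup r(U_1)\cup s(U_1))\bigr)$.
\end{itemize}
Then $Y\subset s(U)=r(U)\neq X$ and $\theta_U(Y)=Y$. Lemma~\ref{elementofDG} for $\cH$ now gives $\theta_V\in\sD(\cH)$ with $V\supset U$. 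This element lies in $\St(\sD(\cH),Y)\subset\St(\Gamma,Y)$ and sends $s(g)$ to $r(g)$.

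With this, minimality of $\cH|Y$, compactness and your Property E gluing give $\Gamma\subset L$. In the gluing, adjoin a clopen set disjoint from $Y$ that contains $X\setminus\bigcup_iD_i$ and meets each $D_i$.

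Finally, the detour through $\cG_L=\cH$ and primeness is never used afterwards and can be dropped. Its appeal to \cite[Lemma 3.7]{Ma15crelle} needs orbits with at least three points, which fails in the small finite case you flagged yourself.
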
 

\begin{proof} 
The equality $\sF(\cG)=\St(\cH,Y)$ was recalled above. 
Every element of $\cN_{\cH}(\cG)$ preserves $Y$, since it is the 
unique nonempty proper closed $\cG$-invariant subset of $X$. 
Moreover, $Y$ is the largest proper closed subset invariant under 
$\sF(\cG)$. 
Indeed, let $x\notin Y$ and let $O\subset X$ be nonempty and open. 
Choose a nonempty clopen set $B\subset O\setminus Y$ and 
a clopen set $D\subset X\setminus Y$ containing $x$ and $B$. 
Since $\sD(\cH)_D\subset\St(\cH,Y)=\sF(\cG)$, 
Lemma~\ref{threecycle} (3) gives an element of $\sF(\cG)$ 
sending $x$ into $B$. 
Thus every element of 
$\Nor\left(\sF(\cH),\sF(\cG)\right)$ also preserves $Y$. 
The reverse inclusions hold because $\sF(\cG)$ normalizes both 
itself and $\cG$. This proves the normalizer assertions. 

Since $\cH$ is minimal and purely infinite and $\cH|Y$ is minimal, 
\cite[Proposition 3.20]{Ma26maximal2} gives the maximality of 
$\St(\Gamma,Y)$ for every $\Gamma$ in the statement. 
The equality with $\Gamma\cap\sF(\cG)$ follows from the first part. 
\end{proof}

We now turn to the subgroupoid graph of $\cG$, which will provide 
a concrete model for the regular reduction to $Y$. 
For $\mu\in\Omega$, the set 
\[ 
\{z\in Z(t(\mu))\mid\mu z\in Y\} 
\] 
is called the follower set of $\mu$ in $Y$. 
The next lemma characterizes $\sim_{\cG}$ in terms of these sets. 

\begin{lemma}\label{lem:exceptionalprefix} 
For $\mu,\nu\in\Omega$ with $t(\mu)=t(\nu)$, one has 
\begin{equation}\label{eq:exceptionalprefix} 
\mu\sim_{\cG}\nu 
\iff 
\left(\mu z\in Y\iff\nu z\in Y\right) 
\quad\text{for every }z\in Z(t(\mu)). 
\end{equation} 
\end{lemma}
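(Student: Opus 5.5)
We unwind the definition of $\sim_{\cG}$ and use the explicit description \eqref{eq:exceptionalsubgroupoid} of $\cG=\cS_Y$. By definition, $\mu\sim_{\cG}\nu$ means $Z(\nu,\mu)\subset\cG$. The right-hand side of \eqref{eq:exceptionalprefix} says exactly that the prefix replacement $\theta_{Z(\nu,\mu)}\colon\mu z\mapsto\nu z$ maps $Y\cap Z(\mu)$ onto $Y\cap Z(\nu)$. In other words, it says $YB=BY$ for $B:=Z(\nu,\mu)$, since $YB=BY$ for a bisection $B$ means that $\theta_B$ carries $Y\cap s(B)$ onto $Y\cap r(B)$.

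For the implication ``$\Leftarrow$'', the right-hand side makes $B=Z(\nu,\mu)$ a compact open bisection with $YB=BY$. Hence $B$ is one of the sets in the union \eqref{eq:exceptionalsubgroupoid}, so $B\subset\cG$ and $\mu\sim_{\cG}\nu$.

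For ``$\Rightarrow$'', suppose $Z(\nu,\mu)\subset\cG$, and let $z\in Z(t(\mu))$ with $\mu z\in Y$. The arrow $g=(\nu z,\lvert\nu\rvert-\lvert\mu\rvert,\mu z)$ lies in $\cG$, so $g\in U$ for some compact open bisection $U$ with $YU=UY$. Since $s(g)=\mu z\in Y$, the property $YU=UY$ gives $r(g)=\nu z\in Y$. The converse direction, $\nu z\in Y\Rightarrow\mu z\in Y$, follows by applying the same argument to $g^{-1}\in\cG$, which lies in $Z(\mu,\nu)\subset\cG$. Equivalently, this direction uses only that $Y$ is $\cG$-invariant, which the paper has already recalled.

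The only point needing care is the translation of $YU=UY$ into ``$\theta_U$ preserves $Y$ on its domain''. Here $YU=\{g\in U\mid r(g)\in Y\}$ and $UY=\{g\in U\mid s(g)\in Y\}$. Their equality says that, for every $g\in U$, $r(g)\in Y$ if and only if $s(g)\in Y$. Applied to $U=Z(\nu,\mu)$, this is literally the right-hand side of \eqref{eq:exceptionalprefix}, and applied to an arbitrary $U\ni g$ it yields the pointwise statement used in the ``$\Rightarrow$'' direction. I expect no genuine obstacle: the lemma is essentially a restatement of the definition of $\cS_Y$.
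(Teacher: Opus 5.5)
Your proof is correct and follows the paper's argument. The reverse implication rests on the observation that the right-hand side says exactly that $Z(\nu,\mu)$ satisfies $YU=UY$, so it lies in $\cG$ by \eqref{eq:exceptionalsubgroupoid}. The forward implication is $\cG$-invariance of $Y$, which you verify directly from the definition of $\cS_Y$.
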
 

\begin{proof} 
The forward implication follows from $\cG$-invariance of $Y$. 
Conversely, the condition on the right says that 
$U:=Z(\nu,\mu)$ satisfies $YU=UY$, so $U\subset\cG$ 
by \eqref{eq:exceptionalsubgroupoid}. 
\end{proof} 

Put $\Delta:=\Delta_{\cG}$, $\rho:=\rho_{\cG}$ and 
$\varsigma:=\varsigma_{\cG}$, and define 
\[ 
R:=\{[\mu]_{\cG}\mid Z(\mu)\cap Y=\emptyset\}, 
\qquad C:=\Delta^0\setminus R. 
\] 
Since $Y$ is $\cG$-invariant, membership in $R$ does not depend 
on the choice of the representative $\mu$. 
For a subset of vertices, we use $\Delta|R$ or $\Delta|C$ 
to denote the induced subgraph. 
A set of vertices is said to be forward closed 
if every edge starting in it also ends in it. 
The strongly connected components are the equivalence classes 
for mutual reachability, allowing paths of length zero. 
Such a component is terminal if it is forward closed. 

\begin{lemma}\label{lem:exceptionalgraph} 
The subgroupoid graph $\Delta$ has the following properties. 
\begin{enumerate} 
\item $R$ is the unique terminal strongly connected component of $\Delta$, 
and $\rho:\Delta|R\to(V,E)$ is an isomorphism. 
Every vertex of $\Delta$ can reach $R$. 
\item One has 
\[ 
Y=\{x\in X\mid [x_1\cdots x_n]_{\cG}\in C 
\text{ for every }n\geq1\}. 
\] 
The graph $\Delta|C$ is countable, row-finite and has no sinks. 
Moreover, every vertex of $C$ can reach a vertex on any given 
infinite path in $\Delta|C$. 

\end{enumerate} 
\end{lemma}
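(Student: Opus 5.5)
The plan is to read everything off Lemma~\ref{lem:exceptionalprefix}: $[\mu]_{\cG}$ is determined by $t(\mu)$ together with the follower set $F(\mu):=\{z\in Z(t(\mu))\mid\mu z\in Y\}$. Thus $R$ is exactly the set of classes with $F(\mu)=\emptyset$. By the lemma, all paths $\mu$ with $Z(\mu)\cap Y=\emptyset$ and a given terminal vertex $v$ lie in a single class, which I denote $r_v$. Hence $R=\{r_v\}$, with at most one vertex over each $v\in V$.

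For (1), I would proceed in four steps.
\begin{enumerate}
\item Each $r_v$ exists. Since $Y$ is nowhere dense, for each $v$ some cylinder $Z(\mu)$ with $t(\mu)=v$ misses $Y$. Choose any cylinder $Z(\lambda)$ disjoint from $Y$, then extend $\lambda$ by a path to $v$, using irreducibility of $(V,E)$.
\item $R$ is forward closed. If $F(\mu)=\emptyset$ then $F(\mu e)=\emptyset$, so $r_v e=r_{t(e)}$.
\item $\rho|R$ is an isomorphism. It is bijective on vertices, and the right-covering property makes it bijective on edges. So $\Delta|R\cong(V,E)$, which is strongly connected, and $R$ is a terminal strongly connected component.
\item Every vertex reaches $R$. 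Given $[\mu]$, I use nowhere density again: some extension $\mu\eta$ has $Z(\mu\eta)\cap Y=\emptyset$.
\end{enumerate}
Since every vertex reaches $R$, any terminal component must meet $R$ and therefore equal $R$. This gives uniqueness.

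For the first claim of (2), take $x\in Y$. Each prefix $x_1\cdots x_n$ has $Z(x_1\cdots x_n)\ni x$ meeting $Y$, so its class lies in $C$. Conversely, if every prefix class lies in $C$, then every cylinder around $x$ meets $Y$. Because $Y$ is closed, $x\in Y$. The same argument shows that $C$ is backward closed: if $[\mu e]\in C$ then $[\mu]\in C$.

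For the remaining claims of (2), $\Delta|C$ is countable and row-finite because $\Delta$ is. For no sinks, take $[\mu]\in C$ and a point $\mu z\in Y$. Then $[\mu z_1]\in C$, so the edge along $z_1$ stays in $C$.

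The last assertion is where I expect the real work, and I would use minimality of $\cH|Y$. Let $\tilde x$ be an infinite path in $\Delta|C$ starting at $Q'=[\nu]$. Its projection $z$ satisfies $\nu z_1\cdots z_n\in C$ for all $n$, hence $\nu z\in Y$ by the argument above. Fix $Q=[\mu]\in C$ and a point $y=\mu w\in Y$. Minimality of the regular reduction gives an arrow of $\cH|Y$ from $y$ into the neighborhood $Z(\nu z_1\cdots z_n)$ for every $n$. Because this arrow is regular, it lies in a bisection $U$ with $YU=UY$. Shrinking $U$ to a basic bisection $Z(\nu z_1\cdots z_m,\mu\eta)$ with $\mu\eta\preceq y$ and $m\ge n$, this bisection preserves $Y$, so it lies in $\cG$ by Lemma~\ref{lem:exceptionalprefix}. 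Therefore $[\mu\eta]=[\nu z_1\cdots z_m]$, which is a vertex on $\tilde x$. Moreover the path from $Q$ along $\eta$ stays in $C$, since the prefixes of $y\in Y$ lie in $C$. The delicate point is ensuring that the refinement of $U$ to a basic bisection keeps the range prefix of the form $\nu z_1\cdots z_m$. This should follow by choosing the arrow's range inside $Z(\nu z_1\cdots z_n)$ and refining enough that the range prefix extends $\nu z_1\cdots z_n$ along the point $\nu z$ itself, possibly after replacing the target point by $\nu z$ using density of the orbit.
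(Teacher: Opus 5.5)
Part (1) and the first three claims of (2) are correct and follow the paper's proof essentially step by step. Your use of Lemma~\ref{lem:exceptionalprefix} to show that $R$ has at most one vertex over each $v$ is equivalent to the paper's observation that $Z(\nu,\mu)\subset\cH|(X\setminus Y)\subset\cG$ whenever both cylinders miss $Y$.

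The last assertion has a genuine gap as you wrote it. You fix $y\in Z(\mu)\cap Y$ and take an arrow from $y$ to some point $x'\in Z(\nu z_1\cdots z_n)\cap Y$. Every basic bisection containing this arrow has as its range prefix a prefix of $x'$. You only know such a bisection lies in $U$ (or in $\cG$) once its prefixes are long enough, and the required length depends on the arrow, hence on $n$. But $x'$ is only known to agree with $\nu z$ on its first $\lvert\nu\rvert+n$ symbols. So you cannot obtain a bisection of the form $Z(\nu z_1\cdots z_m,\mu\eta)$. Refining lengthens the prefix along $x'$, not along $\nu z$, so the first half of your proposed repair does not help. What you actually get is $[\mu\eta]_{\cG}=[\omega]_{\cG}$ for some $\omega$ extending $\nu z_1\cdots z_n$. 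That vertex is reachable from a vertex of $\tilde x$, but in general it does not lie on $\tilde x$.

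The second half of your repair, using the point $\nu z$ itself, is the right idea and is exactly the paper's argument. It should replace the main line rather than serve as a fallback, and you should not fix $y$ in advance.

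Apply minimality of $\cH|Y=\cG|Y$ to the orbit of $\nu z\in Y$. This orbit meets the nonempty relatively open set $Z(\mu)\cap Y$, so there is an arrow of $\cG|Y$ with source $\nu z$ and range in $Z(\mu)\cap Y$. Take a sufficiently refined basic bisection of $\cG$ containing it. Its source prefix is a prefix of $\nu z$ of length at least $\lvert\nu\rvert$, and its range cylinder lies in $Z(\mu)$. Hence the bisection has the form $Z(\mu\eta,\nu\zeta)$ with $\zeta$ a prefix of $z$.

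Thus $\mu\eta\sim_{\cG}\nu\zeta$, and $[\nu\zeta]_{\cG}$ lies on $\tilde x$. The lift of $\eta$ from $Q$ stays in $C$, because the range point of the arrow lies in $Y$ (alternatively, because $C$ is backward closed). This completes the argument.
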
 

\begin{proof} 
(1)\: 
If $Z(\mu)$ and $Z(\nu)$ are disjoint from $Y$ and 
$t(\mu)=t(\nu)$, then 
\[ 
Z(\nu,\mu)\subset\cH|(X\setminus Y)\subset\cG. 
\] 
Thus $\rho|R$ is injective. 
Choose a cylinder disjoint from $Y$ and extend its defining path 
to end at any prescribed vertex of $(V,E)$. 
Irreducibility shows that $\rho(R)=V$. 
The set $R$ is forward closed, and the right-covering property 
now gives the asserted graph isomorphism. 
In particular, $R$ is a terminal strongly connected component. 
Every cylinder has a subcylinder disjoint from $Y$, since $Y$ is nowhere 
dense, so every vertex of $\Delta$ reaches $R$. 
No other strongly connected component can therefore be terminal. 

(2)\: 
A point is outside $Y$ if and only if some cylinder containing it 
is disjoint from $Y$, which gives the displayed formula. 
If $[\mu]_{\cG}\in C$, choose $x\in Z(\mu)\cap Y$. 
Extending $\mu$ by the next edge of $x$ gives an edge in $\Delta|C$. 
Hence this graph has no sinks; countability and row-finiteness 
are inherited from $\Delta$. 

Let $P=[\mu]_{\cG}\in C$ and let $\widetilde z$ be an infinite path 
in $\Delta|C$. 
Choose $\nu\in\Omega$ whose lift ends at $i(\widetilde z_1)$, 
and put $z:=\rho(\widetilde z)$. 
Since $R$ is forward closed, the lift of $\nu$ also stays in $C$, 
and the displayed formula gives $\nu z\in Y$. 
By minimality of $\cG|Y$, some arrow of this reduction sends $\nu z$ 
into $Z(\mu)\cap Y$. 
Refining a basic bisection of $\cG$ containing that arrow, we obtain 
a finite path $\eta$ and a finite prefix $\zeta$ of $z$ such that 
\[ 
\mu\eta\sim_{\cG}\nu\zeta. 
\] 
The lift of $\eta$ from $P$ stays in $C$ and ends at the vertex 
$[\nu\zeta]_{\cG}$ on $\widetilde z$. 
This proves the last assertion. 

\end{proof}

The graph left after removing $R$ models the regular reduction to $Y$. 
When $Y$ is finite, the groupoids below need not be effective. 
We use Kakutani equivalence for them with the same definition in 
terms of isomorphic full clopen reductions. 

\begin{proposition}\label{prop:exceptionalgraphmodel} 
Put 
\[ 
S:=\bigsqcup_{\substack{v\in V\\\varsigma(v)\in C}} 
Z(\varsigma(v))\subset X_{\Delta|C}. 
\] 
Then $S$ is full and compact open, 
and $\rho$ induces an isomorphism $\cG_{\Delta|C}|S\to\cG|Y$. 
Consequently, $\cH|Y=\cG|Y$ is Kakutani equivalent to 
$\cG_{\Delta|C}$. 
\end{proposition}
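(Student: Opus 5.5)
The plan is to restrict the isomorphism $\rho_{\cG}|:\cG_{\Delta}|S_\Delta\to\cG$ of Proposition~\ref{prop:subgroupoidgraphmodel}, where $S_\Delta:=\bigsqcup_{v}Z(\varsigma(v))\subset X_\Delta$, to the points lying over $Y$. Under the homeomorphism $\rho|S_\Delta:S_\Delta\to X$, Lemma~\ref{lem:exceptionalgraph}~(2) identifies the preimage of $Y$ with the set of root-based infinite paths that stay in $C$ forever. Since $R$ is forward closed, such a path starts at a root $\varsigma(v)\in C$ and uses only edges of $\Delta|C$. Hence this preimage is exactly $S\subset X_{\Delta|C}\subset X_\Delta$, and $\rho|S:S\to Y$ is a homeomorphism. $S$ is compact open in $X_{\Delta|C}$ because $V$ is finite and cylinders are compact open. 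It is nonempty because $Y\neq\emptyset$.

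\textbf{Fullness.} Let $\tilde x\in X_{\Delta|C}$. Every vertex of $\Delta$ is reachable from $\varsigma(V)$, so choose a path $\tilde\mu$ from a root to $i(\tilde x_1)$. Since $R$ is forward closed and $i(\tilde x_1)\in C$, the path $\tilde\mu$ never enters $R$, so it lies in $\Delta|C$ and starts at a root in $C$. Then $\tilde\mu\tilde x\in S$, and the arrow $(\tilde\mu\tilde x,|\tilde\mu|,\tilde x)$ lies in $\cG_{\Delta|C}$.

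\textbf{The isomorphism.} $\cG_{\Delta|C}$ is an open subgroupoid of $\cG_\Delta$: its basic bisections $Z(\tilde\nu,\tilde\mu)$, for paths in $\Delta|C$, are open in $X_{\Delta|C}$, and $X_{\Delta|C}$ is a closed subset of $X_\Delta$. Moreover $\cG_{\Delta|C}|S$ equals the full restriction of $\cG_\Delta|S_\Delta$ to $S$. The reason is that a tail-equivalence between two points of $S$ is witnessed by prefixes that are paths in $\Delta|C$, and the topologies agree on cylinders. The isomorphism $\rho$ therefore carries $\cG_{\Delta|C}|S$ bijectively onto $r^{-1}(Y)\cap s^{-1}(Y)\cap\cG$. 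Because $Y$ is $\cG$-invariant, this full restriction of $\cG$ equals $\cG|Y$, which equals $\cH|Y$ as recalled above.

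\textbf{Topologies (main obstacle).} The subtle step is matching the topologies, since the regular reduction carries the relative topology. A basic bisection $Z(\tilde\nu,\tilde\mu)\cap(\cG_{\Delta|C}|S)$ maps onto $Z(\nu,\mu)\cap(\cH|Y)$, where $Z(\nu,\mu)\subset\cG$ and $\nu,\mu$ are the projected paths. Conversely, every relatively open basic set $Z(\nu,\mu)\cap\cG|Y$ with $Z(\nu,\mu)\subset\cG$ and nonempty intersection has lifts $\tilde\mu,\tilde\nu$ ending at a common vertex. That vertex lies in $C$ because the intersection meets $Y$. So $\rho$ maps a basis onto a basis and is a homeomorphism.

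\textbf{Kakutani equivalence.} This follows because $S$ is a full clopen subset of $X_{\Delta|C}$, by the stated convention, which does not require effectiveness.
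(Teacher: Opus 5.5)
Your proposal is correct and follows the paper's proof. Both arguments restrict the graph model of Proposition~\ref{prop:subgroupoidgraphmodel} to root-based paths that stay in $C$, use Lemma~\ref{lem:exceptionalgraph}~(2) to identify $S$ with $Y$, use forward closedness of $R$ for fullness, and use $\cG$-invariance of $Y$ to see that every arrow of $\cG$ over $Y$ is regular.

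One slip: $\cG_{\Delta|C}$ is not an open subgroupoid of $\cG_\Delta$, since its unit space $X_{\Delta|C}$ is closed and nowhere dense in $X_\Delta$. It is, however, the full restriction of $\cG_\Delta$ to $X_{\Delta|C}$ with the relative topology, which is all you actually use. With that in place, your ``main obstacle'' is automatic, because restricting the homeomorphism $\rho$ to a subspace gives a homeomorphism onto its image.
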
 

\begin{proof} 
The set $S$ is compact open because the root section is finite. 
Lemma~\ref{lem:exceptionalgraph} (2) identifies $Y$ with precisely 
the paths lifted from the root section which stay in $C$. 
Thus the restriction $\rho|S:S\to Y$ is a homeomorphism. 
Every vertex of $C$ is reachable from the root section by a path 
staying in $C$, since no path can leave $R$. 
Prepending such a path shows that every $\cG_{\Delta|C}$-orbit meets $S$. 
As $Y$ is $\cG$-invariant, all arrows of $\cG$ with source in $Y$ 
are regular. 
The computation in Proposition~\ref{prop:subgroupoidgraphmodel}, 
restricted to paths which stay in $C$, therefore gives the asserted 
groupoid isomorphism. 
In particular, $\cG_{\Delta|C}$ is minimal. 
\end{proof} 

For infinite $Y$, the graph model yields invariants of the 
stabilizer. Indeed, Lemma~\ref{lem:SFTperfectreduction} gives 
\[ 
\cH{\downarrow}Y=\cH|Y=\cG|Y. 
\] 
For every $\sD(\cH)\subset\Gamma\subset\sF(\cH)$, 
\cite[Theorem 4.3 (2)]{Ma26maximal2}, together with 
Theorem~\ref{thm:exceptionalstabilizer}, shows that the isomorphism 
class of $\cG|Y$ is an invariant of the abstract isomorphism class 
of $\Gamma\cap\sF(\cG)$. 
In particular, the Kakutani equivalence class of $\cG_{\Delta|C}$ 
is an invariant. 

The same model gives the following classification of the regular 
reduction. The absence of closed paths is equivalent to the absence 
of nontrivial isotropy in this reduction. 

\begin{corollary}\label{cor:exceptionalclassification} 
\begin{enumerate} 
\item If $Y$ is finite, it is contained in a single $\cH$-orbit. 
If its points are not eventually periodic, then 
$\cH|Y\cong Y\times Y$ and $\Delta|C$ has no closed paths. 
Otherwise, $\cH|Y\cong(Y\times Y)\times\Z$ and 
$\Delta|C$ contains a closed path. 
Here $Y\times Y$ denotes the pair groupoid. 
\item If $Y$ is a Cantor set, the following conditions are equivalent: 
$\cH|Y$ is AF; $\Delta|C$ has no closed paths; 
$\cH|Y$ is principal. 
If these conditions fail, $\cH|Y$ is purely infinite. 
\end{enumerate} 
\end{corollary}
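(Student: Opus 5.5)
The plan is to reduce everything to the graph model of Proposition~\ref{prop:exceptionalgraphmodel}, $\cH|Y\cong\cG_{\Delta|C}|S$, with $S$ full and $\cG_{\Delta|C}$ minimal. We then read off the structure from the combinatorics of $\Delta|C$. Recall that $\Delta|C$ is row-finite without sinks, and every vertex of $C$ reaches every infinite path in $\Delta|C$ by Lemma~\ref{lem:exceptionalgraph}~(2), so it is cofinal.

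\emph{Finite $Y$.} Minimality of $\cH|Y$ makes $Y$ a single orbit of $\cH|Y$, hence contained in one $\cH$-orbit. A minimal groupoid on a finite set with finitely many orbits is transitive, so $\cH|Y\cong(Y\times Y)\times\cH|Y_y$ for a point $y$; the isotropy $\cH_y$ is trivial unless $y$ is eventually periodic, in which case it is $\Z$. We must still check that the isotropy of the regular reduction agrees with that of $\cH$. For an eventually periodic point, the periodic prefix replacement fixing $y$ is regular once we shrink it to a cylinder meeting $Y$ only in $y$, because $Y$ is finite. This gives $(Y\times Y)\times\Z$. On the graph side, an isotropy arrow $(y,n,y)$ with $n\neq0$ in $\cG_{\Delta|C}|S$ means $\tilde\nu\tilde z=\tilde\mu\tilde z$ with different lengths. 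This forces $\tilde z$ to be eventually periodic in $\Delta|C$, hence gives a closed path. Conversely, a closed path in $\Delta|C$ gives a periodic infinite path in $\Delta|C$; prepending a root path yields a point of $Y$ (by Lemma~\ref{lem:exceptionalgraph}~(2)) with nontrivial isotropy.

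\emph{Cantor $Y$.} Since $\cG_{\Delta|C}|S$ is minimal, the argument of Corollary~\ref{cor:minimalsubgroupoiddichotomy} applies verbatim, using cofinality of $\Delta|C$. If $\Delta|C$ has no closed paths, the same exhaustion by finite edge sets shows that $\cH|Y$ is AF, and AF groupoids are principal. If $\Delta|C$ has a closed path, the isotropy argument above shows that $\cH|Y$ is not principal. It remains to show pure infiniteness in that case. Take the strongly connected component $C_0$ containing the closed path; cofinality makes it terminal in $\Delta|C$ and lets every path enter it. We also need a branching vertex in $\Delta|C_0$. This follows because $Y$ is a Cantor set: if $C_0$ were just a cycle without exit, the corresponding points would be isolated in $Y$. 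Then Lemma~\ref{lem:graphpureinfiniteness}'s cylinder argument gives pure infiniteness. Effectiveness, which is needed to call the reduction an ample groupoid in the usual sense, is supplied by Lemma~\ref{lem:SFTperfectreduction}.

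The main obstacle is the finite case's identification of the isotropy of the regular reduction with that of $\cH$. One has to check that the relevant prefix replacement can be chosen $Y$-preserving, and that in the non-eventually-periodic case no isotropy appears. The latter is immediate, since $\cH_y$ itself is trivial for such $y$.
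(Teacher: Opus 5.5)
Your proposal is correct and follows essentially the same route as the paper. It uses the graph model of Proposition~\ref{prop:exceptionalgraphmodel} together with fullness of $S$ to match nontrivial isotropy with closed paths in $\Delta|C$, the transitive discrete structure in the finite case, and the argument of Corollary~\ref{cor:minimalsubgroupoiddichotomy} in the Cantor case. The only differences are minor: you check regularity of the isotropy arrows over finite $Y$ by shrinking a bisection instead of citing that all such arrows are regular, and you obtain the branching vertex in the terminal component from the absence of isolated points in $Y$ rather than from Condition~\textup{(L)} via Lemma~\ref{lem:SFTperfectreduction}.
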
 

\begin{proof} 
A graph groupoid has nontrivial isotropy exactly when its graph 
has a closed path: its points with nontrivial isotropy are the 
eventually periodic infinite paths. 
Since $S$ in Proposition~\ref{prop:exceptionalgraphmodel} is full, 
$\cH|Y$ is principal if and only if $\Delta|C$ has no closed paths. 

If $Y$ is finite, every arrow with source and range in $Y$ is 
regular by \cite[Section 3.1]{Ma26maximal2}. 
Minimality therefore means that $Y$ lies in one $\cH$-orbit. 
The isotropy groups are trivial when its points are not eventually 
periodic, and are isomorphic to $\Z$ otherwise. 
A transitive \'etale groupoid with finite unit space is discrete 
and isomorphic to the product of the pair groupoid on its units 
and any one of its isotropy groups. This proves (1). 

If $Y$ is a Cantor set, $\cH|Y$ is effective by 
Lemma~\ref{lem:SFTperfectreduction}. 
Thus $\Delta|C$ is cofinal and satisfies Condition~\textup{(L)}. 
The proof of Corollary~\ref{cor:minimalsubgroupoiddichotomy}, 
applied to the full compact open reduction 
$\cG_{\Delta|C}|S$, shows that $\cH|Y$ is AF in the acyclic 
case and purely infinite otherwise. 
Since AF groupoids are principal, this proves (2). 
\end{proof} 

Table~\ref{tab:exceptionalconfigurations} summarizes the four cases. 

\begin{table}[htbp] 
\caption{Classification of the regular reduction to $Y$.} 
\label{tab:exceptionalconfigurations} 
\centering 
\begingroup 
\small 
\renewcommand{\arraystretch}{1.2} 
\begin{tabular}{@{}p{0.14\textwidth}p{0.30\textwidth}p{0.46\textwidth}@{}} 
\hline 
$Y$ & Closed paths in $\Delta|C$ & $\cH|Y$ \\ 
\hline 
Finite & None & $Y\times Y$ \\ 
Finite & Present & $(Y\times Y)\times\Z$ \\ 
Cantor & None & Minimal AF \\ 
Cantor & Present & Minimal purely infinite \\ 
\hline 
\end{tabular} 
\endgroup 
\end{table} 

\begin{example}\label{ex:exceptionalAF} 
By \cite[Proposition 4.12 and Corollary 4.13]{Ma26maximal2}, 
every minimal AF groupoid with Cantor unit space occurs as 
$\cH|Y$ in the present setting. 
The corresponding graph $\Delta|C$ is acyclic by 
Corollary~\ref{cor:exceptionalclassification}, and it is infinite 
because it has no sinks. 
These realizations therefore supply the Cantor AF case in 
Table~\ref{tab:exceptionalconfigurations}. 

For a concrete UHF example, let $\cH$ be the full $2$-shift groupoid 
and put 
\[ 
Y:=\left\{a_1\,0^2\,1\,a_2\,0^4\,1\,a_3\,0^6\,1\cdots 
\mid a_n\in\{0,1\}\right\}. 
\] 
This is a nowhere dense Cantor set. 
The maximal blocks of zeros have lengths $2n$ or $2n+1$, so equality 
of tails forces alignment of the block indices. 
Hence the full restriction to $Y$ is the usual tail equivalence 
groupoid on the free symbols $(a_n)$, of UHF type $2^\infty$. 
In $\Delta|C$, the $n$th stage consists of two parallel edges 
followed by a chain of $2n+1$ edges; the other edges at the vertices 
of this chain lead to $R$. 
Telescoping at the block boundaries gives the standard Bratteli 
diagram with one vertex at each level and two edges between 
successive levels. 
\end{example}

\begin{example}\label{ex:aperiodicpoint} 
An acyclic graph can also arise from a singleton invariant set. 
Let $\cH$ be the groupoid of the full $2$-shift, 
choose a point $x\in X$ which is not eventually periodic, 
and let $Y:=\{x\}$. 
The isotropy group $\cH_x$ is trivial, so $\cH|Y=Y$ is minimal. 
The subgroupoid in \eqref{eq:exceptionalsubgroupoid} is 
\[ 
\cG=\cH|(X\setminus\{x\})\cup X. 
\] 

The vertices outside $R$ are the classes of the prefixes of $x$, 
including the empty prefix. 
These classes are distinct. 
Indeed, if distinct prefixes $\mu,\nu$ had the same class, 
then $Z(\nu,\mu)\subset\cG$ would contain an arrow with source $x$ 
and nonzero middle coordinate $\lvert\nu\rvert-\lvert\mu\rvert$. 
This is impossible, since the only arrow of $\cG$ with source $x$ 
is the unit at $x$. 
Consequently $\Delta|C$ is an infinite directed ray and has no 
closed path. 
Theorem~\ref{thm:exceptionalstabilizer} gives the maximal point 
stabilizer $\sF(\cG)=\St(\cH,\{x\})$. 
\end{example}

%%%%%%%%%%%%%%%%%%%%%%%%%%%%%%%%%%%%%%%%%%%%%%%%%%%%%%%%%%%%
\section{Minimal subgroupoids}\label{sec:minimal} 
%%%%%%%%%%%%%%%%%%%%%%%%%%%%%%%%%%%%%%%%%%%%%%%%%%%%%%%%%%%%

Retaining the notation of Section~\ref{sec:subgroupoidgraph}, 
we study case \textup{(1)} of Theorem~\ref{thm:primetrichotomy}. 
Cases \textup{(2)} and \textup{(3)} were treated in 
Proposition~\ref{prop:clopenpartitionnormalizer} and 
Theorem~\ref{thm:exceptionalstabilizer}, respectively. 
Until Corollary~\ref{cor:primenormalizermaximal}, we assume that 
$\cG\subset\cH$ is a minimal wide open subgroupoid. 
By Corollary~\ref{cor:minimalsubgroupoiddichotomy}, 
it is either AF or purely infinite. 
Lemma~\ref{elementofDG} therefore applies in both cases. 
By Lemma~\ref{lem:groupnormalizers}, the groupoid normalizer agrees 
with the usual group normalizer. We write 
\[ 
N:=\Nor\left(\sF(\cH),\sF(\cG)\right)=\cN_{\cH}(\cG). 
\] 
Our main step is to prove that, when $\cG\subsetneq\cH$ is prime, 
any subgroup of $\sF(\cH)$ containing $\sD(\cG)$ and not contained 
in $N$ must contain $\sD(\cH)$ 
(Theorem~\ref{thm:minimalovergroup}). 
We associate an equivalence relation on $\Omega$ with such a subgroup. 
Primeness will force this relation to identify all paths with a common 
terminal vertex, allowing us to recover the three-cycles that generate 
$\sD(\cH)$. 
This follows the strategy of 
\cite[Proposition 5.10 and Theorem 5.12]{BBQS25TAMS}, 
where transpositions in an overgroup define a quotient type system 
and simplicity forces the overgroup to contain all transpositions. 
Since our subgroups are only assumed to contain $\sD(\cG)$, 
we use three-cycles instead, and we extend the argument to 
irreducible SFT groupoids. 
The construction and basic properties of the relation do not require 
primeness. 
We then characterize maximality of the normalizer by its image 
in the ambient abelianization. 
At the end of the section, we drop the minimality assumption, 
combine the three cases of Theorem~\ref{thm:primetrichotomy}, 
and describe the fully amplified maximal subgroups of $\sF(\cH)$. 

For $n\geq3$ and pairwise incomparable finite paths 
$\gamma_0,\dots,\gamma_{n-1}$ with a common terminal vertex, let 
$\sA(\gamma_0,\dots,\gamma_{n-1})$ be the subgroup of $\sF(\cH)$ 
consisting of the prefix permutations 
\[ 
\gamma_i z\longmapsto\gamma_{\pi(i)}z 
\qquad(0\leq i\leq n-1), 
\] 
where $\pi$ is an even permutation of $\{0,\dots,n-1\}$, 
extended by the identity outside $\bigsqcup_iZ(\gamma_i)$. 
It is naturally isomorphic to the alternating group of degree $n$ 
and is independent of the ordering of the paths. 
The group associated with a subfamily fixes all other cylinders. 
Every such even permutation is a product of three-cycles, each of 
which is a commutator of transpositions. 
Consequently, if $\gamma_i\sim_{\cG}\gamma_j$ for all $i,j$, then 
$\sA(\gamma_0,\dots,\gamma_{n-1})\subset\sD(\cG)$. 

The group $\sD(\cH)$ is generated by the groups 
$\sA(\gamma_0,\gamma_1,\gamma_2)$ with 
$\bigsqcup_{i=0}^2Z(\gamma_i)\neq X$. 
Indeed, Nekrashevych's alternating group equals $\sD(\cH)$ 
for the minimal purely infinite groupoid $\cH$; 
see \cite[Section 4]{Ne19ETDS} and \cite[Section 2.6]{Ma26maximal1}. 
Its generating three-cycles can be written as products of 
three-cycles on disjoint triples of cylinders by refining the 
compact open bisections to basic prefix replacements. 
Since the cylinders are Cantor sets, a further common suffix 
refinement splits each such three-cycle into factors with proper 
support.

\begin{lemma}\label{lem:prefixplacement} 
Let $U_1,\dots,U_k\in\cCO(X)$. 
There exists $L\geq0$ such that, for any 
$\beta_1,\dots,\beta_k\in\Omega$ with 
$\lvert\beta_i\rvert\geq L$, there are 
$\gamma_1,\dots,\gamma_k\in\Omega$ satisfying 
\[ 
\gamma_i\sim_{\cG}\beta_i,\qquad 
Z(\gamma_i)\subset U_i\quad(1\leq i\leq k). 
\] 
In particular, if the $U_i$ are mutually disjoint and 
$\bigcup_iU_i\neq X$, then the $\gamma_i$ are pairwise 
incomparable and $\bigsqcup_iZ(\gamma_i)\neq X$. 
\end{lemma}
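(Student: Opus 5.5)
We are in Section~\ref{sec:minimal}, so $\cG$ is a minimal wide open subgroupoid of $\cH$, and the plan is to use this minimality together with compactness of the path space. Given $\beta\in\Omega$, the task is to find $\gamma\sim_{\cG}\beta$ with $Z(\gamma)\subset U$. By minimality of $\cG$, for every point $x\in Z(t(\beta))$ the point $\beta x$ has dense $\cG$-orbit, so some arrow $g\in\cG$ has $s(g)=\beta x$ and $r(g)\in U$. Refining a basic bisection around $g$ inside $\cG$ and inside $r^{-1}(U)$ (Lemma~\ref{lem:basicbisection} (3)), we get $Z(\nu,\beta\eta)\subset\cG$ with $\eta\preceq x$ and $Z(\nu)\subset U$. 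Then $\nu\sim_{\cG}\beta\eta$, but this only says something about an extension of $\beta$, not about $\beta$ itself.

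To get a statement about $\beta$, I would run the argument uniformly over representatives rather than over arbitrary long words. The key point is that $\sim_{\cG}$-classes depend only on $[\beta]_{\cG}$, and we need $\gamma\sim_{\cG}\beta$ exactly. So instead I would work in the graph model $\cG\cong\cG_\Delta|S$ (Proposition~\ref{prop:subgroupoidgraphmodel}). The class $[\beta]_{\cG}$ is a vertex $Q$ of $\Delta$. The condition $\gamma\sim_{\cG}\beta$ says precisely that the lift of $\gamma$ from the roots ends at $Q$, so $Z(\gamma)\subset U$ means that some root path in $\Delta$ ending at $Q$ has projected cylinder inside $U$.

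Fix $i$ and one vertex $Q$. Minimality of $\cG$ (equivalently, cofinality of $\Delta$ together with fullness of $S$) lets us proceed as follows. Choose any infinite path $\tilde z$ from $Q$ and a root path $\tilde\beta$ ending at $Q$. The point $\rho(\tilde\beta\tilde z)$ has dense $\cG$-orbit, so it is moved into $U$. Refining gives a root path $\tilde\nu$ and a prefix $\tilde\eta$ of $\tilde z$ with $\tilde\nu$ and $\tilde\beta\tilde\eta$ ending at the same vertex and $Z(\rho(\tilde\nu))\subset U$.

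The main obstacle is that this reaches $U$ only at a vertex further along $\tilde z$, not at $Q$ itself. To get back to $Q$, I would use that minimality also lets us return: the orbit of any point in $Z(\rho(\tilde\nu))\cap{}$(a suitable subcylinder) meets $Z(\beta)$. Combining the two, we want a root path through $U$ ending at $Q$. The cleaner route is to reverse roles. Since $U$ is open and $\cG$-orbits are dense, the set $\cG\cdot U$ is an open $\cG$-invariant set, hence all of $X$. By compactness, $X$ is covered by finitely many bisections $Z(\nu_j,\mu_j)\subset\cG$ with $Z(\nu_j)\subset U$, and the $\mu_j$ may be taken to form a complete prefix set. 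Let $L_i$ be the maximal length of these $\mu_j$, and $L:=\max_iL_i$.

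Now if $\lvert\beta_i\rvert\geq L$, then $\beta_i$ extends some $\mu_j$, say $\beta_i=\mu_j\eta$. Put $\gamma_i:=\nu_j\eta$. Lemma~\ref{lem:subgroupoidrelation} (1) gives $\gamma_i\sim_{\cG}\beta_i$, and $Z(\gamma_i)\subset Z(\nu_j)\subset U_i$. This argument actually sidesteps the obstacle above, and it is the one I would write up. For the final assertion, the cylinders $Z(\gamma_i)$ lie in the disjoint sets $U_i$, so they are disjoint. Disjoint cylinders mean the paths are pairwise incomparable. Their union lies in $\bigcup_iU_i\neq X$.
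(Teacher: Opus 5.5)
Your final argument is correct and is essentially the paper's proof: cover $X$ by finitely many basic bisections $Z(\nu_j,\mu_j)\subset\cG$ with $Z(\nu_j)\subset U$, let $L$ be the maximal $\lvert\mu_j\rvert$, and set $\gamma:=\nu_j\eta$ when $\beta=\mu_j\eta$; the earlier graph-model detour can simply be dropped. Refining to a complete prefix set is harmless but unnecessary, because $Z(\beta)$ meets some $Z(\mu_j)$, and $\lvert\beta\rvert\geq\lvert\mu_j\rvert$ then already forces $\mu_j\preceq\beta$.
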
 

\begin{proof} 
It suffices to prove the first assertion for one set $U$. 
For every $x\in X$, minimality gives an arrow in $\cG$ 
with source $x$ and range in $U$. 
Since $\cG$ is open, we may choose a basic bisection 
$Z(\delta,\alpha)\subset\cG$ containing this arrow 
such that $Z(\delta)\subset U$. 
By compactness, there are finitely many such bisections 
$Z(\delta_j,\alpha_j)$ for which the sets $Z(\alpha_j)$ 
cover $X$. 
Put $L:=\max_j\lvert\alpha_j\rvert$. 
If $\lvert\beta\rvert\geq L$, choose $j$ with 
$Z(\beta)\cap Z(\alpha_j)\neq\emptyset$. 
Then $\beta=\alpha_j\zeta$ for some finite path $\zeta$. 
Setting $\gamma:=\delta_j\zeta$, we have 
\[ 
Z(\gamma,\beta)\subset Z(\delta_j,\alpha_j)\subset\cG, 
\qquad Z(\gamma)\subset U. 
\] 
For finitely many $U_i$, take the maximum of the corresponding 
bounds. The last assertion follows from 
$Z(\gamma_i)\subset U_i$. 
\end{proof}

\begin{lemma}\label{lem:prefixtransport} 
Let $n\geq3$, and let $(\gamma_i)_{i=0}^{n-1}$ and 
$(\gamma'_i)_{i=0}^{n-1}$ be two families of pairwise incomparable 
finite paths, each with a common terminal vertex. 
Suppose that $\gamma_i\sim_{\cG}\gamma'_i$ for every $i$ and 
\[ 
\bigsqcup_iZ(\gamma_i)\neq X, 
\qquad\bigsqcup_iZ(\gamma'_i)\neq X. 
\] 
Then there exists $g\in\sD(\cG)$ such that 
$g(\gamma_i z)=\gamma'_i z$ for every $i$ and every 
$z\in Z(t(\gamma_i))$. 
In particular, for any subgroup $H\subset\sF(\cH)$ containing 
$\sD(\cG)$, 
\[ 
\sA(\gamma_0,\dots,\gamma_{n-1})\subset H 
\iff 
\sA(\gamma'_0,\dots,\gamma'_{n-1})\subset H. 
\] 
\end{lemma}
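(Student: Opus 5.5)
The plan is to build $g$ as a single element of $\sF(\cG)$ that performs all the prefix replacements $\gamma_iz\mapsto\gamma'_iz$ at once, and then to push it into $\sD(\cG)$ with Lemma~\ref{elementofDG}.

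Consider the compact open bisection
\[
U:=\bigsqcup_{i=0}^{n-1}Z(\gamma'_i,\gamma_i).
\]
Each piece $Z(\gamma'_i,\gamma_i)$ lies in $\cG$ because $\gamma_i\sim_{\cG}\gamma'_i$. The sources $Z(\gamma_i)$ are mutually disjoint, since the $\gamma_i$ are pairwise incomparable, and likewise the ranges $Z(\gamma'_i)$ are mutually disjoint. Hence $U$ is a bisection in $\cCO(\cG)$ with
\[
s(U)=\bigsqcup_iZ(\gamma_i)\neq X,\qquad r(U)=\bigsqcup_iZ(\gamma'_i)\neq X.
\]
By Corollary~\ref{cor:minimalsubgroupoiddichotomy}, $\cG$ is minimal and either AF or purely infinite, and its unit space $X$ is a Cantor set. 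So Lemma~\ref{elementofDG}, applied in $\cG$, gives a full bisection $W\subset\cG$ with $U\subset W$ and $\theta_W\in\sD(\cG)$. Put $g:=\theta_W$. Since $W\supset U$, $g$ agrees with $\theta_U$ on $s(U)$, so $g(\gamma_iz)=\gamma'_iz$ for all $i$ and all $z\in Z(t(\gamma_i))$. Here $\sD(\cG)$ is viewed inside $\sF(\cH)$ through the inclusion $\sF(\cG)\subset\sF(\cH)$.

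For the equivalence, we check that $g\,\sA(\gamma_0,\dots,\gamma_{n-1})\,g^{-1}=\sA(\gamma'_0,\dots,\gamma'_{n-1})$. Take the element $a_\pi$ of $\sA(\gamma_0,\dots,\gamma_{n-1})$ attached to an even permutation $\pi$, and let $a'_\pi$ be the corresponding element of $\sA(\gamma'_0,\dots,\gamma'_{n-1})$.
\begin{itemize}
\item On $Z(\gamma'_i)$: we compute $\gamma'_iz\mapsto\gamma_iz\mapsto\gamma_{\pi(i)}z\mapsto\gamma'_{\pi(i)}z$. This uses that all the $\gamma_i$ share one terminal vertex, so $z$ is a valid suffix throughout.
\item Off $\bigsqcup_iZ(\gamma'_i)$: the map $g^{-1}$ sends this set outside $\bigsqcup_iZ(\gamma_i)$, where $a_\pi$ is the identity, so $ga_\pi g^{-1}$ is the identity there.
\end{itemize}
Thus $ga_\pi g^{-1}=a'_\pi$. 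If $H\supset\sD(\cG)$, then $g\in H$, and conjugating by $g$ or $g^{-1}$ gives both directions of the equivalence.

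There is no real obstacle here. The only points needing care are that $U$ really is a bisection, which comes from incomparability, and that the non-fullness hypotheses are exactly what Lemma~\ref{elementofDG} requires.
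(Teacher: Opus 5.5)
Your proposal is correct and follows essentially the same route as the paper: form the bisection $U=\bigsqcup_iZ(\gamma'_i,\gamma_i)\subset\cG$, use the non-fullness of its source and range to extend it by Lemma~\ref{elementofDG} to a full bisection $V$ with $\theta_V\in\sD(\cG)$, and then observe that conjugation by this element carries each even prefix permutation of the first family to the same permutation of the second. Your explicit check of $ga_\pi g^{-1}=a'_\pi$, both on the cylinders $Z(\gamma'_i)$ and off their union, just spells out the paper's one-line conjugation argument.
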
 

\begin{proof} 
The set 
\[ 
U:=\bigsqcup_{i=0}^{n-1}Z(\gamma'_i,\gamma_i) 
\] 
is a compact open bisection in $\cG$, and both its source 
and its range are proper subsets of $X$. 
By Lemma~\ref{elementofDG}, there exists a full bisection 
$V\subset\cG$ containing $U$ such that $g:=\theta_V\in\sD(\cG)$. 
For each $i$, we have $g(\gamma_i z)=\gamma'_i z$, so 
$g(Z(\gamma_i))=Z(\gamma'_i)$, and $g$ also maps the complement 
of $\bigsqcup_iZ(\gamma_i)$ onto that of $\bigsqcup_iZ(\gamma'_i)$. 
Consequently, conjugation by $g$ carries each even permutation 
of the first family of prefixes to the same permutation of the second. 
This proves the last assertion. 
\end{proof}

We now fix a subgroup $H$ with $\sD(\cG)\subset H\subset\sF(\cH)$. 
The next definition records when, after every sufficiently long 
common extension, representatives of the resulting 
$\sim_{\cG}$-classes can be linked by a three-cycle in $H$. 

\begin{definition}\label{def:approx} 
For $\mu,\nu\in\Omega$, we write $\mu\approx\nu$ if 
$t(\mu)=t(\nu)$ and there exists $m\geq0$ such that the 
following holds for every $\eta\in\Omega$ with 
$t(\mu)=i(\eta)$ and $\lvert\eta\rvert\geq m$: 
there are pairwise incomparable finite paths 
$\gamma_0,\gamma_1,\gamma_2$ satisfying 
\[ 
\gamma_0\sim_{\cG}\mu\eta,\qquad 
\gamma_1\sim_{\cG}\gamma_2\sim_{\cG}\nu\eta, 
\] 
\[ 
Z(\gamma_0)\sqcup Z(\gamma_1)\sqcup Z(\gamma_2)\neq X, 
\qquad\sA(\gamma_0,\gamma_1,\gamma_2)\subset H. 
\] 
\end{definition} 

The bound $m$ may depend on $\mu$ and $\nu$. 
The condition is required for all $\lvert\eta\rvert\geq m$, 
and no length restriction is imposed on the $\gamma_i$. 
By Lemma~\ref{lem:prefixtransport}, once the condition holds 
for one triple representing the specified classes, it holds 
for every such triple with nonempty complement.

\begin{proposition}\label{prop:approx} 
The relation $\approx$ is an equivalence relation on $\Omega$ 
containing $\sim_{\cG}$. 
Moreover, if $t(\mu)=t(\nu)$, then 
\[ 
\mu\approx\nu 
\iff 
\mu e\approx\nu e\quad\text{for every }e\in t(\mu)E. 
\] 
\end{proposition}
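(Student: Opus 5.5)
The plan is to check reflexivity, symmetry, transitivity and the containment of $\sim_{\cG}$ directly from Definition~\ref{def:approx}. Throughout, Lemma~\ref{lem:prefixplacement} supplies triples with nonempty complement, and Lemma~\ref{lem:prefixtransport} lets us change representatives within $\sim_{\cG}$-classes freely. The extension property is then handled by a compactness (uniform bound) argument.

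\emph{Containment and reflexivity.} Suppose $\mu\sim_{\cG}\nu$, and fix three disjoint nonempty clopen sets $U_0,U_1,U_2$ whose union is not $X$. Let $L$ be the bound from Lemma~\ref{lem:prefixplacement}, and put $m:=L$. For $\lvert\eta\rvert\geq m$ that lemma gives pairwise incomparable $\gamma_0\sim_{\cG}\mu\eta$, $\gamma_1\sim_{\cG}\nu\eta$ and $\gamma_2\sim_{\cG}\nu\eta$, with $Z(\gamma_i)\subset U_i$. By Lemma~\ref{lem:subgroupoidrelation}~(1) we have $\mu\eta\sim_{\cG}\nu\eta$, so all three $\gamma_i$ are $\sim_{\cG}$-equivalent. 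Hence $\sA(\gamma_0,\gamma_1,\gamma_2)\subset\sD(\cG)\subset H$. This proves that $\approx$ contains $\sim_{\cG}$, and in particular that it is reflexive.

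\emph{Symmetry.} Suppose $\mu\approx\nu$ with bound $m$, and let $\lvert\eta\rvert\geq\max(m,L)$. We have a triple $(\gamma_0,\gamma_1,\gamma_2)$ of class pattern $(\mu\eta,\nu\eta,\nu\eta)$ with $\sA\subset H$. We want a triple of pattern $(\nu\eta,\mu\eta,\mu\eta)$. First, transport by Lemma~\ref{lem:prefixtransport} to get triples in $H$ of pattern $(a,b,b)$ for representatives placed in small disjoint sets; here $a=[\mu\eta]_{\cG}$ and $b=[\nu\eta]_{\cG}$. Next, take five pairwise incomparable paths $\delta_1,\dots,\delta_5$ of classes $a,a,b,b,b$, again placed by Lemma~\ref{lem:prefixplacement} with nonempty complement. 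Then $\sA(\delta_3,\delta_4,\delta_5)$ and $\sA(\delta_1,\delta_2,\delta_5)$ lie in $\sD(\cG)$, and $\sA(\delta_1,\delta_3,\delta_4)$ lies in $H$ by transport. Since three-cycles sharing points generate the alternating group on their union, $\sA(\delta_1,\dots,\delta_5)\subset H$. In particular $\sA(\delta_3,\delta_1,\delta_2)\subset H$, and this triple has pattern $(b,a,a)$.

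\emph{Transitivity.} Suppose $\mu\approx\nu$ and $\nu\approx\lambda$, and take $\eta$ longer than both bounds and $L$. Writing $a,b,c$ for the classes of $\mu\eta,\nu\eta,\lambda\eta$, the same generation trick applies. Take incomparable paths of classes $a,b,b,c,c$, placed with nonempty complement. The hypotheses give three-cycles in $H$ of patterns $(a,b,b)$ and $(b,c,c)$, and these overlap in a $b$-path. They therefore generate the alternating group on all five cylinders, which contains a three-cycle of pattern $(a,c,c)$.

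\emph{Extension property.} The forward implication holds because the defining condition for $\mu e,\nu e$ is implied by that for $\mu,\nu$: take $\eta=e\eta'$, with bound $\max(m-1,0)$. For the converse, let $m_e$ be the bound for the pair $(\mu e,\nu e)$, and put $m:=1+\max_e m_e$. This is a finite maximum because $t(\mu)E$ is finite. Any $\eta$ with $\lvert\eta\rvert\geq m$ begins with some edge $e$, so $\eta=e\eta'$ with $\lvert\eta'\rvert\geq m_e$, and the condition for $(\mu e,\nu e)$ at $\eta'$ gives the condition for $(\mu,\nu)$ at $\eta$. So this direction is straightforward.

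The main obstacle is the generation step in symmetry and transitivity. There we must ensure that transported triples can be simultaneously positioned: five incomparable paths of the prescribed classes, with nonempty complement. We also need the three-cycles from different sources to be realized on \emph{the same} cylinders, which requires applying Lemma~\ref{lem:prefixtransport} to subfamilies while the remaining cylinders are still present and the complement stays nonempty. I would handle this with Lemma~\ref{lem:prefixplacement}, using five disjoint sets whose union misses an open set, and then invoke Lemma~\ref{lem:prefixtransport} for each triple.
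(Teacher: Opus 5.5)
Your overall plan matches the paper's: placement for containment, alternating-group manipulations on placed representatives for symmetry and transitivity, and a finite maximum for the extension property. However, the symmetry step as written does not work. You assert that $\sA(\delta_1,\delta_2,\delta_5)\subset\sD(\cG)$. But $\delta_1,\delta_2$ have class $a$ and $\delta_5$ has class $b$, so this triple is not $\sim_{\cG}$-homogeneous, and nothing places it in $\sD(\cG)$ unless $a=b$. Worse, the claim is circular: reordered as $(\delta_5,\delta_1,\delta_2)$, it has pattern $(b,a,a)$, so its inclusion in $H$ is already the witness for $\nu\approx\mu$ that symmetry is supposed to produce. Without it, your legitimate groups $\sA(\delta_3,\delta_4,\delta_5)$ and $\sA(\delta_1,\delta_3,\delta_4)$ never move $Z(\delta_2)$. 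They therefore cannot yield $\sA(\delta_3,\delta_1,\delta_2)$.

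The fix is to transport the $(a,b,b)$ witness onto \emph{both} $a$-paths. Lemma~\ref{lem:prefixtransport} gives $\sA(\delta_1,\delta_3,\delta_4)\subset H$ and $\sA(\delta_2,\delta_3,\delta_4)\subset H$. Conjugating the second by the element of the first acting as $(1\ 3\ 4)$, which fixes $Z(\delta_2)$ pointwise, gives $\sA(\delta_2,\delta_4,\delta_1)\subset H$, of pattern $(a,b,a)$. Equivalently, $(1\ 3\ 4)$ and $(2\ 3\ 4)$ generate the alternating group on $\delta_1,\dots,\delta_4$. Four paths suffice, $\delta_5$ is unnecessary, and this is exactly the paper's argument.

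Your transitivity step is correct and slightly different from the paper's. You use only the two transported triples $(\delta_1,\delta_2,\delta_3)$ and $(\delta_2,\delta_4,\delta_5)$, which share one $b$-path, together with $\langle(1\ 2\ 3),(2\ 4\ 5)\rangle=A_5$. This holds because the group is transitive on five points, so its order is divisible by $15$, and $A_5$ has no proper subgroup of such order. The paper instead adds the third transported group $\sA(\gamma_2,\gamma_3,\gamma_4)$ and conjugates by the explicit double transposition $(1\ 3)(2\ 4)$. Your general slogan about overlapping three-cycles should be replaced by a concrete check like the one above. Also, the placement bound $L$ from Lemma~\ref{lem:prefixplacement} must be taken for four or five disjoint sets, not three.
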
 

\begin{proof} 
For each $k\in\{3,4,5\}$, choose mutually disjoint sets 
$U_0,\dots,U_{k-1}\in\cCO(X)$ whose union is not $X$. 
Lemma~\ref{lem:prefixplacement} provides a bound $L_k$ such 
that any $k$ finite paths of length at least $L_k$ have 
pairwise incomparable representatives in their respective 
$\sim_{\cG}$-classes, with nonempty complement. 
Repetitions among the prescribed paths are allowed. 

Suppose first that $\mu\sim_{\cG}\nu$. 
For every common extension $\eta$, we have 
$\mu\eta\sim_{\cG}\nu\eta$ by 
Lemma~\ref{lem:subgroupoidrelation} (1). 
If $\lvert\eta\rvert\geq L_3$, choose representatives 
$\gamma_0,\gamma_1,\gamma_2$ for 
$(\mu\eta,\nu\eta,\nu\eta)$ as above. 
They are all $\sim_{\cG}$-equivalent, so 
$\sA(\gamma_0,\gamma_1,\gamma_2)\subset\sD(\cG)\subset H$. 
Thus $\mu\approx\nu$. 
This also proves reflexivity. 

To prove symmetry, suppose that $\mu\approx\nu$, and take 
$\eta$ sufficiently long for Definition~\ref{def:approx} 
to apply and for $\lvert\eta\rvert\geq L_4$. 
Choose pairwise incomparable paths $\gamma_0,\dots,\gamma_3$ 
with nonempty complement such that 
\[ 
\gamma_0\sim_{\cG}\gamma_1\sim_{\cG}\mu\eta, 
\qquad 
\gamma_2\sim_{\cG}\gamma_3\sim_{\cG}\nu\eta. 
\] 
By Lemma~\ref{lem:prefixtransport}, 
\[ 
\sA(\gamma_0,\gamma_2,\gamma_3)\subset H, 
\qquad 
\sA(\gamma_1,\gamma_2,\gamma_3)\subset H. 
\] 
Let $g$ be the element corresponding to the cycle $(0\ 2\ 3)$ 
on these four paths. 
It lies in the first group and fixes $Z(\gamma_1)$ pointwise. 
Hence 
\[ 
g\,\sA(\gamma_1,\gamma_2,\gamma_3)g^{-1} 
=\sA(\gamma_1,\gamma_3,\gamma_0)\subset H. 
\] 
Reordering the last triple gives a witness for $\nu\approx\mu$. 
Since the argument applies to every sufficiently long $\eta$, 
symmetry follows. 

For transitivity, suppose that $\mu\approx\nu$ and 
$\nu\approx\lambda$. 
Take $\eta$ sufficiently long for both relations and for 
$\lvert\eta\rvert\geq L_5$, and choose pairwise incomparable 
paths $\gamma_0,\dots,\gamma_4$ with nonempty complement such that 
\[ 
\gamma_0\sim_{\cG}\mu\eta,\qquad 
\gamma_1\sim_{\cG}\gamma_2\sim_{\cG}\nu\eta,\qquad 
\gamma_3\sim_{\cG}\gamma_4\sim_{\cG}\lambda\eta. 
\] 
By Lemma~\ref{lem:prefixtransport}, the three groups 
\[ 
\sA(\gamma_0,\gamma_1,\gamma_2),\qquad 
\sA(\gamma_1,\gamma_3,\gamma_4),\qquad 
\sA(\gamma_2,\gamma_3,\gamma_4) 
\] 
are contained in $H$. 
The product of the elements corresponding to 
$(1\ 3\ 4)$ and $(2\ 3\ 4)$ is an element $g\in H$ 
corresponding to $(1\ 3\ 4)(2\ 3\ 4)=(1\ 3)(2\ 4)$. 
Consequently 
\[ 
g\,\sA(\gamma_0,\gamma_1,\gamma_2)g^{-1} 
=\sA(\gamma_0,\gamma_3,\gamma_4)\subset H. 
\] 
This gives a witness for $\mu\approx\lambda$ for every 
sufficiently long $\eta$, proving transitivity. 

Finally, suppose that $\mu\approx\nu$, with bound $m$ in 
Definition~\ref{def:approx}. 
If $e\in t(\mu)E$ and $i(\eta)=t(e)$ with 
$\lvert\eta\rvert\geq m$, apply that definition to $e\eta$. 
The resulting triple witnesses the condition for 
$\mu e\approx\nu e$. 
Conversely, suppose that $\mu e\approx\nu e$ for every 
$e\in t(\mu)E$, and let $m_e$ be the corresponding bounds. 
Set $m:=1+\max_{e\in t(\mu)E}m_e$. 
Every path $\eta$ with $i(\eta)=t(\mu)$ and 
$\lvert\eta\rvert\geq m$ has the form $e\zeta$, where 
$\lvert\zeta\rvert\geq m_e$. 
The condition for $\mu e\approx\nu e$, applied to $\zeta$, 
therefore gives the required triple for $\mu\approx\nu$. 
\end{proof} 

Since $\sim_{\cG}$ is contained in $\approx$, the latter induces 
an equivalence relation on $\Delta_{\cG}^0=\Omega/{\sim_{\cG}}$.

\begin{lemma}\label{lem:approxlocal} 
Let $h\in H$ fix a nonempty open subset of $X$ pointwise. 
Then $h(\mu)\approx\mu$ for every $\mu\in\Omega(h)$. 
\end{lemma}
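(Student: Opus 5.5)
Fix $\mu\in\Omega(h)$ and put $\nu:=h(\mu)$, so $t(\nu)=t(\mu)$ and $h(\mu\eta)=\nu\eta$ for every extension $\eta$ by Lemma~\ref{lem:subgroupoidpartialaction} (2). Let $O$ be a nonempty open set fixed pointwise by $h$. The plan is to produce, for every sufficiently long $\eta$, a witness triple for $\mu\approx\nu$ in Definition~\ref{def:approx}. We obtain it by conjugating a three-cycle lying in $\sD(\cG)\subset H$ by $h$. If $\mu=\nu$ there is nothing to prove, so assume $\mu\neq\nu$.

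First shrink $O$ to two disjoint nonempty clopen sets $U_1,U_2\subset O$ whose union is not $X$. Apply Lemma~\ref{lem:prefixplacement} to $U_1,U_2$, together with a third clopen set $U_0$. The set $U_0$ will be used to place a representative of the class of $\mu\eta$ inside a region where $h$ acts by a known prefix replacement. The cleanest choice is to take $\gamma_0:=\mu\eta$ itself, with no placement needed. Let $L$ be the bound from Lemma~\ref{lem:prefixplacement}, and take $m$ so large that for $\lvert\eta\rvert\geq m$ the paths $\mu\eta$, $\nu\eta$ have length at least $L$. Also require that $Z(\mu\eta)\cup Z(\nu\eta)$ is disjoint from $U_1\cup U_2$ and not all of $X$. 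For this we first choose $U_1,U_2$ inside $O\setminus(Z(\mu)\cup Z(\nu))$. This is possible if $O\not\subset Z(\mu)\cup Z(\nu)$. Otherwise we replace $\mu$ by its extensions, which is legitimate by the extension criterion of Proposition~\ref{prop:approx}, until the cylinders $Z(\mu e\cdots)$ and $Z(\nu e\cdots)$ miss a subset of $O$; since $h$ fixes $O$, $O\cap Z(\mu)$ meets $Z(\nu)$ only at points where tails are fixed, which is handled by further extension.

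Now, for such $\eta$, choose $\gamma_1,\gamma_2\sim_{\cG}\mu\eta$ with $Z(\gamma_1)\subset U_1$ and $Z(\gamma_2)\subset U_2$, using Lemma~\ref{lem:prefixplacement}. Then $\mu\eta,\gamma_1,\gamma_2$ are pairwise incomparable, their cylinders have nonempty complement, and all three lie in one $\sim_{\cG}$-class. Hence $\sA(\mu\eta,\gamma_1,\gamma_2)\subset\sD(\cG)\subset H$. Conjugate by $h\in H$. Since $h$ maps $Z(\mu\eta)$ to $Z(\nu\eta)$ by the prefix replacement and fixes $Z(\gamma_1)$ and $Z(\gamma_2)$ pointwise, we get
\[
h\,\sA(\mu\eta,\gamma_1,\gamma_2)\,h^{-1}=\sA(\nu\eta,\gamma_1,\gamma_2)\subset H .
\]
Reordering, the triple $(\nu\eta,\gamma_1,\gamma_2)$ has $\gamma_0':=\nu\eta$ and $\gamma_1\sim_{\cG}\gamma_2\sim_{\cG}\mu\eta$. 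This is exactly a witness for $\nu\approx\mu$, and by symmetry in Proposition~\ref{prop:approx} also for $\mu\approx\nu$. We still need $Z(\nu\eta)$ disjoint from $U_1\cup U_2$, so that the cylinders are incomparable, and the union of the three cylinders not equal to $X$. Both hold by the choice of $U_1,U_2$ and by taking $\eta$ long.

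The main obstacle is the placement step: arranging $U_1,U_2\subset O$ disjoint from $Z(\mu\eta)\cup Z(\nu\eta)$ uniformly for all long $\eta$. I would try first to choose $U_1,U_2$ inside $O$ once and for all. Then for all but finitely many prefix classes the cylinders $Z(\mu\eta)$ avoid a fixed small clopen piece of $O$, except when $\mu\eta$ lies over $U_i$. In that case we instead use the equivalence of $\approx$ under extensions: only finitely many bad cylinders occur at each length, and each can be treated with a different pair $U_1',U_2'\subset O$ chosen disjoint from it.
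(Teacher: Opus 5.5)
You have the same core mechanism as the paper. You place $\gamma_1,\gamma_2\sim_{\cG}\mu\eta$ inside the region fixed by $h$, observe that $\sA(\mu\eta,\gamma_1,\gamma_2)\subset\sD(\cG)\subset H$, and conjugate by $h$ to obtain the witness $(h(\mu)\eta,\gamma_1,\gamma_2)$ for $h(\mu)\approx\mu$.

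\textbf{The gap.} The step you flag as the main obstacle is never actually resolved. Your ``otherwise'' branch (extend until the cylinders miss part of $O$, ``handled by further extension'') and your final paragraph are plans, not arguments. The worry about uniformity in $\eta$ is also a red herring. Since $Z(\mu\eta)\subset Z(\mu)$ and $Z(\nu\eta)\subset Z(\nu)$, any $U_1,U_2$ disjoint from $Z(\mu)\cup Z(\nu)$ work for all $\eta$ at once.

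\textbf{The missing idea.} Effectiveness of $\cH$ shows that the bad case never occurs. Suppose $\nu=h(\mu)\neq\mu$ and $Z(\mu)$ met $O$.
\begin{itemize}
\item Then $O$ would contain a cylinder $Z(\mu\zeta)$.
\item On $Z(\mu\zeta)$, $h$ is both the identity and the prefix replacement $\mu\zeta z\mapsto\nu\zeta z$.
\item Hence $Z(\nu\zeta,\mu\zeta)$ is an open subset of $\Iso(\cH)$, so it is contained in $X$.
\item This forces $\nu\zeta=\mu\zeta$, hence $\nu=\mu$, a contradiction.
\end{itemize}
Thus $Z(\mu)\cap O=\emptyset$. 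Since $h(O)=O$, also $Z(\nu)\cap O=h(Z(\mu)\cap O)=\emptyset$.

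You may therefore choose disjoint nonempty clopen $U_1,U_2\subset O$ once and for all with $U_1\cup U_2\neq O$. This choice also gives the nonempty complement of the three cylinders directly. Your argument then goes through for every $\eta$ long enough for Lemma~\ref{lem:prefixplacement}.

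\textbf{How the paper does it.} The paper phrases the same point as a reduction. By the extension criterion of Proposition~\ref{prop:approx}, one may assume that $Z(\mu)\subset U$ or $Z(\mu)\cap U=\emptyset$, for a clopen $U$ fixed pointwise by $h$. In the first case effectiveness gives $h(\mu)=\mu$. In the second case $h$ preserves $X\setminus U$, so $Z(h(\mu))$ also misses $U$.

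Your extension route could also be completed, for instance by noting that two sufficiently deep cylinders cannot cover a cylinder contained in $O$. As written, however, that justification is not supplied.
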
 

\begin{proof} 
Choose $U\in\cCO(X)$ which is fixed pointwise by $h$. 
By Proposition~\ref{prop:approx} and 
Lemma~\ref{lem:subgroupoidpartialaction} (2), 
it suffices to prove the assertion after replacing $\mu$ 
by all its extensions of some fixed length. 
We may therefore assume that $Z(\mu)\subset U$ or 
$Z(\mu)\cap U=\emptyset$. 
In the first case, $Z(h(\mu),\mu)$ is an open subset of 
$\Iso(\cH)$. Since $\cH$ is effective, this bisection is contained 
in $X$, so $h(\mu)=\mu$. 

Suppose that $Z(\mu)\cap U=\emptyset$. 
Since $h$ fixes $U$ pointwise, it preserves $X\setminus U$. 
Choose disjoint nonempty clopen subsets $U_1,U_2\subset U$ 
with $U_1\cup U_2\neq U$. 
For every sufficiently long path $\eta$ with $i(\eta)=t(\mu)$, 
Lemma~\ref{lem:prefixplacement} provides paths $\gamma_1,\gamma_2$ 
such that 
\[ 
\gamma_1\sim_{\cG}\gamma_2\sim_{\cG}\mu\eta, 
\qquad Z(\gamma_j)\subset U_j\quad(j=1,2). 
\] 
The paths $\mu\eta,\gamma_1,\gamma_2$ are pairwise incomparable, 
and their basic alternating group is contained in $\sD(\cG)$. 
Conjugating by $h$ and using $h(\mu\eta)=h(\mu)\eta$, we obtain 
\[ 
h\,\sA(\mu\eta,\gamma_1,\gamma_2)h^{-1} 
=\sA(h(\mu)\eta,\gamma_1,\gamma_2)\subset H. 
\] 
The support on the right is disjoint from 
$U\setminus(U_1\cup U_2)$. 
Thus these triples witness $h(\mu)\approx\mu$. 
\end{proof}

We now use primeness to show that the intermediate subgroupoid 
defined by $\approx$ is all of $\cH$ whenever $H\not\subset N$. 

\begin{proposition}\label{prop:approxprime} 
Suppose that $\cG\subsetneq\cH$ is prime and 
$H\not\subset\Nor\left(\sF(\cH),\sF(\cG)\right)$. 
Then, for all $\mu,\nu\in\Omega$, 
\[ 
\mu\approx\nu\iff t(\mu)=t(\nu). 
\] 
\end{proposition}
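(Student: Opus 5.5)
The plan is to build an intermediate wide open subgroupoid from $\approx$, use primeness to pin it down, and then rule out the bottom option using an element of $H\setminus N$. By Proposition~\ref{prop:approx}, $\approx$ is an equivalence relation on $\Omega$ that contains $\sim_{\cG}$ and is compatible with right extensions in both directions. The relation $\approx$ induces a quotient of $\Delta_{\cG}$: put $W:=\Omega/{\approx}$, take edges $([\mu]_\approx,e)$ from $[\mu]_\approx$ to $[\mu e]_\approx$, let $\rho$ record terminal vertices and edge labels, and let $\varsigma(v):=[v]_\approx$. The forward direction of the extension property makes the edges well defined and $\rho$ right-covering. The backward direction is exactly the reducedness condition \eqref{eq:reducedrootedgraph}. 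Reachability from the roots is automatic, and the quotient map $q:\Delta_{\cG}\to(W,F)$ satisfies $\rho\circ q=\rho_{\cG}$ and $q\circ\varsigma_{\cG}=\varsigma$.

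By Proposition~\ref{prop:subgroupoidgraphquotient}, $\cK:=\cG(W,F,\rho,\varsigma)$ is an intermediate wide open subgroupoid with $\sim_{\cK}=\approx$. Primeness gives $\cK=\cH$ or $\cK=\cG$. In the first case $\approx$ coincides with $\sim_{\cH}$, which is ``same terminal vertex'', as claimed. So the whole proof reduces to excluding $\cK=\cG$, that is, $\approx\,=\,\sim_{\cG}$.

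Suppose $\approx\,=\,\sim_{\cG}$, and pick $h\in H\setminus N$. By Proposition~\ref{prop:subgroupoidnormalizer}~(2), it suffices to show that $h$ preserves $\sim_{\cG}$ on a cofinite right-extension-closed subset of $\Omega(h)$; that contradicts $h\notin N$. The tool is Lemma~\ref{lem:approxlocal}, but it needs $h$ to fix a nonempty open set, so first reduce to that case. Choose a small cylinder $Z(\gamma)$ and, via Lemma~\ref{elementofDG} or Lemma~\ref{lem:prefixtransport}, an element $g\in\sD(\cG)\subset H$ with $g h$ agreeing with the identity on some clopen set. For instance, take $g\in\sD(\cG)$ sending $h(Z(\gamma'))$ back onto $Z(\gamma')$ via $\sim_{\cG}$-equivalent prefixes. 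This needs $h(\gamma')\sim_{\cG}\gamma'$ for some long $\gamma'$, which may fail. If it fails, use instead the commutator $[h,a]$ with $a\in\sD(\cG)$ supported on a small cylinder $D$ where $h(D)\cap D=\emptyset$. Then $k:=[h,a]\in H$ is the identity off $D\cup h(D)$ and so fixes an open set. If $k\notin N$ we are done: Lemma~\ref{lem:approxlocal} gives $k(\mu)\approx\mu$, hence $k(\mu)\sim_{\cG}\mu$ for all $\mu\in\Omega(k)$, so $k\in\sF(\cG)\subset N$ by Proposition~\ref{prop:subgroupoidnormalizer}~(1), a contradiction.

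The main obstacle is showing that some element of $H$ that fixes an open set lies outside $N$. The argument I would try has two steps. First, every $k\in H$ that fixes an open set lies in $\sF(\cG)$, by the argument just given. Next, since $N$ normalizes $\sF(\cG)$, it is enough that conjugation by $h$ maps some such $k\in\sD(\cG)\subset H$ outside $\sF(\cG)$. We know $h\sF(\cG)h^{-1}\neq\sF(\cG)$ because $N=\Nor(\sF(\cH),\sD(\cG))$ (Lemma~\ref{lem:groupnormalizers}). Hence some $a\in\sD(\cG)$ has $hah^{-1}\notin\sF(\cG)$. Writing $\sD(\cG)$ as generated by elements of small support, via Lemma~\ref{elementofDG} and the covering of $\cG$ by such bisections, we may take $a$ with proper support. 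Then $hah^{-1}\in H$ fixes the open set $X\setminus h(\supp a)$ yet is not in $\sF(\cG)$. This contradicts the first step, so $\cK\neq\cG$ and therefore $\cK=\cH$. The delicate point is checking that elements with proper support generate $\sD(\cG)$, so that $a$ can be chosen this way.
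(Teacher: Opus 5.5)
You have the paper's skeleton, but two steps in your last paragraph are not justified as written. The skeleton is: quotient $\Delta_{\cG}$ by $\approx$ to get an intermediate $\cK$ with $\sim_{\cK}$ equal to $\approx$; use Lemma~\ref{lem:approxlocal} and Proposition~\ref{prop:subgroupoidnormalizer}~(1) to see that every element of $H$ fixing a nonempty open set lies in $\sF(\cK)$; then conjugate proper-support elements of $\sD(\cG)$ by $h$ to rule out $\cK=\cG$. The detours through $gh$ and $[h,a]$ in your third paragraph are not needed.

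\textbf{First gap.} The claim ``hence some $a\in\sD(\cG)$ has $hah^{-1}\notin\sF(\cG)$'' does not follow from $h\notin N$. The failure $h\sD(\cG)h^{-1}\neq\sD(\cG)$ may come only from $h^{-1}\sD(\cG)h\not\subset\sD(\cG)$. Also, a conjugate that leaves $\sD(\cG)$ could a priori still lie in $\sF(\cG)$. To fix this, allow replacing $h$ by $h^{-1}$ (both lie in $H\setminus N$), and then use one of the following.
\begin{enumerate}
\item Perfectness of $\sD(\cG)$: it is simple and nonabelian by Theorem~\ref{niceproperties}~(1), so $h\sD(\cG)h^{-1}\subset\sF(\cG)$ forces $h\sD(\cG)h^{-1}\subset\sD(\cG)$.
\item The groupoid form: $U_h\cG U_h^{-1}\not\subset\cG$ for $h$ or for $h^{-1}$, while the bisections $U_a$, $a\in\sD(\cG)$, cover $\cG$.
\end{enumerate}

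\textbf{Second gap.} The ``delicate point'' you leave open is genuinely needed in your group-level route. It does hold. The elements of $\sD(\cG)$ with proper support form a conjugation-invariant set containing nontrivial three-cycles. So they generate a nontrivial normal subgroup of the simple group $\sD(\cG)$, which must be all of $\sD(\cG)$.

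\textbf{The paper's route.} The paper avoids both issues by doing the generation at the groupoid level. It shows directly that the bisections $U_\sigma$, for $\sigma\in\sD(\cG)$ fixing a nonempty open set, generate $\cG$ as a groupoid. Every arrow of $\cG$ with distinct source and range lies in $U_{\tau_V\tau_U}$ for a three-cycle $\tau_V\tau_U\in\sD(\cG)$ with proper support, as in Lemma~\ref{threecycle}. Isotropy arrows are products of two such arrows. For every $h\in H$ one has $U_hU_\sigma U_h^{-1}=U_{h\sigma h^{-1}}\subset\cK$, so $U_h\cG U_h^{-1}\subset\cK$. If $\cK=\cG$, applying this to both $h$ and $h^{-1}$ gives $H\subset\cN_{\cH}(\cG)=N$, a contradiction. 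This argument needs neither simplicity nor perfectness of $\sD(\cG)$, and it treats the two directions symmetrically.
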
 

\begin{proof} 
In the quotient of $\Delta_{\cG}$ by $\approx$, the vertex 
$[\mu]_{\approx}$ lies over $t(\mu)$ and has exactly one outgoing 
edge labelled $e$ for each $e\in t(\mu)E$, ending at 
$[\mu e]_{\approx}$. 
These assignments are well defined by Definition~\ref{def:approx} 
and the forward implication in Proposition~\ref{prop:approx}, 
so the projection to $(V,E)$ is right-covering. 
The reverse implication gives reducedness. 
The root section is $v\mapsto[v]_{\approx}$, and every vertex 
$[\mu]_{\approx}$ is reached by lifting $\mu$ from its root. 
By Proposition~\ref{prop:subgroupoidgraphquotient}, it therefore 
determines a wide open subgroupoid 
\[ 
\cG\subset\cK\subset\cH, 
\qquad 
\mu\sim_{\cK}\nu\iff\mu\approx\nu. 
\] 
Lemma~\ref{lem:approxlocal} and 
Proposition~\ref{prop:subgroupoidnormalizer} (1) imply that 
every element of $H$ fixing a nonempty open subset pointwise 
belongs to $\sF(\cK)$. 

Let $\Sigma$ be the set of all $\sigma\in\sD(\cG)$ 
which fix a nonempty open subset pointwise. 
The full bisections $U_\sigma$, $\sigma\in\Sigma$, generate $\cG$. 
Indeed, let $g\in\cG$ have distinct source and range. 
Choosing a third point in its $\cG$-orbit, we obtain bisections 
$U,V\in\cCO(\cG)$ with $g\in U$ and $s(U)=s(V)$ such that 
$s(U)$, $r(U)$ and $r(V)$ are mutually disjoint and their union 
is not all of $X$. 
By Lemma~\ref{threecycle} (1) and (2), applied to $\cG$, 
we have $\tau_V\tau_U\in\Sigma$ and $g\in U_{\tau_V\tau_U}$. 
Every isotropy arrow is a product of two arrows with distinct 
source and range. 

For $h\in H$ and $\sigma\in\Sigma$, 
the element $h\sigma h^{-1}\in H$ fixes a nonempty open subset, 
so it belongs to $\sF(\cK)$. 
Consequently 
\[ 
U_h\cG U_h^{-1}\subset\cK\qquad(h\in H). 
\] 
If $\cK=\cG$, applying this inclusion to both $h$ and $h^{-1}$ 
would give $H\subset N$, contrary to the assumption. 
Hence $\cG\subsetneq\cK$, and primeness gives $\cK=\cH$. 
The asserted description of $\approx$ follows. 
\end{proof}

\begin{theorem}\label{thm:minimalovergroup} 
Suppose that $\cG\subsetneq\cH$ is prime. 
If $\sD(\cG)\subset H\subset\sF(\cH)$ and 
$H\not\subset\Nor\left(\sF(\cH),\sF(\cG)\right)$, 
then $\sD(\cH)\subset H$. 
\end{theorem}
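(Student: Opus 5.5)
By Proposition~\ref{prop:approxprime}, the hypotheses give $\mu\approx\nu$ whenever $t(\mu)=t(\nu)$. The plan is to turn this into the statement that $H$ contains every generating group $\sA(\gamma_0,\gamma_1,\gamma_2)$ of $\sD(\cH)$, namely those with $\bigsqcup_i Z(\gamma_i)\neq X$. So fix three pairwise incomparable paths $\gamma_0,\gamma_1,\gamma_2$ with common terminal vertex $v$ and nonempty complement. Using Lemma~\ref{lem:basicbisection}~(2), $\sA(\gamma_0,\gamma_1,\gamma_2)$ is generated by the groups $\sA(\gamma_0\eta,\gamma_1\eta,\gamma_2\eta)$, with $\eta$ ranging over paths of a fixed length $n$ starting at $v$. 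Indeed, a $3$-cycle on the three cylinders is the commuting product over $\eta$ of the corresponding $3$-cycles on the subcylinders. Hence it suffices to treat $\gamma_i\eta$ for all sufficiently long $\eta$, and we may replace $\gamma_i$ by $\gamma_i\eta$ with $\lvert\eta\rvert$ as large as we like.

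Apply the definition of $\approx$ to the pairs $(\gamma_0,\gamma_1)$ and $(\gamma_0,\gamma_2)$, and also to their reverses using symmetry. For all sufficiently long $\eta$ we obtain triples in $H$ whose shapes have the form $(a,b,b)$. Here one entry is $\sim_{\cG}$-equivalent to $\gamma_0\eta$ and two entries are equivalent to $\gamma_1\eta$, or similarly with the roles of the paths exchanged.

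Next, using Lemma~\ref{lem:prefixplacement}, choose pairwise incomparable representatives with nonempty complement, say $\delta_0\sim_{\cG}\gamma_0\eta$, $\delta_1,\delta_1'\sim_{\cG}\gamma_1\eta$ and $\delta_2,\delta_2'\sim_{\cG}\gamma_2\eta$. By Lemma~\ref{lem:prefixtransport}, $H$ contains $\sA(\delta_0,\delta_1,\delta_1')$ and $\sA(\delta_0,\delta_2,\delta_2')$. Since $\gamma_1\eta\approx\gamma_2\eta$ as well, $H$ also contains $\sA(\delta_1,\delta_2,\delta_2')$, and the analogous groups for the other pairings. Conjugating these groups into one another, exactly as in the symmetry and transitivity steps of Proposition~\ref{prop:approx}, generates $\sA(\delta_0,\delta_1,\delta_2)$ inside $H$. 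Concretely, the $3$-cycles $(x\,y\,y')$ on the five cylinders generate an alternating group containing all even permutations of $\{\delta_0,\delta_1,\delta_1',\delta_2,\delta_2'\}$. In particular they contain the $3$-cycle on $\delta_0,\delta_1,\delta_2$. Finally, Lemma~\ref{lem:prefixtransport} transports $\sA(\delta_0,\delta_1,\delta_2)$ to $\sA(\gamma_0\eta,\gamma_1\eta,\gamma_2\eta)$, because both triples have nonempty complement.

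The main obstacle is the combinatorial step: checking that the available $3$-cycles, each of the form (one class, two copies of another class), generate enough of the alternating group on the five cylinders. We also have to make sure that all five chosen paths can be placed simultaneously with nonempty complement, which Lemma~\ref{lem:prefixplacement} handles with $k=5$. For the first point I would check directly that $(\delta_0\,\delta_1\,\delta_1')$, $(\delta_0\,\delta_2\,\delta_2')$ and $(\delta_1\,\delta_2\,\delta_2')$ act transitively enough. Their conjugates give $3$-cycles involving all five points, so they generate $A_5$, the subgroup generated by $3$-cycles whose support graph is connected.
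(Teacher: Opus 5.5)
Your proof is correct and follows essentially the paper's route: reduce to the basic groups $\sA(\gamma_0,\gamma_1,\gamma_2)$ refined along all extensions of one fixed length, invoke Proposition~\ref{prop:approxprime}, place five pairwise incomparable representatives with nonempty complement via Lemma~\ref{lem:prefixplacement}, and assemble the target three-cycle from the three-cycles supplied by $\approx$ and Lemma~\ref{lem:prefixtransport}. The difference is only in the bookkeeping. The paper keeps the paths $\mu_i\eta$ themselves, adds two representatives of $\mu_2\eta$ in the complement, and combines $\mu_0\approx\mu_2$, $\mu_1\approx\mu_2$ and $\sA(\gamma_2,\gamma_3,\gamma_4)\subset\sD(\cG)$ through the explicit double transposition $(1\ 3)(2\ 4)$. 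You instead use $\gamma_0\eta$ as the hub, use the fact that two three-cycles meeting in exactly one point generate $A_5$, and transport back at the end; in particular your third group $\sA(\delta_1,\delta_2,\delta_2')$ is not needed.
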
 

\begin{proof} 
By the generation statement recalled above, it suffices to prove 
\[ 
\sA(\mu_0,\mu_1,\mu_2)\subset H 
\] 
for pairwise incomparable paths $\mu_0,\mu_1,\mu_2$ 
with common terminal vertex $v$ and 
$\bigsqcup_{i=0}^2Z(\mu_i)\neq X$. 

Choose mutually disjoint nonempty clopen sets $U_1,U_2,U_3$ 
in the complement of $\bigsqcup_{i=0}^2Z(\mu_i)$. 
For every sufficiently long path $\eta$ with $i(\eta)=v$, 
Lemma~\ref{lem:prefixplacement} provides $\gamma_3,\gamma_4$ 
such that 
\[ 
\gamma_3\sim_{\cG}\gamma_4\sim_{\cG}\mu_2\eta, 
\qquad Z(\gamma_3)\subset U_1,\quad Z(\gamma_4)\subset U_2. 
\] 
Put $\gamma_i:=\mu_i\eta$ for $i=0,1,2$. 
These five paths are pairwise incomparable, and the union 
of their cylinders is disjoint from $U_3$. 

Proposition~\ref{prop:approxprime} gives 
$\mu_0\approx\mu_2$ and $\mu_1\approx\mu_2$. 
For every sufficiently long $\eta$, 
Definition~\ref{def:approx} and Lemma~\ref{lem:prefixtransport} 
therefore give 
\[ 
\sA(\gamma_0,\gamma_3,\gamma_4)\subset H, 
\qquad 
\sA(\gamma_1,\gamma_3,\gamma_4)\subset H. 
\] 
Moreover, 
$\sA(\gamma_2,\gamma_3,\gamma_4)\subset\sD(\cG)\subset H$. 
The same calculation as in the proof of Proposition~\ref{prop:approx} 
gives an element $g\in H$ acting as 
$(1\ 3\ 4)(2\ 3\ 4)=(1\ 3)(2\ 4)$. 
Hence 
\[ 
g\,\sA(\gamma_0,\gamma_3,\gamma_4)g^{-1} 
=\sA(\gamma_0,\gamma_1,\gamma_2)\subset H. 
\] 

Choose $m$ large enough that this holds for every $\eta$ 
with $i(\eta)=v$ and $\lvert\eta\rvert=m$. 
Let $c$ be the basic three-cycle on $(\mu_0,\mu_1,\mu_2)$ 
and let $c_\eta$ be the corresponding three-cycle on 
$(\mu_0\eta,\mu_1\eta,\mu_2\eta)$. 
The latter have mutually disjoint supports, and 
Lemma~\ref{lem:basicbisection} (2) gives 
\[ 
c=\prod_{\substack{i(\eta)=v\\\lvert\eta\rvert=m}}c_\eta\in H. 
\] 
This proves $\sA(\mu_0,\mu_1,\mu_2)\subset H$ 
and completes the proof. 
\end{proof}

We now describe the normalizer and deduce maximality criteria. 
Put 
\[ 
\Out_{\cH}(\cG):=N/\sF(\cG). 
\] 
For $h\in N$, conjugation by $U_h$ restricts to an automorphism 
of $\cG$. Its class modulo automorphisms induced by full bisections 
of $\cG$ is trivial exactly when $h\in\sF(\cG)$: equality of two 
such automorphisms implies equality of their restrictions to $X$. 
Thus $\Out_{\cH}(\cG)$ identifies with the subgroup of outer automorphisms of 
$\cG$ induced by elements of $\sF(\cH)$. 
Since $\sD(\cG)$ is characteristic in $\sF(\cG)$, it is normal 
in $N$, and the quotient $N/\sD(\cG)$ fits into the exact sequence 
\begin{equation}\label{eq:minimalnormalizerquotient} 
\xymatrix@M=8pt{ 
1 \ar[r] & H_1(\sF(\cG)) \ar[r] & N/\sD(\cG) \ar[r] & 
\Out_{\cH}(\cG) \ar[r] & 1. 
} 
\end{equation} 
Here $H_1(\sF(\cG))=\sF(\cG)/\sD(\cG)$ is the group abelianization. 

Let 
\[ 
\pi:\sF(\cH)\longrightarrow H_1(\sF(\cH)) 
\] 
be the abelianization homomorphism. 
Since $\sD(\cG)\subset\sD(\cH)=\Ker\pi$, its restriction to $N$ 
induces a homomorphism 
\[ 
\bar\pi:N/\sD(\cG)\longrightarrow H_1(\sF(\cH)). 
\] 
On the subgroup $H_1(\sF(\cG))$ in 
\eqref{eq:minimalnormalizerquotient}, this is the homomorphism 
induced by the inclusion $\sF(\cG)\subset\sF(\cH)$.

\begin{theorem}\label{thm:minimalnormalizermaximal} 
Suppose that $\cG\subsetneq\cH$ is prime, and let 
$\sD(\cH)\subset\Gamma\subset\sF(\cH)$. 
Put $N_\Gamma:=\Gamma\cap\Nor\left(\sF(\cH),\sF(\cG)\right)$. 
Then $N_\Gamma$ is a proper subgroup of $\Gamma$, and the following 
conditions are equivalent: 
\begin{enumerate} 
\item $N_\Gamma$ is a maximal subgroup of $\Gamma$. 
\item $N_\Gamma\sD(\cH)=\Gamma$. 
\item $\pi(N_\Gamma)=\pi(\Gamma)$. 
\end{enumerate} 
\end{theorem}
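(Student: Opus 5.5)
We work in the standing setting of this section: $\cG\subsetneq\cH$ is minimal and prime, and $N=\Nor(\sF(\cH),\sF(\cG))=\cN_{\cH}(\cG)$. The plan is to prove properness first and then the cycle of implications (1)$\Rightarrow$(3)$\Rightarrow$(2)$\Rightarrow$(1). The equivalence (2)$\iff$(3) is purely group theoretic. Since $\sD(\cH)=\Ker\pi$ and $\sD(\cH)\subset\Gamma$, we have $\Gamma=\pi^{-1}(\pi(\Gamma))$, and $N_\Gamma\sD(\cH)=\pi^{-1}(\pi(N_\Gamma))$. Hence $N_\Gamma\sD(\cH)=\Gamma$ holds exactly when $\pi(N_\Gamma)=\pi(\Gamma)$.

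\emph{Properness.} Suppose $N_\Gamma=\Gamma$. Then $\sD(\cH)\subset N$, so $\sD(\cH)$ normalizes $\sF(\cG)$, and therefore $\sD(\cH)$ normalizes $\cG$ by Lemma~\ref{lem:groupnormalizers}. By \cite[Lemma 3.7]{Ma15crelle}, the bisections $U_\alpha$ with $\alpha\in\sD(\cH)$ cover $\cH$. Hence $\cH\cG\cH^{-1}\subset\cG$ arrowwise: for every $g\in\cH$ and every unit $x=r(g)$ we get $g\,s(g)\,g^{-1}\in\cG$, which says nothing new. To obtain a contradiction, we instead apply Theorem~\ref{thm:minimalovergroup}-type reasoning directly. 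Since $\cG\neq\cH$, there are $\mu,\nu$ with $t(\mu)=t(\nu)$ and $\mu\not\sim_{\cG}\nu$. Using Lemma~\ref{lem:prefixplacement}, choose $\gamma_1\sim_{\cG}\gamma_2\sim_{\cG}\nu$ incomparable with $\mu$, with nonempty complement. The three-cycle $c$ on $(\mu,\gamma_1,\gamma_2)$ lies in $\sD(\cH)\subset\Gamma$. Conjugating the transposition-type element of $\sD(\cG)$ swapping $\gamma_1,\gamma_2$ by $c$ produces an element whose local prefix replacements send $\mu$ to a path in the class of $\nu$. By Proposition~\ref{prop:subgroupoidnormalizer}(1), applied after refining with Lemma~\ref{lem:subgroupoidrelation}, this element is not in $\sF(\cG)$. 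So $c\notin N$, and $N_\Gamma\neq\Gamma$. The most robust way to write this is to choose $\mu$ with $[\mu\eta]\neq[\nu\eta]$ for all long $\eta$ along some extension, which is available by reducedness; this is the fiddly point of the properness argument.

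\emph{(2)$\Rightarrow$(1).} Let $N_\Gamma\subsetneq L\subset\Gamma$. Then $L\supset N_\Gamma\supset\Gamma\cap\sF(\cG)\supset\sD(\cG)$, since $\sD(\cG)\subset\sD(\cH)\subset\Gamma$. Moreover $L\not\subset N$: otherwise $L\subset\Gamma\cap N=N_\Gamma$, contradicting $N_\Gamma\subsetneq L$. Theorem~\ref{thm:minimalovergroup} therefore gives $\sD(\cH)\subset L$, so $L\supset N_\Gamma\sD(\cH)=\Gamma$. Combined with properness, this shows $N_\Gamma$ is maximal.

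\emph{(1)$\Rightarrow$(2).} The subgroup $N_\Gamma\sD(\cH)$ lies between $N_\Gamma$ and $\Gamma$, so maximality forces it to equal either $N_\Gamma$ or $\Gamma$. If it equals $N_\Gamma$, then $\sD(\cH)\subset N_\Gamma$, so $\sD(\cH)\subset N$. Taking $H=\sD(\cH)$ in the properness argument (which used only elements of $\sD(\cH)$) contradicts this. Hence $N_\Gamma\sD(\cH)=\Gamma$.

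The main obstacle is thus only properness, namely exhibiting an element of $\sD(\cH)$ outside $N$. For this we rely on primeness and reducedness of $\Delta_{\cG}$, together with the three-cycle conjugation argument of Lemma~\ref{lem:approxlocal}.
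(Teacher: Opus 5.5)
Your proof of the three equivalences is the same as the paper's. (2)$\Leftrightarrow$(3) holds since $\Ker\pi=\sD(\cH)\subset\Gamma$, (2)$\Rightarrow$(1) follows from Theorem~\ref{thm:minimalovergroup}, and (1)$\Rightarrow$(2) holds because $\sD(\cH)\not\subset N$ gives $N_\Gamma\subsetneq N_\Gamma\sD(\cH)$.

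You differ from the paper in how you prove $\sD(\cH)\not\subset N$. The paper argues structurally:
\begin{itemize}
\item $\sD(\cG)$ is nontrivial because it acts minimally (Lemma~\ref{threecycle} (3));
\item if $\sD(\cH)\subset N$, then $\sD(\cG)$ is a nontrivial normal subgroup of the simple group $\sD(\cH)$ (Theorem~\ref{niceproperties} (1)), hence equal to it;
\item each groupoid is covered by the full bisections of its commutator subgroup, so this forces $\cG=\cH$.
\end{itemize}
You instead produce an explicit witness. Take pairwise incomparable $\gamma_0,\gamma_1,\gamma_2$ with $\gamma_1\sim_{\cG}\gamma_2\not\sim_{\cG}\gamma_0$. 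Let $c\in\sD(\cH)$ be the three-cycle on them, and let $\tau\in\sF(\cG)$ be the transposition of $\gamma_1,\gamma_2$. Then $c\tau c^{-1}$ is the transposition of $\gamma_0,\gamma_2$, whose bisection contains $Z(\gamma_2,\gamma_0)\not\subset\cG$, so $c\notin N$.

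This argument is correct and more elementary. It avoids the simplicity theorem and the covering lemma. It also uses only minimality of $\cG$ (through Lemma~\ref{lem:prefixplacement}) and not primeness, despite your closing remark. The paper's route is shorter given the cited structure theory.

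Your write-up of this step needs tidying.
\begin{itemize}
\item Delete the first attempt: the ``arrowwise'' inclusion does not follow from the covering lemma and plays no role.
\item Take the conjugated element in $\sF(\cG)$, not $\sD(\cG)$. Transpositions need not lie in $\sD(\cG)$, but invariance of $\sF(\cG)$ under conjugation is all you use.
\item For the configuration, first fix three disjoint nonempty clopen sets whose union is not $X$, and take the bound $L$ of Lemma~\ref{lem:prefixplacement} for them. Iterating Lemma~\ref{lem:subgroupoidrelation} (2) gives an extension $\eta$ with $\lvert\eta\rvert\geq L$ and $\mu\eta\not\sim_{\cG}\nu\eta$. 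This holds for suitable $\eta$, not for all long $\eta$.
\item Then apply Lemma~\ref{lem:prefixplacement} to the triple $(\mu\eta,\nu\eta,\nu\eta)$, exactly as in the proof of Proposition~\ref{prop:approx}. This yields pairwise incomparable $\gamma_0\sim_{\cG}\mu\eta$ and $\gamma_1\sim_{\cG}\gamma_2\sim_{\cG}\nu\eta$.
\end{itemize}
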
 

\begin{proof} 
We first show that $\sD(\cH)\not\subset N$. 
By Lemma~\ref{threecycle} (3), $\sD(\cG)$ acts minimally on $X$ 
and is therefore nontrivial. 
If $\sD(\cH)\subset N$, the subgroup $\sD(\cG)\subset\sD(\cH)$ 
would be normal in $\sD(\cH)$. 
By Theorem~\ref{niceproperties} (1), this would give 
$\sD(\cG)=\sD(\cH)$. 
By \cite[Lemma 4.9 (2)]{NO19IJM}, each of $\cG$ and $\cH$ is 
covered by the full bisections associated with its commutator 
subgroup. 
The equality of these subgroups would therefore imply 
$\cG=\cH$, a contradiction. 
Thus $N_\Gamma$ is proper in $\Gamma$, and 
\[ 
N_\Gamma\subsetneq N_\Gamma\sD(\cH)\subset\Gamma. 
\] 
Consequently (1) implies (2). 

Suppose (2) holds, and let $N_\Gamma\subsetneq L\subset\Gamma$. 
Then $\sD(\cG)\subset L$ and $L\not\subset N$, so 
Theorem~\ref{thm:minimalovergroup} gives $\sD(\cH)\subset L$. 
Hence $\Gamma=N_\Gamma\sD(\cH)\subset L$, proving (1). 
Finally, (2) and (3) are equivalent because 
$\Ker\pi=\sD(\cH)\subset\Gamma$. 
\end{proof}

The theorem separates the structural and homological parts of 
maximality, as in \cite[Sections 3 and 5]{Ma26maximal1}. 
The normalizer in $\sD(\cH)$ is maximal without an additional 
homological condition; for a larger intermediate group $\Gamma$, 
maximality is governed by its image in the ambient abelianization. 
In the nonminimal cases of Theorem~\ref{thm:primetrichotomy}, 
this image condition is automatic, as for the stabilizers in 
\cite[Remark 3.16]{Ma26maximal2}; see 
Corollary~\ref{cor:primenormalizermaximal}. 
Passing from the normalizer to $\sF(\cG)$ itself also requires 
the outer quotient to be trivial, as the next corollary shows. 

The two extreme choices of $\Gamma$ give the following consequences. 

\begin{corollary}\label{cor:minimalfullmaximal} 
Suppose that $\cG\subsetneq\cH$ is prime. 
\begin{enumerate} 
\item The group $\Nor\left(\sF(\cH),\sF(\cG)\right)$ is maximal 
in $\sF(\cH)$ if and only if 
\[ 
\pi\left(\Nor\left(\sF(\cH),\sF(\cG)\right)\right) 
=H_1(\sF(\cH)). 
\] 
\item $\sF(\cG)$ is a maximal subgroup of $\sF(\cH)$ if and only if 
$\Out_{\cH}(\cG)$ is trivial and either (and hence both) of the maps 
induced by inclusion 
\[ 
H_1(\sF(\cG))\longrightarrow H_1(\sF(\cH)), 
\qquad H_1(\cG)\longrightarrow H_1(\cH) 
\] 
is surjective. 
When these conditions hold, $\Gamma\cap\sF(\cG)$ is a maximal 
subgroup of $\Gamma$ for every 
$\sD(\cH)\subset\Gamma\subset\sF(\cH)$. 
\end{enumerate} 
\end{corollary}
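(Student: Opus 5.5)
Part (1) is the case $\Gamma=\sF(\cH)$ of Theorem~\ref{thm:minimalnormalizermaximal}: there $N_\Gamma=N$ and $\pi(\Gamma)=H_1(\sF(\cH))$, so the equivalence (1)$\iff$(3) of that theorem is exactly the assertion.

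For (2), suppose first that $\sF(\cG)$ is maximal in $\sF(\cH)$. Since $\sF(\cG)\subset N\subsetneq\sF(\cH)$, where properness is part of Theorem~\ref{thm:minimalnormalizermaximal}, maximality forces $N=\sF(\cG)$. Thus $\Out_{\cH}(\cG)$ is trivial and $N$ is maximal. Part (1) then gives $\pi(\sF(\cG))=H_1(\sF(\cH))$, which is surjectivity of $H_1(\sF(\cG))\to H_1(\sF(\cH))$. Conversely, if $\Out_{\cH}(\cG)$ is trivial and this map is surjective, then $N=\sF(\cG)$ and $\pi(N)=H_1(\sF(\cH))$, so part (1) gives maximality.

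It remains to show that the two surjectivity conditions are equivalent. I would use Theorem~\ref{Li}, applied to both $\cG$ (AF or purely infinite by Corollary~\ref{cor:minimalsubgroupoiddichotomy}) and $\cH$. Naturality for the wide open inclusion gives a commutative diagram with exact rows
\[
\xymatrix@M=6pt{
H_0(\cG,\Z/2)\ar[r]^-{\zeta}\ar[d] & H_1(\sF(\cG))\ar[r]^-{\eta}\ar[d] & H_1(\cG)\ar[r]\ar[d] & 0\\
H_0(\cH,\Z/2)\ar[r]^-{\zeta} & H_1(\sF(\cH))\ar[r]^-{\eta} & H_1(\cH)\ar[r] & 0.
}
\]
The left vertical map is surjective, as recalled in Section~\ref{subsec:homology}. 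If the middle map is surjective, the right map is surjective because the lower $\eta$ is onto. Conversely, suppose the right map is surjective and take $a\in H_1(\sF(\cH))$. Lift $\eta(a)$ through $H_1(\cG)$ and then through the upper $\eta$ to some $b$. Then $a-\iota(b)\in\Ker\eta=\operatorname{Im}\zeta$, and it lifts through $H_0(\cG,\Z/2)$ by surjectivity of the left map and commutativity. So the middle map is surjective; this is a standard four-lemma chase. This equivalence is the only step needing care: one must check that the left vertical map is the inclusion-induced map and that the diagram commutes, which holds by the description of $\zeta$ via transpositions and naturality of the index map.

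For the final claim, let $\sD(\cH)\subset\Gamma\subset\sF(\cH)$. Since $N=\sF(\cG)$, we have $\Gamma\cap\sF(\cG)=N_\Gamma$. By Theorem~\ref{thm:minimalnormalizermaximal}, it suffices to show $N_\Gamma\sD(\cH)=\Gamma$. Surjectivity of $H_1(\sF(\cG))\to H_1(\sF(\cH))$ gives $\sF(\cG)\sD(\cH)=\sF(\cH)$. Given $\gamma\in\Gamma$, write $\gamma=gd$ with $g\in\sF(\cG)$ and $d\in\sD(\cH)$. Then $g=\gamma d^{-1}\in\Gamma$, so $g\in N_\Gamma$, and hence $\Gamma=N_\Gamma\sD(\cH)$.
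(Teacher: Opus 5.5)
Your proposal is correct and follows the paper's proof essentially step by step. Part (1) is the case $\Gamma=\sF(\cH)$ of Theorem~\ref{thm:minimalnormalizermaximal}; the two surjectivity conditions are shown equivalent by a diagram chase in the commutative diagram from Theorem~\ref{Li}, whose left vertical map is surjective; both directions of (2) go through $N=\sF(\cG)$; and the final claim comes from $\sF(\cG)\sD(\cH)=\sF(\cH)$. You only make the four-lemma chase and the factorization $\gamma=gd$ more explicit.
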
 

\begin{proof} 
(1)\: 
This is Theorem~\ref{thm:minimalnormalizermaximal} 
with $\Gamma=\sF(\cH)$. 

(2)\: 
We first compare the two surjectivity conditions. 
Theorem~\ref{Li}, naturality of the index map, and the description 
of $\zeta$ by transpositions give a commutative diagram 
with exact rows: 
\[ 
\xymatrix@M=8pt{ 
H_0(\cG,\Z/2) \ar[r] \ar[d] & H_1(\sF(\cG)) \ar[r] \ar[d] & 
H_1(\cG) \ar[r] \ar[d] & 0 \\ 
H_0(\cH,\Z/2) \ar[r] & H_1(\sF(\cH)) \ar[r] & 
H_1(\cH) \ar[r] & 0. 
} 
\] 
All vertical maps are induced by inclusion. 
The left vertical map is surjective because $\cG$ is wide: 
the classes $[1_B]$ of clopen subsets $B\subset X$ generate 
$H_0(\cH,\Z/2)$, and each is the image of the same class for $\cG$. 
A diagram chase therefore shows that the middle vertical map 
is surjective if and only if the right vertical map is surjective. 

Suppose first that $\sF(\cG)$ is maximal in $\sF(\cH)$. 
Since $\sF(\cG)\subset N\subsetneq\sF(\cH)$, we have 
$N=\sF(\cG)$, so $\Out_{\cH}(\cG)$ is trivial. 
Assertion (1) now gives surjectivity on abelianizations, and hence 
on first groupoid homology. 

Conversely, suppose that $\Out_{\cH}(\cG)$ is trivial and the map 
on first groupoid homology is surjective. 
Then $N=\sF(\cG)$ and the map on abelianizations is surjective, 
so (1) implies that $\sF(\cG)$ is maximal in $\sF(\cH)$. 

Finally, surjectivity gives $\sF(\cG)\sD(\cH)=\sF(\cH)$. 
For any $\Gamma$ in the statement, this implies 
\[ 
\left(\Gamma\cap\sF(\cG)\right)\sD(\cH)=\Gamma. 
\] 
Since $N=\sF(\cG)$, Theorem~\ref{thm:minimalnormalizermaximal} 
proves the last assertion. 
\end{proof}

\begin{corollary}\label{cor:minimalcommutatormaximal} 
Suppose that $\cG\subsetneq\cH$ is prime. 
\begin{enumerate} 
\item $\Nor\left(\sD(\cH),\sD(\cG)\right)$ is a maximal subgroup 
of $\sD(\cH)$. 
\item $\sD(\cG)$ is a maximal subgroup of $\sD(\cH)$ if and only if 
the homomorphism 
\[ 
\bar\pi:\Nor\left(\sF(\cH),\sF(\cG)\right)/\sD(\cG) 
\longrightarrow H_1(\sF(\cH)) 
\] 
is injective. 
\end{enumerate} 
\end{corollary}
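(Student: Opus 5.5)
Both parts follow from Theorem~\ref{thm:minimalovergroup} applied with $H$ contained in $\sD(\cH)$, together with the identification of normalizers in Lemma~\ref{lem:groupnormalizers}. Throughout, $\cG$ is minimal, so every orbit has at least three points and $N=\Nor(\sF(\cH),\sF(\cG))=\Nor(\sF(\cH),\sD(\cG))=\cN_{\cH}(\cG)$. Intersecting with $\sD(\cH)$ gives
\[
\Nor\left(\sD(\cH),\sD(\cG)\right)=\sD(\cH)\cap N=N_{\sD(\cH)}.
\]

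For (1), take $\Gamma=\sD(\cH)$ in Theorem~\ref{thm:minimalnormalizermaximal}. Condition (3) there reads $\pi(N_\Gamma)=\pi(\sD(\cH))=0$, and this holds trivially. Hence $N_{\sD(\cH)}=\Nor(\sD(\cH),\sD(\cG))$ is maximal in $\sD(\cH)$.

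For (2), I would first reduce maximality of $\sD(\cG)$ to the equality $\sD(\cG)=\Nor(\sD(\cH),\sD(\cG))$. We have the chain $\sD(\cG)\subset N_{\sD(\cH)}\subsetneq\sD(\cH)$. Since $N_{\sD(\cH)}$ is proper by Theorem~\ref{thm:minimalnormalizermaximal}, maximality of $\sD(\cG)$ forces $\sD(\cG)=N_{\sD(\cH)}$. Conversely, if this equality holds, then $\sD(\cG)$ is maximal by part (1). Alternatively, one sees this directly: any $L$ with $\sD(\cG)\subsetneq L\subset\sD(\cH)$ contains an element outside $N$, so $\sD(\cH)\subset L$ by Theorem~\ref{thm:minimalovergroup}.

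It then remains to identify $N\cap\sD(\cH)$ with the preimage under the quotient map $N\to N/\sD(\cG)$ of $\Ker\bar\pi$. This holds because $\Ker\pi=\sD(\cH)$: an element $h\in N$ lies in $\sD(\cH)$ exactly when $\bar\pi(h\sD(\cG))=0$. Therefore $\Ker\bar\pi=(N\cap\sD(\cH))/\sD(\cG)$. This kernel is trivial if and only if $N\cap\sD(\cH)=\sD(\cG)$, which by the reduction above is equivalent to maximality of $\sD(\cG)$ in $\sD(\cH)$.

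There is no substantial obstacle. The only point to verify is the first step, namely that the normalizer of $\sD(\cG)$ inside $\sD(\cH)$ is exactly $\sD(\cH)\cap N$, and Lemma~\ref{lem:groupnormalizers} supplies this.
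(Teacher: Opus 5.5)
Your proposal is correct and follows essentially the same route as the paper. You identify $\Nor(\sD(\cH),\sD(\cG))$ with $N\cap\sD(\cH)$ via Lemma~\ref{lem:groupnormalizers}, obtain (1) from Theorem~\ref{thm:minimalnormalizermaximal} with $\Gamma=\sD(\cH)$, and then use the chain $\sD(\cG)\subset N\cap\sD(\cH)\subsetneq\sD(\cH)$ together with $\Ker\bar\pi=(N\cap\sD(\cH))/\sD(\cG)$ to get (2).
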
 

\begin{proof} 
By Lemma~\ref{lem:groupnormalizers}, 
$N\cap\sD(\cH)=\Nor\left(\sD(\cH),\sD(\cG)\right)$. 
For $\Gamma=\sD(\cH)$, condition (3) of 
Theorem~\ref{thm:minimalnormalizermaximal} holds automatically, 
proving (1). 
Since 
\[ 
\sD(\cG)\subset N\cap\sD(\cH)\subsetneq\sD(\cH), 
\] 
assertion (1) shows that $\sD(\cG)$ is maximal exactly when 
$N\cap\sD(\cH)=\sD(\cG)$. 
This is equivalent to injectivity of $\bar\pi$, 
since $\Ker\bar\pi=\left(N\cap\sD(\cH)\right)/\sD(\cG)$. 
\end{proof}

We finally drop the standing assumption that $\cG$ is minimal and 
combine the results of Sections~\ref{sec:trichotomy}, 
\ref{sec:exceptional} and~\ref{sec:minimal}. 

\begin{corollary}\label{cor:primenormalizermaximal} 
Let $\cG\subsetneq\cH$ be a prime wide open subgroupoid, and let 
$\Gamma$ be a group with $\sD(\cH)\subset\Gamma\subset\sF(\cH)$. 
Put $N_\Gamma:=\Gamma\cap\Nor\left(\sF(\cH),\sF(\cG)\right)$, 
and let $\pi:\sF(\cH)\to H_1(\sF(\cH))$ be the abelianization map. 
Then $N_\Gamma$ is a proper subgroup of $\Gamma$, and the following 
conditions are equivalent: 
\begin{enumerate} 
\item $N_\Gamma$ is a maximal subgroup of $\Gamma$. 
\item $N_\Gamma\sD(\cH)=\Gamma$. 
\item $\pi(N_\Gamma)=\pi(\Gamma)$. 
\end{enumerate} 
If $\cG$ is not minimal, these conditions hold automatically. 
In particular, 
$\sD(\cH)\cap\Nor\left(\sF(\cH),\sF(\cG)\right)$ 
is a maximal subgroup of $\sD(\cH)$. 
\end{corollary}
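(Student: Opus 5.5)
The proof splits according to the trichotomy of Theorem~\ref{thm:primetrichotomy}. If $\cG$ is minimal, Theorem~\ref{thm:minimalnormalizermaximal} gives everything at once: $N_\Gamma$ is proper and (1)--(3) are equivalent. In the two nonminimal cases we verify properness and show that (1), (2) and (3) all hold. Then the equivalence is trivial. The final assertion follows from the case $\Gamma=\sD(\cH)$, where (3) holds automatically because $\pi(\sD(\cH))=0$.

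In general, (1) implies (2) once we know $\sD(\cH)\not\subset N$. Indeed, then $N_\Gamma\subsetneq N_\Gamma\sD(\cH)\subset\Gamma$, and maximality forces equality. Also (2) and (3) are equivalent because $\Ker\pi=\sD(\cH)\subset\Gamma$. So in the nonminimal cases it suffices to establish properness and (1).

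In case (2), $\cG=\cH|X_1\sqcup\cH|X_2$. Proposition~\ref{prop:clopenpartitionnormalizer} with $n=2$ gives directly that $N_\Gamma$ is proper and maximal in $\Gamma$. In case (3), the identification at the beginning of Section~\ref{sec:exceptional} gives $\cG=\cS_Y$, where $Y$ is nowhere dense and $\cH|Y$ is minimal. Theorem~\ref{thm:exceptionalstabilizer} then yields $N=\sF(\cG)=\St(\cH,Y)$ and the maximality of $N_\Gamma=\St(\Gamma,Y)$ in $\Gamma$. Properness holds because $\Gamma\supset\sD(\cH)$ acts minimally on $X$ by Lemma~\ref{threecycle}~(3), so it cannot preserve the nonempty proper closed set $Y$.

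No step here is a real obstacle, since the substantive work is in the cited results. The only point needing care is that the hypotheses of Theorem~\ref{thm:exceptionalstabilizer} match those of case (3); this is exactly the primeness argument $\cK\subset\cS_Y\subsetneq\cH$ recorded at the start of Section~\ref{sec:exceptional}.
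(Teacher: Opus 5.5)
Your proof is correct and follows the paper's argument. It uses the same split by Theorem~\ref{thm:primetrichotomy}, handles the three cases with Theorem~\ref{thm:minimalnormalizermaximal}, Proposition~\ref{prop:clopenpartitionnormalizer} (with $n=2$) and Theorem~\ref{thm:exceptionalstabilizer}, and takes $\Gamma=\sD(\cH)$ for the last assertion.

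The only difference is in how you get (2) and (3) in the nonminimal cases. You deduce them from maximality together with $\sD(\cH)\not\subset N$, which is just properness for $\Gamma=\sD(\cH)$ and is supplied by the cited results. The paper instead verifies (2) directly from Lemma~\ref{localabelianization}, using a clopen set $W\subset X_1$ or $W\subset X\setminus Y$ with $\sF(\cH)_W\subset\sF(\cG)$. Both routes are valid.
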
 

\begin{proof} 
If $\cG$ is minimal, the assertions follow from 
Theorem~\ref{thm:minimalnormalizermaximal}. 
Otherwise, case \textup{(2)} or \textup{(3)} of 
Theorem~\ref{thm:primetrichotomy} holds. 
In case \textup{(2)}, Proposition~\ref{prop:clopenpartitionnormalizer} 
with $n=2$ shows that $N_\Gamma$ is a proper maximal subgroup. 
In case \textup{(3)}, $\cG$ is the $Y$-preserving subgroupoid, 
as recalled at the beginning of Section~\ref{sec:exceptional}, 
so Theorem~\ref{thm:exceptionalstabilizer} gives the same conclusion. 

In case \textup{(2)}, choose a nonempty clopen set $W\subset X_1$; 
in case \textup{(3)}, choose one in $X\setminus Y$. 
In either case, 
$\sF(\cH)_W\subset\sF(\cG)\subset\Nor\left(\sF(\cH),\sF(\cG)\right)$. 
Since $\sD(\cH)\subset\Gamma$, Lemma~\ref{localabelianization} gives 
$N_\Gamma\sD(\cH)=\Gamma$. 
This is equivalent to $\pi(N_\Gamma)=\pi(\Gamma)$ because 
$\Ker\pi=\sD(\cH)$. 
Thus all three conditions hold in the nonminimal cases. 
The last assertion follows by taking $\Gamma=\sD(\cH)$. 
\end{proof}

We now describe the fully amplified maximal subgroups of $\sF(\cH)$. 

\begin{corollary}\label{cor:amplemaximal} 
A subgroup $M\subset\sF(\cH)$ is fully amplified and maximal 
in $\sF(\cH)$ if and only if it has one of the following forms. 
\begin{enumerate} 
\renewcommand{\theenumi}{\alph{enumi}} 
\item $M=\St(\cH,A)$ for a nonempty proper clopen set $A\subset X$ 
such that $2[1_A]\neq[1_X]$ in $H_0(\cH)$. 
\item $M=\St(\cH,Y)$ for a nonempty closed nowhere dense set 
$Y\subset X$ such that the regular reduction $\cH|Y$ is minimal. 
\item $M=\sF(\cG)$ for a minimal prime wide open subgroupoid 
$\cG\subsetneq\cH$ with $\Out_{\cH}(\cG)=1$ such that the map 
$H_1(\cG)\to H_1(\cH)$ induced by inclusion is surjective. 
\end{enumerate} 
These three alternatives are mutually exclusive. 
\end{corollary}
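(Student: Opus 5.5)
We prove both directions by combining Proposition~\ref{prop:maximalgerms} with the trichotomy of Theorem~\ref{thm:primetrichotomy} and the maximality criteria of Sections~\ref{sec:trichotomy}--\ref{sec:minimal}.

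\emph{Forward direction.} Let $M$ be fully amplified and maximal in $\sF(\cH)$. Then $M=\sF(\cG_M)$. Since $M\neq\sF(\cH)$, the subgroupoid $\cG:=\cG_M$ is proper, and Proposition~\ref{prop:maximalgerms} with $\Gamma=\sF(\cH)$ shows that $\cG$ is prime. We then split into the three cases of Theorem~\ref{thm:primetrichotomy}.
\begin{enumerate}
\item If $\cG$ is minimal, then $M=\sF(\cG)$ is maximal, and Corollary~\ref{cor:minimalfullmaximal}~(2) gives $\Out_{\cH}(\cG)=1$ together with surjectivity of $H_1(\cG)\to H_1(\cH)$. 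This is form (c).
\item If $\cG=\cH|X_1\sqcup\cH|X_2$, then $\sF(\cG)$ consists of the elements preserving each $X_i$, so $M=\St(\cH,X_1)$. By Proposition~\ref{prop:clopenpartitionnormalizer}, the normalizer of $M$ is the stabilizer of the partition $\{X_1,X_2\}$. Maximality together with $M\subset N\subsetneq\sF(\cH)$ forces $N=M$, so no element of $\sF(\cH)$ swaps $X_1$ and $X_2$. By cancellation (Theorem~\ref{niceproperties}~(2)), combined with the trick of Lemma~\ref{localabelianization} used there, such a swap exists exactly when $[1_{X_1}]=[1_{X_2}]$, that is, when $2[1_{X_1}]=[1_X]$. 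Hence $2[1_A]\neq[1_X]$ for $A=X_1$, which is form (a).
\item In the remaining case, $M=\sF(\cG)=\St(\cH,Y)$ with $\cH|Y$ minimal and $Y$ nowhere dense, as recalled at the start of Section~\ref{sec:exceptional}. This is form (b).
\end{enumerate}

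\emph{Converse.} In each form we must check that $M$ is fully amplified and maximal. Stabilizers of sets are closed under local agreement, and $\sF(\cG)$ is fully amplified by definition, so full amplification is immediate in every case.
\begin{enumerate}
\item[(a)] Put $\cG=\cH|A\sqcup\cH|(X\setminus A)$, so that $\sF(\cG)=\St(\cH,A)$. Proposition~\ref{prop:clopenpartitionnormalizer} with $n=2$ shows that the partition stabilizer $N$ is maximal. Since $2[1_A]\neq[1_X]$, the classes $[1_A]$ and $[1_{X\setminus A}]$ differ. Elements of $\sF(\cH)$ preserve classes, so no element interchanges $A$ and its complement, and therefore $N=\St(\cH,A)$. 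Hence $M$ is maximal.
\item[(b)] Maximality is exactly Theorem~\ref{thm:exceptionalstabilizer}.
\item[(c)] Maximality is Corollary~\ref{cor:minimalfullmaximal}~(2).
\end{enumerate}

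\emph{Mutual exclusivity.} We distinguish the forms by the dynamics of $M$ on $X$.
\begin{itemize}
\item In (c), $M\supset\sD(\cG)$ acts minimally on $X$ by Lemma~\ref{threecycle}~(3).
\item In (a) and (b), $M$ preserves a nonempty proper closed set, so it is not minimal.
\item To separate (a) from (b), note that in (a) the $M$-orbit closures are $A$ and $X\setminus A$, since $\sD(\cH)_A$ acts minimally on $A$. Thus every proper closed invariant set is clopen. In (b), by the proof of Theorem~\ref{thm:exceptionalstabilizer}, the set $Y$ is a nowhere dense invariant set, and it is proper and nonempty. A set $M$ cannot have both a clopen invariant $A$ as in (a) and the structure in (b): the closure of any orbit outside $Y$ is all of $X$.
\end{itemize}

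The main obstacle is case (2) of the forward direction, namely translating the absence of a swapping element into the condition $2[1_{X_1}]\neq[1_X]$. Cancellation supplies bisections $U_1\colon X_1\to X_2$ and $U_2\colon X_2\to X_1$ when the two classes are equal, and their union is a full bisection that swaps the two sets. Conversely, class preservation rules out a swap when the classes differ.
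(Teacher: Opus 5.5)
Your proposal is correct and follows essentially the same route as the paper. You reduce to a prime $\cG_M$ via Proposition~\ref{prop:maximalgerms}, split along the trichotomy, use cancellation and preservation of homology classes in the clopen case, and separate the three forms by their nonempty proper closed invariant sets. The only difference is cosmetic: the paper first obtains $M=\Nor(\sF(\cH),M)$ uniformly from Corollary~\ref{cor:primenormalizermaximal} before splitting into cases, whereas you get it case by case. Your appeal to the trick of Lemma~\ref{localabelianization} in the clopen case is harmless but unnecessary when $\Gamma=\sF(\cH)$.
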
 

\begin{proof} 
Let $M$ be a fully amplified maximal subgroup and put $\cG:=\cG_M$. 
Then $M=\sF(\cG)$, so $\cG\neq\cH$, and 
Proposition~\ref{prop:maximalgerms} shows that $\cG\subset\cH$ 
is prime. 
By Corollary~\ref{cor:primenormalizermaximal}, the normalizer 
$N:=\Nor\left(\sF(\cH),M\right)$ is proper in $\sF(\cH)$. 
Since $M\subset N$, maximality gives $M=N$. 

In case \textup{(2)} of Theorem~\ref{thm:primetrichotomy}, 
we have $M=\St(\cH,X_1)$. 
Since elements of $\sF(\cH)$ preserve the classes of clopen sets 
in $H_0(\cH)$, Theorem~\ref{niceproperties} (2) shows that an element 
exchanging $X_1$ and $X_2$ exists if and only if 
$[1_{X_1}]=[1_{X_2}]$. 
Such an element would belong to $N\setminus M$, so these classes 
are distinct, or equivalently $2[1_{X_1}]\neq[1_X]$. 
In case \textup{(3)}, $\cG$ is the $Y$-preserving subgroupoid, 
as recalled at the beginning of Section~\ref{sec:exceptional}, 
so Theorem~\ref{thm:exceptionalstabilizer} gives $M=\St(\cH,Y)$. 
In case \textup{(1)}, Corollary~\ref{cor:minimalfullmaximal} (2) 
gives the conditions in \textup{(c)} of the present statement. 

Conversely, a subgroup in \textup{(a)} is the normalizer of 
$\sF(\cH|A\sqcup\cH|(X\setminus A))$, since the homology condition 
excludes an element exchanging the two parts. 
It is maximal by Proposition~\ref{prop:clopenpartitionnormalizer}. 
Maximality in \textup{(b)} and \textup{(c)} follows from 
Theorem~\ref{thm:exceptionalstabilizer} and 
Corollary~\ref{cor:minimalfullmaximal} (2), respectively. 
All these groups are fully amplified, being topological full groups 
of wide open subgroupoids. 
Finally, their actions on $X$ have respectively two, one and no 
nonempty proper closed invariant subsets, so the alternatives 
are mutually exclusive. 
\end{proof} 

The groups in case \textup{(c)} act minimally on $X$. 
A maximal subgroup which is not fully amplified has full amplification 
$\sF(\cH)$ and therefore also acts minimally on $X$, since 
full amplification does not change orbits. 
Consequently, cases \textup{(a)} and \textup{(b)} are exactly the maximal 
subgroups of $\sF(\cH)$ which do not act minimally on $X$, in accordance 
with \cite[Theorems 6.6 and 6.19]{GV26ETDS} and \cite{Ma26maximal2}.

%%%%%%%%%%%%%%%%%%%%%%%%%%%%%%%%%%%%%%%%%%%%%%%%%%%%%%%%%%%%
\section{Uncountably many maximal subgroups}\label{sec:uncountable} 
%%%%%%%%%%%%%%%%%%%%%%%%%%%%%%%%%%%%%%%%%%%%%%%%%%%%%%%%%%%%

The maximal subgroups in \cite[Proposition 4.16]{Ma26maximal2} 
stabilize nowhere dense Cantor sets. 
The groups constructed in this section act minimally on the unit space. 
In $\sF(\cH)$ itself, the resulting maximal subgroups are fully 
amplified and belong to case \textup{(c)} of 
Corollary~\ref{cor:amplemaximal}. 
We first describe the construction of 
\cite[Theorem 7.5]{BBQS25TAMS} in terms of subgroupoid graphs. 
We then modify the ambient graph to obtain uncountably many 
pairwise nonisomorphic maximal subgroups for every irreducible SFT 
groupoid considered in this paper. 

Let $\mathbf p=(p_n)_{n\geq1}$ be a strictly increasing sequence 
of primes. 
Consider the graph $\Lambda_{\mathbf p}$ with vertex set 
\[ 
W_{\mathbf p}:=\{\mathbf X,\mathbf Y\}\sqcup 
\{\mathbf Z_{n,k}\mid n\geq0,\ 0\leq k\leq p_{n+1}\}. 
\] 
There are two outgoing edges at each vertex, labelled $0$ and $1$. 
Writing $Pj$ for the terminal vertex of the edge labelled $j$ 
with initial vertex $P$, the edges are specified by 
\begin{equation}\label{eq:BBQSgraph} 
\begin{aligned} 
\mathbf X0&=\mathbf Z_{0,0},& 
\mathbf X1&=\mathbf Y,& 
\mathbf Y0&=\mathbf X,& 
\mathbf Y1&=\mathbf Y,\\ 
\mathbf Z_{n,k}0&=\mathbf Z_{n+1,0},& 
\mathbf Z_{n,k}1&=\mathbf Z_{n,k+1} 
&&&(0\leq k<p_{n+1}),\\ 
\mathbf Z_{n,p_{n+1}}0&=\mathbf X,& 
\mathbf Z_{n,p_{n+1}}1&=\mathbf Z_{n,0}. 
\end{aligned} 
\end{equation} 
Thus the vertices $\mathbf Z_{n,k}$ in the $n$th block form 
a $1$-labelled cycle of length $p_{n+1}+1$. 
Its $0$-labelled edges lead to the first vertex of the next block, 
except for the edge from its last vertex, which returns to $\mathbf X$. 
The root is $\mathbf X$, and the edge labels give a right-covering 
projection onto the graph with one vertex and two loops. 

The graph $\Lambda_{\mathbf p}$ is reduced in the sense of 
\eqref{eq:subgroupoidgraphreduced}: the only distinct 
vertices with the same $1$-successor are $\mathbf X$ and $\mathbf Y$, 
and their $0$-successors differ. 
It is strongly connected, since every vertex can reach $\mathbf X$ 
and every vertex is reachable from $\mathbf X$. 
Let $\cH_2$ denote the SFT groupoid of the full shift on two symbols. 
Propositions~\ref{prop:subgroupoidgraphconverse} 
and~\ref{prop:subgroupoidgraphmodel} give a proper wide open subgroupoid 
$\cK_{\mathbf p}\subset\cH_2$ satisfying 
\[ 
\cK_{\mathbf p}\cong\cG_{\Lambda_{\mathbf p}}|Z(\mathbf X). 
\] 
By Lemma~\ref{lem:graphpureinfiniteness}, $\cK_{\mathbf p}$ 
is minimal and purely infinite. 

We record the two combinatorial features of $\Lambda_{\mathbf p}$ 
which will also be used below. 
First, the only equivalence relations $\equiv$ on $W_{\mathbf p}$ 
satisfying 
\[ 
P\equiv P'\implies 
P0\equiv P'0\ \text{ and }\ P1\equiv P'1 
\] 
are equality and the universal relation. 
To see this, suppose first that $\mathbf X\equiv\mathbf Y$. 
Applying $0$ and then powers of $1$ identifies every vertex in the 
zeroth block with $\mathbf X$, and induction does the same 
for all subsequent blocks. 
It therefore remains to show that any nontrivial identification 
implies $\mathbf X\equiv\mathbf Y$. 
If $\mathbf X$ is identified with a block vertex, applying a full 
turn of its $1$-cycle gives $\mathbf X\equiv\mathbf Y$. 
If $\mathbf Y$ is identified with a block vertex, powers of $1$ 
and then $0$ identify $\mathbf X$ with a block vertex. 
Finally, two distinct block vertices can be followed by a word 
$1^l0$ so that one reaches $\mathbf X$ and the other reaches 
the first vertex of a block. 
Indeed, the nonnegative times at which the two vertices reach 
the last vertices of their respective blocks form arithmetic 
progressions. 
These progressions have different common differences for distinct 
blocks, and different residues modulo the common cycle length 
for distinct vertices in the same block. 
They are therefore unequal, and we choose $l$ in their symmetric 
difference, so that exactly one vertex reaches the last vertex 
before applying $0$. 
Thus any nontrivial identification implies 
$\mathbf X\equiv\mathbf Y$. 
By Corollary~\ref{cor:primegraph}, $\cK_{\mathbf p}\subset\cH_2$ is prime. 

Second, every label-preserving automorphism of $\Lambda_{\mathbf p}$ 
is the identity. 
The vertex $\mathbf Y$ is the unique vertex fixed by the transition $1$, 
so it is fixed, as is $\mathbf X=\mathbf Y0$. 
Since every vertex is reachable from $\mathbf X$, all vertices are fixed. 
In a strongly connected graph, repeating a nonempty closed path 
at a root before following a rooted path gives representatives 
of every vertex of arbitrarily large length. 
Consequently, by Lemma~\ref{lem:groupnormalizers} and 
Proposition~\ref{prop:subgroupoidnormalizer}, each 
$h\in\Nor\left(\sF(\cH_2),\sF(\cK_{\mathbf p})\right)$ 
induces an automorphism of $\Lambda_{\mathbf p}$ via 
$[\mu]_{\cK_{\mathbf p}}\mapsto[h(\mu)]_{\cK_{\mathbf p}}$. 
This is well-defined on all vertices because the domain of the partial 
action is cofinite; its range is also cofinite, which gives surjectivity. 
It preserves the edge labels because $h(\mu j)=h(\mu)j$. 
Rigidity and Proposition~\ref{prop:subgroupoidnormalizer} (1) therefore 
give 
\[ 
\Nor\left(\sF(\cH_2),\sF(\cK_{\mathbf p})\right)=\sF(\cK_{\mathbf p}). 
\] 

We compute the homology using Proposition~\ref{prop:graphalgebras} (4) 
and invariance under full open reduction. 
Let $B_{\mathbf p}$ be the adjacency matrix of $\Lambda_{\mathbf p}$. 
In the cokernel presentation, write $x,y,z_{n,k}$ for the vertex 
generators and put $a_n:=z_{n,0}$. 
The relations at $\mathbf Y$, $\mathbf X$ and the block vertices give 
\[ 
x=0,\qquad a_0=-y,\qquad 
z_{n,k}=a_n-ka_{n+1},\qquad p_{n+1}a_{n+1}=0. 
\] 
Eliminating the other generators leaves precisely these relations. 
Since the root is $\mathbf X$, we obtain 
\begin{equation}\label{eq:BBQShomology} 
H_0(\cK_{\mathbf p}) 
\cong\Z\oplus\bigoplus_{n\geq1}\Z/p_n, 
\qquad [1_{\cK_{\mathbf p}^{(0)}}]=0, 
\qquad H_1(\cK_{\mathbf p})=0. 
\end{equation} 
For the last equality, let $u=B_{\mathbf p}^tu$. 
The equation at $\mathbf Y$ gives $u_{\mathbf X}=0$, and 
the coordinates in the $n$th block have a common value $c_n$. 
The equation at $\mathbf Z_{n+1,0}$ then gives $p_{n+1}c_n=0$, 
so all $c_n$ vanish; the equation at $\mathbf X$ gives 
$u_{\mathbf Y}=0$. 

Since $H_1(\cH_2)=0$, the map 
$H_1(\cK_{\mathbf p})\to H_1(\cH_2)$ is surjective. 
Corollary~\ref{cor:minimalfullmaximal} (2), in its groupoid-homology 
form, therefore shows that $\sF(\cK_{\mathbf p})$ is maximal 
in $\sF(\cH_2)=V_2$. 
Different infinite sets of primes give nonisomorphic zeroth 
homology groups and hence nonisomorphic topological full groups by 
\cite[Theorem 3.10]{Ma15crelle}. 
This recovers the uncountable family in \cite[Theorem 7.5]{BBQS25TAMS}. 
These subgroups act minimally and therefore stabilize no nonempty 
finite subset of the unit space. 

We also recover the additional assertion in 
\cite[Theorem 7.5]{BBQS25TAMS} that these subgroups preserve no 
circular order on any dense invariant subset of the unit space. 
Indeed, choose a nontrivial three-cycle 
$g\in\sF(\cK_{\mathbf p})$ with proper clopen support. 
Any dense invariant subset $D$ meets both the support and its 
complement, so $g|D$ is nontrivial and fixes some $d\in D$. 
If $g|D$ preserved a circular order, taking $d$ as the initial 
point would give a $g$-invariant linear order on $D\setminus\{d\}$. 
An order-preserving permutation of a linearly ordered set with 
finite order is the identity: a strict inequality $x<g(x)$ 
or $g(x)<x$ contradicts $g^3(x)=x$. 
This contradicts the nontriviality of $g|D$. 

We now extend this construction to an arbitrary irreducible SFT 
groupoid. 
The additional vertices in the proof preserve the isomorphism class 
of the ambient groupoid, while separating its homology from the 
prime-dependent part of \eqref{eq:BBQShomology}. 

\begin{theorem}\label{thm:uncountableminimal} 
Let $(V,E)$ be a finite directed graph whose adjacency matrix is 
irreducible and is not a permutation matrix, and put 
$\cH:=\cG_{(V,E)}$. 
There exists a family $(\cG_t)_{t\in T}$, with $\#T=2^{\aleph_0}$, 
of proper prime minimal purely infinite wide open subgroupoids of 
$\cH$ such that, for every subgroup 
\[ 
\sD(\cH)\subset\Gamma\subset\sF(\cH), 
\] 
the groups $\Gamma\cap\sF(\cG_t)$, $t\in T$, are pairwise 
nonisomorphic maximal subgroups of $\Gamma$. 
In particular, this holds for $\Gamma=\sF(\cH)$ and 
$\Gamma=\sD(\cH)$. 
\end{theorem}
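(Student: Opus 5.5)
The plan is to transplant the BBQS graphs $\Lambda_{\mathbf p}$ into a graph presenting $\cH$, and then verify the three properties needed by Corollary~\ref{cor:minimalfullmaximal} (2): primeness, trivial $\Out_{\cH}$, and surjectivity on $H_1$. The starting point is \cite[Theorem 3.6]{MM14Kyoto}: irreducible SFT groupoids are classified by $(H_0,[1_X])$ together with the sign of $\det(I-A)$. So I replace $(V,E)$ by a finite irreducible graph $(V',E')$ with $\cG_{(V',E')}\cong\cH$ that has a convenient local shape. The chosen shape is a vertex $\mathbf X$ with exactly two outgoing edges $0,1$ and a vertex $\mathbf Y=\mathbf X1$ with a loop, mimicking the two-loop graph. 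I would arrange this by adding a few auxiliary vertices. Their extra relations in $\Coker(I-A^t)$ must cancel, and the determinant sign must be preserved, for instance by adding an even number of such blocks or by adjusting with a further vertex. Since the ambient isomorphism transports wide open subgroupoids, topological full groups and $\sD$, I may then assume $(V,E)=(V',E')$.

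For each strictly increasing sequence of primes $\mathbf p$, I build a graph $\Delta_{\mathbf p}$ over $(V,E)$. It is a copy of $(V,E)$ in which the path structure near $\mathbf X$ is replaced by the blocks $\mathbf Z_{n,k}$ from \eqref{eq:BBQSgraph}. Each block vertex lies over $\mathbf X$ or $\mathbf Y$ in a pattern making $\rho$ right-covering, and the remaining vertices of $(V,E)$ each appear once. The root section sends each $v$ to its copy. I then check the hypotheses of Proposition~\ref{prop:subgroupoidgraphconverse}, namely reducedness and reachability, exactly as for $\Lambda_{\mathbf p}$. This gives a wide open subgroupoid $\cG_{\mathbf p}$ with $\cG_{\mathbf p}\cong\cG_{\Delta_{\mathbf p}}|S$ by Proposition~\ref{prop:subgroupoidgraphmodel}. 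Strong connectivity, together with Lemma~\ref{lem:graphpureinfiniteness}, then gives minimality and pure infiniteness.

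The remaining properties are checked one by one.
\begin{enumerate}
\item \emph{Primeness.} By Corollary~\ref{cor:primegraph}, I need that every congruence on $\Delta_{\mathbf p}$ compatible with $\rho$ is trivial or equals $\ker\rho$. I would redo the arithmetic-progression argument for $\Lambda_{\mathbf p}$. Any nontrivial identification forces $\mathbf X\equiv\mathbf Y$-type collapses in the block region, and this propagates to the copy of $(V,E)$ through the edges into and out of the blocks.
\item \emph{Trivial outer quotient.} Rigidity of label-preserving automorphisms follows as before: a distinguished vertex such as the unique fixed vertex of the $1$-transition over $\mathbf Y$ is fixed, and reachability fixes everything else. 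With Proposition~\ref{prop:subgroupoidnormalizer} this gives $\Out_{\cH}(\cG_{\mathbf p})=1$.
\item \emph{Homology.} From Proposition~\ref{prop:graphalgebras} (4), the cokernel is $H_0(\cH)$-like data plus the torsion $\bigoplus_n\Z/p_n$. The kernel $\Ker(I-B^t)$ should map onto $\Ker(I-A^t)\cong H_1(\cH)$, because an $A^t$-fixed vector lifts by putting its values on the copy of $(V,E)$ and solving the block equations. The block equations again force the block coordinates to be trivial modulo the $p_n$ constraints. Surjectivity of $H_1(\cG_{\mathbf p})\to H_1(\cH)$ is then the naturality statement following Proposition~\ref{prop:graphalgebras}.
\end{enumerate}
With primeness, $\Out_{\cH}(\cG_{\mathbf p})=1$ and $H_1$-surjectivity in hand, Corollary~\ref{cor:minimalfullmaximal} (2) gives maximality of $\Gamma\cap\sF(\cG_{\mathbf p})$ in every $\Gamma$.

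Finally, non-isomorphism. Choosing $\mathbf p$ among $2^{\aleph_0}$ sequences with pairwise different infinite prime sets, the torsion of $H_0(\cG_{\mathbf p})$ differs, up to the finitely many primes dividing the torsion of $H_0(\cH)$, which I exclude from the outset. For $\Gamma\cap\sF(\cG_{\mathbf p})$, this subgroup contains $\sD(\cG_{\mathbf p})$ and is contained in $\sF(\cG_{\mathbf p})$. The isomorphism theorem \cite[Theorem 3.9]{Ma15crelle} applies to such intermediate groups, so an abstract isomorphism yields a groupoid isomorphism $\cG_{\mathbf p}\cong\cG_{\mathbf q}$, hence isomorphic $H_0$, hence $\mathbf p=\mathbf q$.

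The main obstacle is the first step: realizing $\cH$ by a graph with the required local shape without changing $(H_0,[1_X],\operatorname{sign}\det(I-A))$. The second difficulty is designing $\Delta_{\mathbf p}$ so that primeness survives the interaction between the blocks and the rest of the graph. I would first try the simplest modification, splitting a single branching vertex via in/out-splittings (which preserve conjugacy) to produce $\mathbf X,\mathbf Y$.
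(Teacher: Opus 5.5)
Your endgame agrees with the paper. Once primeness, $\Out_{\cH}(\cG_{\mathbf p})=1$ and surjectivity of $H_1(\cG_{\mathbf p})\to H_1(\cH)$ are known, Corollary~\ref{cor:minimalfullmaximal}~(2) gives maximality for every $\Gamma$. Prime torsion in $H_0(\cG_{\mathbf p})$ outside the torsion primes of $H_0(\cH)$, together with the isomorphism theorem for groups between $\sD$ and $\sF$, then separates the groups.

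\textbf{The construction is missing.} The gap is that the graph $\Delta_{\mathbf p}$ to which these checks apply is never constructed, and the layout you sketch does not support them. You distribute the block vertices over two distinct base vertices $\mathbf X,\mathbf Y$ and keep a single lift of every other vertex. But in $\Lambda_{\mathbf p}$ all vertices lie over the one vertex of the two-loop graph, and the congruence argument starts and ends with identifying its vertices $\mathbf X$ and $\mathbf Y$. A quotient compatible with the projection can only identify vertices in the same fibre. Once these two vertices sit over different base vertices, that identification is impossible, so primeness does not transfer ``exactly as for $\Lambda_{\mathbf p}$''; it would have to be redone from scratch for a design you have not specified.

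The missing idea is to put a vertex with \emph{exactly two loops} into the ambient graph and to take a layered product instead of a single copy. The paper adjoins $a,b,c$ with the matrix $\widehat A$ of \eqref{eq:uncountableambientmatrix}, where $a$ carries the loops $e_0,e_1$.
\begin{itemize}
\item The relations $a=0$, $b=-v$, $c=v$ leave the cokernel and unit class unchanged.
\item $\det(I-\widehat A)=\det(I-A)$, because the new block has determinant $1$ and Schur complement $I-A$.
\item Hence $\widehat\cH\cong\cH$ by the classification.
\end{itemize}
Then $\Delta_{\mathbf p}$ has vertex set $W_{\mathbf p}\times\widehat V$. The lifts of $e_0,e_1$ at $(P,a)$ go to $(P0,a)$ and $(P1,a)$, and every other edge preserves the $W_{\mathbf p}$-coordinate. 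Since the ambient graph stays strongly connected after deleting $e_0,e_1$, a compatible relation restricted to a fibre $W_{\mathbf p}\times\{w\}$ is independent of $w$. It is also invariant under both transitions, so primeness and rigidity reduce verbatim to those of $\Lambda_{\mathbf p}$.

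\textbf{The homology step has the same problem.} ``Putting the values of $z\in\Ker(I-A^t)$ on the copy of $(V,E)$'' has no content in your layout, since $\Delta_{\mathbf p}$ contains no copy of $(V,E)$. More to the point, no finite vertex set of a strongly connected infinite graph is forward closed. Any finitely supported lift of $z$ must therefore rely on cancellations along edges leaving its support, and nothing in your proposal produces them. This is not a formality: surjectivity of $H_1(\cG)\to H_1(\cH)$ genuinely fails for some prime minimal inclusions; in Example~\ref{ex:selfnormalizingnonmaximal} the map is multiplication by $3$.

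In the paper, surjectivity is built into $\widehat A$. Every element of $\Ker(I-\widehat A^t)$ has the form $(z,0,-z_v,0)$ with $z\in\Ker(I-A^t)$, so it vanishes at $a$ and $c$. It can therefore be placed in a single layer $\{P\}\times\widehat V$ without touching $e_0,e_1$. This yields $H_1(\cG_{\mathbf p})\cong\Ker(I-A^t)^{(W_{\mathbf p})}$, with the inclusion inducing the summation map, which is surjective. The same layer-by-layer elimination gives $H_0(\cG_{\mathbf p})\cong H_0(\cH)^{(W_{\mathbf p})}\oplus\Z\oplus\bigoplus_n\Z/p_n$, which you only assert. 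You correctly flagged realizing $\cH$ with a suitable local shape as the main obstacle, but left it open. The explicit $\widehat A$ resolves both that step and the homological bookkeeping.
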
 

\begin{proof} 
Let $A$ be the adjacency matrix of $(V,E)$. 
Fix $v\in V$ and let $\epsilon_v$ be its standard basis column vector. 
Adjoin three vertices $a,b,c$ and form the graph 
$(\widehat V,\widehat E)$ with adjacency matrix 
\begin{equation}\label{eq:uncountableambientmatrix} 
\widehat A:= 
\begin{bmatrix} 
A&0&\epsilon_v&\epsilon_v\\ 
0^t&2&1&1\\ 
0^t&0&2&1\\ 
\epsilon_v^t&1&0&0 
\end{bmatrix}, 
\qquad 
\widehat V=V\sqcup\{a,b,c\}. 
\end{equation} 
Thus $a$ has exactly two loops, which we denote by $e_0,e_1$. 
Even after deleting these two edges, the graph remains strongly 
connected; see Figure~\ref{fig:uncountableambientgraph}. 
Put $\widehat\cH:=\cG_{(\widehat V,\widehat E)}$. 

\begin{figure}[htbp] 
\centering 
\begin{tikzpicture}[ 
  x=1cm,y=1cm, 
  line width=0.55pt, 
  >={Stealth[length=4.6pt,width=3.5pt]}, 
  every node/.style={inner sep=3pt}, 
  every loop/.style={looseness=7} 
] 
  % The original graph is retained inside the box. 
  \draw[rounded corners=8pt] (-3.0,-1.15) rectangle (0.65,1.15); 
  \node[anchor=north west] at (-2.85,1.02) {$(V,E)$}; 
  \node[font=\Large] (v) at (0,0) {$v$}; 
  \node[font=\Large] (a) at (2.35,0) {$a$}; 
  \node[font=\Large] (b) at (4.70,0) {$b$}; 
  \node[font=\Large] (c) at (7.05,0) {$c$}; 

  % Single edges between the added vertices. 
  \draw[->] (a) -- (b); 
  \draw[->] (b) -- (c); 

  % The old vertex v is attached to b and c. 
  \draw[->] (v.north east) 
    .. controls (0.85,2.10) and (4.20,2.10) .. (b.north west); 
  \draw[->] (c.north) 
    .. controls (7.05,3.40) and (0,3.40) .. (v.north); 
  \draw[->] (v.south) 
    .. controls (0,-3.35) and (7.05,-3.35) .. (c.south); 

  % The two directions between a and c are shown separately. 
  \draw[->] (a.south east) 
    .. controls (3.05,-0.95) and (6.35,-0.95) .. (c.south west); 
  \draw[->] (c.south west) 
    .. controls (6.35,-1.95) and (2.35,-1.95) .. (a.south); 

  % Marked loops at a; a multiplicity label for the loops at b. 
  \draw[->] (a) to[out=145,in=100,loop] 
    node[above left,inner sep=1pt] {$e_0$} (a); 
  \draw[->] (a) to[out=80,in=35,loop] 
    node[above right,inner sep=1pt] {$e_1$} (a); 
  \draw[->] (b) to[out=80,in=35,loop] 
    node[above right,inner sep=1pt] {$2$} (b); 
  % Tight bounds around the curves rather than their control points. 
  \pgfresetboundingbox 
  \path[use as bounding box] (-3.02,-2.58) rectangle (7.25,2.62); 
\end{tikzpicture} 
\caption{The graph $(\widehat V,\widehat E)$. 
The box represents $(V,E)$, whose edges are omitted. 
All added edges are shown; the label $2$ denotes two loops.} 
\label{fig:uncountableambientgraph} 
\end{figure} 

We first verify that $\widehat\cH\cong\cH$. 
By Proposition~\ref{prop:graphalgebras} (4), the zeroth homology 
and unit classes are computed from the vertex generators. 
Use the vertex names also for their classes in 
$\Coker(I-\widehat A^t)$. 
The relations at the new vertices are 
\[ 
a=2a+b+c,\qquad b=2b+c,\qquad c=v+a, 
\] 
and hence $a=0$, $b=-v$ and $c=v$. 
The extra summands $b+c$ in the relation at $v$ cancel. 
Therefore the old vertex generators identify 
$\Coker(I-\widehat A^t)$ with $\Coker(I-A^t)$, and 
$a+b+c=0$ shows that this identification preserves the unit class. 
The bottom right block of $I-\widehat A$ is 
\[ 
D:= 
\begin{bmatrix} 
-1&-1&-1\\ 
0&-1&-1\\ 
-1&0&1 
\end{bmatrix}, 
\qquad 
\det D=1,\qquad 
D^{-1} 
\begin{bmatrix}0\\0\\1\end{bmatrix} 
=\begin{bmatrix}0\\-1\\1\end{bmatrix}. 
\] 
Its Schur complement is $I-A$, so 
$\det(I-\widehat A)=\det(I-A)$. 
The classification of irreducible SFT groupoids by Matsumoto and 
the author \cite[Theorem 3.6]{MM14Kyoto} now gives 
$\widehat\cH\cong\cH$. 
It therefore suffices to construct the required subgroupoids in 
$\widehat\cH$. 

Fix a prime sequence $\mathbf p$ as above. 
Define a graph $\Delta_{\mathbf p}$ with vertex set 
$W_{\mathbf p}\times\widehat V$ and right-covering projection 
$\rho_{\mathbf p}$ onto $(\widehat V,\widehat E)$ as follows. 
For each $P\in W_{\mathbf p}$, the lifts of $e_0,e_1$ are 
\[ 
(P,a)\longrightarrow(P0,a), 
\qquad 
(P,a)\longrightarrow(P1,a), 
\] 
respectively. 
For each $e\in\widehat E\setminus\{e_0,e_1\}$, its lift from 
$(P,i(e))$ goes to $(P,t(e))$. 
Take the root section 
$\varsigma_{\mathbf p}(w):=(\mathbf X,w)$ for $w\in\widehat V$. 
Paths avoiding $e_0,e_1$ connect all vertices with the same first 
coordinate. 
The strong connectivity of $\Lambda_{\mathbf p}$ then shows that 
$\Delta_{\mathbf p}$ is strongly connected. 
It is reduced, since every vertex of $\widehat V$ has an outgoing 
edge other than $e_0,e_1$, and such an edge preserves the first 
coordinate. 
Propositions~\ref{prop:subgroupoidgraphconverse} 
and~\ref{prop:subgroupoidgraphmodel} consequently give a proper wide open 
subgroupoid $\cG_{\mathbf p}\subset\widehat\cH$ and an isomorphism 
\begin{equation}\label{eq:uncountablegraphmodel} 
\cG_{\mathbf p}\cong\cG_{\Delta_{\mathbf p}}|S_{\mathbf p}, 
\qquad 
S_{\mathbf p}:=\bigsqcup_{w\in\widehat V}Z((\mathbf X,w)). 
\end{equation} 
Here $S_{\mathbf p}\in\cCO(X_{\Delta_{\mathbf p}})$ is full. 
Lemma~\ref{lem:graphpureinfiniteness} implies that 
$\cG_{\mathbf p}$ is minimal and purely infinite. 

We next prove primeness. 
An intermediate wide open subgroupoid determines an equivalence 
relation on $W_{\mathbf p}\times\widehat V$ compatible with the 
projection and right extensions. 
For $w\in\widehat V$, let $\equiv_w$ be the induced relation on 
$W_{\mathbf p}\times\{w\}$, regarded as a relation on $W_{\mathbf p}$. 
Paths avoiding $e_0,e_1$ in both directions show that 
$\equiv_w$ is independent of $w$. 
At $a$, the edges $e_0,e_1$ show that this common relation is 
preserved by the two transitions of $\Lambda_{\mathbf p}$. 
The combinatorial observation above implies that it is either 
equality or the universal relation. 
Thus the quotient is either $\Delta_{\mathbf p}$ itself or the 
ambient graph, proving primeness by Corollary~\ref{cor:primegraph}. 

Likewise, a graph automorphism of $\Delta_{\mathbf p}$ over 
$(\widehat V,\widehat E)$ has the form 
$(P,w)\mapsto(\phi_w(P),w)$. 
Commutation with paths avoiding $e_0,e_1$ makes $\phi_w$ 
independent of $w$, and the two marked edges make the common 
permutation commute with $0$ and $1$. 
Rigidity of $\Lambda_{\mathbf p}$ therefore implies that this 
automorphism is the identity. 
As in the argument for $\cK_{\mathbf p}$, every vertex has rooted 
representatives of arbitrarily large length, so every normalizing 
topological full group element induces such an automorphism. 
Proposition~\ref{prop:subgroupoidnormalizer} gives 
\begin{equation}\label{eq:uncountablenormalizer} 
\Nor\left(\sF(\widehat\cH),\sF(\cG_{\mathbf p})\right)=\sF(\cG_{\mathbf p}). 
\end{equation} 

It remains to calculate the homology and the map induced by inclusion. 
Put 
\[ 
D_0:=\Coker(I-A^t)=H_0(\cH), 
\qquad D_1:=\Ker(I-A^t)=H_1(\cH). 
\] 
For the cokernel presentation of $H_0(\cG_{\mathbf p})$, write 
$w_P$ for the generator at $(P,w)$ and put 
$\alpha_P:=a_P$, $\beta_P:=b_P$, $\gamma_P:=c_P$. 
The relations at $b$ and $c$ give 
\[ 
\beta_P+\gamma_P=0,\qquad 
\gamma_P=v_P+\alpha_P. 
\] 
After eliminating $\beta_P,\gamma_P$, the old vertices satisfy 
exactly the relations of $D_0$ separately for each $P$, while 
the relation at $a$ becomes 
$\alpha_P=\alpha_{P0}+\alpha_{P1}$. 
These two sets of relations are independent. 
By \eqref{eq:BBQShomology} and \eqref{eq:uncountablegraphmodel}, 
\begin{equation}\label{eq:uncountableH0} 
H_0(\cG_{\mathbf p}) 
\cong D_0^{(W_{\mathbf p})}\oplus\Z 
\oplus\bigoplus_{n\geq1}\Z/p_n. 
\end{equation} 

Let $\widetilde A_{\mathbf p}$ be the adjacency matrix of 
$\Delta_{\mathbf p}$. 
For the kernel calculation, write a finitely supported vector 
as $(z_P,\alpha_P,\beta_P,\gamma_P)_{P\in W_{\mathbf p}}$, 
where $z_P\in\Z^V$. 
The equation $u=\widetilde A_{\mathbf p}^tu$ is equivalent to 
\[ 
\begin{aligned} 
(I-A^t)z_P&=\epsilon_v\gamma_P,\\ 
\beta_P&=(z_P)_v+\alpha_P+2\beta_P,\\ 
\gamma_P&=(z_P)_v+\alpha_P+\beta_P,\\ 
\alpha_P&=\sum_{R\in W_{\mathbf p}}B_{\mathbf p}(R,P)\alpha_R 
+\gamma_P. 
\end{aligned} 
\] 
The middle two equations give $\gamma_P=0$. 
The last equation and $\Ker(I-B_{\mathbf p}^t)=0$ give 
$\alpha_P=0$, and the remaining equations give 
$z_P\in D_1$ and $\beta_P=-(z_P)_v$. 
For the finite ambient graph, the same calculation gives 
\[ 
\Ker(I-\widehat A^t) 
=\{(z,0,-z_v,0)\mid z\in D_1\}. 
\] 
Thus 
\begin{equation}\label{eq:uncountableH1} 
H_1(\cG_{\mathbf p})\cong D_1^{(W_{\mathbf p})}, 
\qquad H_1(\widehat\cH)\cong D_1. 
\end{equation} 
The projection $\rho_{\mathbf p}$ sums vertex coordinates over $P$ 
and intertwines $\widetilde A_{\mathbf p}^t$ with $\widehat A^t$. 
Under \eqref{eq:uncountablegraphmodel}, its restriction is the 
inclusion $\iota_{\mathbf p}:\cG_{\mathbf p}\to\widehat\cH$. 
Since inclusion of a full open reduction induces an isomorphism 
on homology, naturality of \eqref{eq:countablegraphhomology} shows that 
the induced map, in the coordinates of \eqref{eq:uncountableH1}, is 
\begin{equation}\label{eq:uncountableH1map} 
H_1(\iota_{\mathbf p}): 
D_1^{(W_{\mathbf p})}\longrightarrow D_1, 
\qquad (z_P)_P\longmapsto\sum_Pz_P. 
\end{equation} 
In particular, it is surjective. 

By Corollary~\ref{cor:minimalfullmaximal} (2), 
\eqref{eq:uncountablenormalizer} and the surjectivity above imply 
that, for every 
$\sD(\widehat\cH)\subset\Gamma\subset\sF(\widehat\cH)$, the group 
$M_{\mathbf p}:=\Gamma\cap\sF(\cG_{\mathbf p})$ is maximal in $\Gamma$. 

Finally, let $\mathcal P_A$ be the finite set of primes dividing 
the order of the torsion subgroup of the finitely generated group $D_0$. 
For each infinite set $\mathcal S$ of primes outside $\mathcal P_A$, 
use its increasing enumeration as $\mathbf p$. 
For a prime $p\notin\mathcal P_A$, \eqref{eq:uncountableH0} has 
an element of order $p$ if and only if $p\in\mathcal S$. 
Distinct choices of $\mathcal S$ therefore give nonisomorphic groups 
$H_0(\cG_{\mathbf p})$. 
Moreover, 
\[ 
\sD(\cG_{\mathbf p})\subset M_{\mathbf p}\subset\sF(\cG_{\mathbf p}). 
\] 
By \cite[Theorem 3.9]{Ma15crelle}, an isomorphism between two 
of the groups $M_{\mathbf p}$ would therefore induce an isomorphism 
between their groupoids. 
The distinct zeroth homology groups rule this out. 
There are $2^{\aleph_0}$ such sets $\mathcal S$. 
Transporting the construction through $\widehat\cH\cong\cH$ 
completes the proof. 
\end{proof}

%%%%%%%%%%%%%%%%%%%%%%%%%%%%%%%%%%%%%%%%%%%%%%%%%%%%%%%%%%%%
\section{Type systems, embeddings, realizations and examples} 
\label{sec:BBQS} 
%%%%%%%%%%%%%%%%%%%%%%%%%%%%%%%%%%%%%%%%%%%%%%%%%%%%%%%%%%%%

We compare the results above with those of Belk, Bleak, Quick and 
Skipper \cite{BBQS25TAMS}, using the groupoid interpretation of 
type systems developed in Section~\ref{sec:subgroupoidgraph}. 
After this comparison, we discuss embeddings with wide open images 
and realizations as reductions to nowhere dense Cantor sets. 
We then discuss several examples defined by finite graphs and examine the 
outer quotient and homological conditions for maximality.

%%%%%%%%%%%%%%%%%%%%%%%%%%%%%%%%%%%%%%%%%%%%%%%%%%%%%%%%%%%%
\subsection{Comparison of the main results}\label{subsec:BBQScomparison} 
%%%%%%%%%%%%%%%%%%%%%%%%%%%%%%%%%%%%%%%%%%%%%%%%%%%%%%%%%%%%

We use the dictionary between type systems and wide open subgroupoids 
of the full $2$-shift groupoid recorded in 
Remark~\ref{rem:subgroupoidgraphBBQS}. 
The following corollary collects the consequences of our general 
criteria for Thompson's group $V$. 

\begin{corollary}\label{cor:typesystems} 
Let $\cH_2$ be the full $2$-shift groupoid, so that 
$\sF(\cH_2)=V_2$ is Thompson's group $V$. 
\begin{enumerate} 
\item For every simple type system $\mathcal P$, finite or infinite, 
$\operatorname{Stab}(\mathcal P)$ is a maximal subgroup of $V_2$. 
\item For a simple type system $\mathcal P$, the subgroup 
$\Fix(\mathcal P)$ is maximal in $V_2$ if and only if 
$\Fix(\mathcal P)=\operatorname{Stab}(\mathcal P)$. 
\item A maximal subgroup $M$ of $V_2$ is fully amplified 
if and only if 
$M=\Fix(\mathcal P)=\operatorname{Stab}(\mathcal P)$ for some 
simple type system $\mathcal P$. 
\end{enumerate} 
\end{corollary}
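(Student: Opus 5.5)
The plan is to translate everything through the dictionary of Remark~\ref{rem:subgroupoidgraphBBQS}. A simple type system $\mathcal P$ corresponds to a wide open subgroupoid $\cG\subsetneq\cH_2$ with $\sim_{\cG}=\mathcal P$. By Corollary~\ref{cor:primegraph}, simplicity of $\mathcal P$ is exactly primeness of $\cG$. Under this correspondence $\Fix(\mathcal P)=\sF(\cG)$ and $\operatorname{Stab}(\mathcal P)=\cN_{\cH_2}(\cG)$. Two standing facts will be used throughout: $V_2$ is perfect, so $H_1(\sF(\cH_2))=0$ and $\sD(\cH_2)=\sF(\cH_2)$, and $H_1(\cH_2)=0$.

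The first step is to identify $\cN_{\cH_2}(\cG)$ with the group normalizer $\Nor(V_2,\sF(\cG))$, splitting into the three cases of Theorem~\ref{thm:primetrichotomy}.
\begin{enumerate}
\item If $\cG$ is minimal, every orbit is infinite, so Lemma~\ref{lem:groupnormalizers} gives the identification.
\item In the clopen case, every orbit of $\cH|X_i$ is infinite, so Lemma~\ref{lem:groupnormalizers} again applies.
\item In the nowhere dense case, $\cG=\cS_Y$, and Theorem~\ref{thm:exceptionalstabilizer} gives $\cN_{\cH_2}(\cG)=\sF(\cG)=\Nor(V_2,\sF(\cG))$ directly, even when $Y$ is finite.
\end{enumerate}
This step is the only place where some care is needed: small orbits can occur in case (3), and the lemma does not cover them. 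Theorem~\ref{thm:exceptionalstabilizer} handles that case instead.

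For (1), apply Corollary~\ref{cor:primenormalizermaximal} with $\Gamma=V_2$. Condition (3) there, $\pi(N_\Gamma)=\pi(\Gamma)$, holds trivially because $\pi$ is zero. Hence $\operatorname{Stab}(\mathcal P)=N$ is maximal.

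For (2), note that $\Fix(\mathcal P)\subset\operatorname{Stab}(\mathcal P)\subsetneq V_2$, where properness comes from Corollary~\ref{cor:primenormalizermaximal}. If $\Fix(\mathcal P)$ is maximal, it must therefore equal $\operatorname{Stab}(\mathcal P)$. Conversely, if the two are equal, then $\Fix(\mathcal P)$ is maximal by (1).

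For (3), suppose first that $M$ is fully amplified and maximal. Then $M=\sF(\cG_M)$, so $\cG_M\neq\cH_2$, and Proposition~\ref{prop:maximalgerms} shows that $\cG_M$ is prime. Let $\mathcal P$ be the associated simple type system. Since $M\subset N\subsetneq V_2$, maximality gives $M=N$, that is, $\Fix(\mathcal P)=\operatorname{Stab}(\mathcal P)$. For the converse, suppose $M=\Fix(\mathcal P)=\operatorname{Stab}(\mathcal P)$ for a simple type system $\mathcal P$. Then $M$ is maximal by (2). It is fully amplified because it is the topological full group of a wide open subgroupoid, and such groups are closed under local agreement.
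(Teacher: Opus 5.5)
Your proposal is correct and follows the paper's proof essentially step for step. You use the dictionary of Remark~\ref{rem:subgroupoidgraphBBQS}, then identify $\operatorname{Stab}(\mathcal P)$ with $\Nor(V_2,\sF(\cG))$ via the trichotomy, Lemma~\ref{lem:groupnormalizers} and Theorem~\ref{thm:exceptionalstabilizer}. You then apply Corollary~\ref{cor:primenormalizermaximal} using that $V_2$ is perfect, and Proposition~\ref{prop:maximalgerms} for (3); your explicit three-case split only spells out what the paper compresses into one sentence.
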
 

\begin{proof} 
Let $\mathcal P$ be simple, and let $\cG\subsetneq\cH_2$ be 
the corresponding prime wide open subgroupoid. 
By Remark~\ref{rem:subgroupoidgraphBBQS}, 
$\Fix(\mathcal P)=\sF(\cG)$ and 
$\operatorname{Stab}(\mathcal P)=\cN_{\cH_2}(\cG)$. 
The trichotomy, Lemma~\ref{lem:groupnormalizers} and 
Theorem~\ref{thm:exceptionalstabilizer} give 
\[ 
\operatorname{Stab}(\mathcal P) 
=\Nor\left(\sF(\cH_2),\sF(\cG)\right). 
\] 
Since $V_2$ is perfect, Corollary~\ref{cor:primenormalizermaximal} 
proves \textup{(1)}. 
As $\Fix(\mathcal P)\subset\operatorname{Stab}(\mathcal P)\subsetneq V_2$, 
assertion \textup{(2)} follows. 

For \textup{(3)}, let $M$ be a fully amplified maximal subgroup. 
Proposition~\ref{prop:maximalgerms} gives 
$M=\sF(\cG_M)$ with $\cG_M\subsetneq\cH_2$ prime. 
The corresponding type system $\mathcal P$ is simple and satisfies 
$M=\Fix(\mathcal P)$. 
By \textup{(1)}, its stabilizer is proper in $V_2$, so maximality 
of $M$ gives $M=\operatorname{Stab}(\mathcal P)$. 
Conversely, $\Fix(\mathcal P)=\sF(\cG)$ is fully amplified, 
proving the claim. 
\end{proof} 

Table~\ref{tab:BBQScomparison} summarizes the principal correspondences. 
The numbers in the left column refer to \cite{BBQS25TAMS}. 

\begin{table}[htbp] 
\caption{Comparison with the main results of \cite{BBQS25TAMS}.} 
\label{tab:BBQScomparison} 
\centering 
\begingroup 
\small 
\renewcommand{\arraystretch}{1.2} 
\begin{tabular}{@{}p{0.44\textwidth}p{0.52\textwidth}@{}} 
\hline 
BBQS & Present paper \\ 
\hline 
Corollary 2.3: finite sets in one tail class & 
Theorem~\ref{thm:exceptionalstabilizer} \\ 
Theorem 4.8: finite simple type systems & 
Theorem~\ref{thm:primetrichotomy} \\ 
Theorems 5.12, 5.13 and 5.15: maximality & 
Corollary~\ref{cor:typesystems} (1) \\ 
Proposition 6.3: finite clopen partitions & 
Proposition~\ref{prop:clopenpartitionnormalizer} \\ 
Theorem 6.14: finite nuclear graph model & 
Proposition~\ref{prop:subgroupoidgraphmodel} \\ 
Theorem 7.5: uncountably many maximal subgroups & 
Section~\ref{sec:uncountable}, 
especially Theorem~\ref{thm:uncountableminimal} \\ 
\hline 
\end{tabular} 
\endgroup 
\end{table} 

For a finite simple type system, the nuclear, atomic binuclear and 
atomic quasinuclear alternatives of \cite[Theorem 4.8]{BBQS25TAMS} 
correspond, respectively, to the minimal, clopen bipartition and 
nowhere dense alternatives in Theorem~\ref{thm:primetrichotomy}. 
That theorem gives a dynamical trichotomy without a finiteness 
assumption on the subgroupoid graph. 
In particular, it gives a structural decomposition for the infinite 
simple type systems considered in \cite[Question 5]{BBQS25TAMS}, 
while leaving their classification within each alternative open. 

Corollary~\ref{cor:typesystems} (1) treats all simple type systems 
uniformly. 
The simple nuclear case is covered by 
Theorem~\ref{thm:minimalnormalizermaximal}. 
A simple multinuclear type system with more than one nucleus is 
atomic binuclear and is covered by 
Proposition~\ref{prop:clopenpartitionnormalizer}. 
The atomic quasinuclear cases are covered by 
Theorem~\ref{thm:exceptionalstabilizer}; see 
Remark~\ref{rem:exceptionalBBQS} for this correspondence. 
This recovers \cite[Theorems 5.12, 5.13 and 5.15]{BBQS25TAMS}. 
It also covers infinite simple type systems which are neither 
multinuclear nor quasinuclear, such as the full $2$-shift cases in 
Examples~\ref{ex:exceptionalAF} and~\ref{ex:aperiodicpoint}. 
Their stabilizers are closed-set stabilizers, whose maximality 
also follows from \cite[Theorems 1.2 and 1.3]{GV26ETDS}. 

\begin{remark}\label{rem:exceptionalBBQS} 
We describe the graph structure underlying the atomic quasinuclear 
case. Return to the setting of Section~\ref{sec:exceptional}, with 
$\Delta$, $R$ and $C$ as defined there, and suppose that $\Delta|C$ 
contains a closed path. 
Then $\Delta|C$ has a unique terminal strongly connected component 
$Q$. Every closed path in $\Delta|C$ lies in $Q$, and every 
infinite path in this graph eventually enters $Q$. 
Moreover, there is $m\geq0$ such that $[\mu]_{\cG}\in Q\cup R$ 
whenever $\lvert\mu\rvert\geq m$; in particular, $C\setminus Q$ 
is finite. A closed path necessarily exists if $\Delta$ is finite, 
since $\Delta|C$ is nonempty and has no sinks. 

Indeed, choose a vertex $P_0$ on a closed path in $\Delta|C$. 
Cofinality from Lemma~\ref{lem:exceptionalgraph} (2), applied to 
the infinite repetition of this path, shows that every vertex of 
$C$ can reach $P_0$. 
Its strongly connected component $Q$ is therefore terminal: 
any vertex reachable from $Q$ within $C$ can return to $P_0$. 
Cofinality also gives a path from $P_0$ to a vertex on any infinite 
path in $\Delta|C$, so every such path enters $Q$. 
This proves uniqueness and the assertion about closed paths. 
Every lift from the root section eventually enters the forward 
closed set $Q\cup R$. 
The cylinders whose lifts have entered this set cover $X$; 
compactness gives the uniform bound $m$. 
Every vertex outside $Q\cup R$ is represented by a path of length 
less than $m$, proving finiteness of $C\setminus Q$. 

This component gives a smaller graph model for the reduction: 
$X_{\Delta|Q}$ is full and clopen in $X_{\Delta|C}$, and 
\[ 
\cG_{\Delta|C}|X_{\Delta|Q}=\cG_{\Delta|Q}. 
\] 
Indeed, $Q$ is forward closed in $\Delta|C$, and every infinite 
path eventually enters $Q$. 
Let $S$ be as in Proposition~\ref{prop:exceptionalgraphmodel}. 
A nonempty compact open bisection of the minimal groupoid 
$\cG_{\Delta|C}$ with source in $S$ and range in $X_{\Delta|Q}$ 
identifies full clopen reductions, so $\cG_{\Delta|Q}$ is 
Kakutani equivalent to $\cH|Y$. 

Branching is measured in $\Delta|Q$; edges from $Q$ to $R$ do not 
count. If there is no branching vertex, strong connectivity 
forces $\Delta|Q$ to be a single directed cycle. 
The uniform bound above then shows that $Y$ is finite and 
consists of eventually periodic points. 
If there is a branching vertex, every cylinder meeting $Y$ 
contains at least two points of $Y$, so $Y$ is a Cantor set. 
In particular, an infinite $Q$ always gives the Cantor case. 

For the full $2$-shift, $R$ consists of one vertex, and this is 
an atomic quasinuclear type system in the terminology of 
\cite[Definition 4.3]{BBQS25TAMS}. 
Their branching condition is precisely the existence of 
a vertex of outdegree two in $\Delta|Q$. 
Thus Theorem~\ref{thm:exceptionalstabilizer} recovers the stabilizer 
descriptions and maximality results in 
\cite[Theorems 5.12, 5.13 and Proposition 6.7]{BBQS25TAMS}, 
as well as the equality of $\Fix(\mathcal P)$ and 
$\operatorname{Stab}(\mathcal P)$ in \cite[Remark 6.8]{BBQS25TAMS}. 
\end{remark} 

The finite-set result \cite[Corollary 2.3]{BBQS25TAMS} is also 
covered by Theorem~\ref{thm:exceptionalstabilizer}. 
Indeed, let $Y$ be a nonempty finite subset of a single orbit 
of an irreducible SFT groupoid $\cH$. 
As explained in \cite[Section 3.1]{Ma26maximal2}, every 
arrow with source and range in $Y$ is regular. 
Thus $\cH|Y$ is transitive and hence minimal, and 
$\St(\Gamma,Y)$ is maximal for every 
$\sD(\cH)\subset\Gamma\subset\sF(\cH)$. 
This applies whether or not the points are eventually periodic. 
Corollary~\ref{cor:exceptionalclassification} describes the two 
possible reductions. 
In the eventually periodic case, sufficiently long prefixes have 
singleton follower sets whose successive tails run around a cycle, 
so the component $Q$ in Remark~\ref{rem:exceptionalBBQS} is a 
directed cycle. Otherwise $\Delta|C$ is acyclic. 
For the full $2$-shift, the choices $Y=\{0^\infty\}$ and 
$Y=\{(01)^\infty,(10)^\infty\}$ recover 
\cite[Examples 3.10 and 6.6]{BBQS25TAMS}; the corresponding 
components $Q$ are a loop and a two-cycle, respectively. 
Example~\ref{ex:aperiodicpoint} gives the contrasting case of 
a non-eventually-periodic singleton, with $\Delta|C$ an infinite ray. 

For general irreducible SFT groupoids, 
Corollary~\ref{cor:primenormalizermaximal} characterizes maximality 
of prime subgroupoid normalizers in all groups between $\sD(\cH)$ 
and $\sF(\cH)$ by their images in the ambient abelianization. 
In the minimal case, Corollary~\ref{cor:minimalfullmaximal} (2) 
characterizes maximality of $\sF(\cG)$ itself by triviality of 
$\Out_{\cH}(\cG)$ and surjectivity of $H_1(\cG)\to H_1(\cH)$; 
the examples in Section~\ref{subsec:normalizerexamples} illustrate 
these conditions and the role of the unit class. 

Finite clopen partitions also give maximal normalizers outside 
the prime setting. 
Proposition~\ref{prop:clopenpartitionnormalizer} recovers 
\cite[Proposition 6.3]{BBQS25TAMS} and gives the precise 
$H_0(\cH)$ condition for a general SFT groupoid. 
A partition into at least three parts gives a non-prime inclusion, 
as noted in Remark~\ref{rem:finitepartitionBBQS}. 

Since $H_0(\cH_2)=0$, the clopen case of 
Corollary~\ref{cor:amplemaximal} does not occur for $V_2$. 
By Corollary~\ref{cor:typesystems} (3), a maximal subgroup of $V_2$ 
preserving no nontrivial type system is not fully amplified, 
and its full amplification is $V_2$. 
The converse fails: the normalizer in 
Example~\ref{ex:paritynormalizers} (2) and the stabilizer of a finite 
clopen partition with at least two parts are maximal and not fully 
amplified, but preserve nontrivial type systems. 
For the partition stabilizer, its germ subgroupoid is $\cH_2$, 
since it permutes the parts transitively and contains the 
topological full group of each clopen reduction. 
Thus failure to be fully amplified does not characterize the maximal 
subgroups considered in \cite[Question 1]{BBQS25TAMS}. 

Let $\mathcal P$ be the type system corresponding to a wide open 
subgroupoid $\cG\subset\cH_2$. 
We now explain the algebraic invariants used in \cite{BBQS25TAMS}. 
Let $T(\cG)$ denote the type semigroup of $\cG$, including zero 
(see \cite[Definition 1.1]{ABBL23ETDS}). 
There is a canonical identification 
$S(\mathcal P)\cong T(\cG)\setminus\{0\}$, 
under which the generator associated with the type $[\mu]_{\cG}$ 
corresponds to $[1_{Z(\mu)}]$. 
Under this identification, the s-type of $U\in\cCO(\cH_2^{(0)})$ 
is $[1_U]\in T(\cG)$. 
Thus two nonempty clopen sets have the same s-type if and only if 
they are the source and range of a compact open bisection of $\cG$. 
The Grothendieck group of $T(\cG)$ is $H_0(\cG)$; 
passing to this group can lose information. 
In the nuclear case, however, $S(\mathcal P)$ is already a group 
by \cite[Lemma 5.1 (ii)]{BBQS25TAMS}, 
and hence identifies with $H_0(\cG)$. 

Proposition~\ref{prop:subgroupoidgraphmodel} identifies $\cG$ with 
a full compact open reduction of the graph groupoid of its entire 
type graph, without a finiteness or nuclearity assumption. 
For a finite nuclear type system, the root cylinders can be moved 
by prefix replacements to disjoint cylinders in the nucleus, 
as in the proof of \cite[Theorem 6.14]{BBQS25TAMS}. 
This gives the reduction to the nucleus graph used there. 
Thus the graph model extends that result at the groupoid level, 
and the preceding semigroup identification explains its 
homological conclusion. 

Finally, Section~\ref{sec:uncountable} first translates the 
construction in \cite[Theorem 7.5]{BBQS25TAMS} into subgroupoid 
graphs and computes its homology, including the unit class and 
$H_1$. 
Theorem~\ref{thm:uncountableminimal} then constructs a single 
family $(\cG_t)_{t\in T}$, with $\#T=2^{\aleph_0}$, 
in any fixed irreducible SFT groupoid $\cH$ such that, 
for every fixed 
$\sD(\cH)\subset\Gamma\subset\sF(\cH)$, the groups 
$\Gamma\cap\sF(\cG_t)$ are pairwise nonisomorphic maximal 
subgroups of $\Gamma$. 
The extension concerns the ambient groupoid and all these 
intermediate groups; the construction of \cite{BBQS25TAMS} 
already uses infinite type systems. 

Theorem~\ref{thm:minimalovergroup} provides the generation step 
for the prime minimal inclusions studied here: a subgroup 
containing $\sD(\cG)$ and an element outside 
$\Nor\left(\sF(\cH),\sF(\cG)\right)$ must contain $\sD(\cH)$. 
The general generation criteria based on cone transitivity 
and swap-primitivity in \cite[Theorems 8.5 and 8.18]{BBQS25TAMS} 
are not treated here. 
Nor do we answer \cite[Questions 1--4 and 6]{BBQS25TAMS}, 
which concern, among other things, automorphic images of $T$ 
and maximal subgroups of $F$ and $T$.

%%%%%%%%%%%%%%%%%%%%%%%%%%%%%%%%%%%%%%%%%%%%%%%%%%%%%%%%%%%%
\subsection{Embeddings into the full two-shift groupoid} 
\label{subsec:binaryrealization} 
%%%%%%%%%%%%%%%%%%%%%%%%%%%%%%%%%%%%%%%%%%%%%%%%%%%%%%%%%%%%

We consider embeddings of groupoids into the full two-shift 
groupoid whose images are wide open subgroupoids. 

For the full two-shift groupoid, the following result gives 
a converse, at the level of isomorphism classes, to the graph model 
in Proposition~\ref{prop:subgroupoidgraphmodel}. 
It is a special case of Nyland and Ortega's embedding theorem 
\cite[Theorem 11.16]{NO19IJM}. 
For completeness, we give a direct proof for the row-finite 
graphs considered here, following the prefix-code construction 
in \cite[Lemma 11.1]{NO19IJM}. 
Their construction was inspired by Brownlowe and S\o rensen's 
work on embeddings of Leavitt path algebras 
\cite[Proposition 5.1]{BS16JA}. 
We retain the notation $\cH_2$ for the full $2$-shift groupoid 
and put $C:=\{0,1\}^{\N}$. 

\begin{proposition}\label{prop:binarygraphrealization} 
Let $(W,F)$ be a countable row-finite directed graph without sinks, 
and let $Y\in\cCO(X_{(W,F)})$ be nonempty with no isolated points. 
Then $\cG_{(W,F)}|Y$ is isomorphic to a wide open subgroupoid 
of $\cH_2$. 
\end{proposition}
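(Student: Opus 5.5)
We construct a homeomorphism $\Phi\colon Y\to C$ which is compatible with cylinders, and then show that it carries $\cG_{(W,F)}|Y$ onto a wide open subgroupoid of $\cH_2$. Following the prefix-code idea of \cite[Lemma 11.1]{NO19IJM}, the plan is to encode each edge by a binary word. The encoding must be arranged so that cylinders of $(W,F)$ meeting $Y$ correspond to binary cylinders, and so that basic prefix replacements $Z(\nu,\mu)$ become binary prefix replacements.

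First we reduce to a convenient form. Since $Y$ is compact open, we write it as a finite disjoint union of cylinders $Z(\mu_1),\dots,Z(\mu_k)$. Each of these is infinite because $Y$ has no isolated points. Every $Z(\mu)\subset Y$ has no isolated points, so for each vertex $w$ reached by a finite path $\mu$ with $Z(\mu)\subset Y$, the graph below $w$ branches infinitely often along every infinite path. We restrict attention to the subtree of finite paths $\tau$ that extend some $\mu_i$. For each such $\tau$, we assign a nonempty binary word $b(\tau)$, defined inductively. The root words $b(\mu_1),\dots,b(\mu_k)$ form a complete binary prefix code of $\{0,1\}^{*}$ with $k$ elements; we use a single empty word if $k=1$. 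Given $b(\tau)$ and the finite set $t(\tau)F$ of outgoing edges, the words $b(\tau f)$ are chosen as $b(\tau)$ followed by the words of a complete binary prefix code with $\#(t(\tau)F)$ elements.

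When $\#(t(\tau)F)=1$, we take $b(\tau f)=b(\tau)$. This convention is the main obstacle: $\Phi(x)$, the limit of the $b(x_1\cdots x_n)$, must be an infinite word, so the code lengths must tend to infinity along every infinite path. This is guaranteed by the absence of isolated points together with compactness, since every infinite path in $Y$ passes through infinitely many branching vertices. The resulting map $\Phi$ is a bijection $Y\to C$, and it sends $Z(\tau)$ onto $Z(b(\tau))$, which makes it a homeomorphism. A cylinder $Z(\beta)$ of $C$ with $\beta$ not of the form $b(\tau)$ is a finite union of sets $Z(b(\tau))$.

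The key difficulty is that for $\mu\sim$ ``same tail'' the codes below $t(\mu)$ must coincide. The fix is to make the code choice depend only on the terminal vertex: for each $w\in W$ with $\#(wF)\geq2$, we fix one complete binary prefix code indexed by $wF$. Then for any $\tau,\tau'$ with $t(\tau)=t(\tau')$, induction gives $b(\tau\eta)=b(\tau)c(\eta)$ and $b(\tau'\eta)=b(\tau')c(\eta)$, where $c(\eta)$ depends only on $\eta$. Consequently $\Phi$ intertwines the prefix replacement $\tau z\mapsto\tau' z$ with $b(\tau)u\mapsto b(\tau')u$. Define $\Psi\colon\cG_{(W,F)}|Y\to\cH_2$ by sending $(\tau z,\lvert\tau\rvert-\lvert\tau'\rvert,\tau' z)$ to $(\Phi(\tau z),\lvert b(\tau)\rvert-\lvert b(\tau')\rvert,\Phi(\tau' z))$. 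To check that this is well defined, we compare two representations via a common extension. For long enough extensions the lengths of the codes grow additively, so the middle coordinate is consistent.

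Next we check that $\Psi$ is a homomorphism, that it is injective, and that it is an open map. Openness holds because $\Psi$ sends the basic bisections $Z(\tau,\tau')\cap(\cG|Y)$ onto the basic bisections $Z(b(\tau),b(\tau'))$. Injectivity is the subtle point, because distinct arrows with the same source and range must stay distinct. Distinct integers $n$ at the same pair $(x,y)$ arise only for eventually periodic $x$. Along the periodic cycle, the code length grows by a fixed positive amount per period, since the cycle contains a branching vertex. This makes $\Psi$ injective on isotropy. Finally, the image is open and contains the unit space $C=\Phi(Y)$, so it is a wide open subgroupoid of $\cH_2$, and $\Psi$ is an isomorphism onto it.
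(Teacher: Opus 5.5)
Your proposal is correct and uses the same construction as the paper. That construction consists of complete binary prefix codes $c(f)$, $f\in wF$, depending only on the vertex $w$ (with the empty word at outdegree one), a complete code $\{p_j\}$ for the root cylinders $Z(\mu_j)$, and $b(\mu_j\eta)=p_jc(\eta)$, so that $Z(\nu,\mu)$ is carried onto $Z(b(\nu),b(\mu))$. The only difference is the last step: the paper gets an injective homomorphism from the effectiveness of $\cG_{(W,F)}|Y$ and $\cH_2$, which identifies both with groupoids of germs, whereas you define the map on triples with middle coordinate $\lvert b(\tau)\rvert-\lvert b(\tau')\rvert$ and check injectivity on isotropy directly, using the same fact that every closed path traversed by a point of $Y$ passes through a branching vertex.
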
 

\begin{proof} 
Write 
\[ 
Y=\bigsqcup_{j=1}^r Z(\mu_j), 
\] 
where the finite paths $\mu_1,\dots,\mu_r$ are pairwise incomparable. 
Let $W_Y$ be the set of vertices reachable from some $t(\mu_j)$. 
Every infinite path starting in $W_Y$ passes through vertices 
of outdegree at least two infinitely often. 
Otherwise, adjoining a suitable prefix from $Y$ would give 
an isolated point of $Y$. 

For each $w\in W_Y$, choose a finite binary word $c(f)$ 
for every $f\in wF$ so that the cylinders $c(f)C$, $f\in wF$, 
partition $C$. 
Such a complete prefix code exists for every positive number of 
words: starting with the empty word, replace one word $a$ by 
$a0,a1$ until the desired number is reached. 
In particular, when $w$ has outdegree one, its edge receives 
the empty word. 
Extend $c$ to finite paths starting in $W_Y$ by concatenation, 
with $c(w)$ equal to the empty word for vertices. 
For $w\in W_Y$, define 
\[ 
h_w:Z(w)\longrightarrow C, 
\qquad h_w(f_1f_2\cdots)=c(f_1)c(f_2)\cdots. 
\] 
The preceding observation ensures that the word on the right 
is infinite. 
Successive decoding gives a unique inverse, and every finite 
initial segment of the code is determined by a finite path. 
Thus $h_w$ is a continuous bijection between compact Hausdorff 
spaces and is a homeomorphism. 

Choose a complete binary prefix code $\{p_1,\dots,p_r\}$ and set 
\[ 
h(\mu_j z):=p_jh_{t(\mu_j)}(z). 
\] 
This defines a homeomorphism $h:Y\to C$. 
For a finite path $\mu=\mu_j\eta$ extending one of the $\mu_j$, 
put $b(\mu):=p_jc(\eta)$. 
Then 
\[ 
h(\mu z)=b(\mu)h_{t(\mu)}(z),\qquad z\in Z(t(\mu)). 
\] 
Consequently, when $\mu,\nu$ extend the chosen root paths and 
$t(\mu)=t(\nu)$, conjugation by $h$ carries the prefix replacement 
$\mu z\mapsto\nu z$ to 
\[ 
b(\mu)\omega\longmapsto b(\nu)\omega, 
\qquad\omega\in C. 
\] 
This is the homeomorphism induced by the compact open bisection 
$Z(b(\nu),b(\mu))$ of $\cH_2$. 
By simultaneous extension of the two prefixes, the bisections 
$Z(\nu,\mu)$ with these properties form a basis for 
$\cG_{(W,F)}|Y$. 

The reduction $\cG_{(W,F)}|Y$ is effective. 
Indeed, a basic bisection of nonunit isotropy would force its 
source cylinder in $Y$ to have a unique infinite continuation, 
contrary to the hypothesis on $Y$. 
Since $\cH_2$ is also effective, both groupoids identify with 
the groupoids of germs of their bisections; 
see \cite[Section 3.1]{Ma26maximal2}. 
Conjugation by $h$ therefore induces an injective homomorphism 
\[ 
\Phi:\cG_{(W,F)}|Y\longrightarrow\cH_2 
\] 
which maps each of the above bisections homeomorphically onto 
$Z(b(\nu),b(\mu))$. 
Hence $\Phi$ is an open embedding. 
Its image is wide because $h(Y)=C$. 
\end{proof} 

In particular, every SFT groupoid with Cantor unit space is 
isomorphic to a wide open subgroupoid of $\cH_2$, by taking 
$Y=X_{(W,F)}$. 
This is the finite-graph case of \cite[Theorem 11.3]{NO19IJM}. 
The same conclusion holds for every AF groupoid with Cantor 
unit space \cite[Corollary 11.26]{NO19IJM}. 
Indeed, represent it by a Bratteli diagram with a single root 
\cite[Theorem 3.9]{GPS04ETDS} and view the diagram as a graph. 
Two finite paths from the root with the same terminal vertex 
have the same length, so the reduction to the root cylinder is 
the given AF groupoid \cite[Proposition 11.25]{NO19IJM}. 
Proposition~\ref{prop:binarygraphrealization} therefore applies. 

\begin{remark}\label{rem:binaryrealizationscope} 
The embeddings described above impose no primeness condition 
on their images and do not by themselves imply maximality of 
the associated topological full groups or normalizers. 
They include the embeddings underlying 
\cite[Examples 6.16--6.18]{BBQS25TAMS}. 

The choice of the full $2$-shift is essential for this generality. 
Let $\cK$ be its canonical AF core, described in 
Example~\ref{ex:AFcoresubgroupoidgraph}, and let $\cH_3$ be the 
full $3$-shift groupoid. 
Their zeroth homology groups with unit classes are 
\[ 
\left(H_0(\cK),[1_{\cK^{(0)}}]\right) 
\cong\left(\Z[1/2],1\right), 
\qquad 
\left(H_0(\cH_3),[1_{\cH_3^{(0)}}]\right) 
\cong\left(\Z/2,1\right). 
\] 
Every homomorphism $\Z[1/2]\to\Z/2$ is zero. 
Since a wide open inclusion induces a homomorphism on $H_0$ 
preserving the unit class, $\cK$ cannot be embedded into $\cH_3$ 
with wide open image. 
Nevertheless, by \cite[Corollary 4.13]{Ma26maximal2}, there is a 
nowhere dense $\cH_3$-\'etale Cantor set $Y\subset\cH_3^{(0)}$ 
such that $\cK\cong\cH_3|Y$, the full restriction to $Y$. 
\end{remark}

%%%%%%%%%%%%%%%%%%%%%%%%%%%%%%%%%%%%%%%%%%%%%%%%%%%%%%%%%%%%
\subsection{Stabilizers and strong realizations} 
\label{subsec:BBQSexamples} 
%%%%%%%%%%%%%%%%%%%%%%%%%%%%%%%%%%%%%%%%%%%%%%%%%%%%%%%%%%%%

We use the terminology of \cite[Definition 4.7]{Ma26maximal2}: 
an effective ample groupoid $\cK$ with compact unit space is 
realizable in $\cH$ if there exists a closed nowhere dense set 
$Y\subset\cH^{(0)}$ with Hausdorff effective reduction such that 
$\cK\cong\cH{\downarrow}Y$. 
It is strongly realizable if $\cK\cong\cH|Y$ for a closed nowhere 
dense $\cH$-\'etale set $Y$, so that this is a full restriction. 
When $\cK^{(0)}$ is a Cantor set, so is $Y$. 

We consider stabilizers of nowhere dense Cantor sets. 
The first example describes a regular reduction to a set that is 
not $\cH_2$-\'etale. 
We then use proper subgraphs to construct strong realizations 
for general SFT groupoids. 

\begin{example}[{\cite[Example 6.9]{BBQS25TAMS}}] 
\label{ex:BBQSCantorstabilizer} 
Let $\cH_2$ be the full $2$-shift groupoid, and put 
\[ 
Y:=\{00,1\}^{\N}\subset\{0,1\}^{\N}. 
\] 
Thus $Y$ consists of all infinite concatenations of $00$ and $1$. 
This is a nowhere dense Cantor set. 
Indeed, concatenation gives a homeomorphism from a full two-symbol 
shift space onto $Y$, and every finite concatenation can be extended 
by $01$ to obtain a cylinder disjoint from $Y$. 

For a finite binary word $\mu$, the set 
$\{z\in\{0,1\}^{\N}\mid\mu z\in Y\}$ is 
\[ 
\begin{cases} 
Y&\text{if }\mu\in\{00,1\}^*,\\ 
0Y&\text{if }\mu\in\{00,1\}^*0,\\ 
\emptyset&\text{otherwise}. 
\end{cases} 
\] 
These three sets are distinct, and a prefix replacement preserves 
$Y$ precisely when its two prefixes have the same follower set. 
Thus the subgroupoid $\cG$ defined by 
\eqref{eq:exceptionalsubgroupoid} has three vertices $A,B,R$, 
root $A$, and transitions 
\[ 
\begin{array}{c|cc} 
P&P0&P1\\ \hline 
A&B&A\\ 
B&A&R\\ 
R&R&R 
\end{array}. 
\] 
Deleting $R$ leaves the graph $\Lambda$ with edges 
$A\xrightarrow{0}B$, $A\xrightarrow{1}A$ and 
$B\xrightarrow{0}A$. 
Every finite path in $\Lambda$ starting at $A$ is uniquely 
determined by its binary label. 
The follower-set calculation shows that the basic prefix 
replacements preserving $Y$ and meeting $Y$ are precisely 
those whose two prefixes lift to paths ending at the same 
vertex of $\Lambda$. 
The label map therefore identifies the corresponding basic 
bisections and gives 
\[ 
\cH_2|Y\cong\cG_{\Lambda}|Z(A), 
\qquad 
M_{\Lambda}=\begin{bmatrix}1&1\\1&0\end{bmatrix}, 
\] 
where $M_{\Lambda}$ is the adjacency matrix of $\Lambda$. 
Since $\Lambda$ is strongly connected, this reduction is minimal. 
Thus Theorem~\ref{thm:exceptionalstabilizer} applies and gives 
\[ 
\sF(\cG)=\Nor\left(V_2,\sF(\cG)\right)=\St(\cH_2,Y), 
\] 
which is a maximal subgroup of $V_2$. 
In fact, coding the two first-return paths $00$ and $1$ at $A$ 
identifies $\cG_{\Lambda}|Z(A)$ with $\cH_2$. 
Thus $\cH_2$ is realizable in itself via $Y$. 
This describes the regular reduction explicitly, in addition 
to recovering the stabilizer in \cite[Example 6.9]{BBQS25TAMS}. 

The set $Y$ is not $\cH_2$-\'etale. 
Indeed, the arrow at $0^\infty$ represented by $z\mapsto 0z$ 
has both endpoints in $Y$, but is not regular for $Y$: 
\[ 
(00)^k1^\infty\in Y,\qquad 
0(00)^k1^\infty\notin Y, 
\] 
and $(00)^k1^\infty\to 0^\infty$. 
Hence no restriction of this prefix replacement to a neighborhood 
of $0^\infty$ preserves $Y$. 
The realization via this particular $Y$ is therefore not strong. 
An AF variant, in which the regular reduction is also minimal 
and splits an orbit of the full restriction, is given in 
\cite[Example 3.18]{Ma26maximal2}. 
\end{example} 

\begin{example}[Proper subgraphs]\label{ex:subgraphstabilizer} 
Let $\cH=\cG_{(V,E)}$ be an irreducible SFT groupoid, and let 
$(W,F)$ be a proper subgraph of $(V,E)$ whose adjacency matrix 
is irreducible and is not a permutation matrix. 
Put $Y:=X_{(W,F)}\subset X_{(V,E)}$. 
Then $Y$ is a Cantor set and is closed. 
It is nowhere dense: irreducibility of $(V,E)$ allows every 
finite path in $(W,F)$ to be extended to a path using an edge 
outside $F$. 
Moreover, every arrow of $\cH$ with source and range in $Y$ is regular, 
and $\cH|Y=r^{-1}(Y)\cap s^{-1}(Y)=\cG_{(W,F)}$. 
Indeed, it can be represented by a prefix replacement between 
two paths in $(W,F)$ with the same terminal vertex, and such 
a replacement locally preserves $Y$. 
Thus $Y$ is $\cH$-\'etale and $\cG_{(W,F)}$ is strongly 
realizable in $\cH$. 
The regular reduction is therefore minimal, and 
Theorem~\ref{thm:exceptionalstabilizer} shows that 
$\St(\Gamma,Y)$ is maximal in every group 
$\sD(\cH)\subset\Gamma\subset\sF(\cH)$. 

The subgroupoid graph can also be read directly from the subgraph. 
For the subgroupoid $\cG$ in \eqref{eq:exceptionalsubgroupoid}, 
its vertices are $Q_w$ for $w\in W$ and $R_v$ for $v\in V$. 
Both $Q_w$ and $R_w$ project to $w$, and the transitions are 
\[ 
Q_we=\begin{cases} 
Q_{t(e)}&e\in F,\\ 
R_{t(e)}&e\notin F, 
\end{cases} 
\qquad 
R_ve=R_{t(e)}. 
\] 
The root at $v$ is $Q_v$ if $v\in W$ and $R_v$ otherwise. 
The vertices $Q_w$ record paths which have remained in $(W,F)$; 
the vertices $R_v$ record paths whose cylinders miss $Y$. 
Deleting the latter vertices leaves precisely $(W,F)$. 
Thus this construction gives SFT examples of the configuration 
in Section~\ref{sec:exceptional}. 
By \cite[Theorem 4.3 (2)]{Ma26maximal2}, the isomorphism 
class of $\cG_{(W,F)}$ is an invariant of the resulting stabilizer. 

For instance, let $\cH_d$ be the full $d$-shift groupoid, 
with $d\geq3$, and take 
$Y=\{0,\dots,m-1\}^{\N}$ for $2\leq m<d$. 
This gives a strong realization of the full $m$-shift groupoid 
in $\cH_d$. 
Here the choices of $\Gamma$ are limited. 
For even $d$, one has $\sF(\cH_d)=\sD(\cH_d)$, so this is 
the only possible $\Gamma$. 
For odd $d$, one has $\sF(\cH_d)/\sD(\cH_d)\cong\Z/2$, 
so $\Gamma$ is either $\sD(\cH_d)$ or $\sF(\cH_d)$; 
see \cite[Corollary 6.24]{Ma15crelle}. 
For each fixed choice of $\Gamma$, varying $m$ gives 
pairwise nonisomorphic maximal subgroups 
$\St(\Gamma,Y)$, since the zeroth homology groups of their 
regular reductions are $\Z/(m-1)$. 
\end{example} 

The subgraph construction above leads to the following general 
strong realization result. 

\begin{proposition}\label{prop:SFTstrongrealization} 
Let $\cH$ be an irreducible SFT groupoid with Cantor unit space, 
and let $\cK$ be any SFT groupoid with Cantor unit space. 
Then $\cK$ is strongly realizable in $\cH$: 
there exists a nowhere dense $\cH$-\'etale Cantor set 
$Y\subset\cH^{(0)}$ such that 
\[ 
\cK\cong\cH|Y=r^{-1}(Y)\cap s^{-1}(Y) 
\] 
with the relative topology on the full restriction. 
\end{proposition}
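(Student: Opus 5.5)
The plan is to reduce the statement to Example~\ref{ex:subgraphstabilizer} by changing the presentation of the ambient groupoid $\cH$ without changing its isomorphism class, exactly as in the proof of Theorem~\ref{thm:uncountableminimal}. Write $\cK\cong\cG_{(W,F)}$ for a finite graph $(W,F)$. Since $\cK^{(0)}$ is a Cantor set, I would first normalize $(W,F)$ by standard reductions: remove vertices that cannot reach closed paths and sources, so that every vertex lies on an infinite path and Condition~(L) holds. Irreducibility of $(W,F)$ is not assumed, so the subgraph argument must not use it; it only needs $Y:=X_{(W,F)}$ to be a nowhere dense Cantor set and every arrow of the full restriction to be a prefix replacement between two paths in $(W,F)$. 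That part of Example~\ref{ex:subgraphstabilizer} works verbatim for any subgraph without sinks and without isolated infinite paths. It gives $\cH'|Y=r^{-1}(Y)\cap s^{-1}(Y)=\cG_{(W,F)}$, with $Y$ being $\cH'$-\'etale, whenever $(W,F)$ sits as a subgraph of a finite irreducible graph $(V',E')$ in which each vertex of $W$ has an edge outside $F$ reachable from every path. Nowhere density follows by extending any path of $(W,F)$ with an edge outside $F$, which irreducibility of $(V',E')$ guarantees once $F\neq E'$.

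The second step is to build a finite irreducible graph $(V',E')$ that contains $(W,F)$ as a proper subgraph and satisfies $\cG_{(V',E')}\cong\cH$. By \cite[Theorem 3.6]{MM14Kyoto}, it suffices to match $(\Coker(I-A'^t),[1])$ with $(H_0(\cH),[1_X])$ and the sign of $\det(I-A')$ with that of $\det(I-A)$. I would form the disjoint union of $(V,E)$, $(W,F)$ and a few auxiliary vertices, and connect them so that the whole graph is strongly connected. As in \eqref{eq:uncountableambientmatrix}, the connecting edges should be arranged so that the auxiliary vertices force the classes of the $W$-vertices to be eliminated in the cokernel, the unit class is preserved, and a block-triangular/Schur complement computation gives $\det(I-A')=\det(I-A)$. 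Concretely, I would try attaching to each $w\in W$ one edge into an auxiliary gadget like the $a,b,c$ block, and edges from the gadget into both $W$ and $v\in V$. The aim is that the relations at the gadget vertices express each $w$-generator as a combination killing $\Coker(I-B^t)$, where $B$ is the adjacency matrix of $(W,F)$, and that the extra unit contributions cancel as in $a+b+c=0$.

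The main obstacle is this homological bookkeeping. Adding edges out of $W$ changes the relations at the $W$-vertices themselves, so $\Coker(I-B^t)$ must be killed rather than merely split off. One also needs a unimodular correction block, like $D$ with $\det D=1$, so that the determinant is unchanged. If a single gadget does not work for all $B$, I would fall back to a two-stage approach. First, realize $\cK$ strongly inside some irreducible SFT groupoid $\cH'$ with trivial $H_0$ and positive-sign determinant; for instance, embed $(W,F)$ in a graph whose matrix is Kakutani-equivalent to the full $2$-shift, using that adding enough extra branching kills the cokernel. Second, realize $\cH'$ strongly in $\cH$ through a gadget of the type in Theorem~\ref{thm:uncountableminimal}, and check that strong realizability is transitive. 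Transitivity holds because a full restriction to an \'etale nowhere dense set of a full restriction to an \'etale nowhere dense set is again such.
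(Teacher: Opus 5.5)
Your fallback has the same architecture as the paper's proof: strongly realize $\cK$ in $\cH_2$, strongly realize $\cH_2$ in $\widehat\cH\cong\cH$ through the vertex $a$ and its loops $e_0,e_1$, and compose. Your treatment of the subgraph step (no irreducibility of $(W,F)$ needed) and of transitivity is fine.

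The gap is the first stage, which carries all the content. You never exhibit a finite strongly connected graph that contains $(W,F)$ as a proper subgraph and whose groupoid is isomorphic to $\cH_2$. ``Adding enough extra branching kills the cokernel'' is not an argument. New edges out of $W$ alter the relations at the $W$-vertices, which is exactly the bookkeeping you could not control in your main plan, and the determinant must be controlled as well. There you have the wrong sign: $\det(I-[2])=-1$ for the full $2$-shift. A groupoid with $H_0=0$ and $\det(I-C)=+1$ is not isomorphic to $\cH_2$ by \cite[Theorem 3.6]{MM14Kyoto}. So the gadget of Theorem~\ref{thm:uncountableminimal}, which realizes $\cH_2$, would not realize such a groupoid in $\cH$.

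The missing idea is a construction whose invariants do not depend on $B$ at all. Choose an odd $n\geq\#W$ and a positive integral $n\times n$ matrix $P$ whose upper left $\#W\times\#W$ block dominates $B$ entrywise, and set $C:=\begin{bmatrix}P&P\\I_n&2I_n\end{bmatrix}$. Its graph is strongly connected and contains $(W,F)$ as a proper subgraph, and
\[
\begin{bmatrix}I_n&-P\\0&I_n\end{bmatrix}(I_{2n}-C)
=\begin{bmatrix}I_n&0\\-I_n&-I_n\end{bmatrix}.
\]
Hence $\Coker(I_{2n}-C^t)=0$ and $\det(I_{2n}-C)=(-1)^n=-1$, so its groupoid is $\cH_2$. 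Thus $B$ sits inside a block that a unimodular row operation cancels, and $\Coker(I-B^t)$ never has to be killed by hand.

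Finally, drop your preliminary normalization. Removing sources replaces $\cK$ by a full clopen reduction, which is only Kakutani equivalent to $\cK$, whereas the statement requires an isomorphism. For example, adjoining a source with one edge into the full $3$-shift changes the unit class in $H_0\cong\Z/2$ from $1$ to $0$. The subgraph argument needs only that $(W,F)$ has no sinks and that $X_{(W,F)}$ is a Cantor set, which is the hypothesis.
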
 

\begin{proof} 
We first strongly realize $\cK$ in $\cH_2$. 
Write $\cK=\cG_{(W,F)}$, and let $B$ be the adjacency matrix 
of $(W,F)$. 
Choose an odd integer $n\geq\#W$ and a positive integral 
$n\times n$ matrix $P$ whose upper left $\#W\times\#W$ block 
dominates $B$ entrywise. 
Set 
\[ 
C:=\begin{bmatrix}P&P\\I_n&2I_n\end{bmatrix}. 
\] 
The graph of $C$ is strongly connected and contains $(W,F)$ 
as a proper subgraph. 
Let $\cG$ be its SFT groupoid. 
Multiplication by a unimodular matrix gives 
\[ 
\begin{bmatrix}I_n&-P\\0&I_n\end{bmatrix}(I_{2n}-C) 
=\begin{bmatrix}I_n&0\\-I_n&-I_n\end{bmatrix}. 
\] 
Hence 
\[ 
H_0(\cG)=\Coker(I_{2n}-C^t)=0, 
\qquad \det(I_{2n}-C)=(-1)^n=-1. 
\] 
The unit class is necessarily zero, so 
\cite[Theorem 3.6]{MM14Kyoto} gives $\cG\cong\cH_2$. 
The path space of $(W,F)$ is a nowhere dense $\cG$-\'etale 
Cantor subset, and its full restriction is $\cK$. 
Indeed, the full-restriction argument in 
Example~\ref{ex:subgraphstabilizer} does not require the subgraph 
to be irreducible. 
Thus $\cK$ is strongly realizable in $\cH_2$. 

Next, let $A$ be the adjacency matrix defining $\cH$. 
The matrix $\widehat A$ in \eqref{eq:uncountableambientmatrix} 
defines an SFT groupoid $\widehat\cH\cong\cH$, as shown in 
the proof of Theorem~\ref{thm:uncountableminimal}. 
Its vertex $a$ and the two loops $e_0,e_1$ form a proper subgraph 
whose groupoid is $\cH_2$. 
Example~\ref{ex:subgraphstabilizer} therefore strongly realizes 
$\cH_2$ in $\widehat\cH$, and hence in $\cH$. 
The conclusion follows from transitivity of strong realizability, 
as established in \cite[Section 4.2]{Ma26maximal2}. 
\end{proof}

%%%%%%%%%%%%%%%%%%%%%%%%%%%%%%%%%%%%%%%%%%%%%%%%%%%%%%%%%%%% 
\subsection{Finite subgroupoid graphs}\label{subsec:finitegraphexamples} 
%%%%%%%%%%%%%%%%%%%%%%%%%%%%%%%%%%%%%%%%%%%%%%%%%%%%%%%%%%%% 

We now recover three examples from \cite{BBQS25TAMS} by checking 
primeness and computing homology from their finite subgroupoid graphs. 
In the group-theoretic conclusions below we write normalizers in 
the usual group notation, using Lemma~\ref{lem:groupnormalizers}. 

Let $\Lambda$ be a finite strongly 
connected reduced graph with one outgoing edge of each label 
$0,1$ at every vertex, and choose a root $q$. 
Propositions~\ref{prop:subgroupoidgraphconverse} 
and~\ref{prop:subgroupoidgraphmodel} give a wide open subgroupoid 
\[ 
\cK\subset\cH_2,\qquad 
\cK\cong\cG_{\Lambda}|Z(q). 
\] 
It is minimal and purely infinite. 
In each example, we verify primeness 
using Corollary~\ref{cor:primegraph} and check that every 
label-preserving automorphism of $\Lambda$ is trivial, without 
requiring it to fix the root. 
As in Section~\ref{sec:uncountable}, every vertex is represented 
by arbitrarily long rooted paths, so 
Proposition~\ref{prop:subgroupoidnormalizer} associates such an 
automorphism to every element of $\Nor\left(V_2,\sF(\cK)\right)$. 
Triviality of these automorphisms therefore gives 
$\Nor\left(V_2,\sF(\cK)\right)=\sF(\cK)$. 
Since $H_1(\cH_2)=0$, a proper prime inclusion with this property 
gives a maximal subgroup $\sF(\cK)$ of $V_2$ by 
Corollary~\ref{cor:minimalfullmaximal} (2). 

If $M$ is the adjacency matrix of $\Lambda$, then 
Proposition~\ref{prop:graphalgebras} (4) gives 
\[ 
H_0(\cK)\cong\Coker(I-M^t),\qquad 
[1_{\cK^{(0)}}]=[\epsilon_q],\qquad 
H_1(\cK)\cong\Ker(I-M^t). 
\] 
The group $\sF(\cK)$ is of type $F_\infty$ by 
\cite[Theorem 6.21]{Ma15crelle}. 
Its abelianization is given by 
\[ 
H_1(\sF(\cK))\cong 
\left(H_0(\cK)\otimes\Z/2\right)\oplus H_1(\cK), 
\] 
by \cite[Corollary 6.24]{Ma15crelle}. 

\begin{example}[{Higman--Thompson groups; 
\cite[Example 6.16]{BBQS25TAMS}}]\label{ex:BBQSHigmanThompson} 
Fix $n\geq3$, put $m:=n-1$, and consider the graph with vertices 
$1,\dots,m$ and transitions 
\[ 
(1)0=m,\qquad (1)1=1,\qquad 
(i)0=1,\qquad (i)1=i-1\quad(2\leq i\leq m). 
\] 
Here $(i)j$ denotes the successor of vertex $i$ along label $j$. 
Choose $r\in\{1,\dots,m\}$ as the root, and denote the resulting 
subgroupoid by $\cK_{n,r}$. 
The graph is strongly connected and reduced. 
If a quotient identifies $i<j$, applying $1^{i-1}$ identifies 
$1$ with $j-i+1>1$; applying $0$ then identifies $m$ with $1$. 
Successive applications of $1$ identify every vertex with $1$. 
Thus $\cK_{n,r}\subset\cH_2$ is prime. 
The vertex $1$ is the unique vertex fixed by the transition $1$, 
so every label-preserving automorphism fixes it and hence every 
vertex. 
Consequently $\sF(\cK_{n,r})$ is maximal in $V_2$. 

Writing $a_i$ for the vertex generators of $H_0$, the relations 
are $a_i=a_1+a_{i-1}$ for $i>1$ and $a_1=a_m+a_1$. 
Hence 
\[ 
H_0(\cK_{n,r})\cong\Z/(n-1),\qquad 
[1_{\cK_{n,r}^{(0)}}]=r,\qquad H_1(\cK_{n,r})=0. 
\] 
There are exactly $n$ first-return paths from vertex $1$ to itself: 
their labels are $1$, $01^k0$ for $0\leq k\leq m-2$, and 
$01^{m-1}$. 
Every infinite path visits $1$ infinitely often. 
There are also exactly $r$ paths from the root $r$ which first 
reach $1$, allowing the path of length zero when $r=1$. 
After simultaneous extension, every basic prefix replacement 
can be taken to end at $1$. 
Coding the initial paths by $r$ roots and the first-return paths 
by $n$ symbols therefore identifies these replacements with 
those defining the Higman--Thompson group $V_{n,r}$. 
Thus 
\[ 
\sF(\cK_{n,r})\cong V_{n,r}, 
\] 
recovering the maximal subgroups of 
\cite[Example 6.16]{BBQS25TAMS}. 
\end{example} 

\begin{example}[{\cite[Example 6.17]{BBQS25TAMS}}] 
\label{ex:BBQSnonHigmanThompson} 
Take the root $A$ and the following transitions on five vertices: 
\[ 
\begin{array}{c|cc} 
P&P0&P1\\ \hline 
A&B&C\\ 
B&D&C\\ 
C&B&E\\ 
D&C&A\\ 
E&A&B 
\end{array}. 
\] 
The graph is strongly connected and reduced. 
Any pair of distinct vertices other than $\{A,B\}$ is sent to 
$\{A,B\}$ by one of the words $0,1,10,010,110$. 
An identification $A\equiv B$ then gives successively 
$B\equiv D$, $C\equiv A$ and $C\equiv E$. 
Thus every nontrivial quotient identifies all vertices. 
Moreover, $A$ is the unique vertex of indegree two, so every 
label-preserving automorphism fixes $A$ and hence every vertex. 
The resulting subgroupoid $\cK\subset\cH_2$ is therefore prime, 
and $\sF(\cK)$ is maximal in $V_2$. 

Let $a,b,c,d,e$ be the vertex generators of $H_0(\cK)$. 
The relations give 
\[ 
a=b+c,\qquad d=b-c,\qquad e=c-b, 
\qquad 3b=3c=0, 
\] 
with no further relations. 
Consequently 
\[ 
H_0(\cK)\cong\Z/3\oplus\Z/3,\qquad 
[1_{\cK^{(0)}}]=(1,1),\qquad H_1(\cK)=0. 
\] 
It follows that $\sF(\cK)=\sD(\cK)$ is a simple group of 
type $F_\infty$. 
It is not isomorphic to the commutator subgroup of any 
Higman--Thompson group: by \cite[Theorem 3.10]{Ma15crelle}, 
such an isomorphism would induce an isomorphism of the 
corresponding groupoids, whereas the zeroth homology group 
of a Higman--Thompson groupoid is cyclic. 
\end{example} 

\begin{example}[{\cite[Example 6.18]{BBQS25TAMS}}] 
\label{ex:BBQSinfiniteabelianization} 
Take the root $A$ and the transitions 
\[ 
\begin{array}{c|cc} 
P&P0&P1\\ \hline 
A&A&B\\ 
B&A&C\\ 
C&B&C 
\end{array}. 
\] 
Again the graph is strongly connected and reduced. 
An identification of any two vertices forces all three to be 
identified: $A\equiv B$ gives $B\equiv C$ by applying $1$, 
and either $A\equiv C$ or $B\equiv C$ gives $A\equiv B$ 
by applying $0$. 
The vertex $A$ is the unique vertex fixed by $0$, so the graph 
has no nontrivial label-preserving automorphism. 
Hence the associated subgroupoid $\cK\subset\cH_2$ is prime, 
and $\sF(\cK)$ is maximal in $V_2$. 

The homology relations are 
$a=a+b$, $b=a+c$ and $c=b+c$. 
They reduce to $b=0$ and $c=-a$, and hence 
\[ 
H_0(\cK)\cong\Z,\qquad [1_{\cK^{(0)}}]=1, 
\qquad H_1(\cK)\cong\Z. 
\] 
For the last assertion, $\Ker(I-M^t)$ is generated by 
$(1,0,-1)^t$. 
Thus $\sF(\cK)$ is of type $F_\infty$ and 
\[ 
H_1(\sF(\cK))\cong\Z/2\oplus\Z. 
\] 
Its commutator subgroup is simple and finitely generated 
by \cite[Theorem 4.16 and Corollary 6.25]{Ma15crelle}, 
but has infinite index. 
In particular, $\sF(\cK)$ is not virtually simple. 
The nontrivial abelianization of $\sF(\cK)$ is compatible with 
its maximality in $V_2$: the criterion in 
Corollary~\ref{cor:minimalfullmaximal} concerns the map to 
$H_1(\cH_2)=0$, which is automatically surjective. 
\end{example}

%%%%%%%%%%%%%%%%%%%%%%%%%%%%%%%%%%%%%%%%%%%%%%%%%%%%%%%%%%%% 
\subsection{Normalizers and homological obstructions} 
\label{subsec:normalizerexamples} 
%%%%%%%%%%%%%%%%%%%%%%%%%%%%%%%%%%%%%%%%%%%%%%%%%%%%%%%%%%%% 

The final examples distinguish the two conditions in 
Corollary~\ref{cor:minimalfullmaximal} (2). 
They include both maximal normalizers strictly containing 
$\sF(\cG)$ and a self-normalizing $\sF(\cG)$ that is not maximal. 
Whenever such a normalizer is maximal and strictly contains 
$\sF(\cG)$, it is not fully amplified: its germ subgroupoid strictly 
contains $\cG$, so primeness forces it to be $\cH$, and its full 
amplification is $\sF(\cH)$. 

\begin{example}[Parity cocycles]\label{ex:paritynormalizers} 
Let $X:=\cH_2^{(0)}$ and $E=\{0,1\}$, and take either 
$E_1=\{0,1\}$ or $E_1=\{1\}$ in Example~\ref{ex:cocyclesubgroupoidgraph}. 
Thus $\xi:\cH_2\to\Z/2$ counts, modulo two, the change in 
the number of symbols from $E_1$ under a prefix replacement. 
Put $\cG:=\Ker\xi$ and write $P_0,P_1$ for the two parity 
classes, with root $P_0$. 
Both graphs below are strongly connected and reduced, so 
$\cG$ is minimal and purely infinite. 
Since their only nontrivial quotient identifies the two vertices, 
the inclusions $\cG\subset\cH_2$ are prime. 
In both cases the label-preserving automorphism group has order two. 
By Proposition~\ref{prop:subgroupoidnormalizer}, the normalizer 
acts on this graph with kernel $\sF(\cG)$. 

\textup{(1)} Suppose $E_1=\{0,1\}$. 
Then $\cG=c^{-1}(2\Z)$ for the canonical cocycle 
$c(x,n,y)=n$, and the transitions are 
$P_i0=P_i1=P_{i+1}$, with indices taken modulo two. 
The adjacency matrix and homology are 
\[ 
M=\begin{bmatrix}0&2\\2&0\end{bmatrix},\qquad 
H_0(\cG)\cong\Z/3,\qquad H_1(\cG)=0. 
\] 
Writing $p_i$ for the vertex class of $P_i$, we have 
$p_1=-p_0$ and $[1_X]=p_0$, where $p_0$ generates $\Z/3$. 
A normalizer element interchanging the vertices would send 
each cylinder defined by a sufficiently long prefix of parity 
$i$ to a cylinder of parity $i+1$, and hence act by negation 
on $H_0(\cG)$. 
This is impossible, since it must fix $[1_X]$. 
Therefore 
\[ 
\Out_{\cH_2}(\cG)=1,\qquad 
\Nor\left(V_2,\sF(\cG)\right)=\sF(\cG), 
\] 
and this group is maximal in $V_2$ by 
Corollary~\ref{cor:minimalfullmaximal} (2). 
For the type system in \cite[Example 3.12]{BBQS25TAMS}, one 
therefore has $\Fix(\mathcal P)=\operatorname{Stab}(\mathcal P)$. 
This corrects the assertion in \cite[Example 3.12]{BBQS25TAMS} 
that $\Fix(\mathcal P)$ has index two in 
$\operatorname{Stab}(\mathcal P)$; 
see also the prefix-set argument below. 

The obstruction also has an elementary prefix-set formulation. 
If a complete binary prefix set has $e$ words of even length 
and $o$ words of odd length, then $e-o\equiv1\pmod3$: 
the singleton consisting of the empty word has this property, 
and splitting one word into its two children changes $e-o$ 
by $3$ or $-3$. 
A prefix replacement changing every length by an odd number 
would interchange the even and odd counts of its source and 
range prefix sets, contradicting this congruence. 

\textup{(2)} Suppose $E_1=\{1\}$. 
Now $P_i0=P_i$ and $P_i1=P_{i+1}$, and 
\[ 
M=\begin{bmatrix}1&1\\1&1\end{bmatrix},\qquad 
H_0(\cG)=H_1(\cG)=0. 
\] 
The involution $\tau\in V_2$ given by 
$\tau(0z)=1z$ and $\tau(1z)=0z$ has $\xi=1$ on $U_\tau$. 
The cocycle identity shows that conjugation by $U_\tau$ preserves 
$\Ker\xi$, and $\tau$ induces the nontrivial graph automorphism. 
Consequently 
\[ 
\Out_{\cH_2}(\cG)\cong\Z/2,\qquad 
\Nor\left(V_2,\sF(\cG)\right)=\sF(\cG)\rtimes\langle\tau\rangle. 
\] 
This normalizer is maximal in $V_2$ by 
Corollary~\ref{cor:minimalfullmaximal} (1), since $V_2$ is perfect. 
The subgroup $\sF(\cG)$ has index two in the normalizer and 
is therefore not maximal in $V_2$. 
\end{example} 

\begin{example}[Cyclic covers and semidirect products] 
\label{ex:cycliccovers} 
We relate the examples in \cite[Section 6.2]{Ma26maximal1} 
to the normalizer criteria of Section~\ref{sec:minimal}. 
Let $p$ be an odd prime, let $\cK=\cG_{(V,E)}$, and write 
$E=\bigsqcup_{a\in\Z/p}E_a$. 
Assume that $(V,E_0)$ is irreducible and that $E_a\neq\emptyset$ 
for some $a\neq0$. 
Use the labels $\ell(e)=a$ for $e\in E_a$ to define 
$\xi:\cK\to\Z/p$ as in 
Example~\ref{ex:cocyclesubgroupoidgraph}, and put 
\[ 
\cG:=\cK\times_\xi\Z/p,\qquad 
\lambda_c(g,b):=(g,b+c),\qquad 
\cH:=\cG\rtimes_\lambda\Z/p. 
\] 
The covering graph is strongly connected: the edges in $E_0$ 
connect each layer, and any nonzero label generates $\Z/p$. 
It has a vertex with at least two outgoing edges, so $\cG$ is 
minimal and purely infinite by Lemma~\ref{lem:graphpureinfiniteness}. 
If $\mathcal R_p$ is the pair groupoid on $\Z/p$, then 
\[ 
\cH\cong\cK\times\mathcal R_p, 
\qquad ((g,b),c)\longmapsto(g,b,b+\xi(g)-c). 
\] 
This is again an irreducible SFT groupoid. 
One graph model replaces each vertex $v$ by a chain 
$(v,0)\to\cdots\to(v,p-1)$ and each edge $e$ by an edge 
from $(i(e),p-1)$ to $(t(e),0)$. 

The inclusion $\cG\subset\cH$ is prime, and 
\[ 
N:=\Nor\left(\sF(\cH),\sF(\cG)\right) 
=\sF(\cG)\rtimes_\lambda\Z/p, 
\qquad \Out_{\cH}(\cG)\cong\Z/p, 
\] 
by \cite[Lemmas 5.3 and 5.4]{Ma26maximal1}. 
Thus these examples have a nontrivial outer quotient. 
Since $p$ is odd, each deck transformation is an even permutation 
of the $p$ clopen layers and hence a product of three-cycles. 
Each of these is a commutator of transpositions in $\sF(\cH)$, 
so the deck transformations belong to $\sD(\cH)$. 
Consequently, Theorem~\ref{thm:minimalnormalizermaximal} 
and the proof of Corollary~\ref{cor:minimalfullmaximal} give 
\[ 
N\text{ is maximal in }\sF(\cH) 
\iff H_1(\cG)\to H_1(\cH) 
\text{ is surjective}. 
\] 
Here and below the homomorphisms are induced by inclusion. 
On the other hand, Corollary~\ref{cor:minimalcommutatormaximal} (1) 
always makes $N\cap\sD(\cH)$ maximal in $\sD(\cH)$. 
As 
\[ 
N\cap\sD(\cH) 
=\left(\sF(\cG)\cap\sD(\cH)\right)\rtimes_\lambda\Z/p, 
\] 
this maximal subgroup equals $\sD(\cG)\rtimes_\lambda\Z/p$ 
if and only if $H_1(\sF(\cG))\to H_1(\sF(\cH))$ is injective. 
Hence $\sD(\cG)\rtimes_\lambda\Z/p$ is maximal in $\sD(\cH)$ 
exactly when this map is injective. 
These observations recover the two maximality criteria evaluated by finite 
matrices in \cite[Section 6.2 and Proposition 6.4]{Ma26maximal1}. 

For example, take $p=3$, $\cK=\cH_2$, and label the two loops 
by $0$ and $1$. The covering graph has adjacency matrix 
\[ 
B=\begin{bmatrix}1&1&0\\0&1&1\\1&0&1\end{bmatrix}. 
\] 
Since $H_0(\cG)=0$ and $\det(I-B)=-1$, 
\cite[Theorem 3.6]{MM14Kyoto} gives $\cG\cong\cH_2$. 
The prefix set $\{0,10,11\}$ identifies 
$\cH\cong\cH_2\times\mathcal R_3$ with $\cH_2$. 
Hence $N\cong V_2\rtimes_\lambda\Z/3$ is a maximal subgroup 
of $V_2$, recovering \cite[Example 6.5]{Ma26maximal1}. 
Under $(z,i)\mapsto\rho_i z$, with 
$(\rho_0,\rho_1,\rho_2)=(0,10,11)$, the subgroupoid $\cG$ 
becomes the kernel of the cocycle counting $1$'s modulo three. 
Its type rules are $P_i0=P_i$ and $P_i1=P_{i+1}$. 
The prefix cycle $(0\ 10\ 11)$ implements the deck transformation 
and cyclically permutes these types. 
Thus $\sF(\cG)$ and $N$ correspond respectively to 
$\Fix(\mathcal P)$ and $\operatorname{Stab}(\mathcal P)$, 
and the maximality also follows from 
\cite[Theorem 5.12]{BBQS25TAMS}. 
\end{example} 

\begin{example}[A homological obstruction] 
\label{ex:selfnormalizingnonmaximal} 
The two conditions in Corollary~\ref{cor:minimalfullmaximal} (2) 
are independent. 
In Example~\ref{ex:paritynormalizers} (2), the homology map is 
surjective and the outer quotient is nontrivial. 
Here we construct an example with trivial outer quotient for which 
$H_1(\cG)\to H_1(\cH)$ is not surjective, by restricting the 
cyclic cover in Example~\ref{ex:cycliccovers} to one layer. 

Let $\cH$ be the SFT groupoid of the two-vertex graph with 
labelled adjacency matrices 
\[ 
A_0=\begin{bmatrix}0&1\\1&0\end{bmatrix},\quad 
A_1=\begin{bmatrix}2&0\\0&2\end{bmatrix},\quad A_2=0, 
\quad A=A_0+A_1+A_2=\begin{bmatrix}2&1\\1&2\end{bmatrix}. 
\] 
Let $\xi:\cH\to\Z/3$ be the associated cocycle and put 
$\cG:=\Ker\xi$. 
By Example~\ref{ex:cocyclesubgroupoidgraph}, $\cG$ is the 
full clopen reduction of $\cH\times_\xi\Z/3$ to 
$X\times\{0\}$, where $X=\cH^{(0)}$. 
The covering graph has adjacency matrix 
\[ 
B=\begin{bmatrix} 
A_0&A_1&0\\0&A_0&A_1\\A_1&0&A_0 
\end{bmatrix}. 
\] 
It is strongly connected and has a vertex with at least two 
outgoing edges, so $\cG$ is minimal and purely infinite by 
Lemma~\ref{lem:graphpureinfiniteness}. 
The inclusion $\cG\subsetneq\cH$ is prime. 
Indeed, any nontrivial compatible equivalence relation on the 
covering graph identifies two layers over one base vertex. 
The labelled edges then identify all three layers over both 
base vertices, giving the base graph. 

The homology computation gives 
\[ 
H_1(\cG)\cong\Ker(I-B^t) 
=\Z(1,-1,1,-1,1,-1)^t,\qquad 
H_1(\cH)\cong\Z(1,-1)^t. 
\] 
As in \cite[Section 6.2]{Ma26maximal1}, the map induced by 
inclusion sums the three layer coordinates. 
It is therefore multiplication by three, and is not surjective. 

To compute the outer quotient, write $a_i,b_i$ for the vertex 
classes in $H_0(\cG)\cong\Coker(I-B^t)$, with $i\in\Z/3$. 
The relations are 
\[ 
a_i=b_i+2a_{i+1},\qquad b_i=a_i+2b_{i+1}. 
\] 
Eliminating the $b_i$ leaves $4(a_{i+2}-a_{i+1})=0$. 
Consequently, with $t_j:=a_j-a_0$ for $j=1,2$, we have 
\[ 
H_0(\cG)\cong\Z a_0\oplus(\Z/4)t_1\oplus(\Z/4)t_2, 
\qquad [1_X]=a_0+b_0=2t_1. 
\] 
The automorphisms of the covering graph over the base graph 
are exactly the three layer translations. 
The two nontrivial translations send $[1_X]$ to 
$2t_1+2t_2$ and $2t_2$, respectively. 
Neither fixes $[1_X]$, so neither can be induced by an element 
of $\Nor\left(\sF(\cH),\sF(\cG)\right)$. 
Proposition~\ref{prop:subgroupoidnormalizer} therefore gives 
$\Out_{\cH}(\cG)=1$, $\Nor\left(\sF(\cH),\sF(\cG)\right)=\sF(\cG)$. 
Nevertheless, $\sF(\cG)$ is not maximal in $\sF(\cH)$, 
by Corollary~\ref{cor:minimalfullmaximal} (2). 
Thus restriction to one layer removes the outer symmetry of 
the cyclic cover while retaining the homological obstruction 
to maximality. 
\end{example}

\section*{Acknowledgements}
The author used ChatGPT (GPT-6 Astra) and Claude (Opus 5.5) 
to assist with mathematical discussions, checking arguments, 
literature searches, and the drafting and revision of the manuscript. 
The mathematical arguments and references were independently 
verified by the author, who is responsible for the final content.

\bibliography{../zzz} 
\bibliographystyle{amsplain}

\end{document}